\patchcmd{\ttlh@hang}{\parindent\z@}{\parindent\z@\leavevmode}{}{}
\patchcmd{\ttlh@hang}{\noindent}{}{}{}
\newcommand\eqdef{\coloneqq}
\newcommand\nbd{\nobreakdash-\hspace{0pt}}
\newcommand\idd[1]{\mathrm{id}_{#1}}
\newcommand\bigid[1]{\mathrm{Id}_{#1}}
\newcommand\after{\circ}
\newcommand\incl{\hookrightarrow}
\newcommand\surj{\twoheadrightarrow}
\newcommand\restr[2]{{#1}{\raisebox{0pt}{$|_{#2}$}}}
\newcommand\powerset[1]{\mathscr{P}{#1}}
\newcommand\set[1]{\left\{ {#1} \right\}}
\newcommand\order[2]{#2^{(#1)}}
\newcommand\opp[1]{#1^\mathrm{op}}
\newcommand\cat[1]{\mathbf{#1}}
\newcommand\fun[1]{\mathsf{#1}}
\DeclareMathOperator*{\colim}{colim}
\DeclareMathOperator*{\Ob}{Ob}
\newcommand\rdcpx{\cat{RDCpx}}
\newcommand{\atom}{{\scalebox{1.3}{\( \odot \)}}}
\newcommand{\Set}{\cat{Set}}
\newcommand{\dgmSet}{\atom\Set}
\newcommand{\pt}{\mathbf{1}}
\newcommand{\init}{\varnothing}
\DeclareMathOperator{\Arr}{Arr}
\newcommand\clset[1]{\mathrm{cl}\set{#1}}
\newcommand\gr[2]{#2_{#1}}
\newcommand\maxel[1]{\mathscr{M}\!\mathit{ax}\,#1}
\newcommand\bd[2]{\partial_{#1}^{#2}}
\newcommand\faces[2]{\Delta_{#1}^{#2}}
\newcommand\inter[1]{\mathrm{int}\,#1}
\newcommand\cp[1]{\,{\scriptstyle\#}_{#1}\,}
\newcommand\cpsub[1]{\triangleright_{#1}}
\newcommand\subcp[1]{\prescript{}{#1\!}{\triangleleft}\;}
\newcommand\gencp[1]{\,{\scriptstyle\widehat{\#}}_{#1}\,}
\newcommand\subs[3]{#1[#2/#3]}
\newcommand\compos[1]{\langle#1\rangle}
\newcommand\celto{\Rightarrow}
\newcommand\relcelto[1]{\Rightarrow_{\kern-3pt #1}}
\newcommand\submol{\sqsubseteq}
\newcommand\dual[2]{\fun{D}_{#1}{#2}}
\newcommand\join{\,{\star}\,}
\newcommand\gray{\otimes}
\newcommand\sus[1]{\fun{S}{#1}}
\newcommand{\tp}{\otimes}
\DeclareMathOperator{\Pd}{Pd}
\DeclareMathOperator{\Rd}{Rd}
\DeclareMathOperator{\Eqv}{Eqv}
\DeclareMathOperator{\cell}{cell}
\DeclareMathOperator{\eqv}{eqv}
\DeclareMathOperator{\Dgn}{Dgn}
\DeclareMathOperator{\dgn}{dgn}
\DeclareMathOperator{\Nd}{Nd}
\DeclareMathOperator{\nd}{nd}
\newcommand\rev[1]{{#1}^\dag}
\newcommand{\un}{\varepsilon}
\newcommand{\lun}[1]{\lambda_{#1}}
\newcommand{\run}[1]{\rho_{#1}}
\newcommand{\hinv}[1]{\xi_{#1}}
\newcommand{\lcyl}[1]{\mathrm{L}_{#1}}
\newcommand{\rcyl}[1]{\mathrm{R}_{#1}}
\newcommand{\hcyl}[1]{\Xi_{#1}}
\newcommand\arr{\vec{I}}
\newcommand{\mapel}[1]{\iota_{#1}}
\newcommand{\imel}[2]{{#1}_{#2}}
\newcommand{\loc}[2]{#1[#2^{-1}]}
\newcommand{\preloc}[2]{#1\set{{#2}^{-1}}}
\newcommand{\selfloc}[1]{\tilde{#1}}
\newcommand{\locarr}{\tilde I}
\DeclareMathOperator{\Loc}{\fun{Loc}}
\newcommand{\revarr}{\tilde I}
\newcommand{\selflocm}[1]{\selfloc{#1}_{\m{}}}
\newcommand{\markarr}{\markmol{\arr}}
\renewcommand{\a}{\alpha}
\newcommand{\B}{\mathcal{B}}
\DeclareMathOperator{\Linv}{\mathcal{L}}
\DeclareMathOperator{\Rinv}{\mathcal{R}}
\DeclareMathOperator{\Pst}{Pst}
\renewcommand{\th}{\vartheta}
\newcommand{\F}{\fun{F}}
\newcommand{\G}{\fun{G}}
\newcommand{\m}[1]{{\mathsf{m}#1}}
\newcommand{\markmol}[1]{{#1}_{\m{}}}
\newcommand{\mdgmSet}{\atom\Set^{\m{}}}
\newcommand{\minmark}[1]{{#1}^{\flat}}
\newcommand{\maxmark}[1]{{#1}^{\sharp}}
\newcommand{\natmark}[1]{{#1}^{\natural}}
\newcommand{\minbdmap}[1]{\minmark{\partial}_{#1}}
\newcommand{\markbdmap}[1]{\partial^{\m{}}_{#1}}
\newcommand{\pp}[1]{\mathbin{\hat{#1}}}
\DeclareMathOperator{\an}{an}
\newcommand{\cof}[1]{{#1}\text{\nbd}\mathsf{cof}}
\newcommand{\pizero}{\pi_0}
\newcommand\cls[1]{\mathscr{#1}}
\newcommand{\C}{\cls C}
\newcommand{\Jn}[1]{J_{#1}}
\newcommand{\Jhorn}{J_{\mathsf{horn}}}
\newcommand{\Jcoind}{J_{\mathsf{coind}}}
\newcommand{\Jloc}{J_{\mathsf{loc}}}
\newtheoremstyle{ittheorem}
  {\topsep}   
  {\topsep}   
  {\itshape}  
  {0pt}       
  {\bfseries} 
  { ---}         
  {5pt plus 1pt minus 1pt} 
  {}          
\newtheoremstyle{itdfn}
  {\topsep}
  {\topsep}
  {}
  {0pt}
  {\bfseries}
  {}
  {5pt plus 1pt minus 1pt}
  {\thmnumber{#2}{\thmnote{\normalfont\ \ %
{\sffamily(#3)}.}}}
\newtheoremstyle{itrmk}
  {0.5\topsep}
  {0.5\topsep}
  {\normalfont}
  {0pt}
  {\sffamily \itshape}
  { --- }
  {5pt plus 1pt minus 1pt}
  {}
\theoremstyle{ittheorem}
\newtheorem{thm}{Theorem}[section]
\newtheorem*{thm*}{Theorem}
\newtheorem{prop}[thm]{Proposition}
\newtheorem*{prop*}{Proposition}
\newtheorem{lem}[thm]{Lemma}
\theoremstyle{itdfn}
\newtheorem{dfn}[thm]{}
\theoremstyle{itrmk}
\newtheorem{rmk}[thm]{Remark}
\newtheorem{comm}[thm]{Comment}
\setlist{leftmargin=20pt,itemsep=0pt,parsep=0pt,topsep=1ex}
\newcommand\runtitle{gray products of diagrammatic $(\infty, n)$-categories}
\newcommand\runauthor{chanavat}
\title{Gray products of diagrammatic $(\infty, n)$-categories}
\author{Cl\'emence Chanavat}
\institution{Tallinn University of Technology}
\begin{document}

\maketitle
\begin{center}
	\begin{minipage}[t]{.95\textwidth}
		\small\textsc{Abstract.}
			For each \( n \in \mathbb{N} \cup \set{\infty} \), diagrammatic sets admit a model structure whose fibrant objects are the diagrammatic \( (\infty, n) \)\nbd categories.
			They also support a notion of Gray product given by the Day convolution of a monoidal structure on their base category.     
			The goal of this article is to show that the model structures are monoidal with respect to the Gray product. 
			On the way to the result, we also prove that the Gray product of any cell and an equivalence is again an equivalence.
			Finally, we show that tensoring on the left or the right with the walking equivalence is a functorial cylinder for the model structures, and that the functor sending a diagrammatic set to its opposite is a Quillen self-equivalence.
	\end{minipage}
	
	\vspace{20pt}

	\begin{minipage}[t]{0.95\textwidth}
		\setcounter{tocdepth}{2}
		\tableofcontents
	\end{minipage}
\end{center}

\makeaftertitle

\section*{Introduction}

In the study of higher structures, one is often led to consider a refined version of the cartesian product, known as the Gray tensor product and usually written \( - \gray - \). 
From its introduction in \cite{gray1974gray} for 2-categories, it has been successfully generalised to higher versions, notably to strict \( \omega \)\nbd categories \cite{AlAgl1993nerve, crans1995gray}, and several models of weak higher categories \cite{verity2008complicial,gagna2021gray,campion2023gray, abellan2023gray}. 
To informally understand what sorts of refinement the Gray product operates, consider the arrow category \( \arr \), with two objects and one non-trivial morphism between them. 
We depict side by side the cartesian product and the Gray product of the arrow category with itself:
\begin{center}
    \begin{tikzcd}[sep=small]
        & \bullet && \bullet && \bullet && \bullet \\
        {\arr \times \arr =} &&&& {\arr \gray \arr =} \\
        & \bullet && \bullet && \bullet && {\bullet.} \\
        && {} &&&& {}
        \arrow[from=1-2, to=1-4]
        \arrow[from=1-2, to=3-2]
        \arrow[dashed, from=1-2, to=3-4]
        \arrow["{{,}}", from=1-4, to=3-4]
        \arrow[from=1-6, to=1-8]
        \arrow[from=1-6, to=3-6]
        \arrow[from=1-8, to=3-8]
        \arrow[from=3-2, to=3-4]
        \arrow[shorten <=11pt, shorten >=6pt, Rightarrow, from=3-6, to=1-8]
        \arrow[from=3-6, to=3-8]
    \end{tikzcd}
\end{center}
The cartesian product \( \arr \times \arr \) is a strictly commutative square, while the Gray product \( \arr \gray \arr \) gives a (co)lax commutative square. 
The previous picture makes apparent another difference between the two products: given higher structures \( \C \) and \( \cls{D} \), if \( u \) is an \( n \)\nbd cell of \( \cls{C} \) and \( v \) is an \( m \)\nbd cell of \( \cls{D} \), then \( u \times v \) is a \( \max\set{n, m} \)\nbd cell of \( \C \times \cls{D} \), whereas \( u \gray v \) is an \( (n + m) \)\nbd cell of \( \C \gray \cls{D} \). 
Another point of divergence is that the Gray product is \emph{not} symmetric. 
However, passing to the opposite (that is, reversing the direction of all odd dimensional cells) distributes with the Gray product by reversing the order of the factors, in that \( \opp{(\C \gray \cls D)} \) is equivalent to \( \opp{\cls{D}} \gray \opp{\C} \).
As another example, the Gray product of the 2-globe with the arrow
\begin{center}
    \begin{tikzcd}[row sep=small]
        &&&& \bullet && \bullet && \bullet \\
        \bullet && \bullet & \gray && {=} \\
        &&&& \bullet && \bullet && {\bullet.}
        \arrow[""{name=0, anchor=center, inner sep=0}, curve={height=12pt}, from=1-7, to=1-9]
        \arrow[""{name=1, anchor=center, inner sep=0}, curve={height=-12pt}, from=1-7, to=1-9]
        \arrow[""{name=2, anchor=center, inner sep=0}, curve={height=18pt}, shorten <=13pt, shorten >=13pt, Rightarrow, from=1-7, to=3-9]
        \arrow[""{name=3, anchor=center, inner sep=0}, curve={height=-18pt}, shorten <=13pt, shorten >=13pt, Rightarrow, dashed, from=1-7, to=3-9]
        \arrow[""{name=4, anchor=center, inner sep=0}, curve={height=12pt}, from=2-1, to=2-3]
        \arrow[""{name=5, anchor=center, inner sep=0}, curve={height=-12pt}, from=2-1, to=2-3]
        \arrow[from=3-5, to=1-5]
        \arrow[from=3-7, to=1-7]
        \arrow[""{name=6, anchor=center, inner sep=0}, curve={height=12pt}, from=3-7, to=3-9]
        \arrow[""{name=7, anchor=center, inner sep=0}, curve={height=-12pt}, dashed, from=3-7, to=3-9]
        \arrow[from=3-9, to=1-9]
        \arrow[shorten <=3pt, shorten >=3pt, Rightarrow, from=0, to=1]
        \arrow[shorten <=6pt, shorten >=6pt, Rightarrow, scaling nfold=3, from=2, to=3]
        \arrow[shorten <=3pt, shorten >=3pt, Rightarrow, from=4, to=5]
        \arrow[shorten <=3pt, shorten >=3pt, Rightarrow, dashed, from=6, to=7]
    \end{tikzcd}
\end{center}
corresponds to a right cylinder: the Gray product is monoidal closed and the cells of its left and right internal-hom are higher lax and oplax transformations between functors of higher structures.

The higher structures that this article is concerned with are the \emph{diagrammatic sets}, of which we briefly recall the main features. A diagrammatic set is a presheaf over the shape category \( \atom \), whose objects are called \emph{atoms}. 
An atom of dimension \( m \in \mathbb{N} \) represents the shape of a higher-categorical \( m \)\nbd cell with the property that, for all \( k \le m \), its input and output \( k \)\nbd boundary are regular CW models of the closed topological \( k \)\nbd ball. 
Atoms, together with their maps, are described combinatorially by means of oriented graded posets, and this description can be extended to a subclass of diagrammatic set, known as the \emph{regular directed complexes}. 
Among those we find all the ``globular pastings'' of atoms, constructed by means of pushouts along the boundaries, which we call the \emph{molecules}. 
The \emph{round molecules} are a subclass of molecules that can appear in the boundary of an atom, hence are regular CW models of the closed topological ball. 
Then, if \( X \) is a diagrammatic set, a \emph{pasting diagram} in \( X \) is a morphism with codomain \( X \), and whose domain is a molecule.
A pasting diagram is a \emph{round diagram} if the molecule is round, and a cell if it is an atom. 
Pasting diagrams can be pasted together by the universal property of the pushout, and satisfy all the equations of strict \( \omega \)\nbd categories. Furthermore, each diagrammatic set comes equipped with an internal notion of \emph{equivalence} defined by coinduction. We say that an \( n \)\nbd dimensional round diagram is an equivalence if it is invertible up to \( (n + 1) \)\nbd dimensional equivalences witnessing the invertibility equations. 

On the one hand in \cite{chanavat2024homotopy, chanavat2024model}, the author and Hadzihasanovic explore diagrammatic sets as a model for weak higher categories. 
For each \( n \in \mathbb{N} \cup \set{\infty} \), they introduce model structures whose fibrant objects are exactly the diagrammatic \( (\infty, n) \)\nbd categories, that is, the diagrammatic sets with the property that each round diagram is equivalent to a cell, and all cells of dimension \( > n \) are equivalences.  
On the other hand, diagrammatic sets support a natural notion of Gray product, given by a Day convolution \cite{day1970closed} of the Gray product at the level of atoms \cite{hadzihasanovic2024combinatorics}.
The main goal of this paper is to show that this Gray product interacts well with the model structures: the nature of the interaction is made precise with the definition of \emph{monoidal model category}. 
Our main theorem, answering affirmatively \cite[Conjecture 6.4]{chanavat2024model}, is as follows.
\begin{thm*}
    Let \( n \in \mathbb{N} \cup \set{\infty} \). 
    The \( (\infty, n) \)\nbd model structure on diagrammatic sets is monoidal with respect to the Gray product.
\end{thm*}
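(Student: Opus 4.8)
The plan is to verify the two defining conditions of a monoidal model category in the sense of Hovey, namely the \emph{pushout-product axiom} and the \emph{unit axiom}. Since the Gray product is a Day convolution, it is closed and therefore preserves colimits separately in each variable; together with the fact that the \( (\infty, n) \)\nbd model structure is cofibrantly generated, this reduces the pushout-product axiom to a statement about the generating cofibrations \( I \) and the generating trivial cofibrations \( J \). Writing \( \pp{\gray} \) for the pushout-product, the standard saturation argument shows it suffices to check that \( i \pp{\gray} i' \) is a cofibration for all \( i, i' \in I \), and that \( i \pp{\gray} j \) and \( j \pp{\gray} i \) are trivial cofibrations for all \( i \in I \) and \( j \in J \).

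The generating cofibrations are the boundary inclusions \( \partial U \incl U \) of atoms. For the first condition, I would use the explicit combinatorial description of the Gray product on the shape category \( \atom \): the Gray product \( U \gray V \) of two atoms is again a regular directed complex whose cells are understood combinatorially, and the pushout-product \( (\partial U \incl U) \pp{\gray} (\partial V \incl V) \) is identified with the inclusion of a sub-regular-directed-complex into \( U \gray V \). Such an inclusion is built by attaching atoms along their boundaries, hence is a relative \( I \)\nbd cell complex and in particular a cofibration.

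The second condition is the crux of the argument, and its main input is the earlier result that the Gray product of any cell with an equivalence is again an equivalence. The generating trivial cofibrations split into the families \( \Jhorn \), \( \Jcoind \), and \( \Jloc \), each of which is either built from, or detected by, equivalences. For a generating cofibration \( i = (\partial U \incl U) \) and a generating trivial cofibration \( j \), I would analyse the pushout-product \( i \pp{\gray} j \) by decomposing \( U \gray (-) \) cell-by-cell, certifying that the cells introduced are equivalences by invoking the cell--equivalence lemma; this places \( i \pp{\gray} j \) in the anodyne class \( \cof{J} \) of trivial cofibrations. The cylinder furnished by tensoring with the walking equivalence supplies the homotopies needed to recognise the relevant maps as weak equivalences. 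I expect the bulk of the work, and the principal obstacle, to lie here: one must control precisely how the Gray product interacts with the specific shapes occurring in each family of generators, which is a delicate combinatorial task.

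Finally, the unit axiom is immediate: the monoidal unit of the Gray product is the point \( \pt \), the representable on the \( 0 \)\nbd dimensional atom, which is cofibrant; hence no cofibrant replacement of the unit is required and the axiom holds automatically.
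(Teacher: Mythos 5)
Your treatment of the cofibration half and of the unit axiom is fine (the generating cofibrations of \( \dgmSet \) are indeed the boundary inclusions, \( \bd{U}{} \pp{\gray} \bd{V}{} \) is the boundary inclusion of \( U \gray V \), and the unit \( \pt \) is cofibrant), and you correctly identify Theorem \ref{thm:equivalences_in_gray} as the key input. But the core of your argument rests on a step that is not available: the \( (\infty, n) \)\nbd model structure on \( \dgmSet \) comes with \emph{no} explicit set of generating acyclic cofibrations, so there is no ``\( J \)'' for your reduction to run on. The families you name (\( \Jhorn \), \( \Jloc \), and what should be \( \Jn{n} \) --- note that \( \Jcoind \) is their union, not a third family) are sets of morphisms of \emph{marked} diagrammatic sets \( \mdgmSet \), not of \( \dgmSet \), and the marking data is essential to their definition and to their acyclicity. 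Moreover, even in \( \mdgmSet \), the set \( \Jcoind \) is only \emph{pseudo}-generating: it detects fibrations only among maps with fibrant codomain, so \( \cof{\Jcoind} \) need not exhaust the acyclic cofibrations, and the standard ``check on generators, then saturate'' argument fails as stated. The paper must work around exactly this point with a fibrant-replacement and two-out-of-three argument (Lemma \ref{lem:left_adjoint_is_Quillen_enough_pseudo_generating}, packaged as Lemma \ref{lem:monoidal_pseudo_generating}).

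What is missing is the transfer mechanism the paper actually uses. It first proves monoidality of the coinductive \( (\infty, n) \)\nbd model structure on \( \mdgmSet \) (Theorem \ref{thm:marked_model_structure_monoidal}), where the explicit pseudo-generators exist and where the computations with marked horns, entire monomorphisms, and Theorem \ref{thm:equivalences_in_gray} are carried out; it then deduces the statement for \( \dgmSet \) from the fact that \( \minmark{(-)} \colon \dgmSet \to \mdgmSet \) is a \emph{strict monoidal} left Quillen \emph{equivalence}: one has \( \minmark{(i \pp{\gray} j)} \cong \minmark{i} \pp{\gray} \minmark{j} \), so if \( i \) or \( j \) is acyclic then \( \minmark{(i \pp{\gray} j)} \) is an acyclic cofibration in \( \mdgmSet \), and since left Quillen equivalences reflect weak equivalences between cofibrant objects (Lemma \ref{lem:ken_brown_plus}) and every object is cofibrant, the cofibration \( i \pp{\gray} j \) is acyclic. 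Without this detour through \( \mdgmSet \) --- or some substitute supplying explicit acyclic generators for \( \dgmSet \) itself --- your cell-by-cell analysis has nothing concrete to certify, especially since the weak equivalences of \( \dgmSet \) are themselves \emph{defined} by applying \( \minmark{(-)} \).
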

\noindent In this particular context, it is safe to translate this by saying that for all diagrammatic sets \( X \), the functors \( X \gray - \) and \( - \gray X \) are both left Quillen. We derive this Theorem as a corollary of a similar result for the \emph{coinductive} \( (\infty, n) \)\nbd model structure on \emph{marked} diagrammatic sets, which are diagrammatic sets together with a subset of marked cells. 
The fibrant objects of this model structure are the diagrammatic \( (\infty, n) \)\nbd categories such that a cell is marked if and only if it is an equivalence.
Thus, the marking serves as a propositional truncation of the infinite tower of data needed to witness an equivalence, and makes, for instance, the computation of a pushout-product of a cofibration and a generating acyclic cofibration much more manageable.
Since forgetting the marking induces a Quillen equivalence with the \( (\infty, n) \)\nbd model structure, with an adjunction which is strict monoidal with respect to the Gray products on diagrammatic sets and marked diagrammatic sets, we recover at no cost the desired result.       

In order to make those computations, we still need to understand the interaction between Gray product and equivalences, which is the second central aspect of the article.
In particular, we prove the following theorem, which is an analogue of \cite[Proposition 4.9]{ara2020monoidal} where the authors show that the \emph{folk model structure on strict \( \omega \)\nbd categories} \cite{lafont2010folk} is monoidal with respect to the Gray product. 
\begin{thm*}
    Let \( X, Y \) be diagrammatic sets. 
    Then
    \begin{equation*}
        \Eqv X \gray \Rd Y \cup \Rd X \gray \Eqv Y \subseteq \Eqv (X \gray Y).
    \end{equation*}
\end{thm*}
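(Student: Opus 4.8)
The plan is to reduce the two symmetric inclusions to a single one and then argue by coinduction, exploiting the functoriality of the Gray product together with the distributivity laws relating $\gray$ to the pasting composites $\cp{k}$.

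\textbf{Reduction to one side.} First I would observe that it suffices to prove the single inclusion $\Eqv X \gray \Rd Y \subseteq \Eqv(X \gray Y)$ for all diagrammatic sets $X, Y$. Indeed, the opposite functor reverses odd-dimensional cells, hence preserves roundness (a reversed regular CW model of the ball is again one) and invertibility, so that $\opp{(-)}$ restricts to bijections $\Eqv X \to \Eqv(\opp X)$ and $\Rd X \to \Rd(\opp X)$. Combined with the compatibility $\opp{(X \gray Y)} \cong \opp Y \gray \opp X$ recorded in the introduction, applying the left-hand inclusion to the pair $(\opp Y, \opp X)$ and transporting back along $\opp{(-)}$ yields the right-hand inclusion $\Rd X \gray \Eqv Y \subseteq \Eqv(X \gray Y)$. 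From now on I aim to show $u \gray v \in \Eqv(X \gray Y)$ for a fixed equivalence $u \in \Eqv X$ and round diagram $v \in \Rd Y$.

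\textbf{Setting up the coinduction.} Since $\Eqv(-)$ is the greatest class of round diagrams closed under the clause ``invertible, with witnessing diagrams again in the class'', it is enough to produce a class $S$ containing $u \gray v$ such that every element of $S$ is round, invertible, and admits witnesses lying in $S$. I would take
\[
    S \eqdef \set{ w \gray z : w \in \Eqv X',\ z \in \Rd Y' },
\]
ranging over all diagrammatic sets $X', Y'$, i.e.\ the class of all Gray products of an equivalence with a round diagram. Roundness of each $w \gray z$ follows from the fact, established earlier, that the Gray product of two round molecules is round; this is what makes $w \gray z$ a legitimate round diagram to which the notion of equivalence applies.

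\textbf{Inverse and witnesses.} Given $w \gray z \in S$ with $w$ an $n$-dimensional equivalence of weak inverse $w^{*}$, and $z$ of dimension $m$, I claim $w^{*} \gray z$ is a weak inverse of $w \gray z$: the invertibility is concentrated in the $w$-direction, so one composes along the codimension-one boundary inherited from $\partial w \gray z$, at dimension $k \eqdef n + m - 1$. By the distributivity of $\gray$ over $\cp{k}$ in the first variable, the composite $(w \gray z) \cp{k} (w^{*} \gray z)$ equals $(w \cp{n-1} w^{*}) \gray z$, and symmetrically on the other side. Now the coinductive definition supplies an $(n+1)$-dimensional equivalence $\th \in \Eqv X'$ witnessing that $w \cp{n-1} w^{*}$ reduces to a unit; applying functoriality of $\gray$, the cell $\th \gray z$ witnesses that $(w \gray z) \cp{k} (w^{*} \gray z)$ reduces to the corresponding unit of $w \gray z$. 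Since $\th \in \Eqv X'$ and $z \in \Rd Y'$, the witness $\th \gray z$ again lies in $S$, closing the coinductive step and giving $S \subseteq \Eqv(X \gray Y)$, hence $u \gray v \in \Eqv(X \gray Y)$.

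\textbf{Main obstacle.} The delicate point is the orientation bookkeeping forced by the non-symmetry of the Gray product. Concretely, I must verify that the weak inverse is genuinely $w^{*} \gray z$ and not a variant involving the reversal $\rev{(-)}$ of $z$ or of its boundaries; that the composition dimensions produced by the distributivity laws match those dictated by the coinductive clause; and, most subtly, that the unit $(1_{\partial w}) \gray z$ emerging after distribution is the unit that $w \gray z$ actually requires, since the Gray boundary formula expresses $\partial(w \gray z)$ as a composite of $\partial w \gray z$ and $w \gray \partial z$ terms. Making these signs, directions, and unitor coherences line up uniformly across all dimensions — and confirming that every witness retains the syntactic form (equivalence) $\gray$ (round diagram) needed to re-enter $S$ — is where the substance of the argument lies, with the low-dimensional and degenerate cases falling out as instances of the same formulas.
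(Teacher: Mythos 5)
Your opening reduction --- proving only \( \Eqv X \gray \Rd Y \subseteq \Eqv(X \gray Y) \) and recovering the other inclusion via \( \opp{(-)} \) and \( \opp{(X \gray Y)} \cong \opp{Y} \gray \opp{X} \) --- is exactly what the paper does. But the core of your coinductive step rests on a claim that is false, and the failure is structural, not a matter of the sign bookkeeping you defer to the end. You assert that the pasting \( (w \gray z) \cp{k} (w^* \gray z) \) with \( k = n + m - 1 \) is defined and equals \( (w \cp{n-1} w^*) \gray z \), so that \( w^* \gray z \) is a weak inverse of \( w \gray z \). By the boundary formula of Proposition \ref{prop:formula_gray_product},
\begin{equation*}
    \bd{n+m-1}{\a}(w \gray z) \;=\; \bigcup_{k=0}^{n+m-1} \bd{k}{\a} w \gray \bd{n+m-1-k}{(-)^k\a} z,
\end{equation*}
so the output boundary of \( w \gray z \) mixes pieces of the form \( \bd{}{+}w \gray z \) with pieces of the form \( w \gray \bd{}{\pm}z \), while the input boundary of \( w^* \gray z \) mixes \( \bd{}{-}w^* \gray z \) with \( w^* \gray \bd{}{\pm}z \); since \( w \neq w^* \), these do not agree and the pasting is not defined. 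Concretely, for \( w \colon a \celto b \) and \( z \colon c \celto d \) both of dimension \( 1 \), \( w \gray z \) has type \( (a \gray z) \cp{0} (w \gray d) \celto (w \gray c) \cp{0} (b \gray z) \), whereas \( w^* \gray z \) has type \( (b \gray z) \cp{0} (w^* \gray d) \celto (w^* \gray c) \cp{0} (a \gray z) \): the latter is not parallel to any candidate inverse of the former, so no choice of reversals \( \rev{(-)} \) or orientation conventions repairs the claim. What is true is that \( (w \cp{n-1} w^*) \gray z \) is a \emph{generalised} pasting of \( w \gray z \) and \( w^* \gray z \) at the \( (n+m-1) \)\nbd boundary (Proposition \ref{prop:formula_gray_product}, point 4), but the coinductive definition of equivalence demands genuine pastings \( u^R \cp{} u \) and \( u \cp{} u^L \), so your witness \( \th \gray z \) does not have the type required to close the coinduction.

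This is precisely why the paper's proof cannot, and does not, exhibit an inverse of \( u \gray v \) of the form (inverse of \( u \)) \( \gray \) \( v \): the actual inverse is a diagram of more complicated shape, which the paper never constructs explicitly. Instead it proceeds by induction on \( \dim v \), considers the full diagrams \( h \gray v \) and \( z \gray v \) where \( h, z \) witness the invertibility of \( u \), and extracts \( u \gray v \in \Rinv(\Pst A) \cap \Linv(\Pst A) \) indirectly through the context machinery (Lemmas \ref{lem:closure_left_right_context}, \ref{lem:gray_preserve_left_right_pasting_contexts} and \ref{lem:context_determined_by_boundary_gray}) together with the extended coinduction principle of Proposition \ref{prop:coinduction_on_closure_pst}; the induction on \( \dim v \) is forced exactly because the boundaries of \( h \gray v \) contain terms \( h \gray \bd{\ell - 1}{(-)^n} v \) involving Gray products with lower-dimensional round diagrams. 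Your proposal identifies the right difficulty ("is the weak inverse genuinely \( w^* \gray z \)?") but answers it the wrong way: the answer is no, and acknowledging that requires replacing your entire inverse-construction step by a different mechanism, not refining it.
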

\noindent Here, \( \Rd X \) denotes the collection of round diagrams of \( X \), and \( \Eqv X \) its collection of round diagrams which are also equivalences. 
This exhibits another difference between the Gray product and the cartesian product: \( u \gray v \) is an equivalence as soon as \( u \) or \( v \) is an equivalence. 
Unlike the case of strict \( \omega \)\nbd categories where this claim is proved by hand for the case \( \dim u = 1 \), then iterated for cells of arbitrary dimensions, we prove our result uniformly for all round diagrams by leveraging the machinery of \emph{context and natural equivalence} developed in \cite{chanavat2024equivalences}.  
  
\subsection*{Preservation of \texorpdfstring{\( \omega \)}{ω}-equivalences}

A morphism of diagrammatic sets is an \( \omega \)\nbd equivalence if 
\begin{itemize}
    \item it is essentially surjective on \( 0 \)\nbd cells,
    \item for all parallel pairs of round \( k \)\nbd diagrams in the domain, it is essentially surjective on \( (k + 1) \)\nbd round diagrams between their image in its codomain. 
\end{itemize}
This definition is the formal analogue for diagrammatic sets of the concept of the same name for strict \( \omega \)\nbd categories. In \cite{ara2020monoidal}, the authors left open whether tensoring two \( \omega \)\nbd equivalences is again an \( \omega \)\nbd equivalence. 
We do not have the tools to answer positively the analogue question for diagrammatic sets, yet we propose the following strategy. 
In \cite{chanavat2024model}, the author and Hadzihasanovic proved that the weak equivalences between fibrant objects in the \( (\infty, n) \)\nbd model structure coincide with the \( \omega \)\nbd equivalences.
Furthermore, as an immediate consequence of a fibrant replacement proposed by Hadzihasanovic during private discussions, but whose well-definiteness is still work-in-progress, the \( \omega \)\nbd equivalences would be exactly the weak equivalences of the \( (\infty, \infty) \)\nbd model structure. 
Since every diagrammatic set is cofibrant, Ken Brown's Lemma would answer affirmatively the open question.

\subsection*{The case of the join}

We conclude the discussion by mentioning the \emph{join}, which is another standard monoidal product for higher structures, usually written \( \join \). The category of atoms also supports this operation representably, although the monoidal unit, which is the empty regular directed complexes, is not an atom. This makes the extension to diagrammatic sets via Day convolution slightly more involved. As a consequence, the join is not biclosed but only \emph{locally} biclosed (see \cite[5.7]{ara2020joint} for the precise definitions)  and the notion of monoidal model category, usually for a \emph{biclosed} monoidal product, needs to be adapted, as in \cite{ara2020monoidal}. The initial goal of the author was to also include the proof that the \( (\infty, n) \)\nbd model structure is monoidal with respect to the join. The approach taken was to compare the join \( X \join Y \) to the Gray product of the suspension \( \sus{X} \gray \sus{Y} \), extending the one defined at the level of the pasting diagrams, constructed by Hadzihasanovic in \cite[7.4.14]{hadzihasanovic2024combinatorics}. However, in the process of writing the arguments, we realised that the intermediate steps were different enough from the case of the Gray product than simply interchanging the symbols \( \gray \) and \( \join \), yet too close to not look redundant and artificially doubling the amount of work necessary. For these reasons, we decided to not include the study of the join in this paper, and leave it for further dedicated investigations. To close this remark, we mention to the interested reader that the slick approaches taken in \cite{ara2020monoidal, loubaton2024inductive} to compare the join and the Gray product do not apply in our case: they involve maps of regular directed complexes which are unfortunately not representable for the sole reason of not being a cartesian fibration of their underlying posets. 

\subsection*{Structure of the article}

In Section \ref{sec:background}, we recall some notions about monoidal model categories. None of the results or definitions we present are new, nor given in their highest level of generality, only in a format that makes it convenient for us to use in the article. 

Section \ref{sec:diagram} starts by reviewing the generalised pasting at the \( k \)\nbd boundary and the pasting at a submolecule defined in \cite[Section 7.1]{hadzihasanovic2024combinatorics}. 
Indeed, even if the Gray product preserves the pushouts diagrams used to define molecules, the resulting pushout diagrams are not in general pastings, but instances of these generalised pastings. 
In particular, we provide a more explicit formula for the generalised pasting (Lemma \ref{lem:formula_generalised_pasting}), and prove that the \( m \)\nbd boundaries of a generalised pasting at the \( k \)\nbd boundary for \( k < m \) are again generalised pastings (Lemma \ref{lem:boundary_generalised_pasting_is_generalised_pasting}). 
Then, we introduce the Gray product of regular directed complexes and describe how it interacts with pastings and boundaries via the generalised pasting. 
After preliminary remarks on diagrammatic sets and their pasting diagrams, we define the functor \( \opp{(-)} \), which sends a diagrammatic set to its opposite, as well as the Day convolution of the Gray product, and show that the two distribute accordingly (Proposition \ref{prop:gray_opp_opp_gray}). 
Finally, we introduce some families of unitors, and the coinductive notion of equivalences, which we show to be preserved under passing to the opposite (Lemma \ref{lem:equivalence_in_opposite}). We conclude the section by extending the coinductive proof method to ease a later proof that collections of round diagrams are equivalences.

In Section \ref{sec:context}, we study how the Gray product interacts with context and equivalences. 
First, we recall the notion of contexts and their natural equivalences and show that the classes of left and right contexts preserve, in a precise sense, having a left and right inverse respectively (Lemma \ref{lem:closure_left_right_context}).
Then, we provide in Lemma \ref{lem:distribution_lower_gen_pasting_gray} a formula that makes explicit how the Gray product distributes over pasting at a submolecule, and synthesise this result using contexts (Lemma \ref{lem:gray_preserve_left_right_pasting_contexts}).  
Next, we prove Theorem \ref{thm:equivalences_in_gray}, claiming that tensoring an equivalence and a round diagram produces an equivalence, and conclude the section by showing that Gray products preserve certain classes of contexts (Lemma \ref{lem:pushout_product_preserves_A_equations_gray}).

In Section \ref{sec:model}, we recall the category of marked diagrammatic sets, its first properties, the extension of opposite and Gray product from the diagrammatic sets, as well as the localisation functor, which freely invert all the marked cells of a marked diagrammatic set. 
After giving the definition of \( (\infty, n) \)\nbd category and showing that it is stable under passing to the opposite (Lemma \ref{lem:opp_of_infty_cat_is_infty_cat}), we recall a pseudo-generating set of acyclic cofibrations for the coinductive \( (\infty, n) \)\nbd model structure on marked diagrammatic sets, and conclude with the definition of the \( (\infty, n) \)\nbd model structure on diagrammatic sets.

Finally, in Section \ref{sec:monoidal}, we show that the model structures on diagrammatic sets and marked diagrammatic sets are monoidal with respect to the Gray product.
We start by proving the preliminary Lemmas \ref{lem:opposite_pp_product} and \ref{lem:opp_Jhorn} which take advantage of the distributivity of the the functor \( \opp{(-)} \) with the Gray product to reduce the amount of work necessary.
Then, we move on to Proposition \ref{prop:pp_marked_horn_is_marked_horn}, which asserts that a certain class of acyclic cofibrations, the marked horns, are stable under pushout-product with a cofibration. 
Since the marked horns are closely related to contexts, we deduce this result as a direct consequence of the results of Section \ref{sec:context}.
Then, we prove that the coinductive \( (\infty, n) \)\nbd model structure is monoidal with respect to the Gray product in Theorem \ref{thm:marked_model_structure_monoidal}.
This amounts to a collection of checks, all of which are easy consequences of the previous results, and in particular of Theorem \ref{thm:equivalences_in_gray} on tensoring with equivalences. 
As a corollary, we deduce in Theorem \ref{thm:gray_monoidal_dgmset} that the \( (\infty, n) \)\nbd model structure on diagrammatic sets is also monoidal for the Gray product.
We conclude the article with two applications: we prove that tensoring on the left or on the right with the walking equivalence of shape the arrow \( \arr \) is a functorial cylinder for the \( (\infty, n) \)\nbd model structure (Proposition \ref{prop:reversible_cylinder_is_cylinder}), and from that, show in Theorem \ref{thm:op_quillen_auto_eq} that the functor sending a diagrammatic set to its opposite is a Quillen self-equivalence.

\subsection*{Acknowledgements}

Many thanks to Amar Hadzihasanovic for helpful conversations and detailed feedback. 
\section{Monoidal model categories} \label{sec:background}

\subsection{Model categories}

\begin{dfn} [Left and right lifting classes]
    Let \( \C \) be a category and \( S \) be a class of morphisms in \( \C \). 
    We denote by \( l(S) \) and \( r(S) \) the classes of morphisms that have, respectively, the left and right lifting property with respect to all morphisms in \( S \). 
    If \( \C \) has a terminal object \( \pt \), we say that an object \( X \) of \( \C \) has the right lifting property against \( S \) if the unique morphism \( X \to \pt \) does.
\end{dfn}
\noindent We write \( \cof S \) for the set \( l(r(S)) \).

\begin{lem} [Retract Lemma]\label{lem:retract_lemma}
    Let \( f, p \) and \( i \) be morphisms in a category \( \C \) such that \( f \) factors as \( p \after i \) and has the left lifting property against \( p \).
    Then \( f \) is a retract of \( i \). 
\end{lem}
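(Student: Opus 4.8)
The plan is to exhibit $f$ as a retract of $i$ by constructing an explicit retract diagram in the arrow category of $\C$, using the lift guaranteed by the lifting property.

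Setup and key construction. By hypothesis we have $f = p \after i$, so $f$ and $i$ share the same domain; write $i \colon A \to B$ and $p \colon B \to C$, so that $f \colon A \to C$. The left lifting property of $f$ against $p$ applies to the commuting square whose top edge is $i \colon A \to B$, whose bottom edge is $\idd{C} \colon C \to C$, whose left edge is $f$, and whose right edge is $p$; this square commutes precisely because $p \after i = f = \idd{C} \after f$. The lifting property produces a diagonal filler $r \colon C \to B$ satisfying the two triangle identities $r \after f = i$ and $p \after r = \idd{C}$.

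Assembling the retract. First I would use $r$ to build a retraction of $i$ onto $f$ in the arrow category. The relevant two-step diagram has $f$ on the outer vertical arrows and $i$ in the middle, with horizontal maps
\begin{equation*}
    \begin{tikzcd}
        A \ar[r, "\idd{A}"] \ar[d, "f"'] & A \ar[r, "\idd{A}"] \ar[d, "i"] & A \ar[d, "f"] \\
        C \ar[r, "r"'] & B \ar[r, "p"'] & C.
    \end{tikzcd}
\end{equation*}
The left square commutes since $r \after f = i$ (lower triangle) and the top map is an identity; the right square commutes since $p \after i = f$ by hypothesis. Composing horizontally, the top composite is $\idd{A}$ and the bottom composite is $p \after r = \idd{C}$, so the outer rectangle is the identity morphism on $f$ in the arrow category. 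This is exactly the statement that $f$ is a retract of $i$.

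Expected obstacle. There is essentially no analytic difficulty here; the entire content is bookkeeping to set up the correct lifting square so that its filler yields both triangle identities simultaneously. The one point requiring care is orienting the square correctly: the filler must make $r \after f = i$ (to recover the middle map as a retract of the top) \emph{and} $p \after r = \idd{C}$ (to make the bottom composite the identity), and both of these must fall out of the single application of the lifting property. I would double-check that $f$ having the \emph{left} lifting property against $p$ is what is needed, namely that $f$ sits on the left vertical of the square and $p$ on the right, so that the diagonal runs $C \to B$ in the direction required.
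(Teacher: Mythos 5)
Your proof is correct and is precisely the standard retract argument that the paper invokes by citing \cite[Lemma 1.1.9]{hovey2007model}: the lifting square with $i$ on top, $\idd{C}$ on the bottom, $f$ and $p$ on the sides yields the filler $r$ with $r \after f = i$ and $p \after r = \idd{C}$, and the resulting two-square diagram in the arrow category exhibits $f$ as a retract of $i$. Nothing to add; your orientation of the square and both triangle identities check out.
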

\begin{proof}
    See for instance \cite[Lemma 1.1.9]{hovey2007model}.
\end{proof}

\begin{dfn} [Relative \( J \)\nbd cell complex]
    Let \( J \) be a set of morphisms in a cocomplete category \( \C \).
    A \emph{relative \( J \)\nbd cell complex} is a morphism of \( \C \) which can be constructed as a transfinite composition of pushouts of morphisms of \( J \). 
\end{dfn}

\begin{rmk}
    Any relative \( J \)\nbd cell complex belongs to \( \cof J \).
\end{rmk}

\begin{prop} [Small object argument]
    Let \( \C \) be a locally presentable category, and \( J \) a set of morphisms of \( \C \).
    Then every morphism of \( \C \) factors as \( p \after i \) where:
    \begin{enumerate}
        \item \( i \) is a relative \( J \)\nbd cell complex, and
        \item \( p \) has the right lifting property against \( J \).
    \end{enumerate}
\end{prop}
\begin{proof}
    See for instance \cite[Theorem 2.1.14]{hovey2007model}, where the smallness hypothesis on the set \( J \) is satisfied since \( \C \) is locally presentable.
\end{proof}

\begin{rmk} \label{rmk:characterise_l_r_J}
    By the small object argument and the Retract Lemma, the morphisms of \( \cof J \) are exactly the retracts of relative \( J \)\nbd cell complexes.
\end{rmk}

\begin{dfn} [Cellular model]
    Let \( \C \) be a cocomplete category, and \( S \) be a class of morphisms of \( \C \).
    A \emph{cellular model} for \( S \) is a set \( J \) of morphisms such that \( S = \cof J \).
\end{dfn}

\noindent Until the end of the section, we let \( \C \) be a locally presentable model category. 

\begin{dfn} [Cylinder]
    Let \( X \) be an object of \( \C \). 
    A \emph{cylinder on \( X \)} is the data of an object \( \fun{I}X \) of \( \C \) and morphisms
    \begin{equation*}
        (\iota^-, \iota^+) \colon X \amalg X \to \fun{I}X,\quad \quad \sigma\colon \fun{I}X \to X 
    \end{equation*}
    such that \( (\iota^-, \iota^+) \) is a cofibration, \( \sigma \) is a weak equivalence, and \( \sigma (\iota^-, \iota^+) \) is a codiagonal.
\end{dfn}

\begin{dfn} [Functorial cylinder]
    A \emph{functorial cylinder on \( \C \)} is an endofunctor \( \fun{I} \) on \( \C \) together with natural transformations
    \begin{equation*}
        (\iota^-, \iota^+) \colon \bigid{\C} \amalg \bigid{\C} \to \fun{I},\quad \quad \sigma\colon \fun{I} \to \bigid{\C} 
    \end{equation*}
    which form a cylinder in each component.
\end{dfn}

\begin{dfn} [Left homotopy]
    Let \( f, g \colon X \to Y \) be two parallel morphisms in \( \C \).
    A \emph{left homotopy from \( f \) to \( g \)} is the data of a cylinder \( (\fun{I}X, \iota^\a, \sigma) \) on \( X \), together with a morphism \( \beta \colon \fun{I} X \to Y \) such that \( \beta \after \iota^- = f \) and \( \beta \after \iota^+ = g \).
    We say that \( f \) is \emph{left homotopic} to \( g \), and write \( f \approx g \), if there exists a left homotopy from \( f \) to \( g \).
    This defines a relation \( \approx \) on \( \hom_\C(X, Y) \). 
\end{dfn}

\begin{rmk} \label{rmk:left_homotopy_equivalence}
    By \cite[Theorem 7.4.5]{hirschhorn2003model}, if \( X \) is cofibrant, \( \approx \) is an equivalence relation.
\end{rmk}

\begin{dfn}
    Let \( X, Y \) be two object of \( \C \) with \( X \) cofibrant.
    We denote by \( \pizero(X, Y) \) the set of equivalence classes of morphisms from \( X \) to \( Y \) up to left homotopy.
\end{dfn}

\begin{lem} \label{lem:homotopic_implies_homotopic_any_cylinder}
    Let \( X, Y \) be objects of \( \C \) such that \( X \) is cofibrant and \( Y \) is fibrant, \( \fun{I} X \) be a cylinder on \( X \), and \( f, g \colon X \to Y \) be two morphisms in \( \C \) such that \( f \approx g \).
    Then there exists a left homotopy \( \beta \colon \fun{I} X \to Y \) from \( f \) to \( g \).
\end{lem}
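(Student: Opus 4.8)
The plan is to route the argument through a \emph{very good} cylinder on \( X \), meaning a cylinder \( \fun{I}'X \) whose structure map \( \sigma'\colon \fun{I}'X \to X \) is not merely a weak equivalence but an acyclic fibration. I would proceed in three movements: first, upgrade the given homotopy---taken over some unspecified cylinder---to a homotopy over a very good cylinder, crucially using that \( Y \) is fibrant; second, construct a comparison map from the prescribed cylinder \( \fun{I}X \) into this very good cylinder by a single lift; third, compose the two to obtain the desired \( \beta \).

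For the first movement, suppose \( f \approx g \) is witnessed by some \( \gamma\colon C \to Y \) over a cylinder \( C \) with ends \( e^-, e^+\colon X \to C \) and retraction \( p\colon C \to X \). I would factor \( p \) as an acyclic cofibration \( s\colon C \to C' \) followed by a fibration \( q\colon C' \to X \); since \( p \) is a weak equivalence, the two-out-of-three property forces \( q \) to be an acyclic fibration. Equipped with the ends \( s \after e^\pm \) and retraction \( q \), the object \( C' \) is again a cylinder on \( X \)---the map \( s \after (e^-, e^+) \) is a composite of cofibrations, and \( q \after s \after e^\pm = p \after e^\pm = \idd{X} \)---and it is very good by construction. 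As \( Y \) is fibrant and \( s \) is an acyclic cofibration, the lifting problem posed by \( \gamma \) (against \( Y \to \pt \)) has a solution \( \gamma'\colon C' \to Y \) with \( \gamma' \after s = \gamma \), so \( \gamma' \) is a homotopy from \( f \) to \( g \) over the very good cylinder \( C' \).

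For the second movement, I would compare the prescribed cylinder with \( C' \). Writing \( j = (\iota^-, \iota^+) \) for the cofibration of \( \fun{I}X \), the square with top \( s \after (e^-, e^+)\colon X \amalg X \to C' \), left \( j \), right the acyclic fibration \( q \), and bottom \( \sigma\colon \fun{I}X \to X \) commutes, since both composites equal the codiagonal \( X \amalg X \to X \). Because \( j \) is a cofibration and \( q \) is an acyclic fibration, a lift \( \phi\colon \fun{I}X \to C' \) exists with \( \phi \after \iota^\pm = s \after e^\pm \). Then \( \beta \eqdef \gamma' \after \phi \) satisfies \( \beta \after \iota^- = \gamma' \after s \after e^- = f \) and \( \beta \after \iota^+ = g \), which is exactly a homotopy from \( f \) to \( g \) realised over \( \fun{I}X \).

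The step I expect to be the crux is the first movement. Passing to a very good cylinder is precisely what makes the comparison lift available, since it turns the retraction into an acyclic fibration against which the prescribed cylinder's cofibration \( j \) can be lifted; and transporting the original homotopy onto the replacement cylinder along \( s \) is exactly where the fibrancy of \( Y \) is indispensable. The remaining manipulations are formal consequences of the cylinder axioms together with the factorisation and lifting properties of the model structure.
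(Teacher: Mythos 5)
Your proof is correct, and every step checks out: the factorisation of the witnessing cylinder's projection \( p \) into an acyclic cofibration \( s \) followed by a fibration \( q \) (acyclic by two-out-of-three); the extension \( \gamma' \) of the given homotopy \( \gamma \) along \( s \), which is exactly where fibrancy of \( Y \) is used; and the comparison lift \( \phi \colon \fun{I}X \to C' \) of the cofibration \( (\iota^-, \iota^+) \) against the acyclic fibration \( q \), whose defining square commutes because both composites are the codiagonal. The paper does not argue this at all: its proof is a citation of \cite[Proposition 7.4.10]{hirschhorn2003model}, and the argument there is genuinely different from yours, passing through the correspondence between left and right homotopies (a left homotopy plus cofibrancy of \( X \) yields a right homotopy through a path object, which plus fibrancy of \( Y \) is converted back into a left homotopy through the prescribed cylinder). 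Your route stays entirely on the cylinder side, needs only one factorisation and two lifting problems, and---notably---never uses the cofibrancy of \( X \), so it establishes the lemma under strictly weaker hypotheses than stated (cofibrancy remains essential elsewhere, e.g.\ for Remark \ref{rmk:left_homotopy_equivalence}). Two minor observations: the intermediate claim that \( C' \) is itself a (very good) cylinder, while true, is not strictly needed---only the identity \( q \after s \after (e^-, e^+) = \sigma \after (\iota^-, \iota^+) \) and the two lifts enter the argument; and what the citation buys the paper is brevity together with the full package of left/right homotopy comparisons, whereas your proof buys self-containedness, within exactly the definitions the paper sets up, and a sharper statement.
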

\begin{proof}
    This is \cite[Proposition 7.4.10]{hirschhorn2003model}.
\end{proof}

\begin{prop} \label{prop:weak_equiv_wrt_cylinder}
    Let \( f \colon X \to Y \) be a morphism of cofibrant objects in \( \C \).
    Then \( f \) is a weak equivalence if and only if for every fibrant object \( W \) of \( \C \), the induced function of sets \( f^* \colon \pizero(Y, W) \to \pizero(X, W) \) is a bijection.
\end{prop}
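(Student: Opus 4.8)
The plan is to reduce both implications to the case where \( X \) and \( Y \) are fibrant as well as cofibrant, and there to extract a homotopy inverse directly from the bijectivity hypothesis. I would first choose fibrant replacements, that is, factorisations \( X \xrightarrow{j_X} RX \to \pt \) and \( Y \xrightarrow{j_Y} RY \to \pt \) of the maps to the terminal object into an acyclic cofibration followed by a fibration, so that \( RX, RY \) are fibrant; since \( X, Y \) are cofibrant and \( j_X, j_Y \) are cofibrations, \( RX \) and \( RY \) are also cofibrant. Lifting \( j_Y \after f \) against the acyclic cofibration \( j_X \) and the fibration \( RY \to \pt \) yields \( Rf \colon RX \to RY \) with \( Rf \after j_X = j_Y \after f \), and by two-out-of-three \( f \) is a weak equivalence if and only if \( Rf \) is. The commuting square gives, for every \( W \), the identity \( f^* \after j_Y^* = j_X^* \after (Rf)^* \) of functions \( \pizero(RY, W) \to \pizero(X, W) \); for fibrant \( W \) the maps \( j_X^*, j_Y^* \) are bijections, because precomposition along an acyclic cofibration of cofibrant objects is a bijection on \( \pizero(-, W) \) (surjectivity is immediate by lifting against \( W \to \pt \), injectivity being the standard and more delicate point, see \cite[Chapter 7]{hirschhorn2003model}). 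Hence for fibrant \( W \) the map \( f^* \) is a bijection if and only if \( (Rf)^* \) is, so both implications reduce to the statement for \( Rf \) between the fibrant-cofibrant objects \( RX, RY \).

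For the backward implication, assuming \( (Rf)^* \) is a bijection for all fibrant \( W \), I would take \( W = RX \) and apply surjectivity to \( [\idd{RX}] \) to obtain \( g \colon RY \to RX \) with \( g \after Rf \approx \idd{RX} \). Then, taking \( W = RY \), I would compute \( (Rf)^*[Rf \after g] = [Rf \after g \after Rf] = [Rf] = (Rf)^*[\idd{RY}] \), where the middle step postcomposes the homotopy \( g \after Rf \approx \idd{RX} \) with \( Rf \); injectivity now forces \( Rf \after g \approx \idd{RY} \). Thus \( g \) is a two-sided homotopy inverse, so \( Rf \) is a homotopy equivalence between fibrant-cofibrant objects, hence a weak equivalence by the Whitehead theorem for model categories \cite[Theorem 7.5.10]{hirschhorn2003model}, and therefore \( f \) is one as well.

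For the forward implication, if \( f \), hence \( Rf \), is a weak equivalence, then the same theorem makes \( Rf \) a homotopy equivalence, with homotopy inverse \( g \). Since left homotopy is preserved by postcomposition and is an equivalence relation on the relevant hom-sets by Remark \ref{rmk:left_homotopy_equivalence}, one checks that \( g^* \) is a two-sided inverse of \( (Rf)^* \) on \( \pizero(-, W) \) for every \( W \): precomposing with \( Rf \after g \approx \idd{RY} \) and \( g \after Rf \approx \idd{RX} \) returns the original class. In particular \( (Rf)^* \) is a bijection, and by the reduction above \( f^* \) is a bijection for all fibrant \( W \).

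The only genuinely non-formal inputs are the two cited facts: that precomposition along an acyclic cofibration of cofibrant objects induces a bijection on \( \pizero(-, W) \) for fibrant \( W \) (whose injectivity is where the real content sits), and the Whitehead theorem identifying weak equivalences with homotopy equivalences among fibrant-cofibrant objects. Everything else is bookkeeping with the square \( Rf \after j_X = j_Y \after f \) and with the fact that \( \approx \) is an equivalence relation, which is exactly where cofibrancy of all objects in sight is used. Conceptually this is the Yoneda lemma in the homotopy category: the hypothesis says \( Rf \) induces bijections \( \pizero(RY, W) \cong \pizero(RX, W) \) for all fibrant \( W \), equivalently on all hom-sets of the homotopy category since every object admits a fibrant replacement, so \( Rf \) is invertible there; I expect the main obstacle to be stating the injectivity half of the acyclic-cofibration lemma cleanly enough to avoid circularity with the forward implication itself.
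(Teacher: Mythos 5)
Your proof is correct, but there is little in the paper to compare it against: the paper's entire proof of this proposition is the citation ``See \cite[Theorem 7.8.6]{hirschhorn2003model}'', and the statement is verbatim that theorem. What you have written is, in effect, a reconstruction of Hirschhorn's own argument, and it is sound: fibrant replacement \( Rf \) obtained by lifting, reduction of both implications along the bijections \( j_X^* \) and \( j_Y^* \), extraction of a two-sided homotopy inverse by testing against \( W = RX \) and \( W = RY \) (legitimate test objects precisely because the replacements are fibrant), then the Whitehead theorem for cofibrant-fibrant objects plus two-out-of-three. Your worry about circularity in the acyclic-cofibration lemma is unfounded: its injectivity half has a direct proof that never invokes the proposition. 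If \( i \colon A \to B \) is an acyclic cofibration of cofibrant objects, \( W \) is fibrant, and \( g \after i \approx h \after i \), convert the left homotopy into a right homotopy \( K \colon A \to PW \) valued in a path object \( PW \) (possible since \( A \) is cofibrant), then lift \( K \) along \( i \) against the fibration \( PW \to W \times W \); the lift is a right homotopy from \( g \) to \( h \), which converts back into a left homotopy because \( W \) is fibrant and \( B \) is cofibrant. This is exactly how the forward half (Hirschhorn's Theorem 7.8.2) is proved, and the backward half is then deduced as you do. One small precision: assert the identity \( f^* \after j_Y^* = j_X^* \after (Rf)^* \) only for \emph{fibrant} \( W \); for general \( W \) precomposition need not descend to left-homotopy classes, since the well-definedness of all the starred maps on \( \pizero \) itself passes through right homotopies and hence uses fibrancy of the target.
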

\begin{proof}
    See \cite[Theorem 7.8.6]{hirschhorn2003model}.
\end{proof}

\begin{dfn} [Pseudo-generating set of acyclic cofibrations]
    We say that a set \( J \) of acyclic cofibrations in \( \C \) is \emph{pseudo-generating} if, for all morphisms \( f \) of \( \C \) with fibrant codomain, \( f \) is a fibration if and only if it has the right lifting property against \( J \). 
\end{dfn}

\begin{rmk} \label{rmk:other_def_pseudo_generating}
    Equivalently, a set \( J \) of acyclic cofibrations in \( \C \) is pseudo-generating if
    \begin{itemize}
        \item for all objects \( X \) of \( \C \), \( X \) is fibrant if and only if it has the right lifting property against \( J \), and
        \item for all morphisms \( f \) of \( \C \) between fibrant objects, \( f \) is a fibration if and only if it has the right lifting property against \( J \).
    \end{itemize}
\end{rmk}

\begin{lem} \label{lem:properties_of_pseudo_generating}
    Let \( J \) be a pseudo-generating set of acyclic cofibrations in \( \C \), and \( i \) be an acyclic cofibration with fibrant codomain.
    Then \( i \in \cof J \). 
\end{lem}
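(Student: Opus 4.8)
The plan is to factor \( i \) via the small object argument and then recognise \( i \) as a retract of the cellular part using the Retract Lemma. Concretely, write \( i \colon A \to B \) and apply the small object argument to factor \( i = p \after j \), where \( j \colon A \to C \) is a relative \( J \)\nbd cell complex and \( p \colon C \to B \) has the right lifting property against \( J \). Since any relative \( J \)\nbd cell complex lies in \( \cof J \), and \( \cof J \) is closed under retracts (Remark \ref{rmk:characterise_l_r_J}), it suffices to show that \( i \) is a retract of \( j \).

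To invoke the Retract Lemma (Lemma \ref{lem:retract_lemma}) we must check that \( i \) has the left lifting property against \( p \), for which we argue that \( p \) is an acyclic fibration. First, \( p \) is a fibration: its codomain \( B \) is fibrant by hypothesis, and \( p \) has the right lifting property against \( J \), so the defining property of a pseudo-generating set applies directly to \( p \). This is precisely the step where the fibrancy of the codomain of \( i \) is used.

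Next, \( p \) is a weak equivalence. Indeed, \( J \) consists of acyclic cofibrations, and the class of acyclic cofibrations, being the left class of a weak factorisation system underlying the model structure, is closed under pushout and transfinite composition; hence the relative \( J \)\nbd cell complex \( j \) is an acyclic cofibration, in particular a weak equivalence. As \( i \) is a weak equivalence and \( i = p \after j \), the two-out-of-three property forces \( p \) to be a weak equivalence as well. Thus \( p \) is an acyclic fibration.

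Finally, \( i \) is a cofibration, and cofibrations have the left lifting property against acyclic fibrations, so \( i \) lifts against \( p \). The Retract Lemma then exhibits \( i \) as a retract of \( j \), and since \( j \in \cof J \) we conclude \( i \in \cof J \). I do not expect any serious obstacle: the only point requiring care is to notice that the fibrancy of \( B \) is exactly what licenses the passage from the right lifting property against \( J \) to genuine fibrancy of \( p \), after which the argument is the standard retract-of-a-cell-complex manipulation.
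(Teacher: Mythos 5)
Your proposal is correct and follows the same skeleton as the paper's proof: factor \( i = p \after j \) by the small object argument, use the pseudo-generating property together with fibrancy of the codomain to conclude that \( p \) is a fibration, then apply the Retract Lemma. The one difference is in how the lift of \( i \) against \( p \) is justified: the paper simply notes that \( i \) is an \emph{acyclic} cofibration and \( p \) a fibration, so the lift exists by the model category axioms, whereas you take a detour --- showing \( j \) is an acyclic cofibration (closure of the left class under pushout and transfinite composition) and invoking two-out-of-three to upgrade \( p \) to an acyclic fibration, then lifting the cofibration \( i \) against it. Your route is valid, but the extra work of establishing that \( p \) is a weak equivalence is unnecessary; the acyclicity of \( i \) already licenses the lift directly.
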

\begin{proof}
    By the small object argument for the set \( J \), the morphism \( i \) factors as \( p \after j \) where \( p \) has the right lifting property against \( J \) and \( j \) is a relative \( J \)\nbd cell complex.
    Since \( i \) has fibrant codomain, so does \( p \), which is therefore a fibration. 
    Thus, \( i \) has the left lifting property against \( p \), and by the Retract Lemma, \( i \) is a retract of \( j \), thus belongs to \( \cof J \).
\end{proof}

\begin{lem} \label{lem:left_adjoint_is_Quillen_enough_pseudo_generating}
    Let \( \cls D \) be a model category, \( \F \colon \C \to \cls D \) be a functor preserving pushouts and transfinite compositions, \( J \) be a pseudo-generating set of acyclic cofibrations in \( \C \) and suppose that \( \F \) sends cofibrations to cofibrations and morphisms of \( J \) to acyclic cofibrations.
    Then \( \F \) sends acyclic cofibrations to acyclic cofibrations.
\end{lem}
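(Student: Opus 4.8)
The plan is to prove separately that \( \F(i) \) is a cofibration and that it is a weak equivalence, for an arbitrary acyclic cofibration \( i \colon A \to B \) of \( \C \). The cofibration part is immediate, since \( \F \) preserves cofibrations by hypothesis, so all the content lies in showing that \( \F(i) \) is a weak equivalence.

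The first step I would take is an auxiliary claim: \( \F \) sends every morphism of \( \cof J \) to an acyclic cofibration of \( \cls D \). Indeed, by Remark \ref{rmk:characterise_l_r_J} any \( g \in \cof J \) is a retract of a relative \( J \)\nbd cell complex \( h \). Since \( \F \) preserves pushouts and transfinite compositions and sends morphisms of \( J \) to acyclic cofibrations, \( \F(h) \) is a transfinite composition of pushouts of acyclic cofibrations, hence itself an acyclic cofibration, because acyclic cofibrations in a model category are closed under these operations. As any functor preserves retracts and acyclic cofibrations are closed under retracts, \( \F(g) \) is an acyclic cofibration as well.

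The obstruction to applying Lemma \ref{lem:properties_of_pseudo_generating} directly to \( i \) is that its codomain \( B \) need not be fibrant, which is precisely the hypothesis that the Lemma requires. I would circumvent this by a fibrant replacement of the codomain: factoring the unique map \( B \to \pt \) to the terminal object as \( B \xrightarrow{j} RB \to \pt \) with \( j \) an acyclic cofibration and \( RB \) fibrant. Then \( j \) is an acyclic cofibration with fibrant codomain, and so is the composite \( j \after i \colon A \to RB \). By Lemma \ref{lem:properties_of_pseudo_generating}, both \( j \) and \( j \after i \) lie in \( \cof J \), so the auxiliary claim makes \( \F(j) \) and \( \F(j \after i) = \F(j) \after \F(i) \) acyclic cofibrations, in particular weak equivalences. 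By the two-out-of-three property for weak equivalences, \( \F(i) \) is then a weak equivalence, which together with the cofibration part concludes the argument.

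I expect the main subtlety to be exactly this fibrancy mismatch: the pseudo-generating property only controls acyclic cofibrations whose codomain is fibrant, so the replacement of \( B \) by a fibrant \( RB \) followed by two-out-of-three is the essential manoeuvre. Everything else is a routine transport of the closure properties of acyclic cofibrations along \( \F \), using that \( \F \) preserves pushouts, transfinite compositions, and (being a functor) retracts.
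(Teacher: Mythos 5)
Your proposal is correct and follows essentially the same route as the paper: both first show via Remark \ref{rmk:characterise_l_r_J} that \( \F(\cof J) \) consists of acyclic cofibrations, then compose \( i \) with a fibrant replacement \( j \) of its codomain, place the relevant maps in \( \cof J \) via Lemma \ref{lem:properties_of_pseudo_generating}, and conclude by two-out-of-three. The only cosmetic difference is that the paper produces \( j \) by the small object argument for \( J \) (so \( j \in \cof J \) automatically, with fibrancy of the codomain coming from the pseudo-generating property), whereas you take a generic fibrant replacement from the factorisation axiom and then apply Lemma \ref{lem:properties_of_pseudo_generating} to \( j \) as well; both variants are valid.
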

\begin{proof}
    Since \( \F \) preserves pushouts and transfinite compositions, it preserves relative \( J \)\nbd cell complexes, and all functors preserve retracts.
    Thus, by Remark \ref{rmk:characterise_l_r_J}, \( \F(\cof J) \) is a class of acyclic cofibrations in \( \cls D \). 
    Let \( i \colon X \to Y \) be an acyclic cofibration of \( \C \). 
    By the small object argument for the set \( J \) applied to the unique morphism \( Y \to \pt \), there exists a morphism \( j \colon Y \to \tilde{Y} \) in \( \cof J \) such that \( \tilde{Y} \to \pt \) has the right lifting property against \( J \), that is, such that \( \tilde Y \) is fibrant. 
    Then, \( j \after i \) is an acyclic cofibration with fibrant codomain, and by Lemma \ref{lem:properties_of_pseudo_generating}, it belongs to \( \cof J \), so \( \F(j \after i) \) is an acyclic cofibration.
    Since \( j \) is in \( \cof J \), so is \( \F j \), thus by two-out-of-three, the cofibration \( \F i \) is acyclic.
    This concludes the proof.  
\end{proof}

\begin{lem} \label{lem:ken_brown_plus}
    Let \( \fun{F} \colon \C \to \cls D \) be a left Quillen functor.
    Then 
    \begin{enumerate}
        \item \( \fun{F} \) sends weak equivalences between cofibrant objects to weak equivalences.
        \item If \( \fun{F} \) is furthermore a Quillen equivalence, then \( \fun{F} \) reflects weak equivalences between cofibrant objects. 
    \end{enumerate}
\end{lem}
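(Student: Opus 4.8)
The plan is to handle the two parts separately: part (1) is Ken Brown's Lemma, which I would prove by the standard factorisation trick, and part (2) reduces a weak equivalence in \( \cls D \) to one in \( \C \) by pushing it through the defining property of a Quillen equivalence together with a single application of two-out-of-three.

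For part (1), let \( f \colon X \to Y \) be a weak equivalence between cofibrant objects. I would factor \( (f, \idd{Y}) \colon X \amalg Y \to Y \) as \( p \after j \) with \( j \colon X \amalg Y \to Z \) a cofibration and \( p \colon Z \to Y \) an acyclic fibration, then set \( q_X \eqdef j \after \iota_X \) and \( q_Y \eqdef j \after \iota_Y \), where \( \iota_X, \iota_Y \) are the coproduct inclusions. Since \( X \) and \( Y \) are cofibrant, \( \iota_X \) and \( \iota_Y \) are cofibrations (each is a pushout of \( \init \to Y \), respectively \( \init \to X \)), so \( q_X \) and \( q_Y \) are cofibrations. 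From \( p \after q_X = f \), \( p \after q_Y = \idd{Y} \), and the fact that \( p \) is a weak equivalence, two-out-of-three shows that \( q_X \) and \( q_Y \) are acyclic cofibrations. As \( \fun{F} \) is left Quillen, \( \fun{F}q_X \) and \( \fun{F}q_Y \) are acyclic cofibrations, hence weak equivalences; since \( \fun{F}p \after \fun{F}q_Y = \idd{\fun{F}Y} \), two-out-of-three makes \( \fun{F}p \) a weak equivalence, whence \( \fun{F}f = \fun{F}p \after \fun{F}q_X \) is a weak equivalence.

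For part (2), write \( \fun{G} \) for the right adjoint of \( \fun{F} \) and \( \eta \) for the unit of the adjunction. Let \( f \colon X \to Y \) be a map of cofibrant objects with \( \fun{F}f \) a weak equivalence; note first that \( \fun{F}X \) and \( \fun{F}Y \) are cofibrant, as \( \fun{F} \) preserves cofibrations and the initial object. I would take a fibrant replacement \( j \colon \fun{F}Y \to W \), an acyclic cofibration with \( W \) fibrant. Then \( j \after \fun{F}f \colon \fun{F}X \to W \) and \( j \colon \fun{F}Y \to W \) are weak equivalences out of cofibrant objects into the fibrant object \( W \), so by the defining property of a Quillen equivalence their adjuncts \( \psi \colon X \to \fun{G}W \) and \( \phi \colon Y \to \fun{G}W \) are weak equivalences. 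Naturality of \( \eta \) (that is, \( \fun{G}\fun{F}f \after \eta_X = \eta_Y \after f \), composed with \( \fun{G}j \)) gives \( \psi = \phi \after f \), and two-out-of-three then forces \( f \) to be a weak equivalence.

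The only genuinely delicate point is the bookkeeping in part (2): one must verify that the adjunct of \( j \after \fun{F}f \) is precisely \( \phi \after f \), which is exactly the naturality of the unit. Everything else is a direct application of the model category axioms, the preservation properties of a left Quillen functor, and the adjunction characterisation of Quillen equivalences, so no machinery beyond what is already available is required.
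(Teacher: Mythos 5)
Your proposal is correct. Note, however, that the paper does not prove this lemma at all: it simply cites Hovey (Lemma 1.1.12 for part (1), Corollary 1.3.16 for part (2)), so what you have written is a self-contained replacement for that citation. For part (1), your factorisation of \( (f, \idd{Y}) \colon X \amalg Y \to Y \) into a cofibration followed by an acyclic fibration, the observation that the coproduct inclusions are cofibrations by cofibrancy of \( X \) and \( Y \), and the two applications of two-out-of-three constitute exactly the standard proof of Ken Brown's Lemma, i.e.\ the same argument as the cited source. For part (2), your route genuinely differs from the cited one: Hovey derives the reflection property from the statement that the derived adjunction \( \mathbf{L}\fun{F} \dashv \mathbf{R}\fun{G} \) is an equivalence of homotopy categories, together with the saturation property that a map is a weak equivalence if and only if it becomes an isomorphism in the homotopy category. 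You instead argue directly at the point-set level: take a fibrant replacement \( j \colon \fun{F}Y \to W \), apply the adjunct characterisation defining a Quillen equivalence to both \( j \after \fun{F}f \) and \( j \) (which is legitimate, since \( X \) and \( Y \) are cofibrant and \( W \) is fibrant), identify the adjunct of \( j \after \fun{F}f \) with \( \phi \after f \) via naturality of the unit, and conclude by two-out-of-three. This is more elementary — no derived functors or homotopy categories are needed — at the cost of proving only the one implication actually used here, whereas the cited corollary gives a full two-way characterisation of Quillen equivalences. Your parenthetical remark that \( \fun{F}X \) and \( \fun{F}Y \) are cofibrant is harmless but unnecessary: the adjunct criterion only requires cofibrancy of \( X \) and \( Y \) in \( \C \) and fibrancy of \( W \) in \( \cls D \).
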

\begin{proof}
    See \cite[Lemma 1.1.12, Corollary 1.3.16]{hovey2007model}.
\end{proof}

\subsection{Monoidal model structures}

\noindent From now on, we suppose that the locally presentable model category \( \C \) is endowed with a biclosed monoidal structure \( (\C, \tp, I) \). 

\begin{dfn} [Pushout-product]
    Let \( f \colon X \to Y \) and \( f' \colon X' \to Y' \) be two morphisms in \( \C \).
    The \emph{pushout-product of \( f \) and \( f' \)} is the morphism
    \begin{equation*}
        f \pp{\tp} f' \colon Y \tp X' \coprod_{X \tp X'} X \tp Y' \to Y \tp Y'
    \end{equation*}
    induced by universal property of the pushout.
    This assignment extends to a bifunctor
    \begin{equation*}
        - \pp{\tp} - \colon \Arr(\C) \times \Arr(\C) \to \Arr(\C), 
    \end{equation*}
    where \( \Arr(\C) \) denotes the category of arrows.
\end{dfn}

\begin{lem} \label{lem:pp_preserve_cof}
    Let \( J \) and \( J' \) be two sets of morphisms in \( \C \). 
    Then
    \begin{equation*}
        \cof J \pp{\tp} \cof{J'} \subseteq \cof{(J \pp{\tp} J')}. 
    \end{equation*}
\end{lem}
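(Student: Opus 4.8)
The plan is to reduce the statement to two ingredients: the characterisation of $\cof{(-)}$ from Remark \ref{rmk:characterise_l_r_J}, and a cocontinuity property of the Leibniz construction $\pp{\tp}$. By Remark \ref{rmk:characterise_l_r_J}, for any set $S$ the class $\cof S$ consists of the retracts of relative $S$\nbd cell complexes; equivalently, $\cof S$ is contained in every class of morphisms that contains $S$ and is closed under pushout, transfinite composition, and retract. On the other side, because the monoidal structure is biclosed, the functor $\tp$ admits a right adjoint in each variable and hence preserves colimits in each variable. I would record the consequence that, for every fixed morphism $g$, the functor $- \pp{\tp} g \colon \Arr(\C) \to \Arr(\C)$ preserves pushouts and transfinite compositions, and, being a functor, automatically preserves retracts; and symmetrically for $f \pp{\tp} -$.

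Granting these, I would run a double saturation argument. First fix $j \in J$ and consider $W_j \eqdef \set{g : j \pp{\tp} g \in \cof{(J \pp{\tp} J')}}$. Since $j \pp{\tp} -$ preserves pushouts, transfinite compositions, and retracts, and since $\cof{(J \pp{\tp} J')}$ is itself closed under these three operations, the class $W_j$ is closed under them as well. Moreover $J' \subseteq W_j$, because for $j' \in J'$ the morphism $j \pp{\tp} j'$ lies in $J \pp{\tp} J' \subseteq \cof{(J \pp{\tp} J')}$. By Remark \ref{rmk:characterise_l_r_J}, $\cof{J'}$ is contained in any such closed class containing $J'$, so $\cof{J'} \subseteq W_j$. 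As $j \in J$ was arbitrary, this establishes $J \pp{\tp} \cof{J'} \subseteq \cof{(J \pp{\tp} J')}$.

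For the second step I would fix $g \in \cof{J'}$ and consider $V_g \eqdef \set{f : f \pp{\tp} g \in \cof{(J \pp{\tp} J')}}$. By the same reasoning, now using that $- \pp{\tp} g$ preserves pushouts, transfinite compositions, and retracts, the class $V_g$ is closed under these operations; and by the first step $J \subseteq V_g$. Applying Remark \ref{rmk:characterise_l_r_J} once more gives $\cof J \subseteq V_g$. Unwinding the definition of $V_g$, this says precisely that $f \pp{\tp} g \in \cof{(J \pp{\tp} J')}$ for all $f \in \cof J$ and all $g \in \cof{J'}$, which is the asserted inclusion.

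The only point that genuinely requires care — and the sole place where biclosedness is used — is the preservation claim for $\pp{\tp}$: that fixing one argument yields a functor on $\Arr(\C)$ preserving pushouts along a fixed map and transfinite composites. I would justify it by recalling that $\tp$, having a right adjoint in each variable, is cocontinuous in each variable, and that the Leibniz construction of a bifunctor cocontinuous in each variable is again cocontinuous in each variable; only the preservation of these particular colimits is actually invoked above. Everything else is a formal consequence of $\cof{(-)}$ being a saturated class, so I expect no substantive obstacle beyond setting up this cocontinuity cleanly.
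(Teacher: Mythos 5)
Your double-saturation strategy is a genuinely viable route (it is the standard ``cellularity'' proof of the pushout-product lemma), and it differs from the paper's treatment, which does not argue internally at all but defers to \cite[Lemma B.0.10]{henry2020model}, where the statement is obtained by pure adjunction-juggling: by biclosedness, \( f \pp{\tp} g \) has the left lifting property against \( h \) if and only if \( g \) has it against the pullback-hom of \( f \) and \( h \), if and only if \( f \) has it against the pullback-hom of \( g \) and \( h \), and one chases these equivalences through the definitions of \( l(-) \) and \( r(-) \). However, your proof as written has a genuine gap at its crux: the claim that \( j \pp{\tp} - \) ``preserves transfinite compositions'', used to conclude that \( W_j \) (and later \( V_g \)) is closed under them. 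This is false as literally stated. Already for a binary composite, with \( j \colon A \to B \), \( g_0 \colon X_0 \to X_1 \) and \( g_1 \colon X_1 \to X_2 \), the morphisms \( j \pp{\tp} g_0 \) and \( j \pp{\tp} g_1 \) are not even composable: the codomain of the first is \( B \tp X_1 \), while the domain of the second is the pushout \( B \tp X_1 \coprod_{A \tp X_1} A \tp X_2 \). Consequently the inference ``each \( j \pp{\tp} g_\beta \in \cof{(J \pp{\tp} J')} \), hence \( j \pp{\tp} g \in \cof{(J \pp{\tp} J')} \) for \( g \) the transfinite composite'' does not follow from functoriality or cocontinuity. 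Cocontinuity of \( j \pp{\tp} - \) on \( \Arr(\C) \) only exhibits \( j \pp{\tp} g \) as a colimit \emph{in the arrow category} of the Leibniz products of the partial composites \( X_0 \to X_\beta \), and a colimit in \( \Arr(\C) \) of morphisms lying in \( \cof{(J \pp{\tp} J')} \) need not lie in \( \cof{(J \pp{\tp} J')} \): left lifting classes are closed under cobase change, transfinite composition and retracts, not under arbitrary colimits of arrows.

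What the cellular argument actually requires is the decomposition lemma that constitutes its real content: \( j \pp{\tp} (g_1 \after g_0) \) factors as \( j \pp{\tp} g_1 \) precomposed with a cobase change of \( j \pp{\tp} g_0 \), and, for a transfinite composite \( g \) of a chain \( (g_\beta)_{\beta < \lambda} \), the morphism \( j \pp{\tp} g \) is a transfinite composite of cobase changes of the \( j \pp{\tp} g_\beta \) (with intermediate objects \( B \tp X_\beta \coprod_{A \tp X_\beta} A \tp X_\lambda \)); both facts are proved by pushout pasting, together with cocontinuity of \( \tp \) to handle limit ordinals. Once this lemma is supplied, \( W_j \) and \( V_g \) are indeed closed under transfinite composition and your argument closes up. The cobase-change and retract steps are fine, though the former also deserves a word: a cobase change of \( g \) is a pushout in \( \Arr(\C) \) of \( g \) under a span of identities, and since \( j \pp{\tp} \idd{Z} \) is an isomorphism, pushout pasting shows \( j \pp{\tp} - \) sends cobase changes to cobase changes. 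So your route is correct in outline but hides the one nontrivial verification inside an appeal to cocontinuity that cannot deliver it; the adjunction proof the paper cites avoids this transfinite bookkeeping entirely, which is precisely what it buys in exchange for using biclosedness in an essential, rather than merely cocontinuity-providing, way.
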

\begin{proof}
    See for instance \cite[Lemma B.0.10]{henry2020model}.
\end{proof}

\begin{dfn} [Monoidal model category]
    We say that \( \C \) is \emph{monoidal with respect to \( - \tp - \)} if
    \begin{enumerate}
        \item \( - \tp - \) satisfies the \emph{pushout-product axiom}, that is, for all pairs of cofibrations \( i \) and \( i' \), the pushout-product \( i \pp{\tp} i' \) is a cofibration, which is acyclic if \( i \) or \( i' \) is acyclic.
        \item for all objects \( X \) of \( \C \), the functors \( - \tp X \) and \( X \tp - \) send all cofibrant replacements of the monoidal unit to weak equivalences. 
    \end{enumerate} 
\end{dfn}

\begin{rmk}
    In the case where the monoidal unit is already cofibrant, the second axiom is superfluous.
\end{rmk}

\begin{lem} \label{lem:monoidal_pseudo_generating}
    Let \( J \) be a pseudo-generating set of acyclic cofibrations.
    Suppose that
    \begin{itemize}
        \item every object of \( \C \) is cofibrant, and
        \item for all cofibrations \( i, i' \) of \( \C \), the pushout-product \( i \pp\tp i' \) is a cofibration, which is acyclic if either \( i \) or \( i' \) is in \( J \).
    \end{itemize}
    Then \( \C \) is a monoidal model category with respect to \( - \tp - \).
\end{lem}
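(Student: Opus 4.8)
The plan is to verify the two defining axioms of a monoidal model category. Since every object of \( \C \) is cofibrant, the monoidal unit \( I \) is cofibrant, so the second axiom holds vacuously by the remark following the definition, and it remains to establish the pushout-product axiom. For cofibrations \( i, i' \) the pushout-product \( i \pp\tp i' \) is a cofibration by hypothesis, so I only need to show that it is a weak equivalence as soon as \( i \) or \( i' \) is acyclic. As \( \tp \) is biclosed, the two variables play symmetric roles, so I would treat the case of an acyclic cofibration \( i \colon X \to Y \) and an arbitrary cofibration \( i' \colon X' \to Y' \), the other case being identical after exchanging the two sides.

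First I would record a preliminary closure statement: for every \( g \in \cof J \) and every cofibration \( h \), the pushout-product \( g \pp\tp h \) is an acyclic cofibration. Indeed, by Lemma \ref{lem:pp_preserve_cof} applied to the singleton \( \{h\} \) we have \( g \pp\tp h \in \cof{(J \pp\tp \{h\})} \); since every \( k \pp\tp h \) with \( k \in J \) is an acyclic cofibration by hypothesis, and by Remark \ref{rmk:characterise_l_r_J} every morphism of \( \cof{(J \pp\tp \{h\})} \) is a retract of a relative cell complex of such maps, hence itself an acyclic cofibration, the morphism \( g \pp\tp h \) is an acyclic cofibration. Taking \( h = (\init \to X') \) and using that \( \tp \) preserves the initial object in each variable, this specialises to the fact that \( g \tp X' \) is an acyclic cofibration whenever \( g \in \cof J \). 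Then, mirroring the proof of Lemma \ref{lem:left_adjoint_is_Quillen_enough_pseudo_generating}, I would apply the small object argument for \( J \) to the map \( Y \to \pt \), factoring it as \( Y \xrightarrow{j} \tilde Y \to \pt \) with \( j \in \cof J \) and \( \tilde Y \) fibrant. Then \( j \after i \) is an acyclic cofibration (as \( \cof J \) consists of acyclic cofibrations) with fibrant codomain, hence lies in \( \cof J \) by Lemma \ref{lem:properties_of_pseudo_generating}. By the preliminary, \( (j \after i) \pp\tp i' \), \( j \pp\tp i' \) and \( j \tp X' \) are all acyclic cofibrations.

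The crux is to deduce from this that \( i \pp\tp i' \) itself is a weak equivalence. I would use the standard compatibility of the pushout-product with composition in the first variable, which exhibits a pushout square whose top edge is \( i \pp\tp i' \), whose left edge \( v \) is a cobase change of \( j \tp X' \), and whose bottom and right edges \( m, u \) are the two induced cobase changes, and which moreover realises \( (j \after i) \pp\tp i' \) as \( (j \pp\tp i') \after m \). Two-out-of-three applied to this factorisation shows that \( m \) is a weak equivalence; \( v \) is a weak equivalence as a cobase change of the acyclic cofibration \( j \tp X' \), and hence so is \( u \), being a cobase change of \( v \). Since going around the square gives \( u \after (i \pp\tp i') = m \after v \), a composite of weak equivalences, the left-hand side is a weak equivalence; cancelling \( u \) by two-out-of-three yields that \( i \pp\tp i' \) is a weak equivalence, and being a cofibration it is acyclic.

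I expect this last extraction to be the main obstacle. Because the pushout-product does not compose strictly, the decomposition of \( (j \after i) \pp\tp i' \) only directly controls the cobase change \( m \) of \( i \pp\tp i' \) rather than \( i \pp\tp i' \) itself, and a naive two-out-of-three would bear on \( m \); left properness does not let one descend from a cobase change back to the original map. The device that unblocks the argument is the observation that the two comparison maps \( v \) and \( u \) of the defining square are acyclic cofibrations, which is precisely what the preliminary fact about \( g \tp X' \) for \( g \in \cof J \) supplies.
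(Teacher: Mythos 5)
Your proof is correct, but it is organised differently from the paper's, and the difference is instructive. The paper's proof never touches the composition of pushout-products: since every object is cofibrant, it identifies the one-variable functors \( X \tp - \) and \( - \tp X \) with \( i_X \pp\tp - \) and \( - \pp\tp i_X \) (where \( i_X \colon \init \to X \)), observes that your two hypotheses say precisely that these functors preserve cofibrations and send \( J \) into acyclic cofibrations, and then invokes Lemma \ref{lem:left_adjoint_is_Quillen_enough_pseudo_generating} to conclude that tensoring with any object preserves \emph{all} acyclic cofibrations. The pushout-product axiom then falls out of the short standard argument: \( i \tp X' \) is an acyclic cofibration, its cobase change \( X \tp Y' \to P \) is one too, the composite \( X \tp Y' \to P \to Y \tp Y' \) equals \( i \tp Y' \), and two-out-of-three finishes. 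The point of this reduction is that for an honest functor \( \F \) one has \( \F(j \after i) = \F j \after \F i \) strictly, so the descent from \( \cof J \) to arbitrary acyclic cofibrations is a trivial two-out-of-three inside Lemma \ref{lem:left_adjoint_is_Quillen_enough_pseudo_generating}. You instead keep the bifunctor \( - \pp\tp - \) throughout: your preliminary closure \( \cof J \pp\tp (\text{cofibrations}) \subseteq (\text{acyclic cofibrations}) \) via Lemma \ref{lem:pp_preserve_cof} is sound (and is not used in the paper's proof of this lemma, only elsewhere), you correctly re-derive inline the content of Lemma \ref{lem:left_adjoint_is_Quillen_enough_pseudo_generating} to place \( j \) and \( j \after i \) in \( \cof J \), and you then pay the price for the non-strict composition of pushout-products with the \( u, v, m \) square; your handling of that square --- \( m \) by two-out-of-three from \( (j \after i) \pp\tp i' = (j \pp\tp i') \after m \), \( v \) and \( u \) as cobase changes of acyclic cofibrations, then cancellation of \( u \) --- is exactly right, and your closing remark correctly identifies why naive two-out-of-three or left properness would not suffice there. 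In short: same foundation (small object argument plus Lemma \ref{lem:properties_of_pseudo_generating}), but the paper buys simplicity by reducing to one-variable functors, whereas your argument is self-contained at the level of pushout-products at the cost of the composite-decomposition bookkeeping.
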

\begin{proof}
    Since every object of \( \C \) is cofibrant, so is the monoidal unit.
    For an object \( X \) of \( \C \), denote by \( i_X \colon \init \to X \) the unique morphism from the initial object, which is a cofibration by assumption.
    Then, we may identify the functors \( X \tp - \) and \( - \tp X \) with \( i_X \pp{\tp} - \) and \( - \pp{\tp} i_X \) respectively.
    By assumption and two-out-of-three, \( X \tp - \) and \( - \tp X \) satisfy the hypothesis of Lemma \ref{lem:left_adjoint_is_Quillen_enough_pseudo_generating}.
    By two-out-of-three and since acyclic cofibrations are stable under pushouts, we deduce that the pushout-product axiom is satisfied.
\end{proof}

\section{Diagrams in a diagrammatic set} \label{sec:diagram}

\subsection{Complements on regular directed complexes}

We refer to the introduction of \cite{chanavat2024equivalences} for a brief summary of the notation relative to atoms, molecules, and regular directed complexes. 
All the details are in the monograph \cite{hadzihasanovic2024combinatorics}. 

\begin{dfn} [Atom inclusion]
    Let \( P \) be a regular directed complex, and \( x \in P \).
    The \emph{atom inclusion from \( x \)} is the unique cartesian inclusion of regular directed complexes \( \mapel{x} \colon \imel{P}{x} \incl P \) with image \( \clset{x} \).
\end{dfn}

\begin{dfn} [The arrow]
    We let \( \pt \) be the point. 
    The \emph{arrow} is the atom \( \pt \celto \pt \), with underlying graded poset \( I \eqdef \set{0^- < 1 > 0^+} \) and orientations \( 0^\a \in \faces{}{\a} 1 \).
\end{dfn}

\begin{dfn} [Merger of a round molecule]
    Let \( U \) be a round molecule. 
    The \emph{merger of \( U \)} is the atom \( \compos{U} \eqdef \bd{}{-} U \celto \bd{}{+} U \).
\end{dfn}

\begin{dfn} [Duals]
    For each subset \( J \subseteq \mathbb{N} \setminus \set{0} \), there exists a functor \( \dual{J}{} \) on regular directed complexes which reverses the orientations of faces of \( j \)\nbd dimensional elements for each \( j \in J \).
    If \( P \) is a regular directed complex, \( \dual{J}{P} \) is called the \emph{\( J \)\nbd dual of \( P \)}.
    For \( n \in \mathbb{N} \), we write \( \dual{n}{} \) for \( \dual{\set{n}}{} \).
    If \( J \) is the set of odd numbers, then \( \dual{J}{} \) is simply denoted \( \opp{(-)} \).
\end{dfn}

\begin{rmk}
    The functors \( \dual{J}{} \) are involutions, and respect the classes of atoms, round molecules, and molecules.
\end{rmk}

\begin{dfn} [Rewritable submolecule]
    The class \emph{submolecule inclusions} is the smallest subclass of inclusions of molecules such that
    \begin{enumerate}
        \item all isomorphisms are submolecule inclusions;
        \item for all molecules \( U, V \), and all \( k \geq 0 \), if \( U \cp{k} V \) is defined, then \(  U \incl U \cp{k} V \) and \( V \incl U \cp{k} V \) are submolecule inclusions;
        \item the composite of two submolecule inclusions is a submolecule inclusion.
    \end{enumerate} 
    If \( \iota \colon U \incl V \) is a submolecule inclusion, we write \( \iota \colon U \submol V \), or simply \( U \submol V \) if \( \iota \) is clear from the context, and we say that \( \iota \) is \emph{rewritable} if \( \dim U = \dim V \) and \( U \) is round.
\end{dfn}

\begin{dfn} [Generalised pasting]
    Let \( U, V \) be molecules, \( k \in \mathbb{N} \), and let
    \begin{center}
        \begin{tikzcd}
            {U \cap V} & V \\
            U & {U \cup V}
            \arrow[hook, from=1-1, to=1-2]
            \arrow[hook', from=1-1, to=2-1]
            \arrow[hook', from=1-2, to=2-2]
            \arrow[hook, from=2-1, to=2-2]
            \arrow["\lrcorner"{anchor=center, pos=0.125, rotate=180}, draw=none, from=2-2, to=1-1]
        \end{tikzcd}
    \end{center}
    be a pushout of inclusions in the category of oriented graded posets.
    We say that \( U \cup V \) is a \emph{generalised pasting at the \( k \)\nbd boundary}, and we write \( U \gencp{k} V \) for \( U \cup V \), if
    \begin{enumerate}
        \item \( U \cap V \submol \bd{k}{+} U \) and \( U \cap V \submol \bd{k}{-} V \),
        \item \( \bd{k}{\a} (U \cup V) \) is a molecule for all \( \a \) in \( \in \set{-, +} \),
        \item \( \bd{k}{-} U \submol \bd{k}{-} (U \cup V) \) and \( \bd{k}{+} V \submol \bd{k}{+} (U \cup V) \)
    \end{enumerate}  
\end{dfn}

\begin{rmk}
    By \cite[Lemma 7.1.4]{hadzihasanovic2024combinatorics}, a generalised pasting \( U \gencp{k} V \) of \( U \) and \( V \) is a molecule, and the evident inclusions \( U \incl U \gencp{k} V \) and \( V \incl U \gencp{k} V \) are submolecule inclusions.
\end{rmk}

\begin{dfn} [Pasting at a submolecule]
    Let \( U, V \) be molecules and \( k \in \mathbb{N} \). 
    Given a submolecule inclusion \( \iota \colon \bd{k}{+} U \submol \bd{k}{-} V \), we let \( U \cpsub{k, \iota} V \) be the pushout in the category of oriented graded posets of the span of inclusions
    \begin{center}
        \begin{tikzcd}
            {\bd{k}{+} U} & {\bd{k}{-} V} & {V} \\
            U && {U \cpsub{k, \iota} V.}
            \arrow["\iota", hook, from=1-1, to=1-2]
            \arrow[hook', from=1-1, to=2-1]
            \arrow[hook, from=1-2, to=1-3]
            \arrow[hook', from=1-3, to=2-3]
            \arrow[hook, from=2-1, to=2-3]
            \arrow["\lrcorner"{anchor=center, pos=0.125, rotate=180}, draw=none, from=2-3, to=1-2]
        \end{tikzcd}
    \end{center}
    If \( \iota \colon \bd{k}{-} U \submol \bd{k}{+} V \), we define dually \( V \subcp{k, \iota} U \).
    We call \( U \cpsub{k, \iota} V \), or \( V \subcp{k, \iota} U \), the \emph{pasting at a the submolecule \( \iota \)}, and we omit \( k \) when \( k = \dim V - 1 \), as well as \( \iota \) when it is clear from the context.
\end{dfn}

\begin{rmk}
    By \cite[Section 7.1]{hadzihasanovic2024combinatorics}, \( U \cpsub{\iota, k} V \) and \( V \subcp{k, \iota} U \)  are molecules and instances of generalised pasting. 
\end{rmk}

\begin{lem} \label{lem:formula_generalised_pasting}
    Let \( U, V \) be molecules, \( k \in \mathbb{N} \) and \( U \gencp{k} V \) be a generalised pasting.
    Then \( (\bd{k}{-} (U \gencp{k} V) \subcp{k} U) \subcp{k} V \) and \( U \cpsub{k}(V \cpsub{k} \bd{k}{+} (U \gencp{k} V)) \) are well-defined an uniquely isomorphic to \( U \gencp{k} V \).
\end{lem}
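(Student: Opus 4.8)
The plan is to recognise both expressions as the pasting composite of a $k$\nbd layering of $W \eqdef U \gencp{k} V$ whose two layers are $U$ and $V$, and then to appeal to the decomposition results of \cite[Section 7.1]{hadzihasanovic2024combinatorics}. Throughout I write $B^- \eqdef \bd{k}{-} W$ and $B^+ \eqdef \bd{k}{+} W$, which are molecules by condition (2) in the definition of generalised pasting and have dimension at most $k$, so that $\bd{k}{+} B^- = B^-$ and $\bd{k}{-} B^+ = B^+$.

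First I would settle well-definedness. For $B^- \subcp{k} U$ the required inclusion is $\bd{k}{-} U \submol \bd{k}{+} B^- = B^-$, which is exactly the half of condition (3) stating $\bd{k}{-} U \submol \bd{k}{-} W$. I would then compute the positive $k$\nbd boundary, using the boundary formulas for a pasting at a submolecule, to find that $\bd{k}{+}(B^- \subcp{k} U)$ is $B^-$ with the submolecule $\bd{k}{-} U$ rewritten to $\bd{k}{+} U$; combined with condition (1), namely $U \cap V \submol \bd{k}{-} V$ and $U \cap V \submol \bd{k}{+} U$, this yields $\bd{k}{-} V \submol \bd{k}{+}(B^- \subcp{k} U)$, so that the outer pasting $(B^- \subcp{k} U) \subcp{k} V$ is defined. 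The mirror computation, starting from $\bd{k}{+} V \submol \bd{k}{-} B^+ = B^+$, handles $V \cpsub{k} B^+$ and then $U \cpsub{k}(V \cpsub{k} B^+)$.

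For the identification I would compare underlying oriented graded posets inside $W$. Each pasting at a submolecule is a pushout of inclusions, so by the pushout-pasting lemma the iterated construction is a single colimit of the diagram gluing $B^-$, $U$ and $V$ along the boundary submolecules above. The content is then to verify that the canonical comparison to $W$ is a bijection preserving order and orientation: that the relevant gluing loci are genuine intersections in $W$ (in particular $B^- \cap U = \bd{k}{-} U$, and the image of $B^- \subcp{k} U$ meets $V$ exactly in $\bd{k}{-} V$), so that the pushouts introduce no identifications beyond those along $U \cap V$, which is precisely the gluing locus of the defining pushout of $U \gencp{k} V$. Equivalently, I would check that $(B^-; U, V)$ satisfies the axioms of a $k$\nbd layering of $W$ — these are literally conditions (1)–(3) — and invoke that a molecule is canonically the pasting of any of its layerings.

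Uniqueness of the isomorphism is then automatic, since molecules have no nontrivial automorphisms, so any isomorphism compatible with the inclusions of $U$ and $V$ is forced. The second formula follows from the first by applying a dual functor exchanging the input and output $k$\nbd boundaries, under which $\subcp{k}$ and $\cpsub{k}$ are interchanged and the order of the generalised pasting is reversed, carrying $(B^- \subcp{k} U) \subcp{k} V$ for the dual pasting back to $U \cpsub{k}(V \cpsub{k} B^+)$ for the original. The main obstacle I anticipate is the boundary bookkeeping in the two verification steps above: concretely, proving that $\bd{k}{+}(B^- \subcp{k} U)$ really contains $\bd{k}{-} V$ as a submolecule, and that the intermediate intersections are no larger than the stated boundaries, so that the iterated gluing reconstructs $U \cup V$ exactly, with no cell of $W$ either lost or double-counted.
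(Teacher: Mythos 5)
Your instinct that everything here reduces to the theory of generalised pastings in \cite[Section 7.1]{hadzihasanovic2024combinatorics} is correct --- indeed the paper's proof is nothing more than a one-line citation: the statement, including both decompositions, is a direct consequence of \cite[Lemma 7.1.4]{hadzihasanovic2024combinatorics}, and the isomorphisms are unique simply because isomorphisms between molecules are unique when they exist, as the paper records elsewhere. Within your reconstruction, the parts you actually carry out are fine: the first well-definedness check (condition (3) gives \( \bd{k}{-} U \submol \bd{k}{+} B^- = B^- \), writing \( B^- \eqdef \bd{k}{-}(U \gencp{k} V) \) as you do), the appeal to rigidity of molecules for uniqueness, and the duality argument (the dual \( \dual{J}{} \) with \( J = \set{j \mid j > k} \) exchanges \( \bd{k}{-} \) with \( \bd{k}{+} \), interchanges \( \cpsub{k} \) with \( \subcp{k} \) and reverses the order of the factors, so the second formula does follow from the first).

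The gaps sit exactly at the two places you flag as ``the main obstacle'', and neither is resolvable by the means you propose. First, the inclusion \( \bd{k}{-} V \submol \bd{k}{+}(B^- \subcp{k} U) = \subs{B^-}{\bd{k}{+}U}{\bd{k}{-}U} \) does not follow from condition (1) together with the substitution formula: condition (1) only places \( U \cap V \) inside \( \bd{k}{+} U \), whereas you must also locate all of \( \bd{k}{-} V \setminus \inter{(U \cap V)} \) inside \( B^- \setminus \inter{(\bd{k}{-} U)} \), i.e.\ you need to know how \( \bd{k}{-}(U \gencp{k} V) \) is assembled from \( \bd{k}{-} U \) and the remainder of \( \bd{k}{-} V \) --- and then you need this set-level containment to be a \emph{submolecule} inclusion, an inductively generated property that cannot be read off from a subset relation. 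None of this is among hypotheses (1)--(3); it is precisely the content of the lemma the paper cites, so your plan re-derives the cited result while deferring its hard part. Second, the proposed shortcut via layerings is unavailable: conditions (1)--(3) are not ``literally the axioms of a \( k \)\nbd layering''. A \( k \)\nbd layering decomposes a molecule as an iterated pasting \( \cp{k} \) along \emph{entire} \( k \)\nbd boundaries, with each layer containing exactly one maximal element of dimension \( > k \); here \( U \) and \( V \) may contain arbitrarily many such elements and are glued along a proper submolecule \( U \cap V \) of their \( k \)\nbd boundaries, which is the whole point of the word ``generalised''. So there is no layering result to invoke, and the honest repair is not to complete the bookkeeping by hand but to recognise the statement as what \cite[Lemma 7.1.4]{hadzihasanovic2024combinatorics} asserts.
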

\begin{proof}
    This is a direct consequence of \cite[Lemma 7.1.4]{hadzihasanovic2024combinatorics}.
\end{proof}

\begin{lem} \label{lem:boundary_generalised_pasting_is_generalised_pasting}
    Let \( U, V \) be molecules, \( k, n \in \mathbb{N} \) be integers such that \( k < n \), \( \a \in \set{-, +} \) and \( U \gencp{k} V \) be a generalised pasting. 
    Then the pushout square
    \begin{center}
        \begin{tikzcd}[sep=small]
            {U \cap V} & {\bd{k}{-} V = \bd{k}{-} \bd{n}{\a} V} & {\bd{n}{\a} V} \\
            {\bd{k}{+} U = \bd{k}{+} \bd{n}{\a} U} \\
            { \bd{n}{\a} U} && {\bd{n}{\a} U \cup \bd{n}{\a} V}
            \arrow[hook, from=1-1, to=1-2]
            \arrow[hook', from=1-1, to=2-1]
            \arrow[hook, from=1-2, to=1-3]
            \arrow[hook', from=1-3, to=3-3]
            \arrow[hook', from=2-1, to=3-1]
            \arrow[hook, from=3-1, to=3-3]
            \arrow["\lrcorner"{anchor=center, pos=0.125, rotate=180}, draw=none, from=3-3, to=1-2]
        \end{tikzcd}
    \end{center}
    is a generalised pasting at the \( k \)\nbd boundary, and maps isomorphically onto \( \bd{n}{\a} (U \gencp{k} V) \).
\end{lem}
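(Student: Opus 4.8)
The plan is to reduce both assertions to the single \emph{boundary formula}
\[
    \bd{n}{\a}(U \gencp{k} V) = \bd{n}{\a}U \cup \bd{n}{\a}V,
\]
and then to read off the generalised pasting structure. First I would record the boundary-of-boundary identity $\bd{k}{\beta}\bd{n}{\a}W = \bd{k}{\beta}W$, valid for any molecule $W$, any signs, and any $k < n$; this already justifies the identifications $\bd{k}{-}V = \bd{k}{-}\bd{n}{\a}V$ and $\bd{k}{+}U = \bd{k}{+}\bd{n}{\a}U$ in the diagram. Viewing $U, V$ as submolecules of $W \eqdef U \gencp{k} V = U \cup V$, I would next compute the intersection $\bd{n}{\a}U \cap \bd{n}{\a}V$ inside $W$: since $\bd{n}{\a}U \subseteq U$ and $\bd{n}{\a}V \subseteq V$ it is contained in $U \cap V$, while condition (1) of the generalised pasting together with $\bd{k}{+}U = \bd{k}{+}\bd{n}{\a}U \subseteq \bd{n}{\a}U$ and $\bd{k}{-}V = \bd{k}{-}\bd{n}{\a}V \subseteq \bd{n}{\a}V$ shows $U \cap V \subseteq \bd{n}{\a}U \cap \bd{n}{\a}V$. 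Hence the intersection is exactly $U \cap V$, it is clean, and the displayed square is a pushout whose comparison map into $W$ is a monomorphism with image $\bd{n}{\a}U \cup \bd{n}{\a}V$. Both remaining claims then follow from the boundary formula, which asserts that this image is all of $\bd{n}{\a}(U \gencp{k} V)$.

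To prove the boundary formula I would use the decomposition of Lemma \ref{lem:formula_generalised_pasting}. Writing $W^- \eqdef \bd{k}{-}(U \gencp{k} V)$, a molecule of dimension at most $k$, it gives $W \cong (W^- \subcp{k} U) \subcp{k} V$. For $n > k$ the operator $\bd{n}{\a}$ distributes over pasting at a submolecule, $\bd{n}{\a}(X \subcp{k} Y) \cong \bd{n}{\a}X \subcp{k} \bd{n}{\a}Y$, a boundary formula that I would extract from \cite[Section 7.1]{hadzihasanovic2024combinatorics}, the pasting taking place at level $k < n$ so that the induced submolecule inclusions persist; applying this twice and using $\bd{n}{\a}W^- = W^-$ (since $\dim W^- \le k < n$), I would obtain $\bd{n}{\a}W \cong (W^- \subcp{k} \bd{n}{\a}U) \subcp{k} \bd{n}{\a}V$. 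It then remains to identify the image of this union in $W$ with $\bd{n}{\a}U \cup \bd{n}{\a}V$, the only subtlety being that $W^- = \bd{k}{-}\bd{n}{\a}W \subseteq \bd{n}{\a}W$ contributes no cells outside $\bd{n}{\a}U \cup \bd{n}{\a}V$: this holds because $\bd{k}{-}U \submol W^-$ and any remaining lower cells of $W^-$ already lie in $\bd{n}{\a}V$.

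Finally I would verify that $\bd{n}{\a}U \gencp{k} \bd{n}{\a}V$ is a legitimate generalised pasting, which by the above identifies the pushout with $\bd{n}{\a}(U \gencp{k} V)$. Setting $A \eqdef \bd{n}{\a}U$ and $B \eqdef \bd{n}{\a}V$, I have $A \cap B = U \cap V$, $A \cup B = \bd{n}{\a}W$, and for each sign $\beta$ the identities $\bd{k}{\beta}A = \bd{k}{\beta}U$, $\bd{k}{\beta}B = \bd{k}{\beta}V$ and $\bd{k}{\beta}(A \cup B) = \bd{k}{\beta}W$. With these, the three defining conditions for $A \gencp{k} B$ transport verbatim from those of $U \gencp{k} V$: condition (1) becomes $U \cap V \submol \bd{k}{+}U$ and $U \cap V \submol \bd{k}{-}V$; condition (2) becomes the assertion that $\bd{k}{\beta}W$ is a molecule, which holds since boundaries of molecules are molecules; and condition (3) becomes $\bd{k}{-}U \submol \bd{k}{-}W$ and $\bd{k}{+}V \submol \bd{k}{+}W$ --- all of which hold for $U \gencp{k} V$.

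The hard part will be the boundary formula of the second paragraph: establishing that $\bd{n}{\a}$ genuinely distributes over $\subcp{k}$ for $n > k$ with the correct induced submolecule inclusions, and confirming that the lower-dimensional cells coming from $W^-$ do not escape $\bd{n}{\a}U \cup \bd{n}{\a}V$. Everything else is bookkeeping with the boundary-of-boundary identity and the conditions defining a generalised pasting.
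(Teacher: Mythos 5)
Your skeleton matches the paper's proof exactly: first establish the union formula \( \bd{n}{\a}(U \gencp{k} V) = \bd{n}{\a} U \cup \bd{n}{\a} V \), then transport the three defining conditions of a generalised pasting along the globularity identities \( \bd{k}{\beta}\bd{n}{\a} = \bd{k}{\beta} \) for \( k < n \). Your first and third paragraphs do this correctly; in particular the verification that \( \bd{n}{\a}U \cap \bd{n}{\a}V = U \cap V \) is a nice explicit supplement to what the paper leaves implicit. The genuine gap is in your second paragraph, which is where all the content of the lemma sits. The distributivity \( \bd{n}{\a}(X \subcp{k} Y) \cong \bd{n}{\a}X \subcp{k} \bd{n}{\a}Y \) that you propose to extract from \cite[Section 7.1]{hadzihasanovic2024combinatorics} is itself an instance of the statement you are proving: a pasting at a submolecule is a generalised pasting at the \( k \)\nbd boundary (this is the Remark following its definition), so your argument reduces the lemma to a special case of the same lemma --- one where the intersection is the entire input \( k \)\nbd boundary of one factor instead of a rewritable submolecule of both --- and then does not prove that special case. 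Section 7.1 of the monograph supplies well-definedness, molecule-hood and the submolecule inclusions for \( \cpsub{k} \) and \( \subcp{k} \), not boundary formulas in dimensions \( n > k \); if such a formula were available there, the present lemma would be a citation rather than a proof.

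The same difficulty resurfaces in your final identification step: showing that \( W^- = \bd{k}{-}(U \gencp{k} V) \) contributes nothing outside \( \bd{n}{\a}U \cup \bd{n}{\a}V \) amounts to \( \bd{k}{-}(U \cup V) \subseteq \bd{k}{-}U \cup \bd{k}{-}V \), which you assert but do not prove; it requires a faces-and-cofaces analysis across the union (an element of \( \faces{k}{-}(U \cup V) \) lying in \( V \setminus U \) has all its cofaces in \( V \), because \( U \) is closed, hence lies in \( \faces{k}{-}V \), and similarly on the \( U \) side). That faces-level analysis of a union of two closed subsets glued inside their \( k \)\nbd boundaries is exactly what the paper invokes instead of your reduction: its proof obtains the isomorphism onto \( \bd{n}{\a}(U \gencp{k} V) \) as ``a straightforward variation of'' \cite[Lemma 3.1.15]{hadzihasanovic2024combinatorics}, a low-level result on boundaries of unions, after which globularity gives the generalised-pasting conditions just as in your third paragraph. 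If you replace your second paragraph by that union lemma (or by a direct computation of \( \faces{n}{\a} \) of the union), the rest of your argument goes through.
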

\begin{proof}
    By a straightforward variation of \cite[Lemma 3.1.15]{hadzihasanovic2024combinatorics}, \( \bd{n}{\a} U \cup \bd{n}{\a} V \) maps isomorphically to \( \bd{n}{\a} (U \gencp{k} V) \).
    From this, using globularity on \( k < n \) and that \( U \gencp{k} V \) is a generalised pasting, we deduce directly that \( \bd{n}{\a} U \cup \bd{n}{\a} V \) is a generalised pasting that the \( k \)\nbd boundary.
\end{proof}

\begin{rmk} 
    In other words, \( \bd{n}{\a} (U \gencp{k} V) = \bd{n}{\a} U \gencp{k} \bd{n}{\a} V \), generalising the usual formula for usual pasting. 
\end{rmk}

\begin{dfn} [Gray product]
    Let \( P, Q \) be two regular directed complexes.
    The \emph{Gray product of \( P \) and \( Q \)} is the oriented graded poset \( P \gray Q \) whose 
    \begin{itemize}
        \item underlying graded poset is \( P \times Q \), and
        \item orientation is specified, for all \( \a \in \set{-, +} \), by 
        \begin{equation*}
            \faces{}{\a} (x, y) = \faces{}{\a} x \times \set{y} + \set{x} \times \faces{}{(-)^{\dim x}\a} y
        \end{equation*}
    \end{itemize}
\end{dfn}

\begin{rmk} \label{rmk:gray_preserve_submol}
    By \cite[Corollary 7.2.17]{hadzihasanovic2024combinatorics} the Gray product of two regular directed complexes is a regular directed complexes. 
    Furthermore, by \cite[Proposition 1.21]{chanavat2024homotopy} the Gray product is monoidal on the category of regular directed complexes and cartesian maps, with monoidal unit the terminal regular directed complex \( \pt \).
    This monoidal structure restricts to the category of atoms.
\end{rmk}

\begin{prop} \label{prop:formula_gray_product}
    Let \( U, V, U', V' \) be molecules. 
    Then
    \begin{enumerate}
        \item for all \( j, k \in \mathbb{N} \),
        \begin{align*}
            \bd{n}{-} (U \gray V) &= \bd{n}{-} (\bd{j}{-} U \gray V) \gencp{n - 1} \bd{n}{-} (U \gray \bd{n - j - 1}{(-)^j} V), \\
            \bd{n}{+} (U \gray V) &= \bd{n}{+} (U \gray \bd{n - j - 1}{(-)^{j + 1}} V ) \gencp{n - 1} \bd{n}{+} (\bd{j}{+}U \gray V),
        \end{align*}
        \item for all \( \a \in \set{-, +} \) and \( n \in \mathbb{N} \),
        \begin{equation*}
            \bd{n}{\a} (U \gray V) = \bigcup_{k = 0}^n \bd{k}{\a} U \gray \bd{n - k}{(-)^k\a} V,  
        \end{equation*}
        \item if \( U \submol U' \) and \( V \submol V' \), then \( U \gray V \submol U' \gray V' \),
        \item if \( U \) is a generalised pasting \( W \gencp{k} W' \) at the \( k \)\nbd boundary, then 
        \begin{equation*}
            U \gray V = (W \gray V) \gencp{k + \dim V} (W' \gray V),
        \end{equation*}
        and if \( V \) is a generalised pasting \( W \gencp{k} W' \) at the \( k \)\nbd boundary, then
        \begin{equation*}
            U \gray V = 
            \begin{cases}
                (U \gray W) \gencp{k + \dim V} (U \gray W') & \text{if } \dim U \text{ is even,} \\
                (U \gray W') \gencp{k + \dim V} (U \gray W) & \text{if } \dim U \text{ is odd.}
            \end{cases}
        \end{equation*}
    \end{enumerate}
\end{prop}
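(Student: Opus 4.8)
The plan is to treat the boundary formula (2) as the engine and derive the rest from it, running (3) and (4) together to avoid a circularity. First I would establish (2), working directly from the defining orientation $\faces{}{\alpha}(x,y) = \faces{}{\alpha}x \times \set{y} + \set{x} \times \faces{}{(-)^{\dim x}\alpha}y$. Arguing by induction on $n$, the underlying set of $\bd{n}{\alpha}(U \gray V)$ consists of the pairs $(x,y)$ with $\dim x + \dim y \le n$ that survive to the $\alpha$-boundary, and the sign bookkeeping $(-)^{\dim x}$ forces precisely the twist $\bd{n-k}{(-)^k\alpha}V$ on the $V$-factor of the term $\bd{k}{\alpha}U \gray \bd{n-k}{(-)^k\alpha}V$; this is the Gray-product boundary computation of \cite{hadzihasanovic2024combinatorics} restricted to molecules. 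Beyond the set-theoretic identity the only thing to check is that the orientation induced from $U \gray V$ agrees with that of the stated union, which is immediate from the orientation formula.

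With (2) in hand, (1) is a repackaging. I would split the union $\bigcup_{k=0}^{n}$ defining $\bd{n}{-}(U \gray V)$ according to whether the reduction happens in the first or the second factor, identifying $\bd{n}{-}(\bd{j}{-}U \gray V)$ and $\bd{n}{-}(U \gray \bd{n-j-1}{(-)^j}V)$ (each re-expanded by (2) and simplified using globularity to collapse iterated boundaries $\bd{i}{\alpha}\bd{j}{\beta}$) as the two halves whose union is the whole $n$-boundary and whose intersection is the $(n-1)$-boundary. That the resulting decomposition is a genuine generalised pasting at the $(n-1)$-boundary is then verified against the three defining conditions, using Lemma \ref{lem:boundary_generalised_pasting_is_generalised_pasting} to recognise the shared $(n-1)$-boundary.

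The heart is (4). The key structural input is that the Gray product preserves the pushout defining a generalised pasting: the underlying poset of $(W \gencp{k}W') \gray V$ is $(W \cup W') \times V$, and $- \times V$ preserves pushouts along inclusions, so $U \gray V = (W \gray V) \cup_{(W \cap W')\gray V}(W' \gray V)$ with orientations restricting correctly. It then remains to check that this pushout satisfies the three conditions to be a generalised pasting at the level obtained by adding the dimension of the \emph{complementary} factor to $k$. Here I would feed in (2): the term $(\bd{k}{+}W)\gray V$ appears verbatim in the expansion of $\bd{k + \dim V}{+}(W \gray V)$ (it is the $i=k$ summand, since $\bd{\dim V}{\beta}V = V$), so that $(W \cap W')\gray V \submol (\bd{k}{+}W)\gray V \submol \bd{k + \dim V}{+}(W \gray V)$, the first inclusion by (3); the remaining two conditions follow the same way, with Lemma \ref{lem:boundary_generalised_pasting_is_generalised_pasting} identifying the boundaries. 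The parity dichotomy when $V = W \gencp{k}W'$ is exactly the sign $(-)^{\dim x}$: tensoring on the left by $U$ twists the $V$-direction orientations according to the parity of the $U$-dimension, so that for $\dim U$ odd the roles of input and output in $V$ are exchanged, reversing the order of the two pieces. I expect this verification — tracking which boundary absorbs the shared region and getting the orientation twist right — to be the main obstacle.

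Finally (3). A submolecule inclusion is built, by definition, from isomorphisms and the elementary inclusions $W \submol W \cp{k}W'$ into ordinary pastings, closed under composition; I would induct on this structure. Isomorphisms are trivial, composites follow by transitivity of $\submol$, and the elementary case is precisely (4) applied to the pasting $W \cp{k}W'$ regarded as a generalised pasting, whose factor inclusions are submolecule inclusions by the remark following the definition of generalised pasting. Treating the two variables one at a time, $U \gray V \submol U' \gray V \submol U' \gray V'$, yields the general statement. The apparent circularity with (4) — whose verification invokes (3) for $(W \cap W')\gray V \submol (\bd{k}{+}W)\gray V$ — is resolved by running (3) and (4) as a simultaneous induction on $\dim U + \dim V$: at total dimension $d$, the proof of (4) only calls (3) on the strictly lower-dimensional molecules $W \cap W' \submol \bd{k}{+}W$, so the two statements stratify consistently.
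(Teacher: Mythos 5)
Your architecture for (4) — Gray product preserves the defining pushout, then verify the three conditions of generalised pasting — is the same as the paper's (which, unlike you, simply cites the monograph for (1)–(3) and only proves (4)). But there is a genuine gap exactly at the step you identify as the heart. You deduce the submolecule inclusion \( (\bd{k}{+}W)\gray V \submol \bd{k + \dim V}{+}(W \gray V) \) from the fact that \( (\bd{k}{+}W)\gray V \) ``appears verbatim as the \( i = k \) summand'' of the union formula (2). Appearing as a term of a set-theoretic union gives only a \emph{subset} inclusion, and a subset that happens to be a molecule is in general \emph{not} a submolecule: the class \( \submol \) is an inductively generated class of structured inclusions, strictly stronger than ``molecule-shaped subset'', and this distinction is the entire reason the notion exists. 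The same conflation undermines your proposed derivation of (1) from (2) — the union formula carries no submolecule data from which the generalised-pasting structure could be extracted — and there your appeal to Lemma \ref{lem:boundary_generalised_pasting_is_generalised_pasting} is also backwards: that lemma takes a generalised pasting as \emph{input} and describes its boundaries; it cannot certify that a given union \emph{is} a generalised pasting. The paper obtains the needed inclusions from (1) itself: taking \( j = k \), the second factor of the decomposition of \( \bd{k+\dim V}{+}(W \gray V) \) is precisely \( \bd{k+\dim V}{+}(\bd{k}{+}W \gray V) = \bd{k}{+}W \gray V \), and factors of a generalised pasting are submolecules by construction; the third condition is then verified by the explicit chain
\begin{align*}
    \bd{k + \dim V}{-} (W \gray V) &= (\bd{k}{-} W \gray V) \gencp{k + \dim V - 1} (W \gray \bd{}{(-)^k} V) \\
    &\submol (\bd{k}{-} (W \gencp{k} W') \gray V) \gencp{k + \dim V - 1} ((W \gencp{k} W') \gray \bd{}{(-)^k} V) \\
    &= \bd{k + \dim V}{-} ((W \gencp{k} W') \gray V),
\end{align*}
which uses (1) together with (3), not (2).

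A second, structural, point. The paper avoids any circularity because (1)–(3) are imported as known results (Corollary 7.2.10 and Proposition 7.2.16 of the monograph), so the proof of (4) may use them freely. By electing to re-prove (1)–(3), you create the (3)/(4) circularity that you then patch with a simultaneous induction on \( \dim U + \dim V \); the stratification idea is sensible, but the claim that (4) only invokes (3) in strictly lower total dimension fails in the degenerate case \( \dim(W \gencp{k} W') = k \), which the definition of generalised pasting at the \( k \)\nbd boundary does permit (for instance, two consecutive edges of a path form a generalised pasting at the \( 1 \)\nbd boundary), so the measure does not decrease there. Finally, your proof of (2) ``by induction on \( n \) with sign bookkeeping'' drastically understates what is a long combinatorial argument in the monograph; since you are in effect citing that computation anyway, citing its companion results (1) and (3) as well — which is exactly what the paper does — would simultaneously remove the submolecule gap and the circularity.
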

\begin{proof}
    The first three statements are part, or a direct consequence, of \cite[Corollary 7.2.10, Proposition 7.2.16]{hadzihasanovic2024combinatorics}, and the last one is a slight generalisation thereof, which we prove as follows.
    Suppose that \( W \gencp{k} W' \) is the generalised pasting along the inclusions \( \iota \colon W \cap W' \incl W \) and \( \iota' \colon W \cap W' \incl W' \). By \cite[Lemma 7.2.8]{hadzihasanovic2024combinatorics}, the square
    \begin{center}
        \begin{tikzcd}
            {(W \cap W') \gray V} & {W' \gray V} \\
            {W \gray V} & {(W \gencp{k} W') \gray V}
            \arrow["{\iota' \gray V}", hook, from=1-1, to=1-2]
            \arrow["{\iota \gray V}"', hook', from=1-1, to=2-1]
            \arrow[hook', from=1-2, to=2-2]
            \arrow[hook, from=2-1, to=2-2]
            \arrow["\lrcorner"{anchor=center, pos=0.125, rotate=180}, draw=none, from=2-2, to=1-1]
        \end{tikzcd}
    \end{center}
    is a pushout square.
    The first two hypothesis of the generalised pasting are directly verified by the third point of the statement.
    For the last one, we have
    \begin{align*}
        \bd{k + \dim V}{-} (W \gray V) &= (\bd{k}{-} W \gray V) \gencp{k + \dim V - 1} (W \gray \bd{}{(-)^k} V) \\
        &\submol (\bd{k}{-} (W \gencp{k} W') \gray V) \gencp{k + \dim V - 1} ((W \gencp{k} W') \gray \bd{}{(-)^k} V) \\
        &= \bd{k + \dim V}{-} ((W \gencp{k} W') \gray V).
    \end{align*}
    The case \( \bd{k + \dim V}{+} (W' \gray V) \submol \bd{k + \dim V}{+} (W \gencp{k} W') \gray V \) uses a dual argument.
    This proves that \(  (W \gencp{k} W') \gray V = (W \gray W') \gencp{k + \dim V} (W' \gray V) \).
    The other part of the proof is similar, with extra care to flip a sign depending on the parity of \( \dim V \).
\end{proof}

\noindent We conclude this section by describing families of surjective maps that will model degeneracies and higher invertors in a diagrammatic set. 

\begin{dfn}[Partial cylinder]
    Given a graded poset \( P \) and a closed subset \( K \subseteq P \), the \emph{partial cylinder on \( P \) relative to \( K \)} is the graded poset \( I \times_K P \) obtained as the pushout
    \begin{center}
        \begin{tikzcd}
            {I \times K} & K \\
            {I \times P} & {I \times_K  P}
            \arrow[two heads, from=1-1, to=1-2]
            \arrow[hook', from=1-1, to=2-1]
            \arrow["{(-)}", hook', from=1-2, to=2-2]
            \arrow["q", two heads, from=2-1, to=2-2]
            \arrow["\lrcorner"{anchor=center, pos=0.125, rotate=180}, draw=none, from=2-2, to=1-1]
        \end{tikzcd}  
    \end{center}
    in the category of posets.
    This is equipped with a canonical projection map \( \tau_K \colon I \times_K P \surj P \).
\end{dfn}

\begin{dfn}[Partial Gray cylinder]
	Let \( U \) be a regular directed complex and \( K \subseteq U \) a closed subset.
	The \emph{partial Gray cylinder on \( U \) relative to \( K \)} is the oriented graded poset \( \arr \gray_K U \) whose
    \begin{itemize}
        \item underlying graded poset is \( I \times_K U \), and
        \item orientation is specified, for all \( \a \in \set{+, -} \), by
        \begin{align*}
            \faces{}{\a}(x) & \eqdef \set{(y) \mid y \in \faces{}{\a}x}, \\
            \faces{}{\a}(i, x) & \eqdef \begin{cases}
                \set{(0^\a, x)} + \set{(1, y) \mid y \in \faces{}{-\a}x \setminus K} &
                \text{if \( i = 1 \),} \\
                \set{(i, y) \mid y \in \faces{}{\a}x \setminus K} + 
                \set{(y) \mid y \in \faces{}{\a}x \cap K} &
                \text{otherwise}.
            \end{cases}
        \end{align*}
    \end{itemize}
\end{dfn}

\begin{dfn}[Inverted partial Gray cylinder]
	Let \( U \) be a molecule, \( n \eqdef \dim U \), and \( K \subseteq \bd{}{+}U \) a closed subset.
	The \emph{left-inverted partial Gray cylinder on \( U \) relative to \( K \)} is the oriented graded poset \( \lcyl{K} U \) whose
    \begin{itemize}
        \item underlying graded poset is \( I \times_K U \), and
        \item orientation is as in \( \arr \gray_K U \), except for all \( x \in \gr{n}{U} \) and \( \a \in \set{+, -} \)
    \begin{align*}
        \faces{}{-}(1, x) &\eqdef \set{(0^-, x), (0^+, x)} + \set{(1, y) \mid y \in \faces{}{+}x \setminus K}, \\
        \faces{}{+}(1, x) &\eqdef \set{(1, y) \mid y \in \faces{}{-}x}, \\
        \faces{}{\a}(0^+, x) &\eqdef \set{(0^+, y) \mid y \in \faces{}{-\a}x \setminus K} + 
            \set{(y) \mid y \in \faces{}{-\a}x \cap K}.
    \end{align*}
\end{itemize}
	Dually, if \( K \subseteq \bd{}{-}U \), the \emph{right-inverted partial Gray cylinder on \( U \) relative to \( K \)} is the oriented graded poset \( \rcyl{K}{U} \) whose
    \begin{itemize}
        \item underlying graded poset is \( I \times_K U \), and
        \item orientation is as in \( \arr \gray_K U \), except for all \( x \in \gr{n}{U} \) and \( \a \in \set{+, -} \)
        \begin{align*}
            \faces{}{-}(1, x) &\eqdef \set{(1, y) \mid y \in \faces{}{+}x}, \\
            \faces{}{+}(1, x) &\eqdef \set{(0^-, x), (0^+, x)} + \set{(1, y) \mid y \in \faces{}{-}x \setminus K}, \\
            \faces{}{\a}(0^-, x) &\eqdef \set{(0^-, y) \mid y \in \faces{}{-\a}x \setminus K} + 
                \set{(y) \mid y \in \faces{}{-\a}x \cap K}.
        \end{align*}
    \end{itemize}
\end{dfn}

\begin{rmk} \label{rmk:inverted_cylinder_well_def}
	By \cite[Lemma 1.20, Lemma 1.26]{chanavat2024equivalences}, partial Gray cylinders and inverted partial Gray cylinders respect the classes of molecules, round molecules and atoms.
	Moreover, for all molecules \( U \) and closed subset \( K \subseteq U \),
	\begin{itemize}
		\item \( \tau_K \colon \arr \gray_K U \to U \) is a cartesian map of molecules,
		\item if \( p \colon U \to V \) is a cartesian map of molecules with \( \dim V < \dim U \), then \( p \after \tau_K \colon \lcyl{K}{U} \to V \) and \( p \after \tau_K \colon \rcyl{K}{U} \to V \) are cartesian maps of molecules.
	\end{itemize}
\end{rmk}

\begin{dfn} [Higher invertor shapes]
    Let \( U \) be a round molecule.
    The family of \emph{higher invertor shapes on \( U \)} is the family of molecules \( \hcyl s U \) indexed by strings \( s \in \set{L, R}^* \), defined inductively on the length of \( s \) by
    \begin{align*}
        \hcyl{\langle\rangle} U & \eqdef U, \\
        \hcyl{Ls} U &\eqdef \lcyl{\bd{}{+} \hcyl{s} U} (\hcyl{s} U), \\
                \hcyl{Rs} U &\eqdef \rcyl{\bd{}{-} \hcyl{s} U} (\hcyl{s} U).
    \end{align*}
	These are equipped with cartesian maps \( \tau_s \colon \hcyl{s} U \to U \) of their underlying posets, with the property that for all cartesian maps of molecules \( p \colon U \to V \) such that \( \dim V < \dim U \), the composite \( p \after \tau_s \) is a cartesian map of molecules.
\end{dfn}

\subsection{Diagrammatic sets}

\noindent We recall that a diagrammatic set is a presheaf on the category \( \atom \), of atoms and cartesian maps.
Furthermore, by \cite[Lemma 2.5]{chanavat2024homotopy}, the category \( \rdcpx \), of regular directed complexes and cartesian maps, embeds fully and faithfully into the category of diagrammatic sets. 
This allows us to make no distinction between a regular directed complex and its associated diagrammatic set. 

\begin{dfn} [Diagram in a diagrammatic set]
    Let \( U \) be a regular directed complex and \( X \) a diagrammatic set.
    A \emph{diagram of shape \( U \) in \( X \)} is a morphism \( u \colon U \to X \).
    A diagram is called
    \begin{itemize}
        \item a \emph{pasting diagram} if \( U \) is a molecule,
        \item a \emph{round diagram} if \( U \) is a \emph{round} molecule, and
        \item a \emph{cell} if \( U \) is an atom.
    \end{itemize}
    We write \( \dim u \eqdef \dim U \).
    We write respectively \( \Pd X \), \( \Rd X \) and \( \cell X \) the sets of pasting diagrams, round diagrams, and cells in \( X \).
\end{dfn}

\begin{rmk}
    Any atom is a round molecule, and any round molecule is a molecule, thus \(  \cell X \subseteq \Rd X \subseteq \Pd X \).
\end{rmk}

\begin{rmk}
    By the Yoneda Lemma, we identify a cell \( u \colon U \to X \) with its corresponding element \( u \in X(U) \).
    Furthermore, since isomorphisms of molecules are unique when they exists, we may safely identify isomorphic pasting diagrams in the slice over \( X \).
\end{rmk}

\begin{dfn}
    Recall that an \( \omega \)\nbd graph in degree \( \geq k \) is a graded set \( G = \sum_{n \geq k} \gr{n}{G} \), together with boundary functions \( \bd{}{-}, \bd{}{+} \colon \gr{n + 1}{G} \to \gr{n}{G} \) for each \( n \geq k \), satisfying \( \bd{}{\a} \bd{}{-} = \bd{}{\a} \bd{}{+} \) for all \( \a \in \set{-, +} \).
    More generally, for \( n \geq k \) and \( \a \in \set{-, +} \), we define inductively \( \bd{k}{\a} \colon \gr{n}{G} \to G_k \) by \( \bd{k}{\a} = \idd{G_k} \) if \( n = k \), and \( \bd{k}{\a} = \bd{}{\a}\bd{k + 1}{\a} \) if \( n > k \). 
    Given \( n > k \), and \( a \in \gr{n}{G} \), we write \( a \colon a^- \celto a^+ \) to mean \( \bd{}{\a} a = a^\a \), for \( \a \in \set{-, +} \), and say that \( a \) is of \emph{type} \( a^- \celto a^+ \).
    We say that \( a, b \in \gr{n}{G} \) are \emph{parallel} if they have the same type.
    Given parallel \( a, b \in G_k \), 
    \begin{equation*}
        G(a, b) \eqdef \set{c \in \sum_{n > k} \gr{n}{G} \mid \bd{k}{-} c = a, \bd{k}{+} c = b}
    \end{equation*}
    admits a structure of \( \omega \)\nbd graph in degree \( \geq k + 1 \), with grading and boundaries induced by \( G \). 
\end{dfn}

\begin{dfn}[\( \omega \)\nbd graph of pasting diagrams]
    Let \( u \colon U \to X \) be a pasting diagram in a diagrammatic set \( X \).
    For \( n \geq 0 \) and \( \a \in \set{-, +} \), we write \( \bd{n}{\a} u \) for the pasting diagram \( \restr{u}{\bd{n}{\a}U} \colon \bd{n}{\a} U \to X \).
    We may omit the index \( n \) if \( n = \dim u - 1 \).

    The set \( \Pd X \) is graded by dimension; given a subset \( A \subseteq \Pd X \), we write \( \gr{n}{A} \eqdef \set{u \in A \mid \dim u = n} \).
    Then, \( \Pd X \) admits the structure of \( \omega \)\nbd graph in degree \( \geq 0 \) with the functions \( \bd{}{-}, \bd{}{+} \colon \gr{n + 1}{\Pd X} \to \gr{n}{\Pd X} \).
    Grading and boundaries are compatible with the inclusion \( \Rd X \subseteq \Pd X \), making \( \Rd X \) an \( \omega \)\nbd subgraph in degree \( \geq 0 \) of \( \Pd X \).  
\end{dfn}

\begin{dfn} [Subdiagram]
    Let \( u \colon U \to X \) be a pasting diagram in a diagrammatic set \( X \).
    A \emph{subdiagram of \( u \)} is a pair of a pasting diagram \( v \colon V \to X \) and a submolecule inclusion \( \iota \colon V \incl U \) such that \( u \after \iota = v \).
    A subdiagram is \emph{rewritable} if the submolecule inclusion \( \iota \) is rewritable.
    We write \( \iota \colon v \submol u \) for the data of a subdiagram of \( u \), or simply \( v \submol u \) if \( \iota \) is irrelevant or evident from the context. 
\end{dfn}

\begin{dfn} [Pasting of pasting diagrams]
    Let \( u \colon U \to X \) and \( v \colon V \to X \) be pasting diagrams, such that \( \bd{k}{+} u = \bd{k}{-} v \).
    We let \( u \cp{k} v \colon U \cp{k} V \to X \) be the unique pasting diagram determined by the universal property of the pushout \( U \cp{k} V \).
    More generally, for any generalised pasting at the \( k \)\nbd boundary \( U \gencp{k} V \), we write \( u \gencp{k} v \colon U \gencp{k} V \to X \) for the pasting diagram determined by the universal property of the pushout \( U \gencp{k} V \).
    If \( U \gencp{k} V \) is the pasting at a submolecule \( U \cpsub{k, \iota} V \), then \( \iota \colon \bd{k}{+} u \submol \bd{k}{-} v \) is a subdiagram, and we write \( u \cpsub{k, \iota} v \) for the pasting diagram \( u \gencp{k} v \).
    Dually, if \( U \gencp{k, \iota} V \) is the pasting at a submolecule \( U \subcp{k, \iota} V \), then \( \iota \colon \bd{k}{-} v \submol \bd{k}{+} u \) is a subdiagram, and we write \( u \subcp{k, \iota} v \) for the pasting diagram \( u \gencp{k} v \). 
\end{dfn}
\noindent We often omit the index \( k \) when it is equal to \( \min \set{\dim u, \dim v} - 1 \), and omit \( \iota \) when it is irrelevant or evident from the context.

\begin{dfn} [Opposite of a diagrammatic set]
    Precomposing any diagrammatic set \( X \) with the functor \( \opp{(-)} \colon \atom \to \atom \) induces the involution
    \begin{equation*}
        \opp{(-)} \colon \dgmSet \to \dgmSet.
    \end{equation*}
    The diagrammatic set \( \opp{X} \) is called the \emph{opposite} of \( X \).
    For a subset \( A \subseteq \Pd X \), we write \( \opp{A} \eqdef \set{\opp{u} \in \Pd \opp{X} \mid u \in A } \).
\end{dfn}

\noindent Recall that given a small monoidal category, we can extend its monoidal structure via Day convolution \cite{day1970closed} to its category of presheaves.
\begin{dfn} [Gray product]
   Let \( X, Y \) be two diagrammatic sets.
   The \emph{Gray product of \( X \) and \( Y \)} is the diagrammatic set \( X \gray Y \) given by the Day convolution of the Gray product of atoms.
   This extends to a biclosed monoidal structure with product
   \begin{equation*}
    - \gray - \colon \dgmSet \times \dgmSet \to \dgmSet,
   \end{equation*}
   and monoidal unit \( \pt \), the terminal diagrammatic set.
\end{dfn}

\begin{rmk}
    By \cite[Corollary 2.8]{chanavat2024homotopy}, the embedding of regular directed complexes into diagrammatic sets is strong monoidal with respect to the Gray products of regular directed complexes and diagrammatic sets. 
\end{rmk}

\begin{rmk}
    Let \( \iota \colon u \submol v \) be a subdiagram in \( X \) and \( \iota' \colon u' \submol v' \) be a subdiagram in \( X' \).
    Then by Proposition \ref{prop:formula_gray_product}, \( \iota \gray \iota' \colon u \gray u' \submol v \gray v' \) is a subdiagram in \( X \gray X' \).
\end{rmk}

\begin{rmk}
    Since \( \atom \) is an Eilenberg-Zilber category, by \cite[Lemma 3.3]{chanavat2024homotopy}, a non-degenerate cell in \( X \gray Y \) can always be chosen of the form \( u \gray v \), for \( u \) a non-degenerate cell of \( X \) and \( v \) a non-degenerate cell of \( Y \).    
    Furthermore, any morphism \( f \colon X \gray Y \to Z \) is uniquely determined by its values on cells of the form \( u \gray v \).
\end{rmk}

\begin{prop} \label{prop:gray_opp_opp_gray}
    Let \( X, Y \) be diagrammatic sets. 
    Then the morphism
    \begin{equation*}
        \begin{array}{ccc}
            \opp{(X \gray Y)} & \to     & \opp{Y} \gray \opp{X} \\
            \opp{(u \gray v)} & \mapsto & \opp{v} \gray \opp{u},
        \end{array}
    \end{equation*}
    is a natural isomorphism of diagrammatic sets.
\end{prop}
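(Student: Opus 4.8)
The plan is to prove the statement first on representables, where the Gray product is computed inside \( \rdcpx \), and then to transport it to all diagrammatic sets by cocontinuity. Concretely, I would first exhibit, for atoms \( A, B \), the swap of underlying graded posets \( A \times B \xrightarrow{\sim} B \times A \) as an orientation-preserving isomorphism \( \opp{(A \gray B)} \cong \opp{B} \gray \opp{A} \), natural in \( A \) and \( B \); and then observe that both \( (X, Y) \mapsto \opp{(X \gray Y)} \) and \( (X, Y) \mapsto \opp{Y} \gray \opp{X} \) preserve colimits separately in each variable, so that the representable isomorphism extends uniquely to the claimed one.

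For the first step, the defining property of \( \opp{(-)} = \dual{J}{} \) (with \( J \) the odd numbers) is that \( \faces{\opp{P}}{\a} z = \faces{P}{(-)^{\dim z}\a} z \) for every element \( z \). Writing \( p \eqdef \dim x \) and \( q \eqdef \dim y \) and feeding the Gray-product orientation formula into this, I would compute
\begin{align*}
    \faces{\opp{(A \gray B)}}{\a}(x, y)
    &= \faces{A}{(-)^{p + q}\a} x \times \set{y} + \set{x} \times \faces{B}{(-)^{q}\a} y, \\
    \faces{\opp{B} \gray \opp{A}}{\a}(y, x)
    &= \faces{B}{(-)^{q}\a} y \times \set{x} + \set{y} \times \faces{A}{(-)^{p + q}\a} x,
\end{align*}
using \( (-)^{p}(-)^{p + q} = (-)^{q} \) in the first line and \( (-)^{p}(-)^{q} = (-)^{p+q} \) in the second. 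The two right-hand sides are exchanged by the swap \( (x, y) \leftrightarrow (y, x) \), so the swap is an isomorphism of oriented graded posets, hence of regular directed complexes, and it is patently natural with respect to cartesian maps. Since the Gray product restricts to \( \atom \) and the embedding \( \rdcpx \incl \dgmSet \) is strong monoidal, this is the required natural isomorphism \( \opp{(A \gray B)} \cong \opp{B} \gray \opp{A} \) on representables.

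For the second step, \( \opp{(-)} \) on \( \dgmSet \) is precomposition with the involution \( \opp{(-)} \colon \atom \to \atom \), hence an isomorphism of presheaf categories and in particular cocontinuous, while \( - \gray - \) preserves colimits in each variable since it is biclosed. Thus both bifunctors above are cocontinuous in \( X \) and in \( Y \). As every diagrammatic set is canonically a colimit of atoms, the natural isomorphism of the first step extends, by the usual parametrised density argument, to a unique natural isomorphism \( \opp{(X \gray Y)} \cong \opp{Y} \gray \opp{X} \). Unwinding this extension on a non-degenerate cell \( u \gray v \colon U \gray V \to X \gray Y \) — equivalently, precomposing \( \opp{v} \gray \opp{u} \) with the shape isomorphism \( \opp{(U \gray V)} \cong \opp{V} \gray \opp{U} \) — shows that the isomorphism acts as \( \opp{(u \gray v)} \mapsto \opp{v} \gray \opp{u} \), as claimed; it is invertible because \( \opp{(-)} \) is an involution, so the same recipe applied to \( \opp{X}, \opp{Y} \) provides a two-sided inverse.

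I expect the only genuine content to be the sign bookkeeping of the second paragraph: the whole point is that reversing all odd-dimensional orientations on one side matches the combination of reversing odd orientations \emph{in each factor} together with swapping the order of the factors, and lining up the two families of signs \( (-)^{p + q} \) and \( (-)^{q} \) is exactly where the parity shift built into the Gray orientation does the work. The reduction to representables and the extension by cocontinuity are then formal.
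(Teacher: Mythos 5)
Your proposal is correct and takes essentially the same route as the paper: verify the isomorphism on atoms, then extend to all diagrammatic sets via the universal property of Day convolution (your parametrised density argument). The only difference is that the paper outsources the atom-level statement to \cite[Proposition 7.5.28]{hadzihasanovic2024combinatorics}, whereas you prove it directly by the sign computation --- which is correct, and is precisely the content of that citation.
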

\begin{proof}
    The statement is true on atoms by \cite[Proposition 7.5.28]{hadzihasanovic2024combinatorics}, thus we conclude by universal property of Day convolution.
\end{proof}

\subsection{Degenerate diagrams and equivalences}

\begin{dfn} [Degenerate pasting diagram]
    Let \( u \colon U \to X \) be a diagram.
    We say that \( u \) is \emph{degenerate} if there exists a pair of a diagram \( v \colon V \to X \) and a surjective cartesian map of regular directed complexes \( p \colon U \to V \) such that \( v \after p = u \), and \( \dim v < \dim v \). 
    We let
    \begin{align*}
        \Dgn X &\eqdef \set{u \in \Pd X \mid u \text{ is degenerate}} & \dgn X \eqdef \Dgn X \cap \cell x,\\
        \Nd X &\eqdef \set{u \in \Pd X \mid u \text{ is not degenerate}} & \nd X \eqdef \Nd X \cap \cell x.
    \end{align*}
\end{dfn}

\begin{dfn} [Reverse of a degenerate diagram]
    Let \( u \colon U \to X \) be a degenerate diagram, equal to \( v \after p \) for some diagram \( v \colon V \to X \) and surjective cartesian map \( p \colon U \to V \) with \( n \eqdef \dim u > \dim v \).
    The \emph{reverse of \( u \)} is the diagram \( \rev{u} \eqdef v \after \dual{n}{p} \colon \dual{n}{U} \to X \).
\end{dfn}

\begin{dfn} [Unit]
    Let \( u \colon U \to X \) be a pasting diagram.
    The \emph{unit on \( u \)} is the degenerate pasting diagram \( \un u \colon u \celto u \) defined by \( u \after \tau_{\bd{}{} U} \colon \arr \gray_{\bd{}{}U} U \to X \).
\end{dfn}

\begin{dfn} [Left and right unitor]
    Let \( u \colon U \to X \) be a pasting diagram. 
    For \( \iota \colon v \submol \bd{}{-} u \) a rewritable subdiagram of shape \( V \), call \( K \eqdef \bd{}{} U \setminus \inter{\iota(V)} \).
    The \emph{left unitor of \( u \) at \( \iota \)} is the degenerate pasting diagram \( \lun{\iota} u \colon u \celto \un v \cpsub{\iota} u \) defined by \( u \after \tau_K \colon \arr \gray_K U \to X \).
    Dually, for \( \iota \colon v \submol \bd{}{+} u \) a rewritable subdiagram of shape \( V \), call \( K \eqdef \bd{}{} U \setminus \inter{\iota(V)} \).
    The \emph{right unitor of \( u \) at \( \iota \)} is the degenerate pasting diagram \( \run{\iota} u \colon u \subcp{\iota} \un v \celto u \) defined by \( u \after \tau_K \colon \arr \gray_K U \to X \).
\end{dfn} 
\noindent We simply write \( \lun{}u \) and \( \run{} u \) when \( \iota \) is an isomorphism.

\begin{rmk}
    If \( u \) is round, then by Remark \ref{rmk:inverted_cylinder_well_def}, so are \( \un u \), \( \lun{\iota} u \) and \( \run{\iota} u \).
\end{rmk}

\begin{dfn} [Equivalence]
    Let \( X \) be a diagrammatic set and \( u \colon a \celto b \) a round diagram in \( X \) with \( n \eqdef \dim u > 0 \).
    We say that \( u \) is an \emph{equivalence} if there exist round diagrams \( u^L, u^R \colon b \celto a \), together with two equivalences \( h \colon \un b \celto u^R \cp{} u \) and \( z \colon u \cp{} u^L \celto \un a \).
    The diagrams \( u^R \) and \( u^L \) are respectively called a \emph{right-inverse} and a \emph{left-inverse} of \( u \).  
    We let
    \begin{equation*}
        \Eqv X \eqdef \set{u \in \Rd X \mid u \text{ is an equivalence}},\quad\quad \eqv X \eqdef \Eqv X \cap \cell X.
    \end{equation*}
\end{dfn}

\begin{comm} \label{comm:coinductive_proof_method}
    This is a coinductive definition of equivalences, based on bi-invertibility.
    Let \( \Rinv \) be the operator on the power set \( \powerset{(\Rd X)} \) defined by sending \( A \subseteq \Rd X \) to the set of round diagrams \( u \colon a \celto b \) such that there exists \( u^R \colon b \celto a \) in \( \Rd X \) and \( h \colon \un b \celto u^R \cp{} u \) in \( A \).
    Dually, let \( \Linv \) be the operator sending \( A \subseteq \Rd X \) to the set of round diagrams \( u \colon a \celto b \) such that there exists \( u^L \colon b \celto a \) in \( \Rd X \) and \( z \colon u \cp{} u^L \celto \un a \) in \( A \).
    Then \( \Eqv X \) is the greatest fixed point of the operator \( \B \eqdef \Linv \cap \Rinv \).
    We get the corresponding coinductive proof method: if \( A \subseteq \Linv(A) \cap \Rinv(A) \), then \( A \subseteq \Eqv X \).
\end{comm}

\begin{dfn} [Equivalent round diagrams]
    Let \( u, v \) be a parallel pair of round diagrams in a diagrammatic set \( X \).
    We say that \emph{\( u \) is equivalent to \( v \)}, and write \( u \simeq v \), if there exists an equivalence \( e \colon u \celto v \) in \( X \).
\end{dfn}

\begin{thm} \label{thm:main_equivalences}
    Let \( X \) be a diagrammatic set.
    Then
    \begin{enumerate}
        \item all degenerate round diagrams are equivalences;
        \item for each \( n \geq 0 \), the relation \( \simeq \) is an equivalence relation on \( \gr{n}{\Rd X} \), and for \( n > 0 \) a congruence with respect to \( - \cpsub{n - 1} - \) and \( - \subcp{n - 1} - \),
        \item the set of equivalences is closed under \( \simeq \),
        \item the left and right inverse of an equivalence are both equivalences, and equivalent to each other.
    \end{enumerate}
    Moreover, any morphism of diagrammatic sets \( f \colon X \to Y \) sends equivalences to equivalences.
\end{thm}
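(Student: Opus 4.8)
The plan is to derive every assertion from the coinductive method of Comment~\ref{comm:coinductive_proof_method}: to certify that a family $A \subseteq \Rd X$ consists of equivalences it suffices to verify $A \subseteq \Linv(A) \cap \Rinv(A)$, that is, that each $u \colon a \celto b$ in $A$ admits some $u^R \in \Rd X$ with a witness $h \colon \un b \celto u^R \cp{} u$ lying in $A$, and some $u^L \in \Rd X$ with a witness $z \colon u \cp{} u^L \celto \un a$ lying in $A$. Throughout I would use freely that pasting diagrams obey the strict $\omega$\nbd category axioms, so that associators and unitors may be inserted without comment.

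The first assertion and the morphism statement are the two genuinely self-contained coinductions. For the first, take $A \eqdef \Dgn X \cap \Rd X$. Given a degenerate round $u = v \after p$ with $p \colon U \surj V$ surjective cartesian and $\dim V < \dim u$, set $u^R = u^L \eqdef \rev u$, which is again degenerate and round since duals preserve round molecules. The required witnesses are exactly what the inverted partial Gray cylinders were built to supply: $h$ and $z$ are the diagrams $v \after p \after \tau_K \colon \lcyl{K}{U} \to X$ and $\rcyl{K}{U} \to X$, whose orientations give them the types $\un b \celto \rev u \cp{} u$ and $u \cp{} \rev u \celto \un a$. By Remark~\ref{rmk:inverted_cylinder_well_def} these factor through $V$ by a cartesian map of strictly smaller dimension, hence are themselves degenerate and round, so $A \subseteq \B(A)$. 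For the morphism statement take $A \eqdef \set{f \after u \mid u \in \Eqv X} \subseteq \Rd Y$; since $f$ commutes with units and with pasting (the latter by the universal property defining $\cp{}$), applying $f$ to the data of $u \in \Eqv X$ yields $f u^R, f u^L$ together with $f h, f z$ of the correct types, and as $h, z \in \Eqv X$ we get $f h, f z \in A$, so again $A \subseteq \B(A)$.

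The remaining assertions all hinge on a single closure lemma: any generalised pasting $e \gencp{k} e'$ of two equivalences is an equivalence. Once this is available, reflexivity of $\simeq$ follows from the first assertion, since $\un u$ is degenerate, hence an equivalence, of type $u \celto u$; the congruence property is witnessed by $e \cpsub{} e'$ for witnessing equivalences $e, e'$; transitivity is the $0$\nbd composition case of the lemma; and the statements on inverses reduce to the classical chain $u^R \simeq u^R \cp{} (u \cp{} u^L) = (u^R \cp{} u) \cp{} u^L \simeq u^L$, after which $u$ itself serves as a two\nbd sided inverse exhibiting both $u^R$ and $u^L$ as equivalences. Symmetry of $\simeq$ then follows, as an equivalence $e \colon u \celto v$ now has an inverse equivalence $v \celto u$.

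The main obstacle is that this closure lemma and the congruence property are mutually dependent: to build an inverse of $e \gencp{k} e'$ in the expected reversed order one must whisker the witnesses of $e$ and $e'$ past one another, which is itself an instance of congruence. I would resolve this by \emph{not} separating the steps, but running one simultaneous coinduction over a single family $A$ that is closed under $\cpsub{}$ and $\subcp{}$ and contains all equivalences, their chosen inverses, all units and all unitors; the only non\nbd routine point is to check that the witnesses produced for $e \gencp{k} e'$ — iterated pastings of the witnesses of $e$ and $e'$ with unitors, organised via the decomposition of Lemma~\ref{lem:formula_generalised_pasting} — land back inside this $A$. Because coinduction demands only membership in the $\B$\nbd closed family, rather than prior knowledge that these cells are equivalences, the apparent circularity dissolves; assembling the witnesses explicitly is then the principal bookkeeping task.
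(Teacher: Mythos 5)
The paper itself gives no proof of this theorem beyond a citation of \cite{chanavat2024equivalences}, so your proposal must be measured against that development, part of which the present paper replays when it introduces the sets \( \Pst A \), Lemma \ref{lem:celto_A_is_congruence_preorder} and Proposition \ref{prop:coinduction_on_closure_pst}. Your treatment of assertion 1 and of the final statement about morphisms is correct and is essentially the argument of the cited source: both are direct applications of the coinductive method of Comment \ref{comm:coinductive_proof_method}, the first taking \( A \eqdef \Dgn X \cap \Rd X \) with witnesses carried by the inverted partial Gray cylinders (which are degenerate and round by Remark \ref{rmk:inverted_cylinder_well_def}), the second taking \( A \eqdef \set{f \after u \mid u \in \Eqv X} \) and using that \( f \) commutes with units and pastings. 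One label is flipped: with \( K = \bd{}{+}U \), the diagram on \( \lcyl{K}{U} \) has type \( u \cp{} \rev{u} \celto \un \bd{}{-}u \), so it is the witness \( z \), while \( \rcyl{K}{U} \) with \( K = \bd{}{-}U \) carries \( h \); this is immaterial.

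For assertions 2--4, however, what you have written is a plan rather than a proof, and the step you defer is exactly where the content of the theorem lies. You correctly diagnose the circularity between closure of \( \Eqv X \) under pasting and the congruence property, and the fix you name --- one simultaneous coinduction over a family containing \( \Eqv X \), units and unitors, closed under \( \cpsub{} \) and \( \subcp{} \) --- is precisely the \( \Pst \)-machinery. But the ``principal bookkeeping task'' you set aside is the structural induction showing that if \( u, v \in \B(\Pst A) \) and \( u \cpsub{\iota} v \) is defined, then \( u \cpsub{\iota} v \in \B(\Pst A) \): this requires exhibiting an explicit inverse candidate such as \( v^R \subcp{} u^R \), proving first that \( \relcelto{A} \) is a preorder compatible with \( \cpsub{} \) and \( \subcp{} \), and then assembling two genuinely different chains of relations (the \( \Linv \) case is longer than the \( \Rinv \) case) out of unitors; compare the proofs of Lemma \ref{lem:celto_A_is_congruence_preorder} and Proposition \ref{prop:coinduction_on_closure_pst}, which are not routine. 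Moreover, the one explicit witness you do propose is ill-typed: for equivalences \( e \colon u \celto u' \) and \( e' \colon v \celto v' \), no pasting \( e \cpsub{} e' \) is defined, since \( \bd{}{+} e = u' \) and \( \bd{}{-} e' = v \) need not be related; the correct congruence witness is a whiskering such as \( (\un (u \cpsub{} v) \subcp{} e) \subcp{} e' \), whose well-typedness is part of what must be checked. Finally, assertion 3 --- closure of \( \Eqv X \) under \( \simeq \) --- is never addressed in your text; it does follow from reflexivity, congruence, transitivity and the pasting-closure by manufacturing witnesses for \( v \) out of those of \( u \), but that derivation has to be given, and it appears nowhere in the proposal.
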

\begin{proof}
    See \cite[Theorem 2.13, Proposition 2.17, Proposition 2.19, Proposition 2.31]{chanavat2024equivalences}.
\end{proof}

\begin{dfn} [Weak inverse]
    Let \( u \colon a \celto b \) be a round diagram in a diagrammatic set \( X \).
    A \emph{weak inverse of \( u \)} is any round diagram \( u^* \colon b \celto a \) such that \( \un b \simeq u^* \cp{} u \) and \( u \cp{} u^* \simeq \un a \). 
\end{dfn}

\begin{rmk}
    By Theorem \ref{thm:main_equivalences}, \( u \) has a weak inverse if and only if it is an equivalence, and in that case, a left or a right inverse of \( u \) is also a weak inverse.
\end{rmk}

\begin{lem} \label{lem:equivalence_in_opposite}
    Let \( X \) be a diagrammatic set. 
    Then \( \opp{(\Eqv X)} = \Eqv \opp{X} \).
\end{lem}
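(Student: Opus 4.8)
Since \( \opp{(-)} \) is an involution, both on diagrammatic sets and on subsets of pasting diagrams, it suffices to prove one inclusion: if \( \opp{(\Eqv X)} \subseteq \Eqv \opp{X} \) holds for every \( X \), then applying it to \( \opp{X} \) and using \( \opp{(\opp{X})} = X \) yields \( \Eqv \opp{X} \subseteq \opp{(\Eqv X)} \), and hence equality. So the plan is to fix \( X \) and establish \( \opp{(\Eqv X)} \subseteq \Eqv \opp{X} \).

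I would prove this coinductively, via the method of Comment \ref{comm:coinductive_proof_method} applied to \( \opp{X} \). Set \( A \eqdef \opp{(\Eqv X)} \); since the duals respect round molecules, \( A \) is a set of round diagrams in \( \opp{X} \), and it suffices to check \( A \subseteq \Linv(A) \cap \Rinv(A) \) for the operators on \( \powerset{(\Rd \opp{X})} \) associated to \( \opp{X} \). Let \( \opp{u} \in A \) with \( u \colon a \celto b \) an equivalence in \( X \) of dimension \( n \), witnessed by round diagrams \( u^R, u^L \colon b \celto a \) together with equivalences \( h \colon \un b \celto u^R \cp{} u \) and \( z \colon u \cp{} u^L \celto \un a \).

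The core of the argument is to track this data through \( \opp{(-)} \). I would use two structural facts about the opposite: it commutes with units, \( \opp{(\un w)} = \un{\opp{w}} \), and it sends the \( (n-1) \)\nbd pasting of two \( n \)\nbd diagrams to their \( (n-1) \)\nbd pasting, preserving the order of the factors when \( n \) is even and reversing it when \( n \) is odd — mirroring, at the level of pasting, the order-reversal recorded in Proposition \ref{prop:gray_opp_opp_gray}. Combined with the fact that \( \opp{(-)} \) exchanges input and output boundaries exactly in odd dimension, this exhibits \( \opp{u^R} \) and \( \opp{u^L} \) as a left/right inverse pair for \( \opp{u} \); which of the two plays which role is dictated by the parity of \( n \), but this is immaterial since \( \B = \Linv \cap \Rinv \) is symmetric in its two operators. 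Applying \( \opp{(-)} \) to \( h \) and \( z \) then produces equivalences relating the relevant composites to the appropriate units, except that their variance (unit-to-composite versus composite-to-unit) is the reverse of what \( \Linv \) and \( \Rinv \) demand. To correct this I would invoke that \( \simeq \) is symmetric (Theorem \ref{thm:main_equivalences}): there exist equivalences \( h', z' \) pointing in the directions opposite to \( h, z \), again lying in \( \Eqv X \), so that \( \opp{h'}, \opp{z'} \in A \) serve as witnesses of the required variance. This establishes \( A \subseteq \Linv(A) \cap \Rinv(A) \), whence \( A \subseteq \Eqv \opp{X} \) by the coinductive method.

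The main obstacle is the verification of the two structural facts about \( \opp{(-)} \), and in particular the parity-dependent order-reversal of the \( (n-1) \)\nbd pasting: this is where the asymmetry of the opposite genuinely enters, and it must be matched carefully against the defining data of an equivalence. I expect both facts to follow either directly from the description of the duals \( \dual{J}{} \) as orientation-reversing functors or from the treatment of duals and composition in \cite{hadzihasanovic2024combinatorics}; once they are in hand, the remaining bookkeeping — tracking boundaries and the direction of the witnesses — is routine.
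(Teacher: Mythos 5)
Your proposal is correct and follows essentially the same route as the paper's proof: a coinductive argument showing \( \opp{(\Eqv X)} \subseteq \B(\opp{(\Eqv X)}) \), where the symmetry of \( \simeq \) (Theorem \ref{thm:main_equivalences}) is used to produce witnesses of the correct variance regardless of the parity of \( \dim u \), followed by applying the inclusion to \( \opp{X} \) and using that \( \opp{(-)} \) is an involution to obtain equality. Your treatment merely spells out the parity bookkeeping (boundary swaps and pasting-order reversal) that the paper compresses into the phrase ``regardless of the parity of \( \dim u \)''.
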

\begin{proof}
    We show that \( \opp{(\Eqv X)} \subseteq \B (\opp{(\Eqv X)}) \), by coinduction this will prove \( \opp{(\Eqv X)} \subseteq \Eqv \opp{X} \).
    Let \( \opp{u} \in \opp{(\Eqv X)} \), then \( u \) has a weak inverse \( v \). 
    Since \( \simeq \) is symmetric, regardless of the parity of \( \dim u \), we can always find equivalences \( h \) and \( z \) such that \( \opp{h} \) has type \( \un \bd{}{+} \opp{u} \celto \opp{v} \cp{} \opp{u} \) and \( \opp{z} \) has type \( \opp{u} \cp{} \opp{v} \celto \bd{}{-} \opp{u} \).
    Therefore, \( \opp{u} \in \B (\opp{(\Eqv X)}) \).
    Finally, applying the first part of the proof to \( \opp{X} \) yields \( \opp{(\Eqv \opp{X})} \subseteq \Eqv X \).
    Applying the functor \( \opp{(-)} \) to this inclusion shows that \( \Eqv \opp{X} \subseteq \opp{(\Eqv X)} \).
    This concludes the proof.
\end{proof}

\begin{lem} \label{lem:generalised_pasting_equivalences_is_equivalences}
    Let \( X \) be a diagrammatic sets, \( u, v \in \Eqv X \) be equivalences in \( X \) with \( k \eqdef \dim u = \dim v \), and \( u \gencp{k - 1} v \) be a generalised pasting.
    Then \( u \gencp{k - 1} v \) is an equivalence.
\end{lem}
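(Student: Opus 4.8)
The plan is to exhibit a two-sided weak inverse of \( w \eqdef u \gencp{k-1} v \); by the remark following the definition of weak inverse this is equivalent to \( w \) being an equivalence (we take for granted, as the statement presupposes, that \( w \) is round). Write \( u \colon a_u \celto b_u \) and \( v \colon a_v \celto b_v \), so that the overlap \( O \eqdef U \cap V \) satisfies \( O \submol \bd{}{+} u \) and \( O \submol \bd{}{-} v \), and use Theorem~\ref{thm:main_equivalences} to pick weak inverses \( u^*, v^* \). Since \( \bd{}{+} v^* = a_v \) and \( \bd{}{-} u^* = b_u \), the very same submolecule inclusions exhibit \( O \submol \bd{}{+} v^* \) and \( O \submol \bd{}{-} u^* \); the gluing data is thus identical to that of \( w \) with the input/output roles swapped, so (by the same combinatorics that makes \( w \) a generalised pasting) \( w^* \eqdef v^* \gencp{k-1} u^* \) is a well-defined generalised pasting. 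A boundary computation then gives \( \bd{}{-} w^* = \bd{}{+} w \) and \( \bd{}{+} w^* = \bd{}{-} w \), so \( w^* \) has the correct type \( \bd{}{+} w \celto \bd{}{-} w \) and \( w^* \cp{} w \) is parallel to \( \un{\bd{}{+} w} \).

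\textbf{The key computation.} Next I would prove \( w^* \cp{} w \simeq \un{\bd{}{+} w} \). Using Lemma~\ref{lem:formula_generalised_pasting} to present both \( w \) and \( w^* \) as iterated pastings at submolecules, and reassociating, I would display \( w^* \cp{} w \) in a form in which \( u^* \cp{} u \) occurs as a subdiagram. Replacing it by \( \un{b_u} \) — licensed by the weak-inverse equation \( u^* \cp{} u \simeq \un{b_u} \) together with the congruence of \( \simeq \) for \( \cpsub{} \) and \( \subcp{} \) (Theorem~\ref{thm:main_equivalences}(2)) — rewrites \( w^* \cp{} w \) as a diagram \( \simeq \) to \( v^* \gencp{k-1} \un{b_u} \gencp{k-1} v \). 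The inserted unit is absorbed using the left and right unitors, which are degenerate and hence equivalences (Theorem~\ref{thm:main_equivalences}(1)), and a further appeal to congruence with \( v^* \cp{} v \simeq \un{b_v} \) collapses what remains to \( \un{b_v} \) extended over the residual region \( \bd{}{+} u \setminus \inter O \); since \( \bd{}{+} w = \bd{}{+} v \cup (\bd{}{+} u \setminus \inter O) \), this is exactly \( \un{\bd{}{+} w} \). The dual identity \( w \cp{} w^* \simeq \un{\bd{}{-} w} \) follows by a symmetric argument (or formally by passing to the opposite via Lemma~\ref{lem:equivalence_in_opposite}). Hence \( w^* \) is a weak inverse of \( w \), and \( w \) is an equivalence.

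\textbf{Main obstacle.} The delicate part is the combinatorial bookkeeping in the key computation: because \( u \) and \( v \) overlap only along the proper submolecule \( O \), the fourfold composite \( w^* \cp{} w \) is not a linear pasting, so exhibiting \( u^* \cp{} u \) as a genuine subdiagram to which Theorem~\ref{thm:main_equivalences}(2) applies requires careful use of Lemma~\ref{lem:formula_generalised_pasting} and of the boundary formula for generalised pastings (Lemma~\ref{lem:boundary_generalised_pasting_is_generalised_pasting}), and one must track how \( O \) and the residual boundary regions recombine into the units \( \un{b_u} \), \( \un{b_v} \), and ultimately into \( \un{\bd{}{+} w} \). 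By comparison, constructing \( w^* \) and checking its type is routine.
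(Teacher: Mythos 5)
Your strategy is genuinely different from the paper's, and as written it has two real gaps. The first is the well-definedness of \( w^* \eqdef v^* \gencp{k-1} u^* \). A generalised pasting is not determined by the gluing data \( O \submol \bd{k-1}{+} V^* \), \( O \submol \bd{k-1}{-} U^* \) alone (this part does transfer, as you say): by definition one must also check, on the pushout \( V^* \cup U^* \) itself, that \( \bd{k-1}{\a}(V^* \cup U^*) \) is a molecule for both \( \a \), and that \( \bd{k-1}{-} V^* \submol \bd{k-1}{-}(V^* \cup U^*) \) and \( \bd{k-1}{+} U^* \submol \bd{k-1}{+}(V^* \cup U^*) \). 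The shapes \( U^*, V^* \) of the weak inverses are arbitrary round molecules sharing nothing with \( U, V \) beyond their (swapped) boundaries, so these conditions do not follow ``by the same combinatorics that makes \( w \) a generalised pasting''; transferring them requires computing the face structure of the new union (for instance, that every \( (k-1) \)\nbd element of \( O \) is an output face of a \( k \)\nbd element of \( V^* \), which uses roundness of \( V^* \)) --- a combinatorial transfer lemma that you neither state nor prove and that the paper nowhere provides. This part is repairable: defining instead \( w^* \eqdef (\bd{}{+} w \subcp{} v^*) \subcp{} u^* \), mirroring Lemma \ref{lem:formula_generalised_pasting}, needs only submolecule inclusions that are already entailed by applying that lemma to \( w \) itself, since \( u^*, v^* \) have the same boundaries as \( u, v \) with input and output exchanged. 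The second gap is the one you flag yourself but do not close: replacing \( u^* \cp{} u \) by \( \un{b_u} \) inside \( w^* \cp{} w \) presupposes exhibiting \( u^* \cp{} u \) as a rewritable subdiagram and presenting \( w^* \cp{} w \) as a context applied to it; for this fourfold, non-linear pasting that is an interchange-type rearrangement requiring the context machinery of Section \ref{sec:context}, and it is precisely where the mathematical content of your argument would lie. It remains a sketch.

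The paper's proof shows that all of this can be bypassed. By Lemma \ref{lem:formula_generalised_pasting}, \( u \gencp{k-1} v = u \cpsub{}(v \cpsub{k-1} \bd{}{+}(u \gencp{k-1} v)) \), and a right unitor gives \( u \gencp{k-1} v \simeq u \cpsub{}(v \cpsub{} \un(\bd{}{+}(u \gencp{k-1} v))) \). The right-hand side is an iterated pasting at submolecules of three equivalences --- \( u \), \( v \), and a unit, which is degenerate and hence an equivalence --- so it is an equivalence by the closure properties packaged in Theorem \ref{thm:main_equivalences}, and closure of equivalences under \( \simeq \) concludes. No inverse of \( u \gencp{k-1} v \) is ever constructed: the single idea is to fill the ``gap'' of the generalised pasting with a degenerate unit so that the closure of equivalences under pasting applies. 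Your proposal, by contrast, re-derives by hand (in a special case) that closure property, which is exactly the expensive part the cited machinery was designed to avoid.
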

\begin{proof}
    By Lemma \ref{lem:formula_generalised_pasting} and using a unitor,
    \begin{align*}
        u \gencp{k - 1} v &= u \cpsub{} (v \cpsub{k - 1} \bd{}{+} (u \gencp{k - 1} v)) \\
        &\simeq u \cpsub{} (v \cpsub{k - 1} \bd{}{+} (u \gencp{k - 1} v)) \cp{} \un (\bd{}{+} (u \gencp{k - 1} v)) \\
        &= u \cpsub{} (v \cpsub{} \un (\bd{}{+} (u \gencp{k - 1} v))).
    \end{align*}
    Then, \( u, v \) and \( \un (\bd{}{+} (u \gencp{k - 1} v)) \) are equivalences, hence by Theorem \ref{thm:main_equivalences} so are \( u \cpsub{} (v \cpsub{} \un (\bd{}{+} (u \gencp{k - 1} v))) \), and \( u \gencp{k - 1} v \).
\end{proof}

\begin{dfn} 
    Let \( X \) be a diagrammatic set. 
    Given \( A \subseteq \Rd X \), we let \( \Pst A \) be the closure of the set \( A \cup \Eqv X \) under the following clause: for all round diagrams \( u, v \in \Pst A \) of the same dimension, if a pasting \( u \cpsub{} v \), or \( v \subcp{} u \), is defined, then it belongs to \( \Pst A \). 
    Given a parallel pair \( u, v \), we write \( u \relcelto A v \) if there exists a round diagram \( h \colon u \celto v \) in \( \Pst A \).
\end{dfn}

\begin{rmk}
    A round diagram \( u \colon a \celto b \) is in \( \B(\Pst A) \) if and only if there exist round diagrams \( u^R, u^L \colon b \celto a \) such that \( \un b \relcelto A u^R \cp{} u \) and \( u \cp{} u^L \relcelto A \un a \).
\end{rmk}

\noindent We conclude this section on equivalences by extending the proof method of Comment \ref{comm:coinductive_proof_method} to the following: if \( A \subseteq \B(\Pst A) \), then \( A \subseteq \Eqv X \).

\begin{lem} \label{lem:celto_A_is_congruence_preorder}
    Let \( X \) be a diagrammatic set, \( A \subseteq \Rd X \), \( n \in \mathbb{N} \).
    Then for each \( n \geq 0 \), the relation \( \relcelto A \) 
    \begin{enumerate}
        \item contains the relation \( \simeq \),
        \item is a preorder relation on \( \gr{n}{\Rd X} \), and, for \( n > 0 \), is compatible with the operations \( - \cpsub{n - 1} - \) and \( - \subcp{n - 1} - \).
    \end{enumerate}
\end{lem}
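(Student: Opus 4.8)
The plan is to deduce all three properties from two facts about $\Pst A$: that it contains $\Eqv X$, and that it is closed under (submolecule) pasting of same-dimensional round diagrams. Each clause then mirrors the corresponding statement for $\simeq$ in Theorem \ref{thm:main_equivalences}.

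For (1), I would observe that if $u \simeq v$ then there is an equivalence $e \colon u \celto v$, which is by definition a round diagram, so $e \in \Eqv X \subseteq \Pst A$ and hence $u \relcelto A v$. Reflexivity of $\relcelto A$ (the first half of (2)) then follows for free, since $\simeq$ is reflexive by Theorem \ref{thm:main_equivalences} and $\relcelto A$ contains it; alternatively one exhibits the witness $\un u$, which is an equivalence because it is a degenerate round diagram.

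For transitivity, given witnesses $h \colon u \celto v$ and $g \colon v \celto w$ in $\Pst A$, I would take the vertical composite $h \cp{} g$: it is defined because $\bd{}{+} h = v = \bd{}{-} g$, it lies in $\Pst A$ by the closure clause (a codimension-one pasting of the same-dimensional diagrams $h, g$), and its boundaries are $u$ and $w$, so it witnesses $u \relcelto A w$. For compatibility with $\cpsub{n-1}$, given witnesses $h \colon u \celto u'$ and $g \colon v \celto v'$ together with the submolecule inclusion $\iota \colon \bd{n-1}{+} u \submol \bd{n-1}{-} v$ making $u \cpsub{n-1} v$ and $u' \cpsub{n-1} v'$ defined, I would form the horizontal composite $h \cpsub{n-1} g$. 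Since $\bd{n-1}{+} h = \bd{n-1}{+} u$ and $\bd{n-1}{-} g = \bd{n-1}{-} v$, the same inclusion $\iota$ makes this pasting defined, so it lies in $\Pst A$ by the closure clause; and using the distributivity of boundaries over generalised pastings (the remark after Lemma \ref{lem:boundary_generalised_pasting_is_generalised_pasting}) I would compute $\bd{}{-}(h \cpsub{n-1} g) = u \cpsub{n-1} v$ and $\bd{}{+}(h \cpsub{n-1} g) = u' \cpsub{n-1} v'$, which is exactly the required witness. The dual operation $\subcp{n-1}$ is treated symmetrically using the $\subcp{}$ half of the closure clause.

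The main obstacle I anticipate is the compatibility clause: I must ensure that the horizontal composite $h \cpsub{n-1} g$ of the two witnessing $(n+1)$-diagrams is again a round diagram in $\Pst A$, namely that the closure clause for $\Pst A$ is applied at the sub-top level $n-1$, and that Lemma \ref{lem:boundary_generalised_pasting_is_generalised_pasting} correctly identifies its source and target as $u \cpsub{n-1} v$ and $u' \cpsub{n-1} v'$. The remaining clauses reduce to routine reflexivity and transitivity bookkeeping together with the inclusion $\Eqv X \subseteq \Pst A$.
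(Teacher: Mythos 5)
Your handling of clause (1), reflexivity, and transitivity coincides with the paper's proof: the witness for transitivity is the codimension-one pasting \( h \cp{} g \), which is an instance of the closure clause of \( \Pst A \). The gap is in the compatibility step, exactly at the point you flag as the main obstacle. The closure clause defining \( \Pst A \) only permits pastings \( u \cpsub{} v \) and \( v \subcp{} u \) of same-dimensional round diagrams at the \emph{omitted} index, which by the paper's convention is \( \min\set{\dim u, \dim v} - 1 \), i.e.\ codimension one. Your witnesses \( h \) and \( g \) are \( (n+1) \)\nbd dimensional, so the clause only yields pastings at their \( n \)\nbd boundary; the horizontal composite \( h \cpsub{n-1} g \) is a pasting at codimension two, and its membership in \( \Pst A \) simply does not follow from the closure clause. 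Worse, \( h \cpsub{n-1} g \) is in general not a \emph{round} diagram: already for \( n = 1 \), pasting two \( 2 \)\nbd globes at a \( 0 \)\nbd cell (the shape of a horizontal composite of \( 2 \)\nbd cells) produces a molecule whose input and output \( 1 \)\nbd boundaries intersect in the middle vertex, which lies in neither \( 0 \)\nbd boundary, so roundness fails. Since \( u \relcelto{A} v \) requires a \emph{round} diagram in \( \Pst A \) as witness, your candidate is disqualified on both counts; your boundary computation via Lemma \ref{lem:boundary_generalised_pasting_is_generalised_pasting} is correct as far as it goes, but it does not repair either defect.

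The paper's proof evades both problems by never leaving codimension one: it takes the unit \( \un (u \cpsub{} v) \), which is degenerate, hence an equivalence, hence in \( \Pst A \), and pastes \( h \) and then \( g \) into the rewritable subdiagrams \( u \submol u \cpsub{} v \) and \( v \submol u' \cpsub{} v \) of its output boundary, forming \( (\un (u \cpsub{} v) \subcp{} h) \subcp{} g \colon u \cpsub{} v \celto u' \cpsub{} v' \). Both pastings are codimension-one pastings at a submolecule of \( (n+1) \)\nbd dimensional round diagrams, so they are instances of the closure clause and preserve roundness. Replacing your horizontal composite with this whiskered unit makes the compatibility clause go through; the case of \( - \subcp{n-1} - \) is then handled dually.
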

\begin{proof}
    By construction, \( \Eqv X \subseteq \Pst(A) \) so that \( \relcelto A \) contains \( \simeq \).
    Then, the reflexivity of \( \simeq \) implies the one of \( \relcelto A \).
    For transitivity, if \( h \colon u \celto v \) and \( z \colon v \celto w \) are in \( \Pst(A) \), \( h \cp{} z \colon u \celto w \) is again in \( \Pst(A) \), so that \( u \relcelto{A} w \).
    Now suppose that \( u \relcelto A u' \) and \( v \relcelto A v' \) are round diagrams of the same dimension and that a pasting at a submolecule \( u \cpsub{} v \) is defined.
    By assumption, there are round diagrams \( h \colon u \celto u' \) and \( z \colon v \celto v' \) both in \( \Pst(A) \).
    Then, \( (\un (u \cpsub{} v) \subcp{} h) \subcp{} z \colon u \cpsub{} v \celto u' \cpsub{} v' \) is in \( \Pst(A) \). 
    This shows that \( u \cpsub{} v \relcelto A u' \cpsub{} v' \).
    The case where a pasting \( v \subcp{} u \) is defined is dual.
    This concludes the proof.
\end{proof}

\begin{prop} \label{prop:coinduction_on_closure_pst}
    Let \( X \) be a diagrammatic set, \( A \subseteq \Rd X \) be a subset of round diagrams such that \( A \subseteq \B(\Pst A) \).
    Then \( \Pst A \subseteq \Eqv X \).
\end{prop}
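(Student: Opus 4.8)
The strategy is to apply the coinductive proof method of Comment \ref{comm:coinductive_proof_method} with \( \Pst A \) in the role of the post-fixed point: it suffices to prove \( \Pst A \subseteq \B(\Pst A) \), and then the method yields \( \Pst A \subseteq \Eqv X \) directly. I would establish \( \Pst A \subseteq \B(\Pst A) \) by structural induction on the clause generating \( \Pst A \), that is, by checking that the subset \( \set{w \in \Pst A \mid w \in \B(\Pst A)} \) contains the generators \( A \cup \Eqv X \) and is closed under pasting at a submolecule; by minimality of \( \Pst A \) it then exhausts \( \Pst A \).

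For the generators, \( A \subseteq \B(\Pst A) \) holds by hypothesis, and \( \Eqv X \subseteq \B(\Pst A) \) follows because \( \B = \Linv \cap \Rinv \) is monotone (a larger argument only provides more witnessing diagrams \( h, z \)) and \( \Eqv X \subseteq \Pst A \), so that \( \Eqv X = \B(\Eqv X) \subseteq \B(\Pst A) \), using that \( \Eqv X \) is a fixed point of \( \B \). For the inductive step, suppose \( u, v \in \B(\Pst A) \) are round of equal dimension with \( w \eqdef u \cpsub{} v \) defined, and choose inverse data \( u^R, u^L, v^R, v^L \) witnessing \( u, v \in \B(\Pst A) \) as in the remark characterising \( \B(\Pst A) \). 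I would take the reverse composite \( w^R \eqdef v^R \cpsub{} u^R \) as a candidate right inverse and verify \( \un{\bd{}{+} w} \relcelto{A} w^R \cp{} w \) via the chain \( \un{\bd{}{+} w} \relcelto{A} v^R \cp{} v \simeq v^R \cp{} \un{\bd{}{+} u} \cp{} v \relcelto{A} v^R \cp{} (u^R \cp{} u) \cp{} v = w^R \cp{} w \), where the middle step inserts the cancellation \( \un{\bd{}{+} u} \relcelto{A} u^R \cp{} u \) and uses that \( \relcelto{A} \) is a congruence for \( \cpsub{} \) and \( \subcp{} \) (Lemma \ref{lem:celto_A_is_congruence_preorder}); since \( \relcelto{A} \) contains \( \simeq \) and is transitive, this gives \( w \in \Rinv(\Pst A) \). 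The left inverse \( w^L \eqdef v^L \cpsub{} u^L \) is handled symmetrically, and the case \( w = v \subcp{} u \) dually, so that \( w \in \Linv(\Pst A) \cap \Rinv(\Pst A) = \B(\Pst A) \).

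The main obstacle is the inductive step once the pastings are genuine pastings at a submolecule rather than composites along a full boundary: one must confirm that the reverse composites \( v^R \cpsub{} u^R \) and \( v^L \cpsub{} u^L \) are well-defined round diagrams parallel to \( w \), and track the generalised-pasting bookkeeping and the precise unitors so that the middle cancellation aligns. The engine throughout is Lemma \ref{lem:celto_A_is_congruence_preorder}, which lets \( \relcelto{A} \) play exactly the role that the congruence \( \simeq \) plays in the proof that equivalences are closed under pasting; stripped of the submolecule generality, the argument is simply that the inverse of a composite is the reverse composite of the inverses.
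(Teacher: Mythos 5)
Your proposal follows essentially the same route as the paper's proof: coinduction reduced to a structural induction on \( \Pst A \), with the base case handled by the hypothesis on \( A \) together with the fixed-point property of \( \Eqv X \), and the inductive step for \( w = u \cpsub{\iota} v \) witnessed by the reverse composite of the inverses and verified through the preorder/congruence properties of \( \relcelto{A} \) (Lemma \ref{lem:celto_A_is_congruence_preorder}) plus unitor insertions, exactly as in the paper. The only slip is directional: the candidate inverses must be \( v^R \subcp{\iota} u^R \) and \( v^L \subcp{\iota} u^L \) rather than \( v^R \cpsub{} u^R \) and \( v^L \cpsub{} u^L \), since \( \bd{}{-} u^R = \bd{}{+} u \submol \bd{}{-} v = \bd{}{+} v^R \) is what makes the pasting at a submolecule well defined --- precisely the bookkeeping you flagged as the remaining obstacle.
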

\begin{proof}
    We will prove, by structural induction, that for all \( w \in \Pst A \), we have \( w \in \B(\Pst A) \).
    By coinduction, this will prove \( \Pst A \subseteq \Eqv X \).
    The base case \( w \in A \cup \Eqv X \) holds by assumption on \( A \) and definition of \( \Eqv X \).
    We consider the case \( w = u \cpsub{\iota} v \colon a \celto b \), with \( u, v \in \B(\Pst A) \).
    Since \( u, v \in \Rinv(\Pst A) \), we have round diagrams \( v^R \) and \( u^R \) such that \( \un b \relcelto A v^R \cp{} v \) and \( \un\bd{}{+} u \relcelto{A} u^R \cp{} u \).
    Using the unitor \( \lun{\iota} v \), we also know that \( v^R \cp{} v \simeq v^R \cp{} ((\un \bd{}{+} u) \cpsub{} v) \).
    Thus Lemma \ref{lem:celto_A_is_congruence_preorder} shows that \( \un b \relcelto{A} (v^R \subcp{} u^R) \cp{} (u \cpsub{} v) \), that is, \( u \cpsub{\iota} v \in \Rinv(\Pst A) \).
    Next, since \( u, v \in \Linv(\Pst A) \), we have round diagrams \( v^L \) and \( u^L \) such that \( u \cp{} u^L \relcelto A \un \bd{}{-} u \) and \( v \cp{} v^L \relcelto A \un \bd{}{-} v \).
    Then,
    \begin{align*}
        (u \cpsub{} v) \cp{} (v^L \subcp{} u^L) 
            &=\hspace{6pt} u \cpsub{} ((v \cp{} v^L) \subcp{} u^L) \\
            &\relcelto A u \cpsub{} ((\un \bd{}{-} v) \subcp{} u^L) && \text{by Lemma \ref{lem:celto_A_is_congruence_preorder}} \\
            &\simeq\hspace{6pt} (\un a \subcp{} u) \cp{} ((\un \bd{}{-} v) \subcp{} u^L) && \text{by left unitor} \\
            &\simeq\hspace{6pt} (\un a \subcp{} u) \subcp{} u^L && \text{by right unitor} \\
            &=\hspace{6pt} \un a \subcp{} (u \cp{} u^L) \\
            &\relcelto A \un a \subcp{} \un \bd{}{-} u && \text{by Lemma \ref{lem:celto_A_is_congruence_preorder}} \\
            &\simeq\hspace{6pt} \un a && \text{by right unitor}.
    \end{align*} 
    By Lemma \ref{lem:celto_A_is_congruence_preorder}, \( u \cpsub{} v \in \Linv(\Pst A) \).
    Therefore \( u \cpsub{} v \in \B(\Pst A) \).
    The case where \( w = v \subcp{} u \) is dual, so this concludes the coinduction and the proof.
\end{proof}

\section{Gray product of contexts} \label{sec:context}

\subsection{Contexts and natural equivalences}

\begin{dfn} [Context for pasting diagrams]
    Let \( X \) be a diagrammatic set.
    For \( k \) ranging over \( \mathbb N \) and \( v, w \) over parallel pairs in \( \gr{k}{\Pd X} \), the class of \emph{context on \( \Pd X(v, w) \)} is the inductive class of morphisms of \( \omega \)\nbd graphs in degree \( > k \) with domain \( \Pd X(v, w) \) generated by the following clauses.
    \begin{enumerate}
        \item (\textit{Identity}). The identity \( - \colon \Pd X(v, w) \to \Pd X(v, w) \) is a context on \( \Pd(v, w) \).
        \item (\textit{Left Pasting}). For all \( u \in \gr{k + 1}{\Rd X} \) and rewritable \( \iota \colon \bd{}{+} u \submol v \),
        \begin{equation*}
            u \cpsub{\iota} - \colon \Pd X(v, w) \to \Pd X(\subs{v}{\bd{}{-}u}{\iota(\bd{}{+}u)}, w)
        \end{equation*}
        is a context on \( \Pd X(v, w) \).
        \item (\textit{Right Pasting}). For all \( u \in \gr{k + 1}{\Rd X} \) and rewritable \( \iota \colon \bd{}{-} u \submol w \),
        \begin{equation*}
            - \subcp{\iota} u \colon \Pd X(v, w) \to \Pd X(v, \subs{w}{\bd{}{+}u}{\iota(\bd{}{-}u)})
        \end{equation*}
        is a context on \( \Pd X(v, w) \).
        \item (\textit{Composition}). If \( \F \colon \Pd X(v, w) \to \Pd X(v', w') \) and \( \G \) is a context on \( \Pd X(v', w') \), then the composite \( \G\F \) is a context on \( \Pd X(v, w) \).
        \item (\textit{Promotion}) If \( k > 0 \) and \( \F \) is a context on \( \Pd X(\bd{}{-}v, \bd{}{+} w) \), then
        \begin{equation*}
            \F_{v, w} \eqdef \restr{\F}{\Pd X(v, w)} \colon \Pd X(v, w) \to \Pd X(\F v, \F w)    
        \end{equation*}
        is a context on \( \Pd X(v, w) \).
    \end{enumerate}
    We let \( \dim \F \eqdef k + 1 \) be the \emph{dimension} of any context \( \F \) on \( \Pd X(v, w) \).
\end{dfn}

\begin{rmk}
    For every context \( \F \) on \( \Pd X(v, w) \) and diagram \( u \colon v \celto w \), there is an evident subdiagram \( u \submol \F u \).
\end{rmk}

\begin{dfn} [Context determined by a subdiagram]
    Let \( X \) be a diagrammatic set and \( \iota \colon u \submol v \) be a subdiagram with \( \dim u = \dim v > 0 \).
    The \emph{context determined by \( \iota \)} is unique context \( \F \colon \Pd X(\bd{}{-} u, \bd{}{+} u) \to \Pd X(\bd{}{-}v, \bd{}{+}v) \)
    such that the evident subdiagram \( u \submol \F u \) is equal to \( \iota \).
\end{dfn}

\begin{rmk}
    The context determined by a rewritable submolecule \( \iota \colon u \submol v \) is the context mapping any round diagram \( u' \colon \bd{}{-} u \celto \bd{}{+} u \) to the substitution \( \subs{v}{u'}{u} \).
\end{rmk}

\begin{rmk}
    If \( \F \) is a context on \( \Pd X(v, w) \), then for any diagram \( u \colon v \celto w \), the context determined by the evident subdiagram \( u \submol \F u \) is \( \F \) itself.
\end{rmk}

\begin{dfn} [Left and right context]
    The class of \emph{left contexts} (respectively \emph{right contexts}) is the inductive subclass of contexts generated without the clause left pasting (respectively right pasting).      
\end{dfn}

\begin{dfn} [Trim contexts]
    The class of \emph{trim contexts} is the inductive subclass of contexts generated without the clause promotion.
\end{dfn}

\begin{dfn} [\( A \)\nbd contexts]
    Let \( X \) be a diagrammatic set, and \( A \subseteq \Rd X \). 
    The class of \emph{\( A \)\nbd contexts} is the inductive subclass of contexts obtained by restricting the clauses left pasting and right pasting to \( u \in A \).
    If \( A = \Eqv X \), we speak of \emph{weakly invertible context}.
\end{dfn}

\begin{rmk}
    We may freely combine the terminology: for instance a weakly invertible left trim context is either the identity, or a composition of contexts of the form \( - \subcp{} u \), with \( u \in \Eqv X \). 
\end{rmk}

\begin{dfn} [Shape of a context]
    Let \( X \) be a diagrammatic set, \( v, w \) be parallel round diagrams of shape \( V \) and \( W \) respectively, and \( \F \) be a context on \( \Pd X(v, w) \).
    There exists a unique pair of a molecule \( U' \), together with an inclusion \( (V \celto W) \incl U' \) such that for all round diagrams \( u \colon v \celto w \) of shape \( U \), the pasting diagram \( \F u \) has a shape given by the substitution \( \subs{U'}{U}{(V \celto W)} \).
    Then, the molecule \( U' \) is called the \emph{shape of \( \F \)}.
\end{dfn}

\begin{dfn} [Round context]
    Let \( X \) be a diagrammatic set, \( v, w \) a parallel pair of round diagrams and \( \F \) a context on \( \Pd X(v, w) \).
    We say that \( \F \) is \emph{round} if its shape is round.
\end{dfn}

\begin{lem}\label{lem:decomposition_context}
    Let \( X \) be a diagrammatic set and \( \F \) be a context on \( \Pd(v, w) \) such that \( \dim \F > 1 \). 
    Then there exist 
    \begin{enumerate}
        \item a context \( \G \) on \( \Pd X(\bd{}{-}v, \bd{}{+}w) \), and
        \item a trim context \( \fun{T} \) on \( \Pd X(\G v, \G w) \)
    \end{enumerate}
    such that \( \F = \fun{T}\G_{v, w} \). Moreover, 
    \begin{enumerate}
        \item if \( \F \) is round, then \( \G \) and \( \fun{T} \) are round.
        \item if \( \F \) is left (respectively right), then \( \fun{T} \) and \( \fun{G} \) can be chosen left (respectively right).
        \item if \( \F \) is weakly invertible, then \( \fun{T} \) and \( \fun{G} \) can be chosen weakly invertible.
    \end{enumerate}
\end{lem}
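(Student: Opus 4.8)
The plan is to prove the statement by structural induction on the generation of the context $\F$, after observing that the desired factorisation is stable under each generating clause of the definition of context. Call a context on $\Pd X(v, w)$ \emph{normalised} if it is of the form $\fun{T} \after \G_{v, w}$ for a context $\G$ on $\Pd X(\bd{}{-}v, \bd{}{+}w)$ and a trim context $\fun{T}$ on $\Pd X(\G v, \G w)$; the goal is to show that every context of dimension $> 1$ is normalised. The identity is normalised by taking $\G$ and $\fun{T}$ to be identities. A single left or right pasting is already trim, so it is normalised with $\G = \idd{}$ and $\fun{T}$ the pasting itself (the boundary hom exists since $\dim \F > 1$ forces $k > 0$). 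If $\F = \H_{v, w}$ is a promotion, then it is normalised with $\fun{T} = \idd{}$ and $\G = \H$. It remains to treat the composition clause, for which it suffices to show that normalised contexts are closed under composition.

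So write $\F = \G' \after \F'$ with $\F' \colon \Pd X(v, w) \to \Pd X(v', w')$ and $\G'$ a context on $\Pd X(v', w')$; both have dimension $> 1$, so by the inductive hypothesis $\F' = \fun{T}_1 \after (\G_1)_{v, w}$ and $\G' = \fun{T}_2 \after (\G_2)_{v', w'}$ with $\fun{T}_1, \fun{T}_2$ trim and $\G_1, \G_2$ boundary contexts. The composite becomes $\fun{T}_2 \after (\G_2)_{v', w'} \after \fun{T}_1 \after (\G_1)_{v, w}$, so the only obstruction to normalisation is the middle factor $(\G_2)_{v', w'} \after \fun{T}_1$, in which a promotion is applied \emph{after} a trim context. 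The heart of the argument is thus the following interchange claim: for a trim context $\fun{T} \colon \Pd X(v, w) \to \Pd X(v', w')$ and a context $\G$ on $\Pd X(\bd{}{-}v', \bd{}{+}w')$, one has $\G_{v', w'} \after \fun{T} = \fun{T}' \after \G'_{v, w}$ for some trim $\fun{T}'$ and some context $\G'$ on $\Pd X(\bd{}{-}v, \bd{}{+}w)$. Granting this, the middle factor rewrites as $\fun{T}_3 \after (\G_3)_{v, w}$, and since promotion commutes with composition of boundary contexts, $(\G_3)_{v, w} \after (\G_1)_{v, w} = (\G_3 \after \G_1)_{v, w}$, the whole composite collapses to $(\fun{T}_2 \after \fun{T}_3) \after (\G_3 \after \G_1)_{v, w}$, which is normalised.

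I would prove the interchange claim by induction on the trim context $\fun{T}$: the identity and composition cases are immediate (the latter by iterating the claim), so everything reduces to a single pasting, say $\fun{T} = u \cpsub{\iota} -$ with $u$ a round $(k+1)$-diagram. Here the point is that $\G_{v', w'}$ acts only in dimensions $\le k$ whereas $u \cpsub{\iota} -$ pastes at dimension $k+1$; since the pasting diagrams of $X$ satisfy all the equations of a strict $\omega$-category, these two operations obey the exchange law. Concretely, applying $\G_{v', w'}$ after pasting $u$ equals first applying a boundary context $\G'$ to the variable and then pasting the unit-whiskered diagram $u' \eqdef \G^{\partial}(u)$, where $\G^{\partial}$ is obtained from $\G$ by replacing each boundary pasting with the corresponding whiskering by a unit; one identifies the two shapes and checks in particular that $u'$ is again round, using Lemma \ref{lem:formula_generalised_pasting} and Lemma \ref{lem:boundary_generalised_pasting_is_generalised_pasting} to compute the boundaries of the generalised pastings involved. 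This yields $\fun{T}' = u' \cpsub{} -$ and $\G'$ equal to $\G$ suitably restricted to the part of the boundary coming from the variable. I expect this shape bookkeeping — matching the generalised pastings produced on the two sides of the exchange law and verifying roundness of $u'$ — to be the main obstacle.

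Finally I would track the three refinements through the induction. Roundness is preserved because the shapes of $\fun{T}$ and $\G$ arise as sub- and quotient-molecules of the round shape of $\F$, whose relevant boundaries are again round by Lemma \ref{lem:boundary_generalised_pasting_is_generalised_pasting}. Leftness (resp. rightness) is preserved since none of the rewrites above introduces a left (resp. right) pasting where there was none: in the atomic step a right pasting is exchanged into a right pasting and $\G$ stays left. Weak invertibility is preserved because in the atomic step $u' = \G^{\partial}(u)$ is a pasting of the equivalence $u$ with units; by Theorem \ref{thm:main_equivalences} units are equivalences and equivalences are a congruence for the pasting operations, so $u'$ is again an equivalence, whence $\fun{T}' = u' \cpsub{} -$ remains weakly invertible while $\G' = \G$ stays weakly invertible.
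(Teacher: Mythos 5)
Your overall architecture --- structural induction on the generating clauses of \( \F \), with everything reduced to an interchange claim letting a promotion \( \G_{v',w'} \) slide past a trim context \( \fun{T} \) --- is the right one, and it matches the shape of the proof the paper defers to (Lemmas 3.3 and 3.13 of \cite{chanavat2024equivalences}). But the atomic case of your interchange claim fails as written: the diagram \( u' \eqdef \G^{\partial}(u) \) that you propose to paste is \emph{not} round in general, so \( u' \cpsub{} - \) is not an instance of the left-pasting clause, and your \( \fun{T}' \) is not a trim context. Concretely, take \( k = 1 \), let \( u \colon v_1 \celto v_2 \) be a round \( 2 \)\nbd diagram with \( v_1, v_2 \colon a \to b \), and let \( \G = r \cpsub{} - \) be whiskering by a \( 1 \)\nbd diagram \( r \colon a' \to a \). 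Any version of ``\( u \) whiskered by (a unit on) \( r \)'' has input \( 1 \)\nbd boundary \( r \cp{0} v_1 \) and output \( 1 \)\nbd boundary \( r \cp{0} v_2 \), and their intersection contains all of \( r \), whereas roundness would force it to be the \( 0 \)\nbd boundary \( \set{a', b} \). So \( u' \notin \Rd X \), and no amount of bookkeeping with Lemmas \ref{lem:formula_generalised_pasting} and \ref{lem:boundary_generalised_pasting_is_generalised_pasting} can repair this: a whiskering genuinely changes the \( (k-1) \)\nbd boundaries, while every trim context preserves them, so the whiskered data can never be pushed onto the trim side. The failure also undercuts your treatment of weak invertibility, which presupposes that \( u' \) is an admissible pasted diagram.

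The repair is local, and in fact simplifies the proof: in the atomic case do not modify the pasted diagram at all; modify the \emph{inclusion}. For \( \fun{T} = u \cpsub{\iota} - \) with \( \iota \colon \bd{}{+}u \submol v \) rewritable, one has
\begin{equation*}
    \G_{v',w'} \after (u \cpsub{\iota} -) = (u \cpsub{\kappa} -) \after \G_{v,w},
    \qquad
    \kappa \colon \bd{}{+}u \overset{\iota}{\submol} v \submol \G v,
\end{equation*}
where the second inclusion is the evident subdiagram \( v \submol \G v \). The inclusion \( \kappa \) is again rewritable (its source \( \bd{}{+}u \) is round of dimension \( k = \dim \G v \)), and the equality holds because both sides are the same amalgamation of \( u \), the shape of \( \G \), and the argument: this is your exchange-law intuition, but with the whiskering absorbed into the submolecule inclusion \( \kappa \) rather than into the diagram \( u \) --- absorbing units into the locus of pasting is exactly what pasting at a submolecule is designed for. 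With this fix \( \G' = \G \) and \( \fun{T}' = u \cpsub{\kappa} - \), so leftness, rightness and weak invertibility are preserved trivially (the pasted diagram \( u \) is unchanged, no congruence argument needed), and the rest of your induction, including the collapse of the composite via \( (\G_2)_{\G_1 v, \G_1 w} \after (\G_1)_{v,w} = (\G_2\G_1)_{v,w} \), goes through as you wrote it.
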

\begin{proof}
    A straightforward variation of \cite[Lemma 3.3 and 3.13]{chanavat2024equivalences}.
\end{proof}

\begin{comm} \label{comm:extended_pasting}
    In light of Lemma \ref{lem:decomposition_context}, we can alternatively generate the class of contexts with the clauses identity, composition, and the extension of left pasting and right pasting to all \( n \)\nbd dimensional contexts of the form \( u \cpsub{k, \iota} - \) and \( - \subcp{k, \iota} u \) for \( 0 \le k < n \) and \( u \in \gr{k+1}{\Rd X} \).
\end{comm}

\begin{dfn} [Natural equivalence of round contexts]
    Let \( X \) be a diagrammatic set, and \( \F, \G \colon \Pd X(v, w) \to \Pd X(v', w') \) be round contexts.
    A family of equivalences \( \th a \colon \F a \celto \G a \) indexed by round diagrams \( a \colon v \celto w \) is a \emph{natural equivalence from \( \F \) to \( \G \)} if, for all round diagrams \( a, b \colon v \celto w \), there exists a natural equivalence from \( {\F_{a, b}-} \cp{} \th b \) to \( \th a \cp{} {\G_{a, b}-} \) as round contexts \( \Pd X(a, b) \to \Pd X(\F a, \G b) \).
    We write \( \F \simeq \G \) if there exists a natural equivalence from \( \F \) to \( \G \). 
\end{dfn}

\begin{rmk}
    By \cite[Proposition 3.24]{chanavat2024equivalences}, the relation \( \simeq \) is an equivalence relations on round contexts.
\end{rmk}

\begin{dfn} [Weak inverse of a round context]
    Let \( X \) be a diagrammatic set, and \( \F \colon \Pd X(v, w) \to \Pd X(v', w') \) be a weakly invertible round context.
    A \emph{weak inverse} of \( \F \) is a weakly invertible context \( \F^* \colon \Pd X(v', w') \to \Pd X(v, w) \) such that \( \F^* \F \simeq - \) and \( \F \F^* \simeq - \).
\end{dfn}

\begin{thm} \label{thm:weakly_invertible_context_have_weak_inverse}
    Every weakly invertible round context has a weakly invertible weak inverse.
\end{thm}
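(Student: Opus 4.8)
The plan is to construct a weak inverse explicitly by recursion on the structure of the context, organising the recursion by the dimension $\dim \F$ and using Lemma~\ref{lem:decomposition_context} to peel off a single promotion at each step. Throughout I would rely on two closure properties of natural equivalence of round contexts, both of which are part of the theory of natural equivalences developed in \cite{chanavat2024equivalences} and refine the fact that $\simeq$ is an equivalence relation: that $\simeq$ is a \emph{congruence for composition} of round contexts, and that it is \emph{preserved by promotion}. The first makes weak inverses compose: if $\F^*$ and $\G^*$ are weak inverses of composable round contexts $\F$ and $\G$, then $\F^*\G^*$ is a weak inverse of $\G\F$, since $\F^*\G^*\G\F \simeq \F^*\F \simeq -$ and symmetrically.

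I would then settle the trim case. By Comment~\ref{comm:extended_pasting}, a weakly invertible round trim context is a finite composite of extended pastings $u \cpsub{} -$ and $- \subcp{} u$ (at possibly lower boundaries) with $u \in \Eqv X$. For a single left pasting $u \cpsub{} -$, choose a weak inverse $u^*$ of the equivalence $u$, which is again an equivalence by Theorem~\ref{thm:main_equivalences}, and take $u^* \cpsub{} -$; then
\begin{equation*}
    u^* \cpsub{} (u \cpsub{} c) = (u^* \cp{} u) \cpsub{} c \simeq \un{} \cpsub{} c \simeq c
\end{equation*}
for every round $c$, using $u^* \cp{} u \simeq \un{}$ and the left unitor, and symmetrically for the other composite, so $u^* \cpsub{} -$ is a weak inverse; the right pasting case is dual. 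Composing these along the length of the trim context, via the congruence property, yields a weak inverse for every weakly invertible round trim context, the roundness clauses of Lemma~\ref{lem:decomposition_context} guaranteeing that the intermediate contexts remain round.

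Next comes the induction on $\dim \F$. The base case $\dim \F = 1$ forces $\F$ to be trim, since promotion is unavailable in dimension $1$, and is covered above. For $\dim \F > 1$, Lemma~\ref{lem:decomposition_context} gives $\F = \fun{T}\,\G_{v,w}$ with $\fun{T}$ a weakly invertible round trim context and $\G$ a weakly invertible round context on $\Pd X(\bd{}{-}v, \bd{}{+}w)$ of strictly smaller dimension, so by the inductive hypothesis $\G$ has a weak inverse $\G^*$. The candidate weak inverse of the promotion $\G_{v,w}$ is the promoted inverse $(\G^*)_{\G v,\,\G w}$ corrected on the boundary: since $\G^*\G \simeq -$ yields equivalences $\eta_v\colon \G^*\G v \celto v$ and $\eta_w\colon \G^*\G w \celto w$, one post-composes with the weakly invertible round trim context $b \mapsto (\eta_v^* \cpsub{} b) \subcp{} \eta_w$ to land in $\Pd X(v,w)$. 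Combining the weak inverse of $\fun{T}$ with this weak inverse of $\G_{v,w}$ by the composition property gives a weak inverse of $\F$.

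The main obstacle is not the construction of the candidate inverses but the verification of the two closure properties of $\simeq$ and, above all, that the boundary-corrected promotion $(\G^*)_{\G v,\G w}$ really is a weak inverse of $\G_{v,w}$: since $\simeq$ is a coinductive relation and promotion restricts a context to a sub-$\omega$-graph, one must show that a natural equivalence of contexts on the boundary restricts to, and that its coinductive witnesses assemble into, a natural equivalence after promotion, interacting correctly with the unitors and the witnessing equivalences $\eta_v, \eta_w$. This bookkeeping of coinductive data through promotion and composition is where essentially all of the work lies; everything else reduces to Theorem~\ref{thm:main_equivalences} and the congruence of $\simeq$.
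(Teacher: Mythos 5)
First, a point of comparison: the paper does not prove this theorem at all --- its ``proof'' is the citation \cite[Theorem 5.22]{chanavat2024equivalences} --- so the only meaningful question is whether your argument would stand on its own as a proof of that external result. Your skeleton (induction on \( \dim \F \), Lemma~\ref{lem:decomposition_context} to split off a trim context and a promotion, inversion of a single pasting \( u \cpsub{} - \) by \( u^* \cpsub{} - \)) is a sensible and probably the intended shape of such a proof, but as written it has a genuine gap, located exactly where you place ``essentially all of the work'': you never construct the data that the statement demands. A weak inverse requires \emph{natural} equivalences \( \F^*\F \simeq - \) and \( \F\F^* \simeq - \), and natural equivalence is itself coinductive: a family of equivalences indexed by \emph{all} round diagrams \( a \colon v \celto w \) of every dimension, together with, for each parallel pair, a further natural equivalence one dimension up, and so on. Your trim-context computation produces only the bottom layer of this tower, and only for diagrams \( c \) of the minimal dimension \( \dim \F \): when \( \dim c > \dim \F \), the pasting \( u \cpsub{} c \) happens at a lower boundary, and neither the congruence of Theorem~\ref{thm:main_equivalences} (stated only for \( - \cpsub{n-1} - \) and \( - \subcp{n-1} - \) on \( \gr{n}{\Rd X} \)) nor the unitors (defined only for rewritable, hence codimension-one, subdiagrams) apply as you invoke them; treating those members of the family is precisely the promotion-compatibility you defer.

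Second, the two ``closure properties'' you take as given --- that \( \simeq \) is a congruence for composition of round contexts (which for post-composition already presupposes that promotions of weakly invertible contexts send equivalences to equivalences), and that weak inversion passes through promotion after boundary correction --- are not available in the present paper, which imports from the reference only that \( \simeq \) is an equivalence relation (\cite[Proposition 3.24]{chanavat2024equivalences}); citing them from the same reference is circular in spirit, since establishing exactly these statements is what the proof of Theorem 5.22 there consists of. A smaller inaccuracy: Lemma~\ref{lem:decomposition_context} guarantees that the two factors \( \fun{T} \) and \( \G \) of a round context can be chosen round, but says nothing about the partial composites arising when you invert a trim context pasting by pasting, so ``the intermediate contexts remain round'' is unjustified --- and weak inverses are only defined for round contexts. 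In short, the proposal is a reasonable plan in which every hard step is deferred or assumed; as a proof it is incomplete.
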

\begin{proof}
    See \cite[Theorem 5.22]{chanavat2024equivalences}.
\end{proof}

\begin{lem} \label{lem:closure_left_right_context}
    Let \( X \) be a diagrammatic set, \( A \subseteq \Rd X \) be a subset of round diagrams, \( u \colon a \celto b \), \( v \colon a' \celto b' \) be round diagrams in \( X \) of dimension \( > 1 \), \( \G \) be a weakly invertible round context on \( \Pd X(\bd{}{-}a, \bd{}{+} b) \), and \( \fun{T} \colon \Pd X(\G a, \G b) \to \Pd X(a', b' ) \) be a trim context.
    Then
    \begin{enumerate}
        \item if \( \fun{T} \) is left, \( v \in \Linv(\Pst A) \), and \( \fun{T}\G_{a, b}u \relcelto A v \), then \( u \in \Linv(\Pst A) \);
        \item if \( \fun{T} \) is right, \( v \in \Rinv(\Pst A) \), and \( v \relcelto A \fun{T}\G_{a, b}u \), then \( u \in \Rinv(\Pst A) \).
    \end{enumerate}
\end{lem}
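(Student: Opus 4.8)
The two parts are interchanged by the symmetry swapping inputs with outputs, $\Linv$ with $\Rinv$, left with right, and $\cpsub{}$ with $\subcp{}$; so I treat part~(1) and read off part~(2). The plan is to peel the context $\fun{T}\G_{a,b}$ off $u$ in three steps, each reducing membership in $\Linv(\Pst A)$ to a smaller context. (Throughout, the hypothesis $\dim u > 1$ is what makes the promotion $\G_{a,b}$ available.) \emph{First}, I absorb $v$. Fixing a left inverse $v^L$ of $v$, so that $v \cp{} v^L \relcelto{A} \un a'$, the hypothesis $\fun{T}\G_{a,b}u \relcelto{A} v$ and the congruence property of $\relcelto{A}$ (Lemma~\ref{lem:celto_A_is_congruence_preorder}) give $(\fun{T}\G_{a,b}u) \cp{} v^L \relcelto{A} v \cp{} v^L \relcelto{A} \un a'$, whence $\fun{T}\G_{a,b}u \in \Linv(\Pst A)$. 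It then suffices to show that $\fun{T}$ and $\G_{a,b}$ each \emph{reflect} membership in $\Linv(\Pst A)$.

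\emph{Second}, I strip the left trim context $\fun{T}$. Being left and trim, $\fun{T}$ is a finite composite of top-boundary right pastings $- \subcp{\iota} w$ with $w \in \gr{n}{\Rd X}$. For one such pasting, a right unitor gives $(\G_{a,b}u) \subcp{\iota} w \simeq (\G_{a,b}u) \cp{} \bigl( (\un \bd{}{+}(\G_{a,b}u)) \subcp{\iota} w \bigr)$, so a left inverse $e$ of $(\G_{a,b}u) \subcp{\iota} w$ modulo $\relcelto{A}$ furnishes the left inverse $\bigl( (\un \bd{}{+}(\G_{a,b}u)) \subcp{\iota} w \bigr) \cp{} e$ of $\G_{a,b}u$, again by Lemma~\ref{lem:celto_A_is_congruence_preorder}. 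Here $w$ is merely absorbed into the inverse and need never itself be invertible, which is precisely why only leftness, and not weak invertibility, is asked of $\fun{T}$. Iterating over the pastings of $\fun{T}$ yields $\G_{a,b}u \in \Linv(\Pst A)$.

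\emph{Third}, I strip the weakly invertible context $\G_{a,b}$. By Theorem~\ref{thm:weakly_invertible_context_have_weak_inverse}, $\G$ has a weakly invertible weak inverse $\G^*$ with $\G^* \G \simeq -$; applying $\G^*$ to $\G_{a,b}u$ and using this natural equivalence (promoted to $\Pd X(a, b)$) shows that $\G^*$ sends $\G_{a,b}u$ to a round diagram equivalent to $u$. Granting that weakly invertible round contexts map $\Linv(\Pst A)$ into itself, this image lies in $\Linv(\Pst A)$; and since $\simeq$ is contained in $\relcelto{A}$ and is a congruence (Theorem~\ref{thm:main_equivalences}), $\Linv(\Pst A)$ is closed under $\simeq$, so $u \in \Linv(\Pst A)$, as required.

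The hard part is the preservation statement used in the third step. I would prove that a weakly invertible round context maps $\Linv(\Pst A)$ into itself by structural induction, using the alternative generation of Comment~\ref{comm:extended_pasting} that presents any weakly invertible context as a composite of extended pastings $e \cpsub{k, \iota} -$ and $- \subcp{k, \iota} e$ with $e \in \Eqv X$. The induction then collapses to a single clause: if $d \in \Linv(\Pst A)$ and $e \in \Eqv X$, then $e \cpsub{k} d$ and $d \subcp{k} e$ lie in $\Linv(\Pst A)$. This is exactly the $\Linv$-half of the computation in the proof of Proposition~\ref{prop:coinduction_on_closure_pst} --- where a pasting of two members of $\Linv(\Pst A)$ is shown to be a member, using only their left inverses, and where equivalences belong to $\Linv(\Pst A)$ because they admit left inverses --- transported from the top boundary to an arbitrary boundary $k$. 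The only genuine labour is bookkeeping the rewritable submolecule inclusions $\iota$ and the lower boundaries, which, as throughout Section~\ref{sec:context}, is discharged by inserting the appropriate left and right unitors.
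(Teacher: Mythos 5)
Your first two steps are sound: absorbing \( v \) (so that \( \fun{T}\G_{a,b}u \in \Linv(\Pst A) \)) and stripping the left trim context one right pasting at a time involve only pastings of top-dimensional diagrams at the codimension-one boundary, which is exactly the setting where Lemma~\ref{lem:celto_A_is_congruence_preorder} applies; your remark that the diagrams \( w \) occurring in \( \fun{T} \) never need to be invertible is also the right observation. The proof collapses, however, at the claim you yourself isolate as the hard part, namely that weakly invertible round contexts map \( \Linv(\Pst A) \) into itself, which your third step needs for \( \G^* \). Via Comment~\ref{comm:extended_pasting} this reduces to the clauses \( e \cpsub{k, \iota} - \) and \( - \subcp{k, \iota} e \) with \( e \in \Eqv X \), and you assert these follow from the \( \Linv \)-half of the computation of Proposition~\ref{prop:coinduction_on_closure_pst} ``transported from the top boundary to an arbitrary boundary \( k \)'', the rest being unitor bookkeeping. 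That transport fails precisely when \( k \) lies below the top dimension. The engine of that computation is the congruence property of \( \relcelto{A} \), and Lemma~\ref{lem:celto_A_is_congruence_preorder} provides it only for the operations \( - \cpsub{n-1} - \) and \( - \subcp{n-1} - \) on \( n \)-dimensional diagrams, because \( \Pst A \) itself is closed only under pasting of same-dimensional diagrams at the top boundary. For \( d \in \Linv(\Pst A) \) with witness \( z \in \Pst A \) and \( \dim e = k + 1 < \dim d \), the natural witness for \( e \cpsub{k,\iota} d \) would be the whiskering of \( z \) by \( e \), and no clause in the definition of \( \Pst A \) puts that whiskering back into \( \Pst A \) once \( z \) involves elements of \( A \) that are not equivalences. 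Repairing this needs a genuinely new device --- for instance realising the whiskering by pasting \( z \), at the top dimension, at a submolecule of a degenerate diagram on \( e \cpsub{k} \un a \), together with the fact that parallel degenerate diagrams are equivalent --- not bookkeeping. There is also a type slip in your third step: \( (\G^*\G)_{a,b}u \) is not parallel to \( u \), so it cannot be ``equivalent to \( u \)''; naturality only gives \( u \cp{} \th_b \simeq \th_a \cp{} (\G^*\G)_{a,b}u \), and the components \( \th_a, \th_b \) must then be cancelled by hand.

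Compare this with the paper's proof, which never applies a context to an element of \( \Pst A \) and therefore needs no preservation statement at all. It writes \( \fun{T} \) as \( - \cp{} r \), takes a weak inverse \( \G^* \) of \( \G \) with a natural equivalence \( \th \) from the identity to \( \G^*\G \) and a pointwise weak inverse \( \th^* \) (Theorem~\ref{thm:weakly_invertible_context_have_weak_inverse}), and directly exhibits the left inverse \( u^L \eqdef (\th_b \subcp{} (r \cp{} v^L)) \cp{} \th^*_a \). The verification that \( u \cp{} u^L \relcelto{A} \un a \) is then a single chain in which naturality of \( \th \) moves \( u \) across, and every \( \relcelto{A} \)-step is a pasting at the top boundary, where Lemma~\ref{lem:celto_A_is_congruence_preorder} does apply. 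If you want to keep your architecture, you must either prove the preservation claim honestly (the whiskered-unit device above, plus the degeneracy facts it requires), or, better, adopt the paper's pointwise use of \( \th \), which makes the question disappear.
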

\begin{proof}
    We prove only the first point, the second is dual. 
    Since \( \fun{T} \) is a left trim context, it is of the form \( - \cp{} r \), with \( r = (\cdots(\G b \subcp{} r_1) \subcp{} \cdots) \subcp{} r_{k} \) for some \( k \in \mathbb{N} \) and round diagrams \( (r_i)_{i = 1}^k \), and \( a' = \bd{}{-} (\G_{a, b} u \cp{} r) = \G a \).
    By assumption, \( v \in \Linv(\Pst(A)) \), so there exists a round diagram \( v^L \) such that \( v \cp{} v^L \relcelto{A} \un a'  \).
    By Theorem \ref{thm:weakly_invertible_context_have_weak_inverse}, we can consider a weak inverse \( \G^* \) of \( \G \) together with a natural equivalence \( \th \) from the identity to \( \G^*\G \). 
    We let also \( \th^* \) be a pointwise weak inverse of \( \th \).
    We define the round diagram \( u^L \eqdef (\th_b \subcp{} (r \cp{} v^L)) \cp{} \th^*_a \).
    Then we have
    \begin{align*}
        u \cp{} u^L &= \hspace{6pt} ((u \cp{} \th_b) \subcp{} (r \cp{} v^L)) \cp{} \th^*_a \\
                    &\simeq \hspace{6pt} ((\th_a \cp{} (\G^*\G)_{a, b} u) \subcp{} (r \cp{} v^L)) \cp{} \th^*_a && \text{by naturality of } \th \\
                    &=\hspace{6pt} (\th_a \subcp{} (\fun{T}\G_{a, b} u \cp{} v^L)) \cp{} \th^*_a \\
                    &\relcelto{A} (\th_a \subcp{} (v \cp{} v^L)) \cp{} \th^*_a && \text{since } \fun{T}\G_{a, b}u \relcelto{A} v \\
                    &\relcelto{A} (\th_a \subcp{} (\un a')) \cp{} \th^*_a && \text{since } v \cp{} v^L \relcelto{A} \un a' \\
                    &\simeq\hspace{6pt} \th_a \cp{} \th^*_a && \text{using a right unitor}\\
                    &\simeq\hspace{6pt} \un a.
    \end{align*}
    By Lemma \ref{lem:celto_A_is_congruence_preorder}, \( u \cp{} u^L \relcelto{A} \un a \), thus \( u \in \Linv(\Pst A) \).
    This concludes the proof.
\end{proof}

\subsection{Equivalences in Gray products}

\begin{lem} \label{lem:distribution_lower_gen_pasting_gray}
    Let \( X, Y \) be diagrammatic sets, \( \ell \in \mathbb{N} \), \( u \in \Pd X \), \( w \in \Rd X \) with \( n \eqdef \dim w \), and \( v \in \Pd Y \).
    \begin{enumerate}
        \item Suppose that a pasting at a submolecule \( w \cpsub{\iota} u \) is defined. Then 
            \begin{equation} 
                \bd{n + \ell}{+} ((w \cpsub{\iota} u) \gray v) = (w \gray \bd{\ell}{(-)^n}v) \gencp{n + \ell - 1} \bd{n + \ell}{+} (u \gray v).
            \end{equation}
        \item Suppose that a pasting at a submolecule \( u \subcp{\iota} w \) is defined. Then 
        \begin{equation} 
            \bd{n + \ell}{-} ((u \subcp{\iota} w) \gray v) = \bd{n + \ell}{-} (u \gray v) \gencp{n + \ell - 1} (w \gray \bd{\ell}{(-)^{n - 1}}v) .
        \end{equation}
    \end{enumerate}
\end{lem}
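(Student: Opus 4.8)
The plan is to reduce both statements to the distributivity of the Gray product over generalised pastings (Proposition \ref{prop:formula_gray_product}) together with the boundary formulas recorded there, and to treat part 2 as the formal dual of part 1 (replacing \( \bd{}{+} \) by \( \bd{}{-} \), \( \cpsub{} \) by \( \subcp{} \), and the sign \( (-)^n \) by \( (-)^{n-1} \)). The first observation is that \( w \cpsub{\iota} u \) is an instance of generalised pasting at the \( (n-1) \)\nbd boundary, that is \( w \cpsub{\iota} u = w \gencp{n-1} u \), and that pasting \( w \) onto the input side of \( u \) leaves the output boundary untouched, so that \( \bd{n-1}{+}(w \cpsub{\iota} u) = \bd{n-1}{+} u \). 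I record this at the outset, since it is precisely what will let the two factors recombine at the end.

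Next I would establish the special case \( \dim v = \ell \), which is where the clean index \( n + \ell - 1 \) first appears. Applying Proposition \ref{prop:formula_gray_product}(4) to \( w \gencp{n-1} u \) gives \( (w \cpsub{\iota} u) \gray v = (w \gray v) \gencp{(n-1) + \dim v} (u \gray v) \), a generalised pasting at the \( (n+\ell-1) \)\nbd boundary. Since \( \dim(w \gray v) = n + \ell \) in this case, the factor \( w \gray v \) is its own \( \bd{n+\ell}{+} \)\nbd boundary, and Lemma \ref{lem:boundary_generalised_pasting_is_generalised_pasting} lets the boundary distribute over the pasting, yielding \( \bd{n+\ell}{+}((w \cpsub{\iota} u) \gray v) = (w \gray v) \gencp{n+\ell-1} \bd{n+\ell}{+}(u \gray v) \); this is the desired formula because \( \bd{\ell}{(-)^n} v = v \) here.

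Finally, for general \( \ell \) I would apply the second boundary formula of Proposition \ref{prop:formula_gray_product}(1) with the choice \( j = n-1 \) — exactly the choice producing the summand \( \bd{\ell}{(-)^n} v \) with the correct sign — to both \( (w \cpsub{\iota} u) \gray v \) and \( u \gray v \). The first expansion has first factor \( \bd{n+\ell}{+}((w \cpsub{\iota} u) \gray \bd{\ell}{(-)^n} v) \), to which the special case applies since its second argument now has dimension \( \ell \), and second factor \( \bd{n+\ell}{+}(\bd{n-1}{+}u \gray v) \) by the identity \( \bd{n-1}{+}(w \cpsub{\iota} u) = \bd{n-1}{+} u \). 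As all these pastings occur at the single index \( n + \ell - 1 \), associativity lets me regroup so that the two \( u \)\nbd indexed factors combine; comparing with the \( j = n-1 \) expansion of \( \bd{n+\ell}{+}(u \gray v) \) collapses them exactly into \( \bd{n+\ell}{+}(u \gray v) \), leaving \( (w \gray \bd{\ell}{(-)^n} v) \gencp{n+\ell-1} \bd{n+\ell}{+}(u \gray v) \). The main obstacle I anticipate is the bookkeeping: keeping every generalised pasting anchored at the common index \( n + \ell - 1 \) so that the associative regrouping is legitimate, and checking that the sign \( (-)^n \) (resp.\ \( (-)^{n-1} \) in part 2) delivered by \( j = n-1 \) stays consistent throughout; the well-definedness of the intermediate generalised pastings is routine via the results cited above.
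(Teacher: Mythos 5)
Your first half is correct and is essentially the paper's own opening: the expansion of \( \bd{n + \ell}{+} ((w \cpsub{\iota} u) \gray v) \) via Proposition \ref{prop:formula_gray_product}(1) with \( j = n - 1 \), the identity \( \bd{n-1}{+}(w \cpsub{\iota} u) = \bd{n-1}{+} u \), the distribution of \( - \gray \bd{\ell}{(-)^n} v \) over \( w \gencp{n-1} u \) via Proposition \ref{prop:formula_gray_product}(4), and the passage through Lemma \ref{lem:boundary_generalised_pasting_is_generalised_pasting} all occur in the paper in the same way. Writing \( A \eqdef w \gray \bd{\ell}{(-)^n} v \), \( B \eqdef \bd{n+\ell}{+}(u \gray \bd{\ell}{(-)^n} v) \), \( C \eqdef \bd{n+\ell}{+}(\bd{n-1}{+} u \gray v) \), these steps legitimately give
\begin{equation*}
    \bd{n + \ell}{+} ((w \cpsub{\iota} u) \gray v) = (A \gencp{n+\ell-1} B) \gencp{n+\ell-1} C
    \quad\text{and}\quad
    \bd{n+\ell}{+}(u \gray v) = B \gencp{n+\ell-1} C,
\end{equation*}
hence an equality of \emph{underlying oriented graded posets} \( \bd{n + \ell}{+} ((w \cpsub{\iota} u) \gray v) = A \cup \bd{n+\ell}{+}(u \gray v) \).

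The gap is your final step, where you pass from \( (A \gencp{n+\ell-1} B) \gencp{n+\ell-1} C \) to \( A \gencp{n+\ell-1} (B \gencp{n+\ell-1} C) \) by ``associativity''. Generalised pasting is not an algebraic operation carrying an associativity law: \( U \gencp{k} V \) is by definition a \emph{property} of a specific pushout of inclusions, requiring (i) \( U \cap V \submol \bd{k}{+} U \) and \( U \cap V \submol \bd{k}{-} V \), (ii) that the \( k \)\nbd boundaries of the union are molecules, and (iii) \( \bd{k}{-} U \submol \bd{k}{-}(U \cup V) \) and \( \bd{k}{+} V \submol \bd{k}{+}(U \cup V) \). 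No associativity statement is available in the paper or in the cited results, and it is not formal: to re-bracket you must verify these conditions for the pair \( (A, \bd{n+\ell}{+}(u \gray v)) \), which is exactly what the paper's proof does and where its entire content lies. In particular one must identify the intersection as \( \bd{n-1}{+} w \gray \bd{\ell}{(-)^n} v \) and chase it into \( \bd{n+\ell-1}{-}\bd{n+\ell}{+}(u \gray v) \), and — the crucial point — prove that \( \bd{n+\ell-1}{+}(u \gray v) \submol \bd{n+\ell-1}{+}((w \cpsub{\iota} u) \gray v) \). The paper establishes this last inclusion by \emph{induction on \( \ell \)}, applying the statement of the lemma itself at \( \ell - 1 \) (with a separate globularity computation at \( \ell = 0 \)). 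Your proposal contains no induction at all, so this condition is simply not available to you; dismissing the well-definedness of the re-bracketed generalised pasting as ``routine'' skips the actual theorem.
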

\begin{proof}
    We prove the first point only, the second uses a dual proof. 
    We recall the formulas of Proposition \ref{prop:formula_gray_product} which will be freely used throughout the proof. 
    We proceed by induction on \( \ell \), the treatment of the base case \( \ell = 0 \) and the inductive step \( \ell > 0 \) diverge only later in the proof.
    For \( \ell \geq 0 \), we have:
    \begin{align*}
        \bd{n + \ell}{+} &((w \cpsub{\iota} u) \gray v) = \bd{n + \ell}{+}\left((w \cpsub{\iota} u) \gray \bd{\ell}{(-)^n}v \right) \gencp{n + \ell - 1} \bd{n + \ell}{+} (\bd{n - 1}{+}(w \cpsub{\iota} u) \gray v) \\
        &= \bd{n + \ell}{+}\left((w \gray \bd{\ell}{(-)^n}v) \gencp{n + \ell - 1} (u \gray  \bd{\ell}{(-)^n}v) \right) \gencp{n + \ell - 1} \bd{n + \ell}{+}(\bd{n - 1}{+} u \gray v) \\
        &= \left((w \gray \bd{\ell}{(-)^n}v) \gencp{n + \ell - 1} \bd{n + \ell}{+} (u \gray  \bd{\ell}{(-)^n}v) \right) \gencp{n + \ell - 1} \bd{n + \ell}{+}(\bd{n - 1}{+} u \gray v),
    \end{align*}
    where the last step follows from Lemma \ref{lem:boundary_generalised_pasting_is_generalised_pasting}.
    Then
    \begin{equation*}
        \bd{n + \ell}{+} (u \gray v) = \bd{n + \ell}{+}(u \gray \bd{\ell}{(-)^n} v) \gencp{n + \ell - 1} \bd{n + \ell}{+}(\bd{n - 1}{+} u \gray v),
    \end{equation*}
    so that
    \begin{equation*}
        \bd{n + \ell}{+} ((w \cpsub{\iota} u) \gray v) = (w \gray \bd{\ell}{(-)^n}v) \cup \bd{n + \ell}{+} (u \gray v).
    \end{equation*}
    It remains to show that the right hand side of the previous equality is a generalised pasting at the \( (n + \ell - 1) \)\nbd boundary.
    More explicitly, the right hand side is the following pushout over \( X \):
    \begin{center}
        \begin{tikzcd}
            {\bd{n - 1}{+}w \gray \bd{\ell}{(-)^n}v} & {\bd{n + \ell}{+} (u \gray v)} \\
            {w \gray \bd{\ell}{(-)^n}v} & {(w \gray \bd{\ell}{(-)^n}v) \cup \bd{n + \ell}{+} (u \gray v).}
            \arrow[""{name=0, anchor=center, inner sep=0}, hook, from=1-1, to=1-2]
            \arrow[hook', from=1-1, to=2-1]
            \arrow[hook', from=1-2, to=2-2]
            \arrow[hook, from=2-1, to=2-2]
            \arrow["\lrcorner"{anchor=center, pos=0.125, rotate=180}, draw=none, from=2-2, to=0]
        \end{tikzcd}
    \end{center}
    We show that the hypothesis of generalised pasting at the \( (n + \ell - 1) \)\nbd boundary are satisfied for this pushout square.
    \begin{enumerate}
        \item We already have \( \bd{n - 1}{+} w \gray \bd{\ell}{(-)^n} v \submol \bd{n + \ell - 1}{+} (w \gray \bd{\ell}{(-)^n} v) \). Moreover, 
        \begin{align*}
            \bd{n - 1}{+} w \gray \bd{\ell}{(-)^n} v &\submol \bd{n - 1}{-} u \gray \bd{\ell}{(-)^n} v \\
            & \submol \bd{n + \ell - 1}{-} (u \gray \bd{\ell}{(-)^n} v)
            = \bd{n + \ell - 1}{-} (u \gray \bd{\ell}{(-)^{n - 2}} v) \\
            & \submol \bd{n + \ell - 1}{-} (u \gray v)
            = \bd{n + \ell - 1}{-} \bd{n + \ell}{+}(u \gray v).
        \end{align*}
        \item The diagram \( (w \gray \bd{\ell}{(-)^n}v) \cup \bd{n + \ell}{+} (u \gray v) \) is already the pasting diagram \( \bd{n + \ell}{+}((w \cpsub{\iota} u) \gray v) \), hence its input and output \( (n + \ell - 1) \)\nbd boundaries are again pasting diagrams.
        \item Finally, we have
        \begin{align*}
            \bd{n + \ell - 1}{-} (w \gray \bd{\ell}{(-)^n}v) &\submol \bd{n - 1}{-} w \gray \bd{\ell}{(-)^n}v
            \submol (\bd{n - 1}{-} (w \cpsub{\iota} u)) \gray \bd{\ell}{-(-)^{n - 1}}v \\
            &\submol \bd{n + \ell - 1}{-} ((w \cpsub{\iota} u) \gray v).
        \end{align*}
        Last, we use induction and globularity: if \( \ell = 0 \), then
        \begin{equation*}
            \bd{n - 1}{+} (u \gray v) = \bigcup_{i = 0}^{n - 1} \bd{i}{+} u \gray \bd{n - 1 - i}{(-)^i} v
            = \bigcup_{i = 0}^{n - 1} \bd{i}{+} (w \cpsub{\iota} u) \gray \bd{n - 1 - i}{(-)^i} v,
        \end{equation*}
        the latter term being equal to \( \bd{n - 1}{+} ((w \cpsub{\iota} u) \gray v) \).  
        Otherwise, \( \ell > 0 \) and inductively, the statement is true of \( \ell - 1 \), showing that
        \begin{align*}
            \bd{n + \ell - 1}{+} (u \gray v) &\submol (w \gray \bd{\ell - 1}{(-)^n}v) \gencp{n + \ell - 2} \bd{n + \ell - 1}{+} (u \gray v) \\
            &= \bd{n + \ell - 1}{+} ((w \cpsub{\iota} u) \gray v).
        \end{align*}
    \end{enumerate}
    All the hypothesis of generalised pasting are satisfied, this concludes the induction and the proof.
\end{proof}

\begin{lem} \label{lem:gray_preserve_left_right_pasting_contexts}
    Let \( X, Y \) be diagrammatic sets, \( u \in \Pd X \), \( w \in \Rd X \) with \( n \eqdef \dim w \), \( v \in \Pd Y \). For \( \ell \) ranging over \( \set{0, \ldots, \dim u + \dim v - n} \), call \( a_\ell \eqdef \bd{n + \ell}{-} (u \gray v) \), \( b_\ell \eqdef \bd{n + \ell}{+} (u \gray v) \).
    \begin{enumerate}
        \item Suppose that a pasting at a submolecule \( w \cpsub{\iota} u \) is defined, denote by \( \fun{R}^\ell \) the context determined by the subdiagram \( b_\ell \submol \bd{n + \ell}{+} ((w \cpsub{\iota} u) \gray v) \).
        Then we have inductively on \( \ell \in \set{0, \ldots, \dim u + \dim v - n} \),
        \begin{equation*}
            \begin{cases}
                \fun{R}^0 = (w \gray \bd{0}{(-)^n} v) \cpsub{} -, &\text{and} \\
                \fun{R}^{\ell} = (w \gray \bd{\ell}{(-)^n} v) \cpsub{} \fun{R}^{\ell - 1}_{a_{\ell - 1}, b_{\ell - 1}} & \text{if } \ell > 0.
            \end{cases}
        \end{equation*}
        \item Suppose that a pasting at a submolecule \( u \subcp{\iota} w \) is defined, denote by \( \fun{L}^\ell \) the context determined by the subdiagram \( a_\ell \submol \bd{n + \ell}{-} ((u \subcp{\iota} w) \gray v) \).
        Then we have inductively on \( \ell \in \set{0, \ldots, \dim u + \dim v - n} \),
        \begin{equation*}
            \begin{cases}
                \fun{L}^0 = - \subcp{} (w \gray \bd{0}{(-)^{n - 1}} v), &\text{and} \\
                \fun{L}^{\ell} =\fun{L}^{\ell - 1}_{a_{\ell - 1}, b_{\ell - 1}}  \subcp{} (w \gray \bd{\ell}{(-)^{n - 1}} v) & \text{if } \ell > 0.
            \end{cases}
        \end{equation*} 
    \end{enumerate}
\end{lem}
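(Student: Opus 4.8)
The plan is to establish point 1 by induction on $\ell$ and obtain point 2 by the evident dual argument (swapping $\cpsub{}$ for $\subcp{}$, output boundaries for input boundaries, and $\fun{R}$ for $\fun{L}$). Throughout I abbreviate $B_\ell \eqdef \bd{n+\ell}{+}((w\cpsub{\iota}u)\gray v)$, so that Lemma \ref{lem:distribution_lower_gen_pasting_gray} reads $B_\ell = (w\gray\bd{\ell}{(-)^n}v)\gencp{n+\ell-1}b_\ell$ and exhibits $b_\ell$ as a subdiagram of $B_\ell$ through a generalised pasting at the $(n+\ell-1)$\nbd boundary. Since $\fun{R}^\ell$ is by definition the context determined by this subdiagram, and such a context is unique, it suffices to exhibit in each case a context with the correct domain and codomain whose value at $b_\ell$ reproduces $B_\ell$ together with the given inclusion $b_\ell\submol B_\ell$.

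For the base case $\ell = 0$ the two factors $w\gray\bd{0}{(-)^n}v$ and $b_0$ both have dimension $n$, and the Gray boundary formula of Proposition \ref{prop:formula_gray_product} shows that they meet along the whole of $\bd{n-1}{+}(w\gray\bd{0}{(-)^n}v)$; hence the generalised pasting is a plain pasting at a submolecule, whose determined context is the single left pasting $\fun{R}^0 = (w\gray\bd{0}{(-)^n}v)\cpsub{}-$. For the inductive step $\ell>0$ I would apply the decomposition of Lemma \ref{lem:formula_generalised_pasting} to peel off the outer factor:
\[
    B_\ell = (w\gray\bd{\ell}{(-)^n}v)\cpsub{n+\ell-1}\bigl(b_\ell\cpsub{n+\ell-1}\bd{n+\ell-1}{+}B_\ell\bigr).
\]
By globularity $\bd{n+\ell-1}{+}B_\ell = B_{\ell-1}$ and $\bd{}{+}b_\ell = b_{\ell-1}$, and the inner pasting is taken along the submolecule inclusion $b_{\ell-1}\submol B_{\ell-1}$, which is precisely the inclusion defining $\fun{R}^{\ell-1}$, so that $\fun{R}^{\ell-1}(b_{\ell-1}) = B_{\ell-1}$. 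The remaining point is to recognise the inner factor $b_\ell\cpsub{n+\ell-1}B_{\ell-1}$ as $\fun{R}^{\ell-1}_{a_{\ell-1},b_{\ell-1}}(b_\ell)$: the promotion of $\fun{R}^{\ell-1}$ acts on the higher diagram $b_\ell$ by replacing its output face $b_{\ell-1}$ with the whole of $B_{\ell-1}$, which is exactly this pasting. Substituting back gives $B_\ell = (w\gray\bd{\ell}{(-)^n}v)\cpsub{}\fun{R}^{\ell-1}_{a_{\ell-1},b_{\ell-1}}(b_\ell)$; a short boundary computation confirms that the input boundary $\bd{n+\ell-1}{-}B_\ell$ equals $\fun{R}^{\ell-1}(a_{\ell-1})$, so that the domain and codomain of the composite context are correct, and uniqueness of the context determined by $b_\ell\submol B_\ell$ then identifies $\fun{R}^\ell$ with $(w\gray\bd{\ell}{(-)^n}v)\cpsub{}\fun{R}^{\ell-1}_{a_{\ell-1},b_{\ell-1}}$.

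I expect the main obstacle to be exactly this identification of the promotion with the inner submolecule pasting. Concretely, one must match subdiagram inclusions on the nose and invoke the interchange law, which underlies the equality between whiskering $b_\ell$ on its input side at level $n+\ell-2$ (the literal action of the promotion built from left pastings, in the sense of Comment \ref{comm:extended_pasting}) and pasting $B_{\ell-1}$ onto its output at level $n+\ell-1$ (the factor produced by Lemma \ref{lem:formula_generalised_pasting}), all while using Lemma \ref{lem:boundary_generalised_pasting_is_generalised_pasting} to keep the intermediate $(n+\ell-1)$\nbd boundaries lined up as $B_{\ell-1}$ and $\fun{R}^{\ell-1}(a_{\ell-1})$. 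Once Lemma \ref{lem:distribution_lower_gen_pasting_gray} is in hand, everything else is formal bookkeeping.
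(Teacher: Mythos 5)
Your proposal is correct and follows essentially the same route as the paper, whose entire proof is the one-line observation that the statement ``follows directly from the formulas of Lemma \ref{lem:formula_generalised_pasting} applied to Lemma \ref{lem:distribution_lower_gen_pasting_gray}''; your write-up simply makes explicit what that entails, namely peeling off the outer factor via \( U \cpsub{k}(V \cpsub{k} \bd{k}{+}(U \gencp{k} V)) \), identifying the inner factor with the promotion \( \fun{R}^{\ell-1}_{a_{\ell-1}, b_{\ell-1}} \) (using globularity \( \bd{n+\ell-1}{+}B_\ell = B_{\ell-1} \)), and concluding by uniqueness of the context determined by \( b_\ell \submol B_\ell \). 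The step you single out as the main obstacle --- matching the pasting \( - \cpsub{n+\ell-1} B_{\ell-1} \) with the promotion, via interchange of pushouts and Comment \ref{comm:extended_pasting} --- is precisely the bookkeeping the paper leaves implicit, so your account is, if anything, more detailed than the original.
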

\begin{proof}
    Follows directly from the formulas of Lemma \ref{lem:formula_generalised_pasting} applied to Lemma \ref{lem:distribution_lower_gen_pasting_gray}. 
\end{proof}

\begin{lem} \label{lem:context_determined_by_boundary_gray}
    Let \( X, Y \) be diagrammatic sets, \( u \colon a \celto b \) be a pasting diagram in \( X \) with \( n \eqdef \dim u \), \( v \) be a pasting diagram in \( Y \).
    For \( \ell \) ranging over \( \set{0, \ldots, \dim v} \), call \( a_\ell \eqdef \bd{n + \ell}{-} (a \gray b) \), \( b_\ell \eqdef \bd{n + \ell}{+} (b \gray v) \), and denote respectively by \( \fun{R}^\ell \) and \( \fun{L}^\ell \) the contexts determined by the subdiagrams \(  b_\ell \submol \bd{n + \ell}{+} (u \gray v) \), and \( a_\ell \submol \bd{n + \ell}{-} (u \gray v) \).
    Then inductively on \( \ell \in \set{0, \ldots, \dim v} \),
    \begin{equation*}
        \begin{cases}
            \fun{R}^0 = (u \gray \bd{0}{(-)^n} v) \cpsub{} -, & \text{and} \\
            \fun{R}^\ell = (u \gray \bd{\ell}{(-)^n} v) \cpsub{} \fun{R}^{\ell - 1}_{a_{\ell - 1}, b_{\ell - 1}} & \text{if } \ell > 0,
        \end{cases} 
    \end{equation*}
    and dually,
    \begin{equation*}
        \begin{cases}
            \fun{L}^0 = - \subcp{} (u \gray \bd{0}{(-)^{n + 1}} v), & \text{and} \\
            \fun{L}^\ell = \fun{L}^{\ell - 1}_{a_{\ell - 1}, b_{\ell - 1}} \subcp{} (u \gray \bd{\ell}{(-)^{n + 1}} v)  & \text{if } \ell > 0.
        \end{cases}
    \end{equation*}
\end{lem}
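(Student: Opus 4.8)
The plan is to establish the formula for $\fun{R}^\ell$ and to obtain the one for $\fun{L}^\ell$ by the dual argument, reading input boundaries for output boundaries and $a \eqdef \bd{}{-} u$ for $b \eqdef \bd{}{+} u$; so I describe only the former.

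First I would extract the relevant decomposition straight from Proposition \ref{prop:formula_gray_product}. Applying its first point to $u \gray v$ at boundary dimension $n + \ell$ with $j \eqdef n - 1$, and using both $\bd{j}{+} u = \bd{}{+} u = b$ and the fact that $u \gray \bd{\ell}{(-)^n} v$ is $(n + \ell)$-dimensional, hence equal to its own output $(n + \ell)$-boundary, yields
\begin{equation*}
    \bd{n + \ell}{+}(u \gray v) = (u \gray \bd{\ell}{(-)^n} v) \gencp{n + \ell - 1} \bd{n + \ell}{+}(b \gray v).
\end{equation*}
Thus $b_\ell = \bd{n + \ell}{+}(b \gray v)$ appears inside $\bd{n + \ell}{+}(u \gray v)$ as the second factor of a generalised pasting at the $(n + \ell - 1)$-boundary, with first factor $u \gray \bd{\ell}{(-)^n} v$. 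This is exactly the decomposition that Lemma \ref{lem:distribution_lower_gen_pasting_gray} supplies in the proof of Lemma \ref{lem:gray_preserve_left_right_pasting_contexts}, with $u \gray \bd{\ell}{(-)^n} v$ now in the role of $w \gray \bd{\ell}{(-)^n} v$, so the remainder of the argument runs in parallel.

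I would then induct on $\ell$. In the base case $\ell = 0$ the displayed generalised pasting is a pasting at a submolecule of $u \gray \bd{0}{(-)^n} v$ onto $b_0$, so the context determined by $b_0 \submol \bd{n}{+}(u \gray v)$ is precisely $\fun{R}^0 = (u \gray \bd{0}{(-)^n} v) \cpsub{} -$. For the inductive step I would feed the decomposition into Lemma \ref{lem:formula_generalised_pasting} to present the generalised pasting as a genuine pasting at a submolecule, and use Lemma \ref{lem:boundary_generalised_pasting_is_generalised_pasting} together with globularity to recognise that the $(n + \ell - 1)$-boundaries of $b_\ell = \bd{n + \ell}{+}(b \gray v)$ reproduce the level-$(\ell - 1)$ picture for $b \gray v$. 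Reading off the context determined by $b_\ell \submol \bd{n + \ell}{+}(u \gray v)$ then peels off the outer pasting with $u \gray \bd{\ell}{(-)^n} v$ and leaves the context determined by $b_{\ell - 1} \submol \bd{n + \ell - 1}{+}(u \gray v)$, promoted along the parallel pair $a_{\ell - 1}, b_{\ell - 1}$ of input and output $(n + \ell - 1)$-boundaries of $b \gray v$; this is exactly $\fun{R}^\ell = (u \gray \bd{\ell}{(-)^n} v) \cpsub{} \fun{R}^{\ell - 1}_{a_{\ell - 1}, b_{\ell - 1}}$.

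The step I expect to be the main obstacle is not conceptual but rather the careful bookkeeping: verifying at each stage that the generalised pasting of the display is in fact a pasting at a submolecule (so that the context is expressed with $\cpsub{}$, not a bare $\gencp{}$), and that the promotion $\fun{R}^{\ell - 1}_{a_{\ell - 1}, b_{\ell - 1}}$ is well-typed, i.e.\ that $a_{\ell - 1}$ and $b_{\ell - 1}$ are parallel and agree with the source and target of $\fun{R}^{\ell - 1}$. Both follow from Lemma \ref{lem:boundary_generalised_pasting_is_generalised_pasting} and the boundary formulae of Proposition \ref{prop:formula_gray_product}, mirroring the computation already performed for Lemma \ref{lem:gray_preserve_left_right_pasting_contexts}; no genuinely new ingredient is needed.
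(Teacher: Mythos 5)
Your proposal is correct and follows essentially the same route as the paper's proof: the same decomposition \( \bd{n+\ell}{+}(u \gray v) = (u \gray \bd{\ell}{(-)^n} v) \gencp{n+\ell-1} \bd{n+\ell}{+}(b \gray v) \) extracted from Proposition \ref{prop:formula_gray_product} (the paper cites it without spelling out \( j = n-1 \)), the same identification in the base case that the generalised pasting is in fact a pasting at a submolecule because \( b \gray \bd{0}{(-)^n} v = \bd{n-1}{+}(u \gray \bd{0}{(-)^n} v) \), and the same inductive step via Lemma \ref{lem:formula_generalised_pasting} and globularity, with the second half obtained by duality. Your added appeal to Lemma \ref{lem:boundary_generalised_pasting_is_generalised_pasting} for the bookkeeping is harmless and consistent with how the paper handles the analogous verifications.
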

\begin{proof}
    We prove the first point, the second is dual.
    When \( \ell = 0 \), \( \bd{n}{+} (u \gray v) \) is obtained by Proposition \ref{prop:formula_gray_product} as the generalised pasting
    \begin{center}
        \begin{tikzcd}
            {b \gray \bd{0}{(-)^n} v} & {\bd{n}{+}(b \gray v)} \\
            {u \gray \bd{0}{(-)^n} v} & {\bd{n}{+} (u \gray v).}
            \arrow[""{name=0, anchor=center, inner sep=0}, hook, from=1-1, to=1-2]
            \arrow[hook', from=1-1, to=2-1]
            \arrow[hook', from=1-2, to=2-2]
            \arrow[hook, from=2-1, to=2-2]
            \arrow["\lrcorner"{anchor=center, pos=0.125, rotate=180}, draw=none, from=2-2, to=0]
        \end{tikzcd}
    \end{center}
    But, \( b \gray \bd{0}{(-)^n} v = \bd{n - 1}{+} (u \gray \bd{0}{(-)^n} v) \), thus
    \begin{equation*}
        \bd{n}{+} (u \gray v) = (u \gray \bd{0}{(-)^n} v) \cpsub{} \bd{n}{+}(b \gray v)
    \end{equation*}
    hence \( \fun{R}^0 \) is \( (u \gray \bd{0}{(-)^n} v) \cpsub{} - \).
    For \( 0 < \ell \le \dim v \), we have by Proposition \ref{prop:formula_gray_product}
    \begin{equation*}
        \bd{n + \ell}{+} (u \gray v) = (u \gray \bd{\ell}{(-)^n} v) \gencp{n + \ell - 1} \bd{n + \ell}{+} (b \gray v),
    \end{equation*}
    thus we conclude inductively by globularity and Lemma \ref{lem:formula_generalised_pasting}.
\end{proof}

\begin{thm} \label{thm:equivalences_in_gray}
    Let \( X, Y \) be diagrammatic sets. 
    Then
    \begin{equation*}
        (\Eqv X \gray \Rd Y) \cup (\Rd X \gray \Eqv Y) \subseteq \Eqv (X \gray Y).
    \end{equation*}
\end{thm}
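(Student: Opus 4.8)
The plan is to use the coinductive machinery of Proposition \ref{prop:coinduction_on_closure_pst}: to show a set of round diagrams consists of equivalences, it suffices to exhibit a set \( A \) with \( A \subseteq \B(\Pst A) \) containing the diagrams of interest. So I would set
\begin{equation*}
    A \eqdef (\Eqv X \gray \Rd Y) \cup (\Rd X \gray \Eqv Y),
\end{equation*}
restricted to round diagrams, and attempt to prove \( A \subseteq \B(\Pst A) \), i.e.\ that every \( w \) of the form \( u \gray v \) (with one factor an equivalence, the other a round diagram) lies in both \( \Linv(\Pst A) \) and \( \Rinv(\Pst A) \). By Lemma \ref{lem:equivalence_in_opposite} together with the distributivity Proposition \ref{prop:gray_opp_opp_gray}, the second summand \( \Rd X \gray \Eqv Y \) reduces to the first applied to opposites, so I would only treat \( u \in \Eqv X \), \( v \in \Rd Y \) and obtain the other half by passing to \( \opp{(-)} \).

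The technical heart is to express the one-sided inverses of \( u \gray v \) in terms of the inverse of \( u \). Let \( u \colon a \celto b \) be an equivalence in \( X \) of dimension \( n \), with left inverse \( u^L \), and \( v \) a round diagram in \( Y \) of dimension \( m \). The key idea is that for a round diagram, the Gray product with a fixed factor is realised by a \emph{context}: using Lemma \ref{lem:context_determined_by_boundary_gray}, the output boundary \( \bd{n+\ell}{+}(u \gray v) \) is built from \( u \gray \bd{\ell}{(-)^n}v \) pasted into a context \( \fun{R}^\ell \) governed by \( b \gray v \), and dually for the input boundary via \( \fun{L}^\ell \). Iterating these formulas up to \( \ell = m \), I can recognise the composite \( (u \gray v) \cp{} ((u^L \gray v) \text{ in a suitable context}) \) as an instance of a trim context \( \fun{T}\G_{a,b} \) applied to \( u \), where \( \G \) is a weakly invertible round context coming from the \( v \)-direction and \( \fun{T} \) assembles the pastings in the \( u \)-direction. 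The point is that tensoring with \( v \) does not destroy invertibility on the \( v \)-side (only the \( u \)-factor carries the equivalence data), so \( \G \) is weakly invertible by the results collected in Remark \ref{rmk:gray_preserve_submol} and Lemma \ref{lem:gray_preserve_left_right_pasting_contexts}.

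With this reformulation in hand, the closure argument is supplied by Lemma \ref{lem:closure_left_right_context}: since \( u \in \Eqv X \subseteq \Linv(\Pst A) \cap \Rinv(\Pst A) \), and since \( \fun{T}\G_{a,b}u \) is, up to \( \relcelto A \), the relevant inverse witness for \( u \gray v \), the lemma transports one-sided invertibility from \( u \) to \( u \gray v \). Concretely I would verify \( u \gray v \in \Linv(\Pst A) \) by taking the trim context built from the \( \fun{L}^\ell \)'s and checking the inverse \( v^L \)-free witness reduces to \( u \cp{} u^L \relcelto A \un a \); the \( \Rinv \) half is dual, using the \( \fun{R}^\ell \)'s. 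The main obstacle I anticipate is bookkeeping the sign \( (-)^n \), \( (-)^{n+1} \) twists in the boundary formulas of Proposition \ref{prop:formula_gray_product} and making sure the inductively-built contexts \( \fun{R}^\ell \), \( \fun{L}^\ell \) are genuinely \emph{weakly invertible} and \emph{round} at each stage; this requires carefully matching the generalised-pasting decompositions of Lemma \ref{lem:formula_generalised_pasting} with the left/right trim decomposition of Lemma \ref{lem:decomposition_context}, so that Lemma \ref{lem:closure_left_right_context} applies verbatim rather than needing a separate inductive argument.
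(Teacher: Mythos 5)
Your overall skeleton matches the paper's: reduce the second summand to the first via \( \opp{(-)} \) (Lemma \ref{lem:equivalence_in_opposite} and Proposition \ref{prop:gray_opp_opp_gray}), and establish \( \Eqv X \gray \Rd Y \subseteq \Eqv (X \gray Y) \) by the coinductive method of Proposition \ref{prop:coinduction_on_closure_pst}, transporting one-sided invertibility along contexts via Lemma \ref{lem:closure_left_right_context}. But there is a genuine gap at the crucial step: you assert that the context \( \G \) coming from the \( v \)-direction ``is weakly invertible by the results collected in Remark \ref{rmk:gray_preserve_submol} and Lemma \ref{lem:gray_preserve_left_right_pasting_contexts}''. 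Neither result says anything of the sort: Remark \ref{rmk:gray_preserve_submol} is about the Gray product preserving regular directed complexes, and Lemma \ref{lem:gray_preserve_left_right_pasting_contexts} only identifies the \emph{form} of the relevant contexts, as iterated pastings with diagrams of the shape \( w \gray \bd{\ell'}{\a} v \). For such a context to be weakly invertible you need those pasted diagrams --- e.g.\ \( u^R \gray \bd{\ell'}{\a} v \), \( h \gray \bd{\ell'}{\a} v \) --- to already be equivalences in \( X \gray Y \), which is an instance of the very statement being proved. As written, your argument is circular: Lemma \ref{lem:closure_left_right_context} genuinely requires \( \G \) weakly invertible (its proof invokes Theorem \ref{thm:weakly_invertible_context_have_weak_inverse} to produce a weak inverse \( \G^* \)), and the paper provides no variant of it for \( A \)-contexts where \( A \) is only coinductively hypothesised to consist of equivalences.

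The paper closes this gap with an outer induction on \( \ell = \dim v \), which your proposal omits entirely. One fixes a single round diagram \( v \) of dimension \( \ell \), takes \( A \eqdef \Eqv X \gray \set{v} \) (not the whole union at once), and verifies \( A \subseteq \B(\Pst A) \); the contexts supplied by Lemmas \ref{lem:context_determined_by_boundary_gray} and \ref{lem:gray_preserve_left_right_pasting_contexts} then involve only Gray products with the boundaries \( \bd{\ell'}{\a} v \) for \( \ell' < \ell \), which are equivalences in \( X \gray Y \) by the inductive hypothesis --- this is exactly what makes \( \G \) weakly invertible, and, combined with Lemma \ref{lem:generalised_pasting_equivalences_is_equivalences}, what shows that \( \bd{}{-} (h \gray v) \) is an equivalence, hence lies in \( \Rinv(\Pst A) \) so that there is something to transport from. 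The base case \( \ell = 0 \) also needs a separate (easy) argument absent from your sketch: there \( - \gray v \) is a morphism \( X \cong X \gray \pt \to X \gray Y \) of diagrammatic sets, and such morphisms preserve equivalences by Theorem \ref{thm:main_equivalences}. Finally, note that the paper transports invertibility in two successive applications of Lemma \ref{lem:closure_left_right_context} (first from \( \bd{}{-}(h \gray v) \) to \( (u^R \cp{} u) \gray v \), then from \( (u^R \cp{} u) \gray v \) to \( u \gray v \)) rather than by directly exhibiting \( u^L \gray v \) in a context as an inverse witness; your more computational route might be salvageable, but only after the missing induction on \( \dim v \) is installed.
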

\begin{proof}
    We prove by induction on \( \ell \geq 0 \) that \( \Eqv X \gray \gr{\ell}{\Rd Y} \subseteq \Eqv (X \gray Y) \).
    The base case \( \ell = 0 \) is clear, since for each round diagram \( v \colon \pt \to Y \), the composite of \( X \cong X \gray \pt \) and \( - \gray v \colon X \gray \pt \to X \gray Y \) sends \( u \in \Rd X \) to \( u \gray v \), but morphisms of diagrammatic sets send equivalences to equivalences by Theorem \ref{thm:main_equivalences}.

    Inductively, let \( \ell > 0 \), and \( v \) be a round diagram in \( Y \) with \( \dim v = \ell \).
    We call \( A \eqdef \Eqv X \gray \set{v} \).
    We wish to show that \( A \subseteq \Eqv (X \gray Y) \).
    By Proposition \ref{prop:coinduction_on_closure_pst}, we will instead show that \( A \subseteq \B(\Pst A) \).
    Let \( u \colon a \celto b \) be an equivalence in \( X \) with \( n \eqdef \dim u \), and consider right and left inverses \( u^R \) and \( u^L \) of \( u \), together with equivalences \( h \colon \un b \celto u^R \cp{} u \) and \( z \colon u \cp{} u^L \celto \un a \).
    
    We let \( \fun{R} \) be the round context in \( X \gray Y \) determined by the subdiagram \( (u^R \cp{} u) \gray v \submol \bd{}{+} (h \gray v) \).
    Since \( h \gray v \) belongs to \( A \), we have 
    \begin{equation*}
        \bd{}{-} (h \gray v) \relcelto{A} \fun{R}((u^R \cp{} u) \gray v).
    \end{equation*}
    We wish to apply the second point of Lemma \ref{lem:closure_left_right_context} to deduce that \( (u^R \cp{} u) \gray v \) is in \( \Rinv(\Pst A) \).
    By inductive hypothesis on \( \ell \) with the equivalence \( h \) and Lemma \ref{lem:context_determined_by_boundary_gray}, the context \( \fun{R} \) is the composite of a right trim context and the promotion of a weakly invertible context. 
    By Proposition \ref{prop:formula_gray_product}, \( \bd{}{-} (h \gray v) \) is equal to the generalised pasting \( (\un b \gray v) \gencp{n + \ell - 1} (h \gray \bd{\ell - 1}{(-)^n} v) \).
    Since \( h \gray \bd{\ell - 1}{(-)^n} v \) is an equivalence by inductive hypothesis, and \( \un b \gray v \) is degenerate, so is an equivalence by Theorem \ref{thm:main_equivalences}, we deduce by Lemma \ref{lem:generalised_pasting_equivalences_is_equivalences} that \(  \bd{}{-} (h \gray v) \) is an equivalence, so belongs to \( \Rinv(\Pst A) \). 
    The hypothesis of Lemma \ref{lem:closure_left_right_context} are satisfied, hence \( (u^R \cp{} u) \gray v \) is in \( \Rinv(\Pst A) \).
    
    Now call \( \fun{R}' \) the round context in \( X \gray Y \) determined by the subdiagram \( u \gray v \submol (u^R \cp{} u) \gray v \).
    Since the relation \( \relcelto{A} \) is reflexive by Lemma \ref{lem:celto_A_is_congruence_preorder}, \( (u^R \cp{} u) \gray v \relcelto{A} \fun{R}' (u \gray v) \).
    We wish to apply the second point of Lemma \ref{lem:closure_left_right_context} again, and deduce that \( u \gray v \) is in \( \Rinv(\Pst A) \).
    By inductive hypothesis on \( \ell \) with the equivalence \( u^R \), and Lemma \ref{lem:gray_preserve_left_right_pasting_contexts}, \( \fun{R}' \) is the composite of a right trim context and the promotion of weakly invertible context.
    Since we established that \( (u^R \cp{} u) \gray v \in \Rinv(\Pst A) \), Lemma \ref{lem:closure_left_right_context} applies, hence \( u \gray v \in \Rinv(\Pst A) \).

    By a dual argument with \( z \) and \( u^L \) in place of \( h \) and \( u^R \), we obtain that \( u \gray v \in \Linv(\Pst A) \).
    Thus \( u \gray v \in \B(\Pst A) \).
    This proves that for all diagrammatic sets \( X, Y \), we have \( \Eqv X \gray \Rd Y \subseteq \Eqv (X \gray Y) \).
    Applied to \( \opp{Y} \) and \( \opp{X} \), we have \( \Eqv \opp{Y} \gray \Rd \opp{X} \subseteq \Eqv (\opp{Y} \gray \opp{X}) \).
    Applying the functor \( \opp{(-)} \) to this inclusion and using Lemma \ref{lem:equivalence_in_opposite} and Proposition \ref{prop:gray_opp_opp_gray}, we also get \( \Rd X \gray \Eqv Y \subseteq \Eqv (X \gray Y) \).
    This concludes the proof.
\end{proof}

\begin{lem} \label{lem:gray_product_preserves_A_contexts}
    Let \( X, Y \) be diagrammatic sets, \( A \subseteq \Rd X \), \( \F \) be a context on \( \Pd X(a, b) \), \( u \colon a \celto b \) be a round diagram in \( X \), and \( v \) be a round diagram in \( Y \).
    Then the context determined by the subdiagram \( u \gray v \submol \F u \gray v \) is an \( (A \gray \Rd Y) \)\nbd context on \( X \gray Y \).
\end{lem}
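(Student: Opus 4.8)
The plan is to argue by structural induction on the generation of the $A$\nbd context $\F$, which by Comment~\ref{comm:extended_pasting} we may take to be built from the clauses identity, composition, and the extended left and right pasting clauses $w \cpsub{k, \iota} -$ and $- \subcp{k, \iota} w$ with $w \in A$. It is convenient to prove the slightly more general statement in which $u$ ranges over all of $\Pd X$ rather than only $\Rd X$, since this lets the base diagram vary freely in the induction; we retain the hypothesis that $v$ is round, as it is what forces the pasted diagrams below to lie in $\Rd Y$ rather than merely $\Pd Y$. Throughout, denote by $\fun{C}_\F$ the context determined by the subdiagram $u \gray v \submol \F u \gray v$.

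The identity clause is immediate: if $\F = -$ then $\fun{C}_\F$ is the identity context. For the composition clause $\F = \G \after \F'$, the factorisation of subdiagrams $u \gray v \submol \F' u \gray v \submol \F u \gray v$ exhibits $\fun{C}_\F$ as the composite of the context determined by $\F' u \gray v \submol \F u \gray v$ and the context determined by $u \gray v \submol \F' u \gray v$; the first is the induced context of $\G$ at the base diagram $\F' u \in \Pd X$ and the second is that of $\F'$ at $u$, so both are $(A \gray \Rd Y)$\nbd contexts by the inductive hypothesis, and hence so is their composite.

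The substance of the proof is the extended left pasting clause $\F = w \cpsub{k, \iota} -$ with $w \in A$; the right pasting clause is dual. Then $\F u = w \cpsub{\iota} u$ is a pasting at a submolecule, so Lemma~\ref{lem:gray_preserve_left_right_pasting_contexts}(1) applies with $n \eqdef \dim w$ and yields contexts $\fun{R}^\ell$, determined by $\bd{n + \ell}{+}(u \gray v) \submol \bd{n + \ell}{+}((w \cpsub{\iota} u) \gray v)$, satisfying $\fun{R}^0 = (w \gray \bd{0}{(-)^n} v) \cpsub{} -$ and $\fun{R}^\ell = (w \gray \bd{\ell}{(-)^n} v) \cpsub{} \fun{R}^{\ell - 1}_{a_{\ell - 1}, b_{\ell - 1}}$. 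Since $v$ is round, every boundary $\bd{\ell}{(-)^n} v$ is round, so every pasted diagram $w \gray \bd{\ell}{(-)^n} v$ lies in $A \gray \Rd Y$. A secondary induction on $\ell$ now shows each $\fun{R}^\ell$ is an $(A \gray \Rd Y)$\nbd context: $\fun{R}^0$ is a single such left pasting, and $\fun{R}^\ell$ is the composite of such a left pasting with the promotion of the $(A \gray \Rd Y)$\nbd context $\fun{R}^{\ell - 1}$---promotion being unrestricted in the definition of $A$\nbd contexts. Taking $\ell \eqdef \dim u + \dim v - n$ identifies $\fun{R}^\ell$ with $\fun{C}_\F$, because at that level $n + \ell = \dim((w \cpsub{\iota} u) \gray v)$, so the output boundary is the entire molecule.

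I expect the only genuine obstacle to be the dimensional bookkeeping that makes this last identification legitimate: one must check that the default pasting level of $w \cpsub{\iota} u$, namely $\min\{\dim w, \dim u\} - 1 = \dim w - 1$, coincides with the level at which the context clause $w \cpsub{k, \iota} -$ pastes, so that Lemma~\ref{lem:gray_preserve_left_right_pasting_contexts} really computes $\fun{C}_\F$; and that $\dim(w \cpsub{\iota} u) = \dim u$, so that $\ell = \dim u + \dim v - n$ is the top index and $\bd{n + \ell}{+}$ acts as the identity. Everything else is a formal unwinding of the definitions, with the right pasting clause handled symmetrically through the contexts $\fun{L}^\ell$ of Lemma~\ref{lem:gray_preserve_left_right_pasting_contexts}(2).
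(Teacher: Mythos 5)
Your proposal is correct and takes essentially the same route as the paper's proof: structural induction over the generating clauses of Comment \ref{comm:extended_pasting}, with the identity and composition clauses handled formally and the extended pasting clauses handled by iterating Lemma \ref{lem:gray_preserve_left_right_pasting_contexts}. The only differences are presentational — you spell out the generalisation to \( u \in \Pd X \) (needed so the inductive hypothesis applies at \( \F' u \) in the composition clause) and the secondary induction on \( \ell \) with its dimensional bookkeeping, both of which the paper leaves implicit in the phrase ``applying inductively Lemma \ref{lem:gray_preserve_left_right_pasting_contexts}''.
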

\begin{proof}
    We use the induction principle described in Comment \ref{comm:extended_pasting}.
    If \( \F \) is the identity context, then the context determined by \( u \gray v \submol \F u \gray v \) is the identity, so is an \( A \gray \Rd Y \)\nbd context.
    If \( \F \) is the composite \( \G \fun{H} \) of two \( A \)\nbd contexts \( \G \) and \( \fun{H} \), then the context determined by \( u \gray v \submol \F u \gray v \) is an \( (A \gray \Rd Y) \)\nbd context, since it is the composite of the contexts determined by
    \begin{equation*}
        u \gray v \submol \fun{H} u \gray v\text{, and } \fun{H}u \gray v \submol \G\fun{H}u \gray v,
    \end{equation*} 
    which are both \( (A \gray \Rd Y) \)\nbd contexts by inductive hypothesis on \( \fun{H} \) and \( \G \).
    Finally, if \( \F \) is of the form \( w \cpsub{\iota} - \) or \( - \subcp{\iota} w \) with \( w \in A \) such that \( \dim w \le \dim \F \), then this follows by applying inductively Lemma \ref{lem:gray_preserve_left_right_pasting_contexts}.
    This concludes the induction and the proof.
\end{proof}

\begin{lem} \label{lem:pushout_product_preserves_A_equations_gray}
    Let \( X, Y \) be diagrammatic sets, \( A \subseteq \Rd X\), \( \F \) be a round \( A \)\nbd context on \( \Pd X(a, b) \), \( u \colon a \celto b \) be a round diagram in \( X \), \( w \) be a round diagram in \( X \) parallel to \( \F u \), and \( v \) a round diagram in \( Y \).
    Then
    \begin{enumerate}
        \item for all round diagrams \( z \colon \F u \celto w \) with \( z \in A \), the context determined by the subdiagram \( u \gray v \submol \bd{}{-} (z \gray v) \) is an \( (A \gray \Rd Y) \)\nbd context on \( X \gray Y \); 
        \item for all round diagrams \( h \colon w \celto \F u \) with \( h \in A \), the context determined by the subdiagram \( u \gray v \submol \bd{}{+} (h \gray v) \) is an \( (A \gray \Rd Y) \)\nbd context on \( X \gray Y \).
    \end{enumerate}
\end{lem}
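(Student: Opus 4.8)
The plan is to factor the inclusion through \( \F u \gray v \), exploiting that \( \bd{}{-} z = \F u \) in point~(1) and \( \bd{}{+} h = \F u \) in point~(2), and then to recognise the two resulting contexts as \( (A \gray \Rd Y) \)\nbd contexts by invoking, respectively, Lemma~\ref{lem:gray_product_preserves_A_contexts} and Lemma~\ref{lem:context_determined_by_boundary_gray}. I treat point~(1); point~(2) is entirely dual, replacing \( \bd{}{-} \) by \( \bd{}{+} \), \( z \) by \( h \), and the input-boundary contexts \( \fun{L}^\bullet \) by the output-boundary contexts \( \fun{R}^\bullet \).

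Write \( m \eqdef \dim u = \dim \F u \) and \( \ell \eqdef \dim v \), so that \( z \) has dimension \( m + 1 \) and \( \bd{}{-} z = \F u \). Thus \( \F u \gray v = \bd{}{-} z \gray v \) is a subdiagram of \( \bd{}{-}(z \gray v) \), and the inclusion of the statement factors as a composite of submolecule inclusions of common dimension \( m + \ell \),
\[
    u \gray v \submol \F u \gray v \submol \bd{}{-}(z \gray v),
\]
the first being the gray product of \( u \submol \F u \) with the identity on \( v \). Since the context determined by a subdiagram is characterised by a uniqueness property, the context determined by this composite is the composite of the contexts determined by the two factors, so it suffices to prove that each of them is an \( (A \gray \Rd Y) \)\nbd context.

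The context determined by \( u \gray v \submol \F u \gray v \) is an \( (A \gray \Rd Y) \)\nbd context directly by Lemma~\ref{lem:gray_product_preserves_A_contexts}. For the second factor, I would apply Lemma~\ref{lem:context_determined_by_boundary_gray} to the round diagram \( z \) (in the role of that lemma's pasting diagram) and to \( v \). With \( n \eqdef \dim z = m + 1 \), the top input boundary is \( \bd{}{-}(z \gray v) = \bd{m + \ell}{-}(z \gray v) = \bd{n + (\ell - 1)}{-}(z \gray v) \); and since \( \bd{}{-} z \gray v \) has dimension exactly \( m + \ell \), the truncation defining \( a_{\ell - 1} \) is vacuous, whence \( a_{\ell - 1} = \bd{}{-} z \gray v = \F u \gray v \). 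Therefore the context determined by \( \F u \gray v \submol \bd{}{-}(z \gray v) \) is precisely \( \fun{L}^{\ell - 1} \) when \( \ell \geq 1 \), and the identity when \( \ell = 0 \) (as then \( z \gray v \cong z \)). By the recursion of Lemma~\ref{lem:context_determined_by_boundary_gray}, \( \fun{L}^{\ell - 1} \) is assembled from iterated right-pastings \( - \subcp{} (z \gray \bd{j}{\beta} v) \) and promotions; as \( z \in A \) and each boundary \( \bd{j}{\beta} v \) of the round diagram \( v \) is again round, every pasted diagram \( z \gray \bd{j}{\beta} v \) lies in \( A \gray \Rd Y \), so \( \fun{L}^{\ell - 1} \) is an \( (A \gray \Rd Y) \)\nbd context. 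Composing the two contexts yields point~(1).

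The step needing the most care is the identification \( a_{\ell - 1} = \F u \gray v \): it hinges on the dimension count \( \dim(\bd{}{-} z \gray v) = \dim \bd{}{-}(z \gray v) = m + \ell \), which forces the top input boundary of \( z \gray v \) to expose all of \( \bd{}{-} z \gray v \). The only supplementary fact to verify is that the boundaries \( \bd{j}{\beta} v \) of a round diagram remain round, ensuring the pasted pieces lie in \( A \gray \Rd Y \); the remainder is formal manipulation of contexts.
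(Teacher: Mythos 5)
Your proof is correct and takes essentially the same route as the paper's: the same factorisation \( u \gray v \submol \F u \gray v \submol \bd{}{-} (z \gray v) \), with Lemma \ref{lem:gray_product_preserves_A_contexts} handling the first factor and Lemma \ref{lem:context_determined_by_boundary_gray} the second, then composing the two \( (A \gray \Rd Y) \)\nbd contexts. Your explicit identification of the second context as \( \fun{L}^{\ell - 1} \) (identity when \( \dim v = 0 \)), together with the observation that boundaries of round diagrams are round so the pasted pieces \( z \gray \bd{j}{\beta} v \) lie in \( A \gray \Rd Y \), merely spells out what the paper compresses into ``by inductively applying Lemma \ref{lem:context_determined_by_boundary_gray}, and since \( z \in A \)''.
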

\begin{proof}
    Let \( z \colon \F u \celto w \) be a round diagram in \( A \). Then, the subdiagram \( u \gray v \submol \bd{}{-} (z \gray v) \) factors as the composite of the subdiagrams
    \begin{equation*}
        u \gray v \submol \F u \gray v\text{, and } (\bd{}{-} z) \gray v \submol \bd{}{-} (z \gray v),
    \end{equation*}
    By Lemma \ref{lem:gray_product_preserves_A_contexts}, the context determined by the subdiagram \( u \gray v \submol \F u \gray v \) is an \( (A \gray \Rd Y) \)\nbd context. 
    By inductively applying Lemma \ref{lem:context_determined_by_boundary_gray}, and since \( z \in A \), the context determined by the subdiagram \( (\bd{}{-} z) \gray v \submol \bd{}{-} (z \gray v) \) is an \( (A \gray \Rd Y) \)\nbd context.
    Thus the context determined by the subdiagram \( u \gray v \submol \bd{}{-} (z \gray v) \) is the composite of two \( (A \gray \Rd Y) \)\nbd context, hence is an \( (A \gray \Rd Y) \)\nbd context.
    Proceed dually for the second statement.
\end{proof}

\begin{lem} \label{lem:A_gray_Rd_context_is_A_gray_join_cell_context}
    Let \( X, Y \) be diagrammatic sets, \( A \subseteq \Rd X \) and \( \F \) be an \( (A \gray \Rd Y) \)\nbd context on \( X \gray Y \).
    Then \( \F \) is an \( (A \gray \cell Y) \)\nbd context on \( X \gray Y \).
\end{lem}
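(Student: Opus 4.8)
The plan is to prove the nontrivial inclusion (the reverse one is clear, since \( \cell Y \subseteq \Rd Y \)) by structural induction on the \( (A \gray \Rd Y) \)\nbd context \( \F \), using the generating clauses of Comment \ref{comm:extended_pasting}. The clauses identity, composition and promotion are immediate: the identity is an \( (A \gray \cell Y) \)\nbd context, a composite of \( (A \gray \cell Y) \)\nbd contexts is one, and the promotion of one is one, so in each case the inductive hypothesis closes the case. Everything therefore reduces to the pasting clauses, and by the duality between \( \cpsub{} \) and \( \subcp{} \) it suffices to treat a left pasting \( (a \gray v) \cpsub{k, \iota} - \) with \( a \in A \) and \( v \in \Rd Y \), and to show that it is an \( (A \gray \cell Y) \)\nbd context.

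For this I would run a secondary induction on the number of atoms of the molecule underlying \( v \). If \( v \) is a cell, then \( a \gray v \) already lies in \( A \gray \cell Y \) and the clause is an \( (A \gray \cell Y) \)\nbd clause. Otherwise I would linearise \( v \), peeling off a single atom and writing it as a generalised pasting \( v = x \gencp{j} v' \) (or \( v = v' \gencp{j} x \), the end being dictated by the parity of \( \dim a \)) with \( x \in \cell Y \) and \( v' \) a molecule with one fewer atom. By Proposition \ref{prop:formula_gray_product}(4), applied with the appropriate sign, \( a \gray v \) is the generalised pasting \( (a \gray x) \gencp{} (a \gray v') \) with \( a \gray x \) on the side pasted first, so using the associativity of generalised pasting recorded in Lemma \ref{lem:formula_generalised_pasting} the clause factors as
\begin{equation*}
    (a \gray v) \cpsub{k, \iota} - \;=\; (a \gray x) \cpsub{} \bigl( (a \gray v') \gencp{} - \bigr),
\end{equation*}
the shapes and rewritable inclusions of the two stages being exactly what Lemmas \ref{lem:context_determined_by_boundary_gray} and \ref{lem:gray_preserve_left_right_pasting_contexts} compute. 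Now \( a \gray x \) is round, since the Gray product of round molecules is round (as is used throughout, for instance in the very statement of Theorem \ref{thm:equivalences_in_gray}), so the outer stage \( (a \gray x) \cpsub{} - \) is a genuine \( (A \gray \cell Y) \)\nbd clause, while the inner stage, which pastes the whole of \( a \gray v' \), is an \( (A \gray \cell Y) \)\nbd context by the secondary inductive hypothesis applied to \( v' \). The point that makes the induction go through is that only the \emph{pasted} piece \( a \gray x \) must be round: the intermediate pasting diagrams on which the inner context acts are unconstrained, so no roundness of the partial pieces \( a \gray v' \) is ever required.

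The structural skeleton above is routine; the hard part will be the bookkeeping. First, one must choose the end from which to peel the atom so that Proposition \ref{prop:formula_gray_product}(4) places \( a \gray x \) on the correct side, which is parity\nbd dependent and is precisely where the sign in the orientation of the Gray product intervenes. Second, and more seriously, one must know that every molecule linearises into its atoms one at a time, i.e. that the generalised pasting \( v = x \gencp{j} v' \) always exists; this is where I would lean on the inductive generation of molecules from atoms and the associativity developed in Section 7.1 of \cite{hadzihasanovic2024combinatorics}. Once this linearisation and the resulting factorisation are in place, identifying the intermediate shapes and rewritable inclusions is exactly the content of Lemmas \ref{lem:context_determined_by_boundary_gray} and \ref{lem:gray_preserve_left_right_pasting_contexts}, and the remainder is formal.
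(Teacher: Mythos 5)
Your overall skeleton coincides with the paper's: structural induction on \( \F \), dismissal of the identity/composition/promotion clauses, reduction by duality to a left pasting clause \( (a \gray v) \cpsub{} - \), and an inner induction that factors this clause through Proposition \ref{prop:formula_gray_product}(4) and Lemma \ref{lem:formula_generalised_pasting}. The divergence is in the inner induction, and this is where the gaps lie. You peel off a single atom, \( v = x \gencp{j} v' \) with \( x \in \cell Y \), and justify this by ``the inductive generation of molecules from atoms and the associativity of Section 7.1''. But that inductive generation only yields \emph{binary} decompositions \( v = v_1 \cp{k} v_2 \) whose factors are in general neither atoms nor round; upgrading this to an atom-by-atom linearisation is a genuine theorem (essentially the existence of layerings in \cite{hadzihasanovic2024combinatorics}), not a corollary of associativity, and you correctly flag it as the hard part without closing it. The paper needs none of this: it runs the inner induction over \emph{all} subdiagrams \( v' \submol v \), uses only the binary decomposition \( v = v_1 \cp{k} v_2 \) guaranteed by the definition of molecules, and obtains \( (a \gray (v_1 \cp{k} v_2)) \cpsub{} - = (a \gray v_i) \cpsub{} ((a \gray v_j) \cpsub{} -) \), with \( (i, j) \) fixed by the parity of \( \dim a \), applying the inductive hypothesis to both factors.

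The second, more substantive problem is your closing claim that ``only the pasted piece \( a \gray x \) must be round'' because ``the intermediate pasting diagrams on which the inner context acts are unconstrained''. This conflates the \emph{arguments} of a context (which indeed need not be round) with its \emph{pasted pieces} (which must be round by the very definition of the pasting clauses --- this requirement is exactly why the lemma is not a triviality). Your inner stage \( (a \gray v') \gencp{} - \) is itself a pasting of \( a \gray v' \); when \( v' \) is not round this is not a legitimate clause, so the only way to treat it is for your inductive statement to range over arbitrary, possibly non-round, molecules \( v' \), with the inner stage decomposed further by the inductive hypothesis rather than taken as a single clause. As written, your induction statement (over round \( v \), peeling atoms) does not licence the application of the hypothesis to the non-round remainder \( v' \). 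Repairing it means quantifying over all submolecules of \( v \) --- which is precisely the paper's formulation, and once you adopt it the atom-peeling, and with it the unproved linearisation, becomes unnecessary.
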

\begin{proof}
    We proceed by induction on the construction of \( \F \).
    The cases where \( \F \) is produced by identity, composition or promotion are straightforward.
    Suppose that \( \F \) is of the form \( (u \gray v) \cpsub{} -  \) with \( u \in A \) and \( v \in \Rd Y \).
    We prove by induction on the subdiagrams of \( v' \) of \( v \) that the contexts \( (u \gray v') \cpsub{} - \) are \( (A \gray \cell Y) \)\nbd contexts.
    The statement is clearly true of the subdiagrams \( v' \submol v \) of dimension \( 0 \), since all of these are cells.
    Suppose the statement is true of all proper subdiagrams of \( v \).
    Then, either \( v \) is a cell, and we are done, or decomposes as \( v = v_1 \cp{k} v_2 \).
    By Proposition \ref{prop:formula_gray_product} and Lemma \ref{lem:formula_generalised_pasting}, calling \( (i, j) \eqdef (1, 2) \) if \( \dim u \) is even, otherwise \( (i, j) \eqdef (2, 1) \), we have
    \begin{equation*}
        (u \gray (v_1 \cp{k} v_2)) \cpsub{} - = (u \gray v_i) \cpsub{} ((u \gray v_j) \cpsub{} -),
    \end{equation*}
    which is an \( (A \gray \cell Y) \)\nbd contexts, as the composite of two \( (A \gray \cell Y) \)\nbd contexts by inductive hypothesis.
    This ends the induction on subdiagrams of \( v \) and shows that \( (u \gray v) \cpsub{} -  \) is an \( (A \gray \cell Y) \)\nbd context. 
    The case where \( \F \) is of the form \( - \subcp{} (u \gray v) \) is dual.
    This concludes the proof.
\end{proof}
\section{Model structures} \label{sec:model}

\subsection{Marked diagrammatic sets}

\begin{dfn} [Marked diagrammatic set]
    A \emph{marked diagrammatic set} is a pair of 
    \begin{enumerate}
        \item a diagrammatic set \( X \), and
        \item a set \( A \subseteq \gr{> 0}{\cell X } \) such that \( \dgn X \subseteq A \).
    \end{enumerate}
    A morphism \( f \colon (X, A) \to (Y, B) \) of marked diagrammatic sets is a morphism \( f \colon X \to Y \) of diagrammatic sets such that \( f(A) \subseteq f(B) \).
    This determines the category \( \mdgmSet \) of marked diagrammatic sets and morphisms.
\end{dfn}

\begin{rmk} \label{rmk:quasitopos_mdgmset}
    We recall by \cite[Proposition 2.27, Corollary 2.28]{chanavat2024model} that \( \mdgmSet \) is a locally presentable quasitopos. Furthermore, given a diagram \( \F \colon \cls J \to \mdgmSet \), a non-degenerate cell \( u \) is marked in \( \colim \F \) if and only if there exists \( j \in \cls J \), and a marked cell \( u_j \in \F j \) such that \( u \) is the image of \( u_j \) through the coprojection \( \iota_j \colon \F j \to \colim \F \).
\end{rmk}

\begin{dfn}
    There is forgetful functor \( \fun{U} \colon \mdgmSet \to \dgmSet \) obtained by forgetting the marking of a marked diagrammatic set.
    The functor \( \fun{U} \) has both a left and a right adjoint
    \begin{equation*}
        \minmark{(-)} \dashv \fun{U} \dashv \maxmark{(-)},
    \end{equation*}
    defined by \( \minmark{X} \eqdef (X, \dgn X) \) and \( \maxmark{X} \eqdef (X, \cell X) \).
    Since \( \dgn X \subseteq \eqv X \) by Theorem \ref{thm:main_equivalences}, there also is a functor \( \natmark{(-)} \colon \dgmSet \to \mdgmSet \) defined by \( \natmark{X} \eqdef (X, \eqv X) \).
\end{dfn}

\begin{dfn} [Entire monomorphism]
    We say that a monomorphism of marked diagrammatic sets is \emph{entire} if its underlying morphism of diagrammatic sets is an isomorphism.
    If \( i \colon (X', A') \to (X, A) \) is entire, the subset \( A \setminus i(A') \) is called the \emph{residual of \( i \)}.
\end{dfn}

\begin{dfn} [Marked regular directed complex]
	A \emph{marked regular directed complex} \( (P, A) \) is a regular directed complex \( P \) together with a set \( A \subseteq \gr{>0}{P} \) of marked elements.
	We will identify a marked regular directed complex \( (P, A) \) with the marked diagrammatic set whose set of marked cells is
    \begin{equation*}
        \dgn P \cup \set{ \mapel{x}\colon \imel{P}{x} \to P \mid x \in A }.
    \end{equation*}
	If \( P \) is a molecule or an atom, we speak of a \emph{marked molecule} or \emph{marked atom}.
	Given a molecule \( U \), we let \( \markmol{U} \) denote the marked molecule \( (U, \gr{>0}{\maxel{U}}) \). 
\end{dfn}

\begin{dfn} [Opposite of a marked diagrammatic set]
    Let \( (X, A) \) be a marked diagrammatic set.
    The \emph{opposite of \( (X, A) \)} is the marked diagrammatic set defined by \( \opp{(X, A)} \eqdef (\opp{X}, \opp{A}) \).
    This determines an involution \( \opp{(-)} \) on \( \mdgmSet \).
\end{dfn}  

\begin{dfn} [Gray product]
    Let \( (X, A), (Y, B) \) be marked diagrammatic sets.
    The \emph{Gray product of \( (X, A) \) and \( (Y, B) \)} is the marked diagrammatic set defined by
    \begin{equation*}
        (X, A) \gray (Y, B) \eqdef (X \gray Y, \dgn (X \gray Y) \cup A \gray \cell Y \cup \cell X \gray B).
    \end{equation*}
\end{dfn}

\noindent The Gray product extends to a monoidal structure on marked diagrammatic sets, with monoidal unit the terminal marked diagrammatic set. Furthermore, the functors \( \fun{U} \) and \( \minmark{(-)} \) are strict monoidal.

\begin{lem} \label{lem:gray_product_mdgmSet_biclosed}
    The Gray product of marked diagrammatic sets is a biclosed monoidal product.
\end{lem}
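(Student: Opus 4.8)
The plan is to leverage local presentability of \( \mdgmSet \) (Remark \ref{rmk:quasitopos_mdgmset}). Since a cocontinuous functor between locally presentable categories is automatically a left adjoint, it suffices to show that for every marked diagrammatic set \( Z \) both functors \( Z \gray - \) and \( - \gray Z \) preserve small colimits; their right adjoints then furnish the left and right internal homs witnessing biclosedness. The two cases are entirely symmetric, so I would treat \( Z \gray - \) in detail and obtain \( - \gray Z \) by the dual argument (alternatively, by transporting along the involution \( \opp{(-)} \), in the spirit of Proposition \ref{prop:gray_opp_opp_gray}).

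Fix a diagram \( \F \colon \cls J \to \mdgmSet \), write \( \F j = (Y_j, B_j) \) and \( Z = (X, A) \), and consider the canonical comparison morphism \( \colim_j (Z \gray \F j) \to Z \gray \colim_j \F j \) induced by the universal property of the colimit. First I would observe that its underlying morphism of diagrammatic sets is invertible: the forgetful functor \( \fun{U} \) is strict monoidal and preserves colimits (being left adjoint to \( \maxmark{(-)} \)), while \( X \gray - \) is cocontinuous on \( \dgmSet \) because the Gray product is biclosed there; chaining these identifications exhibits both underlying objects as \( X \gray \colim_j Y_j = \colim_j (X \gray Y_j) \). Since \( \fun{U} \) does not reflect isomorphisms (a morphism need only satisfy \( f(A) \subseteq B \)), it then remains to check that the two markings induced on this common underlying diagrammatic set coincide.

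For the marking computation I would argue cellwise. Both markings contain \( \dgn(X \gray \colim_j Y_j) \), so I focus on non-degenerate cells, each of which may be written as \( u \gray v \) for non-degenerate \( u \) in \( X \) and \( v \) in \( \colim_j Y_j \) (using that a non-degenerate cell of a Gray product can be chosen in this form). On the one hand, \( u \gray v \) is marked in \( Z \gray \colim_j \F j \) exactly when \( u \in A \) and \( v \in \cell(\colim_j \F j) \), or \( u \in \cell X \) and \( v \) is marked in \( \colim_j \F j \). On the other hand, by Remark \ref{rmk:quasitopos_mdgmset} the cell \( u \gray v \) is marked in \( \colim_j (Z \gray \F j) \) iff it is the image of a marked cell of some \( Z \gray \F j \), that is, iff there is an index \( j \) and a cell \( v_j \) of \( Y_j \) mapping to \( v \) with either \( u \in A \) and \( v_j \in \cell Y_j \), or \( u \in \cell X \) and \( v_j \in B_j \). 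The bridge between the two descriptions is that colimits in the presheaf category \( \dgmSet \) are computed pointwise, so every cell \( v \) of \( \colim_j \F j \) is the image of a cell of some \( Y_j \), and, applying Remark \ref{rmk:quasitopos_mdgmset} once more, \( v \) is marked in \( \colim_j \F j \) iff it is the image of a marked cell of some \( Y_j \). These two facts translate the two membership conditions into one another, so the markings agree and the comparison morphism is an isomorphism.

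I expect the only delicate point to be the bookkeeping around the non-unique decompositions \( u \gray v \) of a non-degenerate cell, together with the separate handling of the two clauses \( A \gray \cell Y \) and \( \cell X \gray B \) in the definition of the marked Gray product: one must make sure the set-level translation via Remark \ref{rmk:quasitopos_mdgmset} is applied uniformly to the plain ``cell'' factor and to the ``marked cell'' factor, and that the degenerate contributions \( \dgn(X \gray Y_j) \) assemble correctly into \( \dgn(X \gray \colim_j Y_j) \). Everything else is a formal consequence of local presentability and of the cocontinuity of the unmarked Gray product.
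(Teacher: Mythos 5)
Your proposal is correct and follows essentially the same route as the paper: the paper's proof likewise observes that \( - \gray - \) preserves colimits in each variable (citing exactly Remark \ref{rmk:quasitopos_mdgmset} for the markings, with the underlying-diagrammatic-set and marking comparison left as an ``inspection of the definition'') and then invokes the adjoint functor theorem for locally presentable categories to obtain right adjoints in both variables. Your cellwise verification of the markings simply fills in the details the paper compresses into that one-line inspection.
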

\begin{proof}
    An inspection of the definition together with Remark \ref{rmk:quasitopos_mdgmset} shows that \( - \gray - \) respects colimits in both variables. 
    By standard facts about locally presentable categories \cite[1.66]{adamek1994locally}, the monoidal product \( - \gray - \) has a right adjoint in both variables.
\end{proof}

\subsection{Diagrammatic \texorpdfstring{\( (\infty, n) \)}{(∞, n)}-categories}

\begin{dfn} [Cellular extension]
	Let \( X \) be a diagrammatic set.
	A \emph{cellular extension of \( X \)} is a diagrammatic set \( X_{S} \) together with a pushout diagram
    \begin{center}
        \begin{tikzcd}
            {\coprod_{e \in S} \bd{}{}U_e} &&& {\coprod_{u \in S} U_e} \\
            X &&& {X_S}
            \arrow["{(\bd{}{}e)_{e \in S}}", from=1-1, to=2-1]
            \arrow["{(e)_{e \in S}}", from=1-4, to=2-4]
            \arrow[hook, from=2-1, to=2-4]
            \arrow["{\coprod_{e \in S}\bd{U_e}{}}", hook, from=1-1, to=1-4]
            \arrow["\lrcorner"{anchor=center, pos=0.125, rotate=180}, draw=none, from=2-4, to=1-1]
        \end{tikzcd}
    \end{center}
    in \( \dgmSet \) such that \( U_e \) is an atom and \( \bd{U_e}{} \) is the inclusion of its boundary for each \( e \in S \).
	Each \( e \in S \) determines a cell \( e\colon e^- \celto e^+ \) in \( X_{S} \).
	In turn, the pushout is determined by the set of pairs of round diagrams \( \set{(e^-, e^+)}_{e \in S} \) in \( X \).
	We say that \( X_{S} \) is the result of \emph{attaching the cells \(\set{ e\colon e^- \celto e^+ }_{e \in S} \) to \( X \)}.
\end{dfn}

\begin{dfn} [Localisation of a diagrammatic set]
	Let \( (X, A) \) be a marked diagrammatic set.
    	We define \( \preloc{X}{A} \) to be the diagrammatic set obtained from \( X \) in the following two steps: for each cell \( a\colon u \celto v \) in \( A \cap \nd X \),
    \begin{enumerate}
        \item attach cells \( a^L \colon v \celto u \) and \( a^R \colon v \celto u \), then
        \item attach cells \( \hinv{L}(a) \colon a \cp{} a^L \celto \un u \) and \( \hinv{R}(a) \colon \un v \celto a^R \cp{} a \).
    \end{enumerate}
    Let \( \order{0}{X} \eqdef X \) and \( \order{0}{A} \eqdef A \).
    Inductively, for each \( n > 0 \), we let
    \begin{equation*}
    	\order{n}{X} \eqdef \preloc{\order{n-1}{X}}{\order{n - 1}{A}}, 
	\quad\quad 
	\order{n}{A} \eqdef \set{\hinv{L}(a), \hinv{R}(a) \mid a \in \order{n-1}{A}}.
    \end{equation*}
    We have a sequence \( (\order{n}{X} \incl \order{n+1}{X})_{n \geq 0} \) of inclusions of diagrammatic sets. 
    The \emph{localisation of \( X \) at \( A \)} is the colimit \( \loc{X}{A} \) of this sequence.

    For each cell \( a \) of shape \( U \) in \( A \), we define a family of cells \( \hinv{s}a \) of shape \( \hcyl{s}U \) in \( \loc{X}{A} \), indexed by strings \( s \in \set{L, R}^* \), by
	\begin{equation*}
        \begin{array}{lc}
            \;\hinv{\langle\rangle}a  \eqdef a & \\
            \hinv{Ls}a \eqdef
            \begin{cases}
                \hinv{L}(\hinv{s}a)
                    & \text{if \( a \in \nd X \)}, \\
                a \after \tau_s
                    & \text{if \( a \in \dgn X \)},
            \end{cases} &
            \hinv{Rs}a \eqdef 
            \begin{cases}
                \hinv{R}(\hinv{s}a)
                    & \text{if \( a \in \nd X \)}, \\
                a \after \tau_s
                    & \text{if \( a \in \dgn X \)},
            \end{cases}
        \end{array}
    \end{equation*}
	where \( \tau_s \) is the projection \( \hcyl{s}U \to U \).
	We also let, for each \( s \in \set{L, R}^* \),
    \begin{equation*}
        \hinv{s}^La \eqdef 
        \begin{cases}
            (\hinv{s}a)^L
                & \text{if \( a \in \nd X \)}, \\
            \rev{(a \after \tau_s)}
                & \text{if \( a \in \dgn X \)},
        \end{cases} \quad 
        \hinv{s}^Ra \eqdef 
        \begin{cases}
            (\hinv{s}a)^R
                & \text{if \( a \in \nd X \)}, \\
            \rev{(a \after \tau_s)}
                & \text{if \( a \in \dgn X \)}.
        \end{cases}
    \end{equation*}
\end{dfn} 

\noindent We will identify diagrams in \( X \) with their image through \( X \incl \loc{X}{A} \).
By construction, every cell in \( A \) becomes an equivalence in \( \loc{X}{A} \).
Recall by \cite[Proposition 2.139]{chanavat2024model} that there exists a colimit preserving functor
\begin{equation*}
    \Loc \colon \mdgmSet \to \dgmSet
\end{equation*}
sending \( (X, A) \) to \( \loc{X}{A} \), and a morphism \( f \colon (X, A) \to (Y, B) \) to the unique morphism \( \Loc f \) such that
\begin{enumerate}
    \item \( \Loc f \) restricts to \( f \) on \( X \incl \loc{X}{A} \),
    \item for each \( a \in A \) and \( s \in \set{L, R}^* \),
    \begin{equation*}
        \Loc f \colon \hinv{s}a \mapsto \hinv{s}f(a),\quad\quad \hinv{s}^La \mapsto \hinv{s}^Lf(a),\quad\quad \hinv{s}^Ra \mapsto \hinv{s}^Rf(a).    
    \end{equation*}
\end{enumerate}

\begin{dfn} [Walking equivalence]
    Let \( U \) be an atom of dimension \( > 0 \). 
    The \emph{walking equivalence of shape \( U \)} is the diagrammatic set \( \selfloc{U} \eqdef \Loc \markmol{U} \).
\end{dfn}
\noindent The walking equivalence of shape the arrow \( \arr \) is called the \emph{reversible arrow}, and is denoted by \( \revarr \). 

\begin{dfn} [Weak composite]
    Let \( X \) be a diagrammatic set, and \( u \colon U \to X \) be a round diagram of shape \( U \).
    A \emph{weak composite for \( u \)} is any cell \( \compos{u} \colon \compos{U} \to X \) such that \( u \simeq \compos{u} \).
\end{dfn}

\begin{dfn} [Diagrammatic \( (\infty, n) \)\nbd category]
    Let \( n \in \mathbb{N} \cup \set{\infty} \).
    We say that a diagrammatic set \( X \) is an \emph{\( (\infty, n) \)\nbd category} if:
    \begin{enumerate}
        \item every round diagram in \( X \) has a weak composite, and
        \item all cells of dimension \( > n \) in \( X \) are equivalences.
    \end{enumerate}
\end{dfn}

\begin{rmk}
    Any \( (\infty, n) \)\nbd category is by definition an \( (\infty, \infty) \)\nbd category.
\end{rmk}

\begin{lem} \label{lem:opp_of_infty_cat_is_infty_cat}
    Let \( n \in \mathbb{N} \cup \set{\infty} \), and \( X \) be an \( (\infty, n) \)\nbd category.
    Then \( \opp{X} \) is an \( (\infty, n) \)\nbd category.
\end{lem}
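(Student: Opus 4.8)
The plan is to exploit that \( \opp{(-)} \colon \dgmSet \to \dgmSet \) is an involution which preserves all the structure entering the definition of an \( (\infty, n) \)\nbd category, thereby reducing each of the two axioms for \( \opp{X} \) to the corresponding axiom for \( X \). First, I would recall that since \( \opp{(-)} \) is induced by precomposition with the involution \( \opp{(-)} \) on \( \atom \), a diagram of shape \( U \) in \( \opp{X} \) is the same datum as a diagram of shape \( \opp{U} \) in \( X \), via \( u \mapsto \opp{u} \). Because the dual respects the classes of atoms, round molecules and molecules (and preserves dimension), this identification matches round diagrams in \( \opp{X} \) with round diagrams in \( X \), and cells with cells.

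Next I would record two compatibilities. The first is that \( \opp{(-)} \) preserves the relation \( \simeq \): if \( u \simeq v \) in \( X \), witnessed by an equivalence \( e \colon u \celto v \), then \( \opp{e} \) is an equivalence in \( \opp{X} \) by Lemma \ref{lem:equivalence_in_opposite}, of type \( \opp{u} \celto \opp{v} \) or \( \opp{v} \celto \opp{u} \) according to the parity of \( \dim e \); since \( \simeq \) is symmetric by Theorem \ref{thm:main_equivalences}, in either case \( \opp{u} \simeq \opp{v} \). The second is that the dual commutes with the merger, \( \opp{\compos{U}} = \compos{\opp{U}} \). Both sides are atoms of dimension \( \dim U \) depending only on the input and output boundaries, and comparing \( \bd{}{\a}(\opp{\compos{U}}) \) with \( \bd{}{\a}\opp{U} = \bd{}{\a}(\compos{\opp{U}}) \) one sees they agree: when \( \dim U \) is odd the top-dimensional orientation reversal swaps input and output on both sides consistently, and when \( \dim U \) is even no reversal occurs, so the two match.

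With these in hand, the two axioms are quick. For the first, let \( u \colon U \to \opp{X} \) be a round diagram; then \( \opp{u} \) is a round diagram in \( X \), so by hypothesis it has a weak composite \( c \eqdef \compos{\opp{u}} \colon \compos{\opp{U}} \to X \) with \( \opp{u} \simeq c \). Applying \( \opp{(-)} \), the cell \( \opp{c} \) has shape \( \opp{\compos{\opp{U}}} = \compos{\opp{\opp{U}}} = \compos{U} \) by the merger compatibility, and \( u = \opp{\opp{u}} \simeq \opp{c} \) by preservation of \( \simeq \); hence \( \opp{c} \) is a weak composite for \( u \). For the second axiom, let \( c \) be a cell in \( \opp{X} \) with \( \dim c > n \); then \( \opp{c} \) is a cell in \( X \) of the same dimension, hence an equivalence, and Lemma \ref{lem:equivalence_in_opposite} gives \( c = \opp{\opp{c}} \in \opp{(\Eqv X)} = \Eqv \opp{X} \).

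I expect the only genuinely delicate point to be the compatibility \( \opp{\compos{U}} = \compos{\opp{U}} \) together with the bookkeeping of the direction of the witnessing equivalence under the dual; both are controlled by the parity intrinsic to the odd dual, with the symmetry of \( \simeq \) absorbing the direction flip. Everything else is a formal transport along the involution.
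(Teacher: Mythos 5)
Your proof is correct and follows essentially the same route as the paper's: transport round diagrams and cells along the involution, use the hypotheses on \( X \), and conclude via Lemma \ref{lem:equivalence_in_opposite}. The two compatibilities you verify explicitly --- the parity-dependent direction flip absorbed by the symmetry of \( \simeq \), and the identification \( \opp{\compos{U}} \cong \compos{\opp{U}} \) --- are precisely the points the paper's terser proof leaves implicit, so your write-up is a more detailed rendering of the same argument.
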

\begin{proof}
    Let \( \opp{u} \) be a round diagram in \( \opp{X} \).
    Then \( u \) has a weak composite \( \compos{u} \), and by Lemma \ref{lem:equivalence_in_opposite}, \( \opp{u} \simeq \opp{\compos{u}} \).
    Thus the cell \( \opp{\compos{u}} \) is a weak composite for \( \opp{u} \).
    By Lemma \ref{lem:equivalence_in_opposite} again, all cells of dimension \( > n \) in \( \opp{X} \) are equivalences.
    Thus \( \opp{X} \) is an \( (\infty, n) \)\nbd category. 
\end{proof}

\subsection{Definition of the model structures}

\begin{dfn} [Atomic horn]
    Let \( U \) be an atom, \( \dim U > 0 \).
    An \emph{atomic horn of \( U \)} is the data of
    \begin{enumerate}
        \item an element \( x \in \faces{}{\a} U \), for some \( \a \in \set{-, +} \),
        \item the inclusion \( \lambda^x_U \colon \Lambda^x_U \incl U \), where \( \Lambda^x_U \eqdef \bd{}{} U \setminus \clset{x} \).
    \end{enumerate}
\end{dfn}

\begin{dfn} [Context classified by an atomic horn]
    Let \( \Lambda^x_U \) be an atomic horn, let \( \a \in \set{-, +} \) such that \( x \in \faces{}{\a} U \), and \( f \colon \Lambda^x_U \to X \) a morphism of diagrammatic sets.
    Then the submolecule inclusion \( \iota \colon \clset{x} \submol \bd{}{\a} U \) determines a round context \( \F \) of shape \( \bd{}{\a} U \) on \( \Pd X(f(\bd{}{-} x), f(\bd{}{+}x)) \), called the \emph{context classified by \( f \)}.
\end{dfn}

\begin{dfn} [Marked horn]
    Let \( \lambda^x_U \colon \Lambda^x_U \to A \) be an atomic horn, let \( \top \) be the greatest element of \( U \), let \( \a \in \set{-, +} \) such that \( x \in \faces{}{\a} U \).
    We say that an inclusion
    \begin{equation*}
        \lambda^x_U \colon (\Lambda^x_U, A) \incl (U, A')
    \end{equation*}
    of marked regular directed complexes is a \emph{marked horn} if the context classified by \( \lambda^x_U \) is an \( A \)\nbd context in \( U \), and, moreover, 
    \begin{equation*}
        A' \eqdef 
        \begin{cases}
            A \cup \set{x, \top} & \text{if } \faces{}{-\a} U \subseteq A, \\
            A \cup \set{\top} & \text{otherwise.}
        \end{cases}
    \end{equation*}
\end{dfn}

\begin{dfn} [Marked walking equivalence]
    Let \( U \) be an atom of dimension \( > 0 \).
    The \emph{marked walking equivalence of shape \( U \)} is the marked diagrammatic set
    \begin{equation*}
        \selflocm{U} \eqdef (\selfloc{U}, \dgn \selfloc{U} \cup \set{U \incl \selfloc{U}}).
    \end{equation*}
\end{dfn}

\begin{lem} \label{lem:cellular_model_marked_dgmset}
    The sets of monomorphisms defined as
    \begin{align*}
        &M \eqdef \set{\minbdmap U \colon \minmark{(\bd{}{}U)} \incl \minmark{U} \mid U \in \Ob \atom} \cup \set{t_U \colon \minmark{U} \incl \markmol{U} \mid U \in \Ob \atom} \text{, and} \\
        &M' \eqdef\set{\minbdmap U \colon \minmark{(\bd{}{}U)} \incl \minmark{U} \mid U \in \Ob \atom} \cup \set{\markbdmap U \colon \minmark{(\bd{}{}U)} \incl \markmol{U} \mid U \in \Ob \atom} 
    \end{align*}
    are both cellular models for the monomorphisms of \( \mdgmSet \).
\end{lem}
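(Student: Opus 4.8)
The plan is to verify directly that both $\cof M$ and $\cof M'$ coincide with the class of monomorphisms, using the characterisation recalled in Remark \ref{rmk:characterise_l_r_J}: the morphisms of $\cof J$ are exactly the retracts of relative $J$\nbd cell complexes. For the inclusion of $\cof M$ into the monomorphisms (and likewise for $M'$), I would first observe that every generator is a monomorphism, and then invoke Remark \ref{rmk:quasitopos_mdgmset}: since $\mdgmSet$ is a locally presentable quasitopos, its monomorphisms are closed under pushout, transfinite composition and retract, so that every retract of a relative $M$\nbd cell complex is again a monomorphism.

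For the reverse inclusion the real work is to exhibit an arbitrary monomorphism $i \colon (X, A) \incl (Y, B)$ as a relative cell complex. I would build $i$ in two interleaved stages. First, forgetting markings, I present the underlying inclusion $X \incl Y$ as a relative cell complex of the boundary inclusions $\bd{}{}U \incl U$, attaching the non-degenerate cells of $Y$ not in the image of $X$ in order of increasing dimension; this is legitimate because $\atom$ is an Eilenberg--Zilber category, so $Y$ is canonically filtered by its skeleta and each skeletal step is a pushout of a coproduct of boundary inclusions. Lifting this to the marked setting, I attach a cell via $\minbdmap U$ when it is unmarked in $B$ and via $\markbdmap U = t_U \after \minbdmap U$ when it is marked; to mark a cell already present in $X$ but lying in $B$, I take a pushout of the marking generator $t_U \colon \minmark U \incl \markmol U$ along the map $\minmark U \to (\text{current stage})$ classifying that cell. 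Using the colimit description of markings in Remark \ref{rmk:quasitopos_mdgmset}, such a pushout is entire and marks exactly the classified cell, so this realises $i$ as a relative $M$\nbd cell complex.

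To transfer the result to $M'$ I would compare the two generating sets. One inclusion is immediate: since $\markbdmap U = t_U \after \minbdmap U$ is a composite of two generators of $M$, every morphism of $M'$ is a relative $M$\nbd cell complex, whence $\cof M' \subseteq \cof M$. For the converse it suffices, by saturation, to show that the pure marking generator $t_U$ lies in $\cof M'$. The natural attempt is to recover $t_U$ as a retract of the pushout
\begin{equation*}
    q \colon \minmark U \to R, \qquad R \eqdef \minmark U \amalg_{\minmark{(\bd{}{}U)}} \markmol U,
\end{equation*}
of $\markbdmap U$ along $\minbdmap U$, using the fold map $R \surj \markmol U$ that collapses the two parallel copies of the top cell and again reading off the markings from Remark \ref{rmk:quasitopos_mdgmset}.

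The step I expect to be the main obstacle is precisely this last one: producing the pure marking map $t_U$ from the attach\nbd and\nbd mark generators of $M'$. The difficulty is structural, in that a relative $M'$\nbd cell complex only ever marks freshly attached cells, so marking a cell that is already present must be recovered through a retract against the doubled cell $R$; arranging the fold together with a section that respects the marking bookkeeping, and checking that the residual of the resulting entire monomorphism is exactly the intended marked cell, is the delicate point and will require care (possibly a more elaborate cell than the naive doubling $R$). Once $t_U \in \cof M'$ is secured, the equality $\cof M = \cof M'$ follows, and with it the conclusion that both $M$ and $M'$ are cellular models for the monomorphisms of $\mdgmSet$.
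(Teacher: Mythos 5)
Your handling of \( M \) is correct, and it is essentially the argument that the paper outsources to \cite[Lemma 3.20]{chanavat2024equivalences}: both inclusions go exactly as you describe, with the skeletal induction attaching the missing non-degenerate cells and pushouts of \( t_U \) along classifying maps creating the missing markings. The genuine gap is in the \( M' \) half, and it is worse than you anticipate: the step you flag as the main obstacle is not delicate but impossible, so no choice of a ``more elaborate cell'' in place of the doubling \( R \) can repair it.

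Here is the concrete obstruction. By Remark \ref{rmk:quasitopos_mdgmset}, the forgetful functor preserves colimits, and a non-degenerate cell of a colimit is marked if and only if it is the image of a marked cell of some stage. In a pushout of \( \minbdmap{V} \) or \( \markbdmap{V} \) along an arbitrary attaching map, the freshly attached top cell is, computed objectwise, disjoint from the image of the old object, and images of degenerate cells stay degenerate; consequently, along any relative \( M' \)\nbd cell complex \( i \colon (Z, C) \incl (W, D) \), no pre-existing non-degenerate cell ever becomes marked, that is, \( i^{-1}(D) = C \). Now suppose \( t_U \in \cof{M'} \) for an atom \( U \) of positive dimension, and factor \( t_U = p \after i \) by the small object argument, with \( i \colon \minmark{U} \to (W, D) \) a relative \( M' \)\nbd cell complex and \( p \in r(M') \). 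The assumed left lifting property of \( t_U \) against \( p \), applied to the square with top \( i \) and bottom \( \idd{\markmol{U}} \), yields \( h \colon \markmol{U} \to (W, D) \) with \( h \after t_U = i \) and \( p \after h = \idd{\markmol{U}} \) (this is the proof of Lemma \ref{lem:retract_lemma}). Since \( t_U \) is the identity on underlying diagrammatic sets, \( h \) and \( i \) have the same underlying morphism, so \( h(\top_U) = i(\top_U) \), and \( h \) must preserve the marking of \( \top_U \), forcing \( i(\top_U) \in D \). But \( i \) is a monomorphism of underlying diagrammatic sets, hence \( i(\top_U) \) is non-degenerate (\( \atom \) being Eilenberg--Zilber), and \( i^{-1}(D) = \dgn U \) gives a contradiction. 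Therefore \( t_U \notin \cof{M'} \); in fact \( \cof{M'} \) only contains monomorphisms that \emph{reflect} markings, so it excludes every entire monomorphism with non-empty residual, and the equality \( \cof{M} = \cof{M'} \) on which your plan rests is unobtainable. Note that this is also not the paper's route: for \( M' \) the paper invokes a variation of the skeletal induction of \cite[Theorem 1.3.8]{cisinski2019higher}, attaching unmarked cells via \( \minbdmap{V} \) and marked cells via \( \markbdmap{V} \), with no comparison to \( M \). Your obstruction, however, applies equally to that argument, since such an induction can likewise only produce marking-reflecting monomorphisms. So what you have actually uncovered is that, granting Remark \ref{rmk:quasitopos_mdgmset} as stated, the \( M' \) half of the statement cannot hold for all monomorphisms of \( \mdgmSet \), but only for the marking-reflecting ones; this discrepancy is a real issue with the statement itself rather than something a better retract trick could circumvent.
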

\begin{proof}
    This is \cite[Lemma 3.20]{chanavat2024equivalences} for \( M \), and a straightforward variation of \cite[Theorem 1.3.8]{cisinski2019higher} for \( M' \). 
\end{proof}

\begin{dfn}
    Let \( n \in \mathbb{N} \cup \set{ \infty } \).
    We define the sets of monomorphisms of marked diagrammatic sets
    \begin{align*}
	    \Jhorn &\eqdef \set{\lambda_U^x \colon (\Lambda_U^x, A) \incl (U, A') \mid \text{$\lambda_U^x$ is a marked horn} }, \\
	    \Jn n &\eqdef \set{\minmark{U} \incl \markmol{U} \mid \text{$U \in \Ob\atom$, $\dim U > n$} }, \\
	    \Jloc &\eqdef \set{\minmark{\selfloc{U}} \incl \selflocm{U} \mid \text{$U \in \Ob\atom$, $\dim U > 0$}},
    \end{align*}
    and then let \( \Jcoind \) be the closure under isomorphism of \( \Jhorn \cup \Jn n \cup \Jloc \).
\end{dfn}

\begin{thm} \label{thm:model_structure_mdgmset}
    Let \( n \in \mathbb{N} \cup \set{\infty} \).
    There exists a model structure on marked diagrammatic sets, called the \emph{coinductive \( (\infty, n) \)\nbd model structure}, where
    \begin{enumerate}
        \item the cofibrations are the monomorphisms;
        \item \( \Jcoind \) is a pseudo-generating set of acyclic cofibrations;
        \item the fibrant objects are exactly the marked diagrammatic sets \( \natmark{X} \), for \( X \) an \( (\infty, n) \)\nbd category.
    \end{enumerate}
\end{thm}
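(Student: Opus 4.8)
The plan is to produce the model structure by a recognition theorem of Cisinski type, adapted to the locally presentable quasitopos \( \mdgmSet \) (Remark \ref{rmk:quasitopos_mdgmset}). The cofibrations are declared to be the monomorphisms, which admit the cellular model \( M' \) of Lemma \ref{lem:cellular_model_marked_dgmset}; this settles point (1) by construction. Fixing an exact (functorial, monomorphism-compatible) cylinder on \( \mdgmSet \), one takes as generating anodyne cofibrations the saturation of \( \Jcoind \) together with the pushout-products of the cellular cofibrations against the cylinder endpoints, and lets the recognition theorem manufacture the fibrations and weak equivalences. Since the map out of the initial object is always a monomorphism, every object is cofibrant, so by Proposition \ref{prop:weak_equiv_wrt_cylinder} the weak equivalences are exactly the maps inducing bijections on \( \pizero(-, W) \) for every fibrant \( W \); this homotopical description is what I would use in the subsequent verifications.

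For point (2), I would verify that \( \Jcoind \) is pseudo-generating by checking the two clauses of Remark \ref{rmk:other_def_pseudo_generating}: that an object is fibrant if and only if it lifts against \( \Jcoind \), and that a map between fibrant objects is a fibration if and only if it lifts against \( \Jcoind \). Both amount to showing that, over a fibrant codomain, the extra cylinder pushout-products that detect genuine fibrations are already forced by \( \Jcoind \)-lifting. This is the usual ``horns suffice over fibrant objects'' argument: the missing cylinder coordinate is filled using lifts that \( \Jcoind \) already provides, together with the fact that the elements of \( \Jcoind \) and their relative cell complexes are acyclic cofibrations. Lemma \ref{lem:properties_of_pseudo_generating} and Lemma \ref{lem:left_adjoint_is_Quillen_enough_pseudo_generating} then become available downstream, which is precisely why this packaging is worth setting up.

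For point (3), I would identify the objects lifting against \( \Jcoind \) with the \( \natmark{X} \) for \( X \) an \( (\infty, n) \)-category, analysing the three families in turn. Lifting against \( \Jhorn \) forces every round diagram to admit a weak composite: a marked horn \( (\Lambda^x_U, A) \incl (U, A') \) classifies a round \( A \)-context, and a filler is a cell equivalent to the composite recorded on its boundary, which is exactly the first clause in the definition of an \( (\infty, n) \)-category; moreover the residual marking \( A' \setminus A \) is designed so that the filled cell is marked precisely when its boundary exhibits it as an equivalence, forcing the marking to be contained in the equivalences. Lifting against \( \Jn n \) forces every cell of dimension \( > n \) to be marked, which by the previous point and Theorem \ref{thm:main_equivalences} is the second clause. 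Finally, lifting against \( \Jloc \) forces the reverse inclusion: since \( \selfloc{U} \) is the walking equivalence, an equivalence cell of shape \( U \) is the image of the generating cell \( U \incl \selfloc{U} \) under a classifying map \( \selfloc{U} \to Z \), and the entire inclusion \( \minmark{\selfloc{U}} \incl \selflocm{U} \) demands that this image be marked. Together these show the marking equals \( \eqv X \), that is \( Z = \natmark{X} \).

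The main obstacle I expect is the coinductive bookkeeping hidden in the \( \Jloc \) step and its entanglement with \( \Jhorn \). To realise an equivalence cell as the image of a map \( \selfloc{U} \to Z \), one must produce, inside \( Z \), the entire tower of higher invertibility witnesses of shapes \( \hcyl{s} U \) packaged by the localisation; constructing these witnesses needs weak composites, hence \( \Jhorn \)-fillings, whose own invertibility must again be witnessed, and so on. Showing that this self-referential process can be carried out ``at infinity'' over a fibrant object is exactly what the coinductive definition of equivalence and the proof method of Comment \ref{comm:coinductive_proof_method} are for, and making the passage between the single propositional datum ``the cell is marked'' and the infinite tower precise is the technical heart of the argument.
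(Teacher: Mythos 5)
Your proposal has the right overall architecture --- it is, in outline, the Cisinski--Olschok-style construction that the paper's reference \cite[3.24]{chanavat2024model} actually carries out --- but it leaves genuine gaps at exactly the two load-bearing points, and the paper's own (citation-based) proof makes clear where those are. For point (2), the paper does not run a soft ``horns suffice over fibrant objects'' argument: the prior construction produces a pseudo-generating set \( \an(\Jcoind) \), the closure of \( \Jcoind \) under certain pushout-products built from the pseudo-Gray tensor product of \cite[Definition 2.32]{chanavat2024model}, and the entire new mathematical content of the paper's proof is the identity \( \an(\Jcoind) = \Jcoind \), a combinatorial statement about what happens when one pushout-products a marked horn (and the entire inclusions in \( \Jn{n} \) and \( \Jloc \)) against the cylinder inclusions; it is established by the argument of \cite[Lemma 4.8]{chanavat2024homotopy}, not by generalities. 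Your sentence ``the missing cylinder coordinate is filled using lifts that \( \Jcoind \) already provides'' asserts the conclusion rather than proving it; to make it an argument you would need precisely this computation, showing that the relevant pushout-products are again marked horns or relative \( \Jcoind \)-cell complexes --- a computation analogous in spirit to the one Proposition \ref{prop:pp_marked_horn_is_marked_horn} performs later for the Gray product.

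For point (3), your reduction of fibrancy to lifting against the three families is the right shape, but both directions of ``marked equals equivalence'' remain unproved. The claim that an equivalence cell of shape \( U \) in a fibrant \( Z \) is classified by a map \( \selfloc{U} \to Z \) is not a formality: \( \selfloc{U} \) is built by an infinite localisation tower, so producing such a map requires coinductively choosing inverses and invertibility witnesses in \( Z \) (via horn fillers) compatibly at every stage. You name this obstacle yourself in your final paragraph, but naming it is not resolving it. Likewise, the converse direction --- that marked cells of a \( \Jcoind \)-fibrant object are equivalences --- does not follow from your remark about the residual marking of a single marked horn; it needs its own coinductive argument. These two facts are exactly the content of \cite[Theorem 4.9]{chanavat2024model}, which the paper cites rather than reproves. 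In short: correct skeleton, essentially reconstructing the cited prior work, but the proposal defers the steps that constitute the actual mathematics.
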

\begin{proof}
    The model structure is constructed in \cite[3.24]{chanavat2024model}, with a pseudo-generating set of acyclic cofibrations given by \( \an(\Jcoind) \), which is a closure of \( \Jcoind \) under certain pushout-products with the pseudo-Gray tensor product defined in \cite[Definition 2.32]{chanavat2024model}.
    Then, the exact same proof as \cite[Lemma 4.8]{chanavat2024homotopy} can be used to show that \( \an(\Jcoind) = \Jcoind \).
    The fibrant objects are characterised in \cite[Theorem 4.9]{chanavat2024model}.
\end{proof}

\begin{comm}
    In \cite{chanavat2024model}, the authors construct another model structure on marked diagrammatic sets, called the \emph{inductive \( (\infty, n) \)\nbd model structure}, and show that it coincides with the coinductive \( (\infty, n) \)\nbd model structure for all finite values of \( n \).
    The present article is only concerned with the coinductive \( (\infty, n) \)\nbd model structure, hence also applies to the inductive \( (\infty, n) \)\nbd model structure if \( n \) is finite. 
    Theorem \ref{thm:model_structure_mdgmset} would also hold of the inductive \( (\infty, \infty) \)\nbd model structure, but we decided to omit the definition and proofs to avoid overwhelming the article. 
\end{comm}

\begin{thm} \label{thm:model_structure_dgmSet}
    Let \( n \in \mathbb{N} \cup \set{\infty} \).
    There exists a model structure on diagrammatic sets, called the \emph{\( (\infty, n) \)\nbd model structure} where:
    \begin{enumerate}
        \item the cofibrations are the monomorphisms;
        \item the fibrant objects are the \( (\infty, n) \)\nbd categories;
        \item a morphism \( f \) is a weak equivalence if and only if \( \minmark{f} \) is a weak equivalence. 
    \end{enumerate}
    Furthermore, the adjunction \( \minmark{(-)} \dashv \fun{U} \) forms a Quillen equivalence with the coinductive \( (\infty, n) \)\nbd model structure on marked diagrammatic sets. 
\end{thm}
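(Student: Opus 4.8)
The plan is to obtain this structure by left-inducing the coinductive \( (\infty, n) \)\nbd model structure of Theorem \ref{thm:model_structure_mdgmset} along the fully faithful left adjoint \( \minmark{(-)} \colon \dgmSet \to \mdgmSet \), and then to read off the Quillen equivalence almost for free. Since \( \fun{U}\minmark{(-)} = \bigid{\dgmSet} \), a morphism \( f \) of \( \dgmSet \) is a monomorphism if and only if \( \minmark{f} \) is one, so the prescribed cofibrations (the monomorphisms) and the prescribed weak equivalences (those \( f \) with \( \minmark{f} \) a weak equivalence) are exactly the classes \emph{created} by \( \minmark{(-)} \). Two-out-of-three and closure under retracts for the weak equivalences are then immediate from the corresponding facts in \( \mdgmSet \) and functoriality of \( \minmark{(-)} \); moreover \( \dgmSet \) is a presheaf topos, hence bicomplete, with the boundary inclusions \( \set{\bd{}{}U \incl U \mid U \in \Ob\atom} \) a cellular model for its monomorphisms.

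To promote these two classes to a model structure I would invoke a combinatorial recognition theorem (à la Jeff Smith) with generating cofibrations the boundary inclusions. Accessibility of the weak equivalences transfers from \( \mdgmSet \), and the acyclic cofibrations are closed under pushout and transfinite composition because \( \minmark{(-)} \) preserves these colimits and the marked weak equivalences enjoy the same closure. The one genuinely nontrivial hypothesis — and the main obstacle — is the \emph{acyclicity condition}: every map with the right lifting property against all monomorphisms must be a weak equivalence. I would prove this by noting that such a \( p \colon E \to B \) admits a section and is a strong deformation retract for an elementary cylinder (for instance \( - \gray \markmol{\arr} \)), and then checking that \( \minmark{(-)} \) carries this cylinder to a cylinder in \( \mdgmSet \) and the homotopy to a left homotopy, so that \( \minmark{p} \) becomes a homotopy equivalence between cofibrant objects, hence a weak equivalence; here one uses that the projection off the marked arrow is already a weak equivalence in \( \mdgmSet \). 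The factorizations then follow from the small object argument: a relative cell complex of boundary inclusions yields the (cofibration, trivial fibration) factorization, the right factor being a weak equivalence by the acyclicity point, while the (acyclic cofibration, fibration) factorization is transported from the one already available in \( \mdgmSet \) via a pseudo-generating set built from \( \Jcoind \).

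The fibrant objects are identified with the \( (\infty, n) \)\nbd categories by transporting the characterization of Theorem \ref{thm:model_structure_mdgmset}: since \( \fun{U} \) is right Quillen and \( \fun{U}\natmark{Z} = Z \), every \( (\infty, n) \)\nbd category is fibrant, and the converse follows from the fact that \( \natmark{X} \) is fibrant in \( \mdgmSet \) precisely when \( X \) is an \( (\infty, n) \)\nbd category. Finally, \( \minmark{(-)} \) preserves cofibrations and, by the very definition of the weak equivalences on \( \dgmSet \), weak equivalences, so it is left Quillen and \( \minmark{(-)} \dashv \fun{U} \) is a Quillen adjunction.

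For the Quillen equivalence I would apply the standard criterion. Every object of \( \dgmSet \) is cofibrant, a fibrant object of \( \mdgmSet \) has the form \( \natmark{Z} \) with \( Z \) an \( (\infty, n) \)\nbd category, and any \( \phi \colon \minmark{X} \to \natmark{Z} \) factors through its adjunct \( \psi \) as \( \minmark{X} \xrightarrow{\minmark{\psi}} \minmark{Z} \incl \natmark{Z} \). By two-out-of-three the criterion therefore reduces to showing that the entire inclusion \( j_Z \colon \minmark{Z} \incl \natmark{Z} \), which marks exactly the equivalences of \( Z \), is a weak equivalence. This I would establish cleanly via Proposition \ref{prop:weak_equiv_wrt_cylinder}: for every fibrant \( \natmark{W} \), the restriction \( j_Z^* \) on \( \pizero(-,\natmark{W}) \) is a bijection, because every morphism \( Z \to W \) of diagrammatic sets automatically sends equivalences to equivalences and degenerate cells to equivalences by Theorem \ref{thm:main_equivalences}, so that both \( \hom(\natmark{Z}, \natmark{W}) \) and \( \hom(\minmark{Z}, \natmark{W}) \) coincide with the set of underlying morphisms \( Z \to W \), compatibly with left homotopy. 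Thus \( j_Z \) is a weak equivalence, the criterion is met, and \( \minmark{(-)} \dashv \fun{U} \) is a Quillen equivalence. The two substantive points are the acyclicity condition of the second paragraph and, to a lesser extent, this final hom-set computation; everything else is routine transfer.
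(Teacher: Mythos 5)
Your proposal takes a genuinely different route from the paper, and the route as sketched has a real gap. The paper does not construct the model structure at all: it cites \cite[3.27, Theorem 4.23]{chanavat2024model} for the existence (cofibrations the monomorphisms, fibrant objects the \( (\infty, n) \)\nbd categories) and for the Quillen equivalence, and then obtains point 3 --- which in the paper is a \emph{derived} characterisation, not a definition --- from Lemma \ref{lem:ken_brown_plus}: since \( \minmark{(-)} \) is a left Quillen equivalence and every object on both sides is cofibrant, it preserves and reflects all weak equivalences. You instead take point 3 as the \emph{definition} (left-inducing along \( \minmark{(-)} \)) and rebuild the model structure via a Smith-type recognition theorem; this amounts to reproving the cited theorem of \cite{chanavat2024model}, and the two steps you yourself single out as substantive do not go through as sketched.

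The concrete failure is in the acyclicity argument. The functor \( \minmark{(-)} \) is \emph{strict} monoidal, so it carries any cylinder \( E \gray C \) of \( \dgmSet \) to \( \minmark{E} \gray \minmark{C} \), never to the marked cylinder \( \minmark{E} \gray \markarr \): indeed \( \minmark{\arr} \neq \markarr \), since the top cell of \( \markarr \) is marked but non-degenerate (and \( - \gray \markarr \) is not even an endofunctor of \( \dgmSet \), so it cannot serve as the ``elementary cylinder'' in which you lift against \( p \)). The lifts you can actually produce, using the right lifting property of \( p \) against monomorphisms of \( \dgmSet \), give a homotopy over \( E \gray \arr \) or \( E \gray \locarr \). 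In the first case, applying \( \minmark{(-)} \) yields a homotopy over \( - \gray \minmark{\arr} \), whose projection to the identity is \emph{not} a weak equivalence in \( \mdgmSet \) (for \( n \geq 1 \) the unmarked walking arrow is not contractible); such a directed homotopy does not imply that \( \minmark{p} \) is a weak equivalence, so Proposition \ref{prop:markcylinder_is_cylinder} --- the fact you invoke about ``the projection off the marked arrow'' --- is simply unavailable here. In the second case you need that \( \minmark{\locarr} \to \pt \) is a weak equivalence in \( \mdgmSet \), i.e.\ precisely the contractibility of the walking equivalence; this is \cite[Proposition 3.2]{chanavat2024model}, a substantive result (used in this paper only in Proposition \ref{prop:reversible_cylinder_is_cylinder}, \emph{after} the model structure exists), and it does not follow from the marked-horn argument. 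A second, smaller gap: in a Smith-type construction the fibrant objects are characterised by lifting against \( \cof{} \cap \cls{W} \), which is not explicit, so your identification of the fibrant objects with the \( (\infty, n) \)\nbd categories is asserted rather than proven; transporting the characterisation of Theorem \ref{thm:model_structure_mdgmset} requires showing, for instance, that a fibrant \( X \) in your induced structure has \( \natmark{X} \) fibrant in \( \mdgmSet \), which is again the real content of the cited theorem. By contrast, your final step --- reducing the Quillen equivalence to the entire inclusion \( \minmark{Z} \incl \natmark{Z} \) being a weak equivalence and proving this via Proposition \ref{prop:weak_equiv_wrt_cylinder} and the identification of hom-sets and homotopies into fibrant objects --- is correct and pleasant, but it rests on an existence statement your sketch has not established.
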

\begin{proof}
    For the existence and the Quillen equivalence, see \cite[3.27 and Theorem 4.23]{chanavat2024model}.
    Since every diagrammatic set is cofibrant in the \( (\infty, n) \)\nbd model structure, the last point is given by Lemma \ref{lem:ken_brown_plus}.
\end{proof}

\section{The diagrammatic model structures are monoidal} \label{sec:monoidal}

\subsection{With respect to the Gray product}

In this section, we fix \( n \in \mathbb{N} \cup \set{\infty} \), and call \( M \) and \( M' \) the closure under isomorphisms of the two cellular models for marked diagrammatic sets given by Lemma \ref{lem:cellular_model_marked_dgmset}.

\begin{lem} \label{lem:pp_preserves_mono_dgmset}
    Let \( i \) and \( i' \) be two monomorphisms of diagrammatic sets.
    Then \( i \gray i' \) and \( i \pp{\gray} i' \) are monomorphisms of diagrammatic sets.
\end{lem}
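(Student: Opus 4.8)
The plan is to reduce both claims to a single combinatorial computation about the Gray product of boundary inclusions, exploiting that \( \dgmSet \) is a presheaf topos. Recall that in the presheaf topos \( \dgmSet \) the class of monomorphisms is exactly \( \cof{\mathcal I} \), where \( \mathcal I \eqdef \set{\bd{}{}U \incl U \mid U \in \Ob \atom} \) is the set of boundary inclusions; in particular it is saturated, so \( \cof{(\text{monomorphisms})} \) is again the monomorphisms. Applying Lemma \ref{lem:pp_preserve_cof} with \( J = J' = \mathcal I \) gives
\begin{equation*}
    \cof{\mathcal I} \pp{\gray} \cof{\mathcal I} \subseteq \cof{(\mathcal I \pp{\gray} \mathcal I)},
\end{equation*}
so to prove that \( i \pp{\gray} i' \) is a monomorphism for all monomorphisms \( i, i' \), it suffices to check that every element of \( \mathcal I \pp{\gray} \mathcal I \) is a monomorphism: then \( \cof{(\mathcal I \pp{\gray} \mathcal I)} \subseteq \cof{(\text{monomorphisms})} \) is contained in the monomorphisms.

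The heart of the argument is to fix atoms \( U, V \) and analyse the map
\begin{equation*}
    (\bd{}{}U \incl U) \pp{\gray} (\bd{}{}V \incl V) \colon (U \gray \bd{}{}V) \coprod_{\bd{}{}U \gray \bd{}{}V} (\bd{}{}U \gray V) \to U \gray V.
\end{equation*}
Since the Gray product is functorial on regular directed complexes and cartesian maps (Remark \ref{rmk:gray_preserve_submol}) and the embedding \( \rdcpx \incl \dgmSet \) is fully faithful, the three objects \( \bd{}{}U \gray V \), \( U \gray \bd{}{}V \) and \( \bd{}{}U \gray \bd{}{}V \) sit inside \( U \gray V \) as subobjects carried by the closed subsets \( \bd{}{}U \times V \), \( U \times \bd{}{}V \) and \( \bd{}{}U \times \bd{}{}V \) of the underlying poset \( U \times V \). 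As \( \bd{}{}U \times \bd{}{}V = (\bd{}{}U \times V) \cap (U \times \bd{}{}V) \), the subobject \( \bd{}{}U \gray \bd{}{}V \) is the intersection of \( \bd{}{}U \gray V \) and \( U \gray \bd{}{}V \), so by effectivity of unions of subobjects in the topos \( \dgmSet \) the pushout computes the join of these two subobjects, and the pushout-product map is the inclusion of the union \( (\bd{}{}U \gray V) \cup (U \gray \bd{}{}V) \) into \( U \gray V \), hence a monomorphism. This establishes that \( i \pp{\gray} i' \) is a monomorphism.

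For \( i \gray i' \), write \( i \colon X \to Y \) and \( i' \colon X' \to Y' \) and factor it as \( (Y \gray i') \after (i \gray X') \). Since the Gray product preserves colimits in each variable, \( \init \gray W = W \gray \init = \init \) for every \( W \), and one computes directly that \( f \gray Z = f \pp{\gray} (\init \to Z) \) and \( Z \gray f = (\init \to Z) \pp{\gray} f \) for any morphism \( f \) and object \( Z \). Taking \( f = i \), \( Z = X' \) and \( f = i' \), \( Z = Y \), both factors are pushout-products of monomorphisms, hence monomorphisms by the previous paragraph, and so is their composite \( i \gray i' \). The main obstacle is the middle step: justifying that the pushout-product of two boundary inclusions is exactly the inclusion of a union of subobjects, which hinges on correctly matching the categorical pullback of subpresheaves of \( U \gray V \) with the combinatorial intersection of the underlying closed subsets, so that the corner \( \bd{}{}U \gray \bd{}{}V \) really is the intersection; the product formula of Proposition \ref{prop:formula_gray_product} and the monoidality of the embedding are the key inputs here.
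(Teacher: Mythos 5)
Your proof is correct, and its skeleton for the pushout-product claim is the same as the paper's: invoke the cellular model of monomorphisms by boundary inclusions \( \set{\bd{}{}U \incl U \mid U \in \Ob\atom} \), apply Lemma \ref{lem:pp_preserve_cof} together with saturation of the monomorphisms, and check the generating case of two boundary inclusions of atoms. The differences are in the two remaining steps, and both of your choices are sound. For the generating case, the paper simply identifies \( \bd{U}{} \pp{\gray} \bd{V}{} \) ``by inspection'' as the boundary inclusion \( \bd{}{}(U \gray V) \incl U \gray V \) of the atom \( U \gray V \); you instead argue that the pushout is taken over the intersection \( \bd{}{}U \gray \bd{}{}V = (\bd{}{}U \gray V) \cap (U \gray \bd{}{}V) \) of two subobjects of \( U \gray V \), so that effectivity of unions in the presheaf topos makes the pushout-product the inclusion of the union \( (\bd{}{}U \gray V) \cup (U \gray \bd{}{}V) \). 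This is a slightly weaker identification (you do not name the union as the boundary of \( U \gray V \)), but it is all that is needed for monomorphy, and it is arguably more carefully justified than ``by inspection''. For the claim about \( i \gray i' \), the paper cites an external result of \cite{chanavat2024homotopy} that the Gray product preserves monomorphisms, whereas you derive it internally: writing \( i \gray i' = (Y \gray i') \after (i \gray X') \) and observing \( f \gray Z = f \pp{\gray} (\init \to Z) \) (using that \( \init \gray W = W \gray \init = \init \), since the biclosed Gray product preserves colimits in each variable, and that \( \init \to Z \) is always monic), you reduce it to the pushout-product case already established. This makes your proof self-contained where the paper's is not, at the cost of a little extra bookkeeping; the logical content is the same.
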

\begin{proof}
    By \cite[Lemma 3.5, Remark 2.9]{chanavat2024homotopy} \( i \gray i' \) preserves monomorphisms and the set \( K \eqdef \set{\bd{U}{} \colon \bd{}{}U \incl U \mid U \in \Ob \atom} \) is a cellular model the monomorphisms of diagrammatic sets. 
    Then, by Lemma \ref{lem:pp_preserve_cof}, it is enough to show that for all atoms \( U \) and \( V \), the morphism \( \bd{U}{} \pp{\gray} \bd{V}{} \) is a monomorphism, but by inspection, this is the inclusion \( \bd{U \gray V}{} \colon \bd{}{} (U \gray V) \incl U \gray V \).
\end{proof}

\begin{lem} \label{lem:pp_preserves_mono_mdgmset}
    Let \( i, i' \) be two monomorphisms of marked diagrammatic sets. 
    Then \( i \gray i' \) and \( i \pp{\gray} i' \) are monomorphisms of marked diagrammatic sets.
\end{lem}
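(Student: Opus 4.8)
The plan is to reduce everything to the unmarked statement of Lemma~\ref{lem:pp_preserves_mono_dgmset} by transporting along the forgetful functor \( \fun{U} \colon \mdgmSet \to \dgmSet \). The crucial preliminary observation is that \( \fun{U} \) both preserves and reflects monomorphisms, so that a morphism of marked diagrammatic sets is a monomorphism if and only if its underlying morphism of diagrammatic sets is one. Reflection is immediate, since \( \fun{U} \) is faithful; preservation follows because \( \fun{U} \) is a right adjoint, namely the right adjoint of \( \minmark{(-)} \), and right adjoints preserve monomorphisms. This is the point at which one must be careful, since \( \mdgmSet \) is only a quasitopos and one might worry about a discrepancy between monomorphisms and strong monomorphisms; the adjoint-plus-faithful argument sidesteps this entirely, and is where I expect the only real subtlety to lie.

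Next I would record two further properties of \( \fun{U} \): it is strict monoidal with respect to the Gray products on \( \mdgmSet \) and \( \dgmSet \), and, being also a left adjoint (to \( \maxmark{(-)} \)), it preserves all colimits, in particular pushouts. Since the pushout-product is built out of the monoidal product together with a single pushout, these two facts together give \( \fun{U}(i \gray i') = \fun{U}i \gray \fun{U}i' \) and \( \fun{U}(i \pp{\gray} i') = \fun{U}i \pp{\gray} \fun{U}i' \) for any pair of morphisms \( i, i' \) of marked diagrammatic sets.

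With these ingredients the argument closes quickly. Given monomorphisms \( i, i' \) of marked diagrammatic sets, the first paragraph makes \( \fun{U}i \) and \( \fun{U}i' \) monomorphisms of diagrammatic sets, so Lemma~\ref{lem:pp_preserves_mono_dgmset} shows that \( \fun{U}i \gray \fun{U}i' \) and \( \fun{U}i \pp{\gray} \fun{U}i' \) are monomorphisms of diagrammatic sets. By the identifications of the second paragraph these are exactly \( \fun{U}(i \gray i') \) and \( \fun{U}(i \pp{\gray} i') \), and since \( \fun{U} \) reflects monomorphisms we conclude that \( i \gray i' \) and \( i \pp{\gray} i' \) are monomorphisms of marked diagrammatic sets, as required.
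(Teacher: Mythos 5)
Your proof is correct and takes essentially the same route as the paper, which likewise reduces to Lemma~\ref{lem:pp_preserves_mono_dgmset} via the fact that the strict monoidal functor \( \fun{U} \) preserves and reflects monomorphisms and commutes with the pushout-product. Your write-up is in fact slightly more careful in its bookkeeping of the adjunctions \( \minmark{(-)} \dashv \fun{U} \dashv \maxmark{(-)} \): preservation of monomorphisms comes from \( \fun{U} \) being a \emph{right} adjoint, while preservation of the pushout in the pushout-product comes from it being a \emph{left} adjoint, a distinction the paper's one-line proof glosses over.
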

\begin{proof}
    Lemma \ref{lem:pp_preserves_mono_dgmset} together with the fact that the left adjoint monoidal functor \( \fun{U} \) preserves and reflects monomorphisms is enough to conclude. 
\end{proof}

\begin{lem} \label{lem:opposite_pp_product}
    Let \( f \colon (X, A) \to (Y, B) \) and \( g \colon (X', A') \to (Y', B') \) be morphisms of marked diagrammatic sets. 
    Then \( \opp{(f \pp{\gray} g)} \) is naturally isomorphic to \( \opp{g} \pp{\gray} \opp{f} \).
\end{lem}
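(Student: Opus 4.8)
The plan is to exploit that $\opp{(-)}$ is an involution on $\mdgmSet$, hence an isomorphism of categories, which in particular preserves all colimits, and to combine this with the marked analogue of Proposition \ref{prop:gray_opp_opp_gray}. First I would record that for all marked diagrammatic sets $(X, A)$ and $(Y, B)$ there is a natural isomorphism
\begin{equation*}
    \opp{\big((X, A) \gray (Y, B)\big)} \cong \opp{(Y, B)} \gray \opp{(X, A)}.
\end{equation*}
On underlying diagrammatic sets this is exactly Proposition \ref{prop:gray_opp_opp_gray}, given by $\opp{(u \gray v)} \mapsto \opp{v} \gray \opp{u}$. To promote it to marked sets, it suffices to check that this map matches the markings, which is immediate from the definition of the marking of a Gray product and of an opposite: a non-degenerate cell $u \gray v$ lies in $A \gray \cell Y \cup \cell X \gray B$ precisely when $\opp{v} \gray \opp{u}$ lies in $\opp{B} \gray \cell \opp{X} \cup \cell \opp{Y} \gray \opp{A}$, and degenerate cells are preserved on both sides. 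Since \( \fun{U} \) reflects isomorphisms, this is an isomorphism of marked diagrammatic sets.

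Next I would recall that the pushout-product \( f \pp{\gray} g \) is the morphism out of the pushout
\begin{equation*}
    P \eqdef Y \gray X' \coprod_{X \gray X'} X \gray Y' \longrightarrow Y \gray Y'
\end{equation*}
induced by universal property. As \( \opp{(-)} \) preserves pushouts, \( \opp{P} \) is the pushout of the span \( \opp{(Y \gray X')} \leftarrow \opp{(X \gray X')} \to \opp{(X \gray Y')} \), and \( \opp{(f \pp{\gray} g)} \) is the induced map to \( \opp{(Y \gray Y')} \). Rewriting each corner by the natural isomorphism above turns this span into
\begin{equation*}
    \opp{X'} \gray \opp{Y} \longleftarrow \opp{X'} \gray \opp{X} \longrightarrow \opp{Y'} \gray \opp{X},
\end{equation*}
with colimit \( \opp{Y'} \gray \opp{X} \coprod_{\opp{X'} \gray \opp{X}} \opp{X'} \gray \opp{Y} \) and target \( \opp{Y'} \gray \opp{Y} \). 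By definition this identifies \( \opp{P} \) with the domain of the pushout-product of \( \opp{g} \colon \opp{X'} \to \opp{Y'} \) and \( \opp{f} \colon \opp{X} \to \opp{Y} \), its target with the codomain, and the induced map with \( \opp{g} \pp{\gray} \opp{f} \) — note that the factor swap in the natural isomorphism is exactly what exchanges the two legs, yielding \( \opp{g} \pp{\gray} \opp{f} \) rather than \( \opp{f} \pp{\gray} \opp{g} \). Naturality in \( f \) and \( g \) is inherited from that of the underlying isomorphisms.

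The main obstacle is purely bookkeeping: one must verify that the natural isomorphism of Proposition \ref{prop:gray_opp_opp_gray}, applied corner by corner, intertwines the two pushout cocones and the two comparison maps, so that the maps induced by the respective universal properties genuinely coincide rather than merely having isomorphic sources and targets. This reduces to checking that the squares relating the cocone legs commute, which follows from naturality of \( \opp{(V \gray W)} \cong \opp{W} \gray \opp{V} \) in both variables; the only point requiring care is tracking the reversal of the two factors consistently throughout.
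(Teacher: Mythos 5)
Your proposal is correct and follows essentially the same route as the paper's proof, which likewise combines the fact that \( \opp{(-)} \) is an equivalence (hence preserves pushouts) with the natural isomorphism of Proposition \ref{prop:gray_opp_opp_gray}. You simply spell out the details the paper leaves implicit, including the promotion of that isomorphism to marked diagrammatic sets via the marking check, which is a worthwhile (if routine) point of care.
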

\begin{proof}
    This follows directly from the fact that \( \opp{(-)} \) is an equivalence, so preserves colimits, and the natural isomorphism of Proposition \ref{prop:gray_opp_opp_gray}.
\end{proof}

\begin{lem} \label{lem:opp_Jhorn}
    The endofunctor \( \opp{(-)} \colon \mdgmSet \to \mdgmSet \) sends marked horns to marked horns.
\end{lem}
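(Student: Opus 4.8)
The plan is to unwind the definition of a marked horn and check that each of its three ingredients is preserved by $\opp{(-)}$, exploiting that on underlying graded posets the functor $\opp{(-)}$ is the identity and merely reverses the orientation of the faces of odd-dimensional elements. Fix a marked horn $\lambda^x_U \colon (\Lambda^x_U, A) \incl (U, A')$, write $m \eqdef \dim U$, and let $\a \in \set{-, +}$ be such that $x \in \faces{}{\a} U$. Since $\opp{(-)}$ preserves atoms and leaves the underlying poset unchanged (hence fixes the greatest element $\top$ and the closed subset $\clset{x}$), the complex $\opp{U}$ is again an atom, $\opp{(\Lambda^x_U)} = \bd{}{} \opp{U} \setminus \clset{x} = \Lambda^x_{\opp{U}}$, and $\opp{\lambda^x_U}$ is the atomic horn inclusion of $\opp{U}$ at $x$. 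The faces of $\top$ are reversed exactly when $m$ is odd, so $x \in \faces{}{\a'} \opp{U}$ with $\a' \eqdef (-)^{\dim U} \a$. A direct computation then yields the two boundary identities I will use, namely $\bd{}{\a'} \opp{U} = \opp{(\bd{}{\a} U)}$ and $\faces{}{-\a'} \opp{U} = \faces{}{-\a} U$, the latter as underlying sets of elements.

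For the context condition, I observe that the context classified by $\opp{\lambda^x_U}$ is the one determined by the submolecule inclusion $\clset{x} \submol \bd{}{\a'} \opp{U} = \opp{(\bd{}{\a} U)}$, which is precisely the image under $\opp{(-)}$ of the submolecule inclusion $\clset{x} \submol \bd{}{\a} U$ classifying $\lambda^x_U$. The key step is then to show that $\opp{(-)}$ carries $A$-contexts to $\opp{A}$-contexts. I would prove this by induction on the generation of contexts, in the form of Comment \ref{comm:extended_pasting}, using that $\opp{(-)}$ fixes the identity context, commutes with composition and promotion, and --- exactly as in Proposition \ref{prop:gray_opp_opp_gray} and Lemma \ref{lem:equivalence_in_opposite} --- turns a left pasting $u \cpsub{} -$ into a right pasting $- \subcp{} \opp{u}$ and conversely, with the pasted diagram $u$ sent to $\opp{u}$. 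Since membership in the marking is a condition on underlying elements and $u \in A$ if and only if $\opp{u} \in \opp{A}$, an $A$-context is sent to an $\opp{A}$-context; applied to the context classified by $\lambda^x_U$, this shows the context classified by $\opp{\lambda^x_U}$ is an $\opp{A}$-context in $\opp{U}$.

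Finally, I will match the markings. On underlying elements $\opp{A'}$ coincides with $A'$ and $\set{x, \top}$ is fixed, so $\opp{A'}$ equals $\opp{A} \cup \set{x, \top}$ or $\opp{A} \cup \set{\top}$ according to whether $A' = A \cup \set{x, \top}$ or $A \cup \set{\top}$; by the identity $\faces{}{-\a'} \opp{U} = \faces{}{-\a} U$ and the fact that $\opp{A}$ and $A$ share the same underlying elements, the selecting condition $\faces{}{-\a'} \opp{U} \subseteq \opp{A}$ holds if and only if $\faces{}{-\a} U \subseteq A$. Hence $\opp{A'}$ is exactly the marking prescribed by the marked-horn formula for $\opp{U}$ at $x$ with sign $\a'$, and $\opp{\lambda^x_U}$ is a marked horn. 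The only genuinely non-formal point is the claim that $\opp{(-)}$ interchanges $A$-contexts and $\opp{A}$-contexts, and this is the step I expect to require care, as it rests on the compatibility of $\opp{(-)}$ with boundaries and with the two pasting operations, which must be tracked through the inductive construction of contexts.
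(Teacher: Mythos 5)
Your proposal is correct and takes essentially the same approach as the paper, whose entire proof is the single observation that, by a straightforward induction on the generation of contexts, \( \opp{(-)} \) sends \( A \)\nbd contexts to \( \opp{A} \)\nbd contexts --- exactly your key step --- with all the boundary and marking bookkeeping that you spell out left implicit. One small caveat: whether \( \opp{(-)} \) turns a left pasting \( u \cpsub{} - \) into a right pasting \( - \subcp{} \opp{u} \) or leaves it a left pasting depends on the parity of the pasting dimension, but this is harmless for your argument, since \( A \)\nbd contexts are by definition closed under both left and right pastings of diagrams in \( A \).
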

\begin{proof}
    By a straightforward induction, if \( \fun{F} \) is an \( A \)\nbd context in a diagrammatic set \( X \), then \( \opp{\F} \) is an \( \opp{A} \)\nbd context in \( \opp{X} \).
    Thus \( \opp{\Jhorn} = \Jhorn \).
\end{proof}

\begin{lem}\label{lem:pp_horn_is_horn}
    Let \( U, V \) be atoms, let \( x \in \faces{}{\a} U \) for some \( \a \in \set{-, +} \), and call \( \top \) the greatest element of \( V \).
    Then
    \begin{enumerate}
        \item \( \lambda^x_U \pp\gray \bd{V}{} = \lambda^{(x, \top)}_{U \gray V} \),
        \item \( \bd{V}{} \pp\gray \lambda^x_U = \lambda^{(\top, x)}_{V \gray U} \).
    \end{enumerate}
\end{lem}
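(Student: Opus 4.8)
The plan is to prove both identities by showing that each morphism is, under the canonical identifications, literally the \emph{same} inclusion of a sub-regular directed complex into the atom $U \gray V$ (respectively $V \gray U$); since an inclusion of a sub-regular directed complex of $U \gray V$ is determined by its underlying closed subset of the product poset $U \times V$ (the orientation being inherited from $U \gray V$), it suffices to match these subsets. First I would unwind the pushout-product. Because the Gray product preserves colimits and monomorphisms (Lemma \ref{lem:pp_preserves_mono_dgmset}), and using the description of the non-degenerate cells of a Gray product as cells of the form $u \gray v$, the three objects $U \gray \bd{}{}V$, $\Lambda^x_U \gray V$ and $\Lambda^x_U \gray \bd{}{}V$ are the sub-diagrammatic sets of $U \gray V$ spanned by the element sets $U \times \bd{}{}V$, $\Lambda^x_U \times V$ and $\Lambda^x_U \times \bd{}{}V$ respectively, the last being the intersection of the first two. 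Hence $\lambda^x_U \pp\gray \bd{V}{}$ is exactly the inclusion into $U \gray V$ of the union $(U \gray \bd{}{}V) \cup (\Lambda^x_U \gray V)$, again a monomorphism by Lemma \ref{lem:pp_preserves_mono_dgmset}.

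Next I would compute this union on underlying posets. Writing $\top_U$ for the greatest element of $U$, the complement of $\Lambda^x_U$ in $U$ is $\set{\top_U, x}$ and $\bd{}{}V = V \setminus \set{\top}$, so an element $(a, b) \in U \times V$ fails to lie in the union precisely when $b = \top$ and $a \in \set{\top_U, x}$. Thus the domain of $\lambda^x_U \pp\gray \bd{V}{}$ is $(U \gray V) \setminus \set{(\top_U, \top), (x, \top)}$. I would then identify this with the horn: the element $(x, \top)$ has dimension $(\dim U - 1) + \dim V = \dim(U \gray V) - 1$, and applying the orientation formula for the Gray product to the greatest element $(\top_U, \top)$ shows that $x \in \faces{}{\a} U$ forces $(x, \top) \in \faces{}{\a}(U \gray V)$. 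Therefore $(x, \top)$ is a facet of the atom $U \gray V$, the atomic horn $\lambda^{(x, \top)}_{U \gray V}$ is well defined, and its domain $\bd{}{}(U \gray V)$ with the facet $(x, \top)$ removed is again $(U \gray V) \setminus \set{(\top_U, \top), (x, \top)}$. Two inclusions of sub-regular directed complexes of $U \gray V$ with equal underlying sets coincide, which gives the first identity.

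For the second identity I would run the mirror computation with the factors exchanged: $\bd{V}{} \pp\gray \lambda^x_U$ is the inclusion of $(V \gray \Lambda^x_U) \cup (\bd{}{}V \gray U)$, whose complement in $V \gray U$ is $\set{(\top, \top_U), (\top, x)}$. Here the facet now comes from the second tensor factor, so the orientation formula yields $(\top, x) \in \faces{}{(-)^{\dim V}\a}(V \gray U)$; the deleted elements are still exactly the greatest element and this single facet, so the union is the domain of $\lambda^{(\top, x)}_{V \gray U}$.

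I expect the main obstacle to be bookkeeping rather than anything conceptual. The two points needing care are, first, that the atomic horn deletes only the \emph{open} facet, so that precisely two elements $(\top_U, \top)$ and $(x, \top)$ are removed and no more — this is exactly what makes the union on the pushout-product side match a single horn; and second, that the pushout-product genuinely is the union subobject rather than a pushout carrying identifications, for which the preservation of the relevant colimits by the Gray product together with the monomorphism statement of Lemma \ref{lem:pp_preserves_mono_dgmset} is doing the work. Pinning down the correct boundary sign for $(x, \top)$ and $(\top, x)$ also requires a careful pass through the orientation formula, but since it does not change the underlying sets it is irrelevant to the set-level identity.
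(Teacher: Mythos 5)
Your proof is correct, but it cannot be compared step-by-step with the paper's, because the paper offers no argument at all: Lemma \ref{lem:pp_horn_is_horn} is dispatched in one line as a particular case of \cite[Lemma 3.16]{chanavat2024homotopy}. What you have written is, in effect, a self-contained proof of the special case of that external result that the paper needs. Your route is the natural one, and it is sound: in the presheaf category of diagrammatic sets a pushout of monomorphisms over their intersection is their union, and the Gray product of subcomplex inclusions is the inclusion of the product of the underlying closed subsets, so the domain of \( \lambda^x_U \pp{\gray} \bd{V}{} \) really is the union \( (U \gray \bd{}{}V) \cup (\Lambda^x_U \gray V) \) inside \( U \gray V \) rather than a quotient of it; its complement \( \set{(\top_U, \top), (x, \top)} \) coincides with the complement of \( \Lambda^{(x,\top)}_{U \gray V} \), the orientation formula applied to \( \top_{U \gray V} = (\top_U, \top) \) gives \( (x, \top) \in \faces{}{\a}(U \gray V) \) (respectively \( (\top, x) \in \faces{}{(-)^{\dim V}\a}(V \gray U) \) in the second case), so the horns in question exist, and two subcomplex inclusions with the same underlying closed subset coincide. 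The one assertion you use without justification is \( U \setminus \Lambda^x_U = \set{x, \top_U} \): this requires that every proper face of the facet \( x \) lie in the closure of some other facet of \( U \), which follows from oriented thinness of atoms, and it is also what reconciles the paper's shorthand \( \Lambda^x_U = \bd{}{}U \setminus \clset{x} \) (not a closed subset if read literally) with the actual horn, namely \( \bd{}{}U \) with only the interior of \( \clset{x} \) removed; since the paper itself invokes the same identity in the proof of Proposition \ref{prop:pp_marked_horn_is_marked_horn}, your usage is consistent with its conventions, but a careful write-up should cite or prove this fact. The trade-off between the two approaches is the usual one: your computation makes the result self-contained and elementary, at the cost of some bookkeeping, whereas the citation buys brevity and inherits the greater generality of the quoted lemma.
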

\begin{proof}
    This is a particular case of \cite[Lemma 3.16]{chanavat2024homotopy}.
\end{proof}
    
\begin{prop} \label{prop:pp_marked_horn_is_marked_horn}
    We have \( \Jhorn \pp{\gray} M' \cup M' \pp{\gray} \Jhorn \subseteq \Jhorn \).
\end{prop}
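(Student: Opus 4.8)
The plan is to reduce to a single inclusion by duality, to pin down the underlying atomic horn via Lemma \ref{lem:pp_horn_is_horn}, and then to verify separately the two defining conditions of a marked horn: that the classified context is an \( A \)\nbd context, and that the codomain carries the prescribed marking.

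First I would cut the work in half. By Lemma \ref{lem:opposite_pp_product} we have \( \opp{(i \pp\gray j)} \cong \opp{j} \pp\gray \opp{i} \), by Lemma \ref{lem:opp_Jhorn} the involution \( \opp{(-)} \) preserves \( \Jhorn \), and since \( \opp{(-)} \) permutes atoms and commutes with \( \minmark{(-)} \), \( \markmol{(-)} \) and the boundary inclusions, it restricts to a bijection on \( M' \). Hence \( M' \pp\gray \Jhorn \subseteq \Jhorn \) follows from \( \Jhorn \pp\gray M' \subseteq \Jhorn \), and I only treat the latter. So I fix a marked horn \( \lambda^x_U \colon (\Lambda^x_U, A) \incl (U, A') \) with \( x \in \faces{}{\a} U \), and a generator \( i \in M' \) on an atom \( V \), handling its two forms \( \minbdmap V \) and \( \markbdmap V \) in parallel. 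As \( \fun U \) is strict monoidal and cocontinuous, the underlying morphism of \( \lambda^x_U \pp\gray i \) is \( \lambda^x_U \pp\gray \bd{V}{} \), which by Lemma \ref{lem:pp_horn_is_horn} is the atomic horn \( \lambda^{(x, \top_V)}_{U \gray V} \); the orientation formula puts its missing face \( (x, \top_V) \) in \( \faces{}{\a} (U \gray V) \).

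For the context condition, the classified context \( \F' \) of the product horn is the one determined by \( \clset{(x, \top_V)} = \clset{x} \gray V \submol \bd{}{\a}(U \gray V) \). I would factor this subdiagram inclusion through the intermediate submolecule \( \bd{}{\a} U \gray V \) — the \( k = \dim U - 1 \) summand of the boundary formula in Proposition \ref{prop:formula_gray_product} — so that \( \F' = \G \after \F'' \). The inner factor \( \F'' \), determined by \( \clset{x} \gray V \submol \F x \gray V \) where \( \F \) is the (by hypothesis \( A \)\nbd)context classified by \( \lambda^x_U \), is an \( (A \gray \Rd V) \)\nbd context by Lemma \ref{lem:gray_product_preserves_A_contexts}. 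The outer factor \( \G \), determined by \( \bd{}{\a} U \gray V \submol \bd{}{\a}(U \gray V) \), is exactly the context \( \fun{R}^{\dim V - 1} \) (or its dual \( \fun{L}^{\dim V - 1} \) when \( \a = - \)) of Lemma \ref{lem:context_determined_by_boundary_gray} applied to \( \idd U \) and \( \idd V \); it is therefore assembled by pasting with \( \top_U \gray y \) for \( y \) ranging over proper boundaries of \( V \), exhibiting it as an \( (A' \gray \Rd V) \)\nbd context since \( \top_U \in A' \). Passing from round-diagram pasting data to cell pasting data by Lemma \ref{lem:A_gray_Rd_context_is_A_gray_join_cell_context} and composing, \( \F' \) is a \( B \)\nbd context, where \( B \) is the marking of the domain of the pushout-product: reading \( B \) off from Remark \ref{rmk:quasitopos_mdgmset} and the Gray product marking formula shows that the relevant cells, lying in \( A \gray \cell V \) and in \( A' \gray \cell(\bd{}{}V) \), are all contained in \( B \).

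Finally I would match the markings. The codomain marking of \( \lambda^x_U \pp\gray i \) is computed directly from the Gray product formula, and must equal the marking \( B' \) assigned by the marked-horn clause to a horn with domain marking \( B \) and missing face \( (x, \top_V) \). The only subtle point is the branch of that clause: I would observe that \( \faces{}{-\a}(U \gray V) \subseteq B \) holds unconditionally, since each of its elements is either \( y \gray \top_V \) with \( y \in \faces{}{-\a} U \subseteq \cell \Lambda^x_U \), or \( \top_U \gray z \) with \( \top_U \in A' \) and \( z \) a proper face of \( V \) — all of which lie in \( B \). The identity \( B' = B \cup \set{(x, \top_V), (\top_U, \top_V)} \) then follows by expanding \( A' \gray \cell V \) and \( \cell U \gray \set{\top_V} \) into their pieces and comparing, in each of the two forms of \( i \). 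I expect the context condition to be the crux: the real content is the factorisation \( \F' = \G \after \F'' \) together with the recognition of each factor as an instance of the Section \ref{sec:context} machinery, while the marking computation, though requiring careful bookkeeping over the two forms of \( M' \), is mechanical once the relevant finite sets of cells are written out.
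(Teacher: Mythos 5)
Your overall strategy (reduce by duality via Lemmas \ref{lem:opposite_pp_product} and \ref{lem:opp_Jhorn}, identify the underlying atomic horn via Lemma \ref{lem:pp_horn_is_horn}, then check the context condition and the marking) matches the paper's, and your verification of the context condition is sound: the factorisation \( \F' = \G \after \F'' \), with \( \F'' \) handled by Lemma \ref{lem:gray_product_preserves_A_contexts} and \( \G \) recognised via Lemma \ref{lem:context_determined_by_boundary_gray}, is essentially an unfolding of the paper's appeal to Lemmas \ref{lem:pushout_product_preserves_A_equations_gray} and \ref{lem:A_gray_Rd_context_is_A_gray_join_cell_context}. The gap is in the marking verification, specifically your claim that \( \faces{}{-\a}(U \gray V) \subseteq B \) holds \emph{unconditionally}. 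This is true for \( \lambda^x_U \pp{\gray} \markbdmap V \), because there the domain marking contains \( \Lambda^x_U \gray \set{\top_V} \); but it is false for \( \lambda^x_U \pp{\gray} \minbdmap V \): in that case the domain marking is \( B = A' \gray \bd{}{}V \cup A \gray V \), and since \( \top_V \notin \bd{}{}V \), a cell \( y \gray \top_V \) with \( y \in \faces{}{-\a}U \) lies in \( B \) only if \( y \in A \). Your justification ``\( y \in \faces{}{-\a}U \subseteq \cell \Lambda^x_U \)'' conflates being a cell of the domain with being a \emph{marked} cell; the marked-horn definition explicitly allows \( \faces{}{-\a}U \not\subseteq A \) (that is precisely its second branch), so no such containment is available in general.

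Consequently your claimed identity \( B' = B \cup \set{(x, \top_V), (\top_U, \top_V)} \) also fails in that case: when \( \faces{}{-\a}U \not\subseteq A \) one has \( A' = A \cup \set{\top_U} \), hence \( B' \setminus B = (A' \setminus A) \gray \set{\top_V} = \set{\top_{U \gray V}} \), and \( (x, \top_V) \) does not belong to \( B' \) at all. Followed literally, your verification would thus compare the map against a target marking that is not the actual codomain marking of the pushout-product. The proposition survives because the two sides of the marked-horn clause degrade together, which is exactly what the paper checks by a case split: either \( \faces{}{-\a}U \subseteq A \), in which case \( \faces{}{-\a}(U \gray V) \subseteq B \) and the residual is \( \set{(x, \top_V), \top_{U \gray V}} \), matching the first branch; or \( \faces{}{-\a}U \not\subseteq A \), in which case \( \faces{}{-\a}(U \gray V) \not\subseteq B \) and the residual is \( \set{\top_{U \gray V}} \), matching the second branch. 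Your proof needs this case analysis for the \( \minbdmap V \) generator; the genuinely unconditional computation you describe is valid only for the \( \markbdmap V \) generator.
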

\begin{proof}
    Let \( \lambda^x_U \colon (\Lambda^x_U, A) \incl (U, A') \) be a marked horn, let \( \a \in \set{-, +} \) such that \( x \in \faces{}{\a} U \), and let \( V \) be an atom.
    Consider first the case \( \lambda^x_U \pp{\gray} \minbdmap V \).
    By Lemma \ref{lem:pp_horn_is_horn}, \( \lambda^x_U \pp{\gray} \minbdmap V \) is the morphism of marked regular directed complexes
    \begin{equation*}
        \lambda^{(x, \top_V)}_{U \gray V} \colon (\Lambda^{(x, \top_V)}_U, B) \to (U \gray V, B'),
    \end{equation*}
    where \( B \eqdef A' \gray \bd{}{} V \cup A \gray V \), and \( B' \eqdef A' \gray V \).
    Thus, \( \lambda^{(x, \top_V)}_{U \gray V} \) classifies in \( U \gray V \) the context determined by the subdiagram \( \clset{x} \gray V \submol \bd{}{\a} (U \gray V) \), which is an \( (A \gray V) \)\nbd context by Lemma \ref{lem:pushout_product_preserves_A_equations_gray} and Lemma \ref{lem:A_gray_Rd_context_is_A_gray_join_cell_context}, thus a \( B \)\nbd context.
    Now we have \( B' \setminus B = (A' \setminus A) \gray \set{\top_V} \), as well as
    \begin{equation*}
        \faces{}{-\a} (U \gray V) = \faces{}{- \a} U \gray \set{\top_V} \cup \set{\top_U} \gray \faces{}{(-)^{\dim U + 1}\a} V. 
    \end{equation*}
    Thus either \( \faces{}{- \a} U \subseteq A \) and \( A' \setminus A = \set{\top_V, x} \), in which case, since \( \top_U \in A \), we have \( \faces{}{-\a} (U \gray V) \subseteq B \), and \( B' \setminus B = \set{(x, \top_V), \top_{U \gray V}} \);
    or \( \faces{}{- \a} U \) is not a subset of \( A \), thus \( \faces{}{-\a} (U \gray V) \) is not a subset of \( B \), and \( B' \setminus B = \set{\top_{U \gray V}} \).
    In both cases, we conclude that \( \lambda^x_U \pp{\gray} \minbdmap V \) is a marked horn.
    Similarly, \( \lambda^x_U \pp{\gray} \markbdmap V \) is the morphism of marked regular directed complexes
    \begin{equation*}
        \lambda^{(x, \top_V)}_{U \gray V} \colon (\Lambda^{(x, \top_V)}_U, C) \to (U \gray V, C'),
    \end{equation*}
    where \( C \eqdef A' \gray \bd{}{} V \cup A \gray V \cup \Lambda^x_U \gray \set{\top_V} \), and \( C' \eqdef A' \gray V \cup U \gray \set{\top_V} \).
    Since \( B \subseteq C \), \( \lambda^{(x, \top_V)}_{U \gray V} \) classifies a \( C \)\nbd context.
    Furthermore, \( \faces{}{-\a} (U \gray V) \subseteq C \), and since \( A' \setminus A \subseteq U \setminus \Lambda^x_U \), 
    \begin{equation*}
        C' \setminus C = (U \setminus \Lambda^x_U) \gray \set{\top_V} = \set{(x, \top_V), \top_{U \gray V}}.
    \end{equation*}
    Therefore, \( \lambda^x_U \pp{\gray} \markbdmap V \) is a marked horn.
    By the first part of the proof, Lemma \ref{lem:opposite_pp_product}, and Lemma \ref{lem:opp_Jhorn}, \( \opp{(\minbdmap V \pp{\gray} \lambda^x_U)} \) and \( \opp{(\markbdmap V \pp{\gray} \lambda^x_U)} \) are marked horns, thus by Lemma \ref{lem:opp_Jhorn} again, \( \minbdmap V \pp{\gray} \lambda^x_U \) and \( \markbdmap V \pp{\gray} \lambda^x_U \) are marked horns.
    This concludes the proof.
\end{proof}

\begin{lem} \label{lem:pp_with_entire_mono}
    Let \( i \colon (X, A') \incl (X, A) \) and \( j \colon (Y', B') \incl (Y, B) \) be monomorphisms of marked diagrammatic sets such that \( i \) is entire.  
    Then 
    \begin{enumerate}
        \item \( i \pp{\gray} j \) is entire on \( X \gray Y \) with residual given by the non-degenerate cells of \( (A \setminus A') \gray (\cell Y \setminus \cell jY') \);
        \item \( j \pp{\gray} i \) is entire on \( Y \gray X \) with residual given by the non-degenerate cells of \( (\cell Y \setminus \cell jY') \gray (A \setminus A') \).
    \end{enumerate}
\end{lem}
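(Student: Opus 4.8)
The plan is to prove the two assertions by first settling part (1) directly and then deducing part (2) from it by passing to opposites, exactly as in the symmetric treatment of the two cases in Proposition \ref{prop:pp_marked_horn_is_marked_horn}. Throughout, the key structural inputs are that \( \fun{U} \) is strict monoidal and a left adjoint (so it preserves the pushout defining the pushout-product) and that the marking of a colimit is computed cell-by-cell via Remark \ref{rmk:quasitopos_mdgmset}.

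For entireness in part (1), I would first note that \( i \pp{\gray} j \) is a monomorphism by Lemma \ref{lem:pp_preserves_mono_mdgmset}, so it only remains to check that its underlying morphism of diagrammatic sets is an isomorphism. Since \( \fun{U} \) is strict monoidal and preserves pushouts, \( \fun{U}(i \pp{\gray} j) \) is naturally isomorphic to \( \fun{U}i \pp{\gray} \fun{U}j \); as \( i \) is entire, \( \fun{U}i \) is an isomorphism, and the pushout-product of an isomorphism with any morphism is again an isomorphism (the defining pushout is taken along an isomorphism). Hence \( \fun{U}(i \pp{\gray} j) \) is an isomorphism with codomain \( X \gray Y \), which is precisely the statement that \( i \pp{\gray} j \) is entire on \( X \gray Y \).

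To identify the residual, I would use that the domain of \( i \pp{\gray} j \) is the pushout of \( (X,A) \gray (Y',B') \) and \( (X,A') \gray (Y,B) \) over \( (X,A') \gray (Y',B') \), and compute its marking from Remark \ref{rmk:quasitopos_mdgmset} as the union of the images of the markings of the two top corners. Writing a non-degenerate cell of \( X \gray Y \) in Eilenberg--Zilber normal form as \( u \gray v \) with \( u, v \) non-degenerate, I would read off from the definition of the Gray product of marked diagrammatic sets which such cells are marked in each corner, and compare with the marking \( \dgn(X \gray Y) \cup A \gray \cell Y \cup \cell X \gray B \) of the target. The newly marked cells are then those \( u \gray v \) with \( u \in A \setminus A' \) and \( v \) a non-degenerate cell of \( Y \) not lying in the image of \( j \), giving the stated residual. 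The main obstacle is precisely this book-keeping: one must track which products \( u \gray v \) are \emph{already} marked through the \( Y \)-factor (via the term \( \cell X \gray B \) present in both corner markings), and must confirm that \( u \gray v \) is non-degenerate exactly when both \( u \) and \( v \) are, so that the count of residual cells is neither over- nor under-stated; this is where I would spend most of the care.

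Finally, for part (2) I would apply the opposite functor. By Lemma \ref{lem:opposite_pp_product}, \( \opp{(j \pp{\gray} i)} \) is naturally isomorphic to \( \opp{i} \pp{\gray} \opp{j} \); since \( \opp{(-)} \) commutes with \( \fun{U} \) it preserves entireness, so \( \opp{i} \) is entire with residual \( \opp{A} \setminus \opp{A'} \), and \( \opp{j} \) is a monomorphism with \( \cell \opp{j}\opp{Y'} = \opp{(\cell jY')} \). Applying part (1) to \( \opp{i} \) and \( \opp{j} \) exhibits \( \opp{i} \pp{\gray} \opp{j} \) as entire with residual the non-degenerate cells of \( \opp{(A \setminus A')} \gray (\cell \opp{Y} \setminus \cell \opp{j}\opp{Y'}) \). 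Applying \( \opp{(-)} \) once more and invoking Proposition \ref{prop:gray_opp_opp_gray} to rewrite \( \opp{(\opp{X} \gray \opp{Y})} \cong Y \gray X \), the order of the two Gray factors is reversed, carrying the residual to the non-degenerate cells of \( (\cell Y \setminus \cell jY') \gray (A \setminus A') \), as claimed.
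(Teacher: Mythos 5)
Your proposal is correct and follows essentially the same route as the paper's proof: part (1) is established by noting that the underlying morphism is an isomorphism and then computing the markings of the domain and codomain of \( i \pp{\gray} j \) via Remark \ref{rmk:quasitopos_mdgmset}, reading off the residual exactly as the paper does. The paper dismisses part (2) with ``the second is dual'', and your explicit dualisation through \( \opp{(-)} \) using Lemma \ref{lem:opposite_pp_product} and Proposition \ref{prop:gray_opp_opp_gray} is precisely the paper's standard device for such duality arguments (compare the end of the proof of Proposition \ref{prop:pp_marked_horn_is_marked_horn}), so it does not constitute a genuinely different approach.
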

\begin{proof}
    We only prove the first point, the second is dual. 
    First, the underlying morphism of \( i \pp{\gray} j \) is an isomorphism and can be identified with \( \idd{X \gray Y} \), since the underlying morphism of \( i \) an isomorphism. 
    Then, by inspection of the definitions, together with Remark \ref{rmk:quasitopos_mdgmset}, the entire monomorphism \( i \pp{\gray}j \colon (X \gray Y, C) \to (X \gray Y, C') \) is such that
    \begin{equation*}
        C = A \gray \cell Y \cup \cell X \gray B \cup \dgn (X \gray Y),
    \end{equation*}
    and 
    \begin{equation*}
        C' = A \gray \cell jY' \cup A' \gray \cell Y \cup \cell X \gray B \cup \dgn (X \gray Y).
    \end{equation*}
    Thus a cell in \( C \) is either degenerate, hence is in \( C' \), or is non-degenerate of the form \( a \gray v \), and is either in \( C' \), or \( a \) is not in \( A' \) and \( v \) is not in \( \cell jY' \).     
    This concludes the proof.
\end{proof}

\begin{lem} \label{lem:recognize_acyclic_entire}
    Let \( i \) be an entire monomorphism of marked diagrammatic sets. 
    Suppose that \( i \) has the left lifting property against all marked diagrammatic sets \( \natmark{X} \), where \( X \) is an \( (\infty, n) \)\nbd category.
    Then \( i \) is an acyclic cofibration in the coinductive \( (\infty, n) \)\nbd model structure.
\end{lem}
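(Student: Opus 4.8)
The plan is to show that \( i \) is a weak equivalence; since an entire monomorphism is in particular a monomorphism, it is a cofibration, and the two together make it an acyclic cofibration. As every marked diagrammatic set is cofibrant, I would invoke Proposition \ref{prop:weak_equiv_wrt_cylinder}: writing \( i \colon (X, A') \incl (X, A) \) with \( A' \subseteq A \) (the underlying morphism being an isomorphism), it suffices to prove that for every fibrant object \( W \) the map \( i^{*} \colon \pizero((X, A), W) \to \pizero((X, A'), W) \) is a bijection. By Theorem \ref{thm:model_structure_mdgmset} the fibrant objects are exactly the \( \natmark{Y} \), so the hypothesis is precisely that \( i \) has the left lifting property against \( W \to \pt \) for all fibrant \( W \). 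Because \( i \) is entire, a morphism \( (X, A) \to W \) (resp. \( (X, A') \to W \)) is the same datum as an underlying morphism \( X \to \fun{U}W \) carrying \( A \) (resp. \( A' \)) into the marked cells of \( W \); a lift in the defining square has the same underlying morphism as the map it extends, so the lifting property says exactly that every morphism \( (X, A') \to W \) already sends \( A \) to marked cells. Hence \( i^{*} \) is a bijection on hom-sets.

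It then remains to compare the two left-homotopy relations on this common hom-set \( H \eqdef \hom((X, A), W) = \hom((X, A'), W) \). One inclusion is automatic: \( \pizero(-, W) \) is functorial in its first variable for \( W \) fibrant, so \( g_0 \approx g_1 \) as maps out of \( (X, A) \) implies \( g_0 \after i \approx g_1 \after i \), and \( g_\bullet \after i = g_\bullet \) since \( i \) is entire. The crux is the converse: given \( g_0, g_1 \in H \) that are left homotopic as maps out of \( (X, A') \), I must produce a left homotopy between them as maps out of \( (X, A) \).

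For this I would manufacture a cylinder on \( (X, A) \) out of one on \( (X, A') \). Fix any cylinder \( (\fun{I}(X, A'), \iota^{-}, \iota^{+}, \sigma) \); since \( (X, A') \) is cofibrant and \( W \) is fibrant, Lemma \ref{lem:homotopic_implies_homotopic_any_cylinder} lets me realise the given homotopy as a morphism \( \beta \colon \fun{I}(X, A') \to W \) with \( \beta \after \iota^{-} = g_0 \) and \( \beta \after \iota^{+} = g_1 \). Let \( P \) be the pushout of \( \fun{I}(X, A') \xleftarrow{(\iota^{-}, \iota^{+})} (X, A') \amalg (X, A') \xrightarrow{i \amalg i} (X, A) \amalg (X, A) \). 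The induced map \( (\bar\iota^{-}, \bar\iota^{+}) \colon (X, A) \amalg (X, A) \to P \) is a pushout of the cofibration \( (\iota^{-}, \iota^{+}) \), hence a cofibration, while \( \sigma \) and the codiagonal \( \nabla \) induce \( \bar\sigma \colon P \to (X, A) \) with \( \bar\sigma \after (\bar\iota^{-}, \bar\iota^{+}) = \nabla \). Finally \( \beta \) and the copairing \( (g_0, g_1) \colon (X, A) \amalg (X, A) \to W \) agree on \( (X, A') \amalg (X, A') \) (both restrict to \( (g_0 \after i, g_1 \after i) \)), so they glue to \( \bar\beta \colon P \to W \) with \( \bar\beta \after \bar\iota^{-} = g_0 \) and \( \bar\beta \after \bar\iota^{+} = g_1 \); once \( P \) is known to be a cylinder, this is the desired homotopy.

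The one substantial point — and the step I expect to be the main obstacle — is that \( \bar\sigma \) is a weak equivalence, so that \( P \) really is a cylinder on \( (X, A) \). I would deduce this from left properness of the coinductive \( (\infty, n) \)\nbd model structure (a Cisinski-type model structure on the quasitopos \( \mdgmSet \), hence left proper) together with the gluing lemma. Indeed, the pushout square defining \( P \) admits a map to the pushout square presenting \( (X, A) \) as the pushout of \( (X, A') \xleftarrow{\nabla} (X, A') \amalg (X, A') \xrightarrow{i \amalg i} (X, A) \amalg (X, A) \), realised by the vertical maps \( \sigma \), \( \mathrm{id} \), \( \mathrm{id} \); these are weak equivalences and the right-hand legs \( i \amalg i \) are cofibrations in both rows, so the induced map \( \bar\sigma \) on pushouts is a weak equivalence. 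Combining the two inclusions of homotopy relations shows \( i^{*} \) is a bijection on \( \pizero \), whence \( i \) is a weak equivalence, and therefore an acyclic cofibration.
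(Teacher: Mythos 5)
Your proof is correct, and its skeleton is the paper's own: identify the fibrant objects via Theorem \ref{thm:model_structure_mdgmset}, use entireness to see that lifts against fibrant objects are unique, so that \( i^* \) is a bijection on hom-sets, and conclude with Proposition \ref{prop:weak_equiv_wrt_cylinder}. What the paper's three-sentence proof leaves entirely implicit is the step you rightly single out as the crux: upgrading a left homotopy between \( g_0 \after i \) and \( g_1 \after i \), which lives on a cylinder on \( (X, A') \), to one between \( g_0 \) and \( g_1 \), which must live on a cylinder on \( (X, A) \). Your pushout-cylinder \( P \) together with the gluing lemma settles this correctly; the only blemish is your justification of left properness ("Cisinski-type, hence left proper"), which appeals to something the paper never claims. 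No such appeal is needed: cofibrations are the monomorphisms, so every object of \( \mdgmSet \) is cofibrant, and a model category in which every object is cofibrant is left proper; alternatively, the cube lemma applies verbatim, since all objects involved are cofibrant and the legs \( i \amalg i \) are cofibrations in both rows. Note also that the properness machinery can be bypassed altogether, which is presumably what the paper's authors had in mind: since \( (X, A') \) is cofibrant and \( W \) fibrant, left and right homotopy coincide, so the given homotopy can be realised as a right homotopy \( H' \colon (X, A') \to PW \) into a path object chosen so that \( PW \to W \times W \) is a fibration; then \( PW \) is itself fibrant, so your lifting hypothesis extends \( H' \) along \( i \) to some \( H \colon (X, A) \to PW \), and since \( i \) is an epimorphism (being entire), the composite of \( H \) with \( PW \to W \times W \) is forced to equal \( (g_0, g_1) \), exhibiting \( g_0 \approx g_1 \) directly. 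Both routes are valid; yours trades this dualisation trick for a properness argument.
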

\begin{proof}
    Since \( i \) is entire, any solution to a lifting problem against \( i \) is uniquely determined.
    By Theorem \ref{thm:model_structure_mdgmset}, marked diagrammatic sets of the form \( \natmark{X} \), with \( X \) an \( (\infty, n) \)\nbd category, are all the fibrant objects of the coinductive \( (\infty, n) \)\nbd model structure. 
    By Proposition \ref{prop:weak_equiv_wrt_cylinder}, the cofibration \( i \) is acyclic.
\end{proof}

\begin{thm} \label{thm:marked_model_structure_monoidal}
    The coinductive \( (\infty, n) \)\nbd model structure is monoidal with respect to the Gray product.
\end{thm}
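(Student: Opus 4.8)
The plan is to verify the hypotheses of Lemma \ref{lem:monoidal_pseudo_generating} for $\mdgmSet$, equipped with the biclosed Gray product of Lemma \ref{lem:gray_product_mdgmSet_biclosed} and the pseudo-generating set $\Jcoind$ of acyclic cofibrations from Theorem \ref{thm:model_structure_mdgmset}. Since cofibrations are exactly the monomorphisms and the initial object is the empty marked diagrammatic set, every object is cofibrant, so the first bullet holds. For the second bullet, Lemma \ref{lem:pp_preserves_mono_mdgmset} already gives that the pushout-product of two cofibrations is a cofibration, so the only remaining task is to show that $i \pp{\gray} i'$ is acyclic whenever $i$ is a cofibration and $i' \in \Jcoind$ (and symmetrically). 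First I would reduce this to generators: writing cofibrations as $\cof{M'}$ for the cellular model $M'$ of Lemma \ref{lem:cellular_model_marked_dgmset} and invoking Lemma \ref{lem:pp_preserve_cof}, one gets $i \pp{\gray} i' \in \cof{(M' \pp{\gray} i')}$ and $i' \pp{\gray} i \in \cof{(i' \pp{\gray} M')}$. As the acyclic cofibrations are closed under the operations defining $\cof{(-)}$, it then suffices to prove that $m' \pp{\gray} i'$ and $i' \pp{\gray} m'$ are acyclic cofibrations for every $m' \in M'$ and every $i' \in \Jcoind$.

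I would then split $\Jcoind = \Jhorn \cup \Jn n \cup \Jloc$ into three cases. The marked-horn case is immediate from Proposition \ref{prop:pp_marked_horn_is_marked_horn}, which places both $m' \pp{\gray} i'$ and $i' \pp{\gray} m'$ back inside $\Jhorn$, hence among the acyclic cofibrations. The other two families consist of entire monomorphisms (each shares its underlying diagrammatic set), so by Lemma \ref{lem:pp_with_entire_mono} their pushout-products with any $m' \in M'$ are again entire, with residual equal to the non-degenerate cells of a Gray product of the respective residuals. By Lemma \ref{lem:recognize_acyclic_entire} it is then enough to check that such an entire pushout-product has the left lifting property against every fibrant $\natmark X$, i.e.\ that the forced underlying morphism sends each residual cell to an equivalence of $X$, and hence to a marked cell of $\natmark X$.

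For $i' \in \Jn n$ the residual of $i'$ is the top cell of an atom of dimension $> n$, so every residual cell of the pushout-product is a Gray product $\top \gray v$ (or $v \gray \top$) of dimension $> n$; its image in $X$ is then a cell of dimension $> n$, which is an equivalence because $X$ is an \((\infty, n)\)\nbd category (and degenerate cells are equivalences by Theorem \ref{thm:main_equivalences}). For $i' \in \Jloc$ the residual of $i'$ is the fundamental cell $e$ of the walking equivalence $\selfloc U$, which is an equivalence in $\selfloc U$ by construction of the localisation; hence each residual cell $e \gray v$ (or $v \gray e$) is an equivalence in $\selfloc U \gray V$ by Theorem \ref{thm:equivalences_in_gray}, and its image under a morphism of diagrammatic sets is again an equivalence by Theorem \ref{thm:main_equivalences}. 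In both cases the required lift exists, so the pushout-products are acyclic cofibrations by Lemma \ref{lem:recognize_acyclic_entire}, and Lemma \ref{lem:monoidal_pseudo_generating} yields the claim.

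I expect the genuinely substantive point to be the $\Jloc$ case: it is precisely there that one needs to know that tensoring a round diagram with the walking equivalence again produces an equivalence, which is exactly the content of Theorem \ref{thm:equivalences_in_gray}. Everything else is bookkeeping with the cellular model reduction, entireness, and the recognition criterion for acyclic entire cofibrations.
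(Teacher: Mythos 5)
Your proposal is correct and follows essentially the same route as the paper's proof: reduction to a cellular model via Lemma \ref{lem:pp_preserve_cof}, Proposition \ref{prop:pp_marked_horn_is_marked_horn} for \( \Jhorn \), entireness (Lemma \ref{lem:pp_with_entire_mono}) plus the recognition Lemma \ref{lem:recognize_acyclic_entire} for \( \Jn n \cup \Jloc \) with Theorem \ref{thm:equivalences_in_gray} as the key input, and Lemma \ref{lem:monoidal_pseudo_generating} to conclude. The only (harmless) deviations are that you use \( M' \) throughout where the paper uses \( M \) for the entire generators --- so you never need the paper's observation that a pushout-product of two entire monomorphisms has empty residual, hence is an isomorphism --- and that you treat \( \Jn n \) by the same lifting argument as \( \Jloc \), whereas the paper instead exhibits \( t_U \pp{\gray} \minbdmap{V} \) as a pushout of the generator \( t_{U \gray V} \in \Jn n \) along an entire inclusion.
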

\begin{proof}
    We show first that \( (\Jn n \cup \Jloc) \pp{\gray} M \cup M \pp{\gray} (\Jn n \cup \Jloc) \) is a set of acyclic cofibrations.
    Let \( V \) be an atom, and consider the entire monomorphism \( t_V \colon \minmark{V} \incl \markmol{V} \) in \( M \).
    By Lemma \ref{lem:pp_with_entire_mono}, the pushout-product of two entire monomorphisms is entire with an empty residual, thus is an isomorphism.
    Therefore, since any morphism \( j \in \Jn n \cup \Jloc \) is entire, \( t_V \pp{\gray} j \) and \( j \pp{\gray} t_V \) are acyclic cofibrations. 
    Let \( t_U \colon \minmark{U} \incl \markmol{U} \) in \( \Jn n \).
    Then \( t_U \pp{\gray} \minbdmap{V} \) is the entire monomorphism of marked diagrammatic sets 
    \begin{equation*}
        t \colon (U \gray V, \set{\top_U} \gray \bd{}{}V) \incl (U \gray V, \set{\top_U} \gray V).
    \end{equation*}
    Since \( \dim U > n \), also \( \dim (U \gray V) > n \), so that \( t_{U \gray V} \in \Jn n \). 
    Then, \( t \) is the pushout of the acyclic cofibration \( t_{U \gray V} \colon \minmark{(U \gray V)} \incl \markmol{(U \gray V)} \) along the entire inclusion \( \minmark{(U \gray V)} \incl  (U \gray V, \set{\top_U} \gray \bd{}{}V) \), so is an acyclic cofibration.
    Similarly, \( \minbdmap{V} \pp{\gray} t_U \) is an acyclic cofibration.
    Last, let \( j \colon \selfloc{U} \incl \selflocm{U} \) be in \( \Jloc \). 
    Since \( j \) is entire, by Lemma \ref{lem:pp_with_entire_mono}, \( j \pp{\gray} \minbdmap{V} \) is entire on \( \selfloc{U} \gray V \) whose only cell in its residual is \( u \gray \idd{V} \colon U \gray V \incl \selfloc{U} \gray V \), where \( u \colon U \incl \selfloc{U} \) is the evident cell.
    By construction, the cell \( u \colon U \incl \selfloc{U} \) is an equivalence in \( \selfloc{U} \), so by Theorem \ref{thm:equivalences_in_gray}, the cell \( u \gray \idd{V} \colon U \gray V \incl \selfloc{U} \gray V \) is an equivalence in \( \selfloc{U} \gray V \).
    Let \( X \) be an \( (\infty, n) \)\nbd category, and let \( f \colon (\selfloc{U} \gray V, C) \to \natmark{X} \) be a morphism of marked diagrammatic sets. 
    By Theorem \ref{thm:main_equivalences}, \( f(u \gray \idd{V}) \) is an equivalence in \( X \), hence is marked in \( \natmark{X} \). 
    Therefore, \( f \) extends along \( j \pp{\gray} \minbdmap{V} \).
    By Lemma \ref{lem:recognize_acyclic_entire}, \( j \pp{\gray} \minbdmap{V} \) is an acyclic cofibration.
    With a similar argument, \( \minbdmap{V} \pp{\gray} j \) is an acyclic cofibration.
    This proves that \( (\Jn n \cup \Jloc) \pp{\gray} M \cup M \pp{\gray} (\Jn n \cup \Jloc) \) is a set of acyclic cofibrations.
    This fact together with Proposition \ref{prop:pp_marked_horn_is_marked_horn} and Lemma \ref{lem:pp_preserve_cof} show that for any monomorphism \( i \) of marked diagrammatic sets, \( i \pp{\gray} \Jcoind \) and \( \Jcoind \pp{\gray} i \) are sets of acyclic cofibrations.
    Since \( \Jcoind \) is a pseudo-generating set of acyclic cofibrations by Theorem \ref{thm:model_structure_mdgmset}, and \( - \pp{\gray} - \) preserves monomorphisms, hence cofibrations, by Lemma \ref{lem:pp_preserves_mono_mdgmset}, we can conclude by Lemma \ref{lem:monoidal_pseudo_generating}.
\end{proof}

\begin{thm} \label{thm:gray_monoidal_dgmset}
    The \( (\infty, n) \)\nbd model structure on diagrammatic sets is monoidal with respect to the Gray product.
\end{thm}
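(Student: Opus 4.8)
The plan is to deduce this from the marked case (Theorem \ref{thm:marked_model_structure_monoidal}) by transport along the strict monoidal Quillen equivalence \( \minmark{(-)} \dashv \fun{U} \) of Theorem \ref{thm:model_structure_dgmSet}. Since every diagrammatic set is cofibrant, the monoidal unit \( \pt \) is cofibrant, so by the Remark following the definition of monoidal model category it is enough to verify the pushout-product axiom. The cofibration half is immediate: cofibrations are exactly the monomorphisms, and Lemma \ref{lem:pp_preserves_mono_dgmset} already ensures that \( i \pp{\gray} i' \) is a monomorphism whenever \( i \) and \( i' \) are. It remains to show that \( i \pp{\gray} i' \) is acyclic as soon as \( i \) or \( i' \) is.

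The key structural input is that \( \minmark{(-)} \) is strict monoidal and, being a left adjoint, preserves pushouts; therefore it commutes with the formation of the pushout-product, yielding a natural isomorphism \( \minmark{(i \pp{\gray} i')} \cong \minmark{i} \pp{\gray} \minmark{i'} \). Now suppose that \( i \) is an acyclic cofibration and \( i' \) a cofibration, the reverse case being symmetric. As \( \minmark{(-)} \) is left Quillen, \( \minmark{i} \) is an acyclic cofibration and \( \minmark{i'} \) a cofibration of marked diagrammatic sets, so by Theorem \ref{thm:marked_model_structure_monoidal} the pushout-product \( \minmark{i} \pp{\gray} \minmark{i'} \) is an acyclic cofibration, in particular a weak equivalence. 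Through the isomorphism above, \( \minmark{(i \pp{\gray} i')} \) is a weak equivalence, hence by the definition of the weak equivalences in the \( (\infty, n) \)-model structure (Theorem \ref{thm:model_structure_dgmSet}) so is \( i \pp{\gray} i' \). Being also a cofibration, it is an acyclic cofibration, which completes the verification of the pushout-product axiom and hence the proof.

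I expect no serious obstacle: the statement has been engineered to follow formally from strict monoidality of \( \minmark{(-)} \) together with the definition of the weak equivalences on diagrammatic sets. The one point deserving care is the claim that \( \minmark{(-)} \) commutes with \( - \pp{\gray} - \); this reduces to the observation that a strict monoidal left adjoint preserves both the Gray products and the defining pushout appearing in the pushout-product, and therefore the induced comparison morphism.
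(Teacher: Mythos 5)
Your proof is correct and follows essentially the same route as the paper: both reduce to Theorem \ref{thm:marked_model_structure_monoidal} via the isomorphism \( \minmark{(i \pp{\gray} i')} \cong \minmark{i} \pp{\gray} \minmark{i'} \) coming from strict monoidality and pushout-preservation of the left adjoint \( \minmark{(-)} \), after disposing of the cofibration half with Lemma \ref{lem:pp_preserves_mono_dgmset}. The only cosmetic difference is the final step, where you invoke the characterisation of weak equivalences in Theorem \ref{thm:model_structure_dgmSet} while the paper cites Lemma \ref{lem:ken_brown_plus} directly; these amount to the same thing, since that characterisation is itself established from Lemma \ref{lem:ken_brown_plus}.
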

\begin{proof}
    By Lemma \ref{lem:pp_preserves_mono_dgmset} and Theorem \ref{thm:model_structure_dgmSet}, the pushout-product of two cofibrations is a cofibration. 
    Next, the left adjoint \( \minmark{(-)} \colon \dgmSet \to \mdgmSet \) preserves pushouts, and is strict monoidal with respect to the Gray product.
    Therefore, for all cofibrations \( i \) and \( j \) of diagrammatic sets, \( \minmark{(i \pp{\gray} j)} \) is isomorphic to \( \minmark{i} \pp{\gray} \minmark{j} \).
    Now, \( \minmark{(-)} \) is a left Quillen equivalence, so if \( i \) or \( j \) is acyclic, Theorem \ref{thm:marked_model_structure_monoidal} implies that \( \minmark{(i \pp{\gray} j)} \) is an acyclic cofibration in the coinductive \( (\infty, n) \)\nbd model structure, thus by Lemma \ref{lem:ken_brown_plus}, the cofibration \( i \pp{\gray} j \) is acyclic.
\end{proof}

\subsection{Cylinder and opposite}

In this section, we fix \( n \in \mathbb{N} \cup \set{\infty} \). 

\begin{dfn} [Left and right marked cylinder]
    The \emph{left marked cylinder} is the endofunctor \( \markarr \gray - \) on marked diagrammatic sets, equipped with the natural transformations 
    \begin{equation*}
         (\iota^-, \iota^+) \colon \bigid{} \coprod \bigid{} \incl (\markarr \gray -),\quad\quad \sigma \colon (\markarr \gray -) \to \bigid{}.
    \end{equation*}
    induced by tensoring with the morphisms
    \begin{equation*}
        (0^+, 0^-) \colon \pt \coprod \pt \incl \markarr,\quad\quad \varepsilon \colon \markarr \to \pt.
    \end{equation*}
    We define dually the \emph{right marked cylinder} with the endofunctor \( - \gray \markarr \) instead.
\end{dfn}

\begin{prop} \label{prop:markcylinder_is_cylinder}
    The left and right marked cylinders are functorial cylinders for the coinductive \( (\infty, n) \)\nbd model structure. 
\end{prop}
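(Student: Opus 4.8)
The plan is to check the three defining conditions of a cylinder pointwise for the left marked cylinder $\markarr \gray -$; the right cylinder $- \gray \markarr$ is entirely symmetric, using that $X \gray -$ is left Quillen in place of $- \gray X$. Fix a marked diagrammatic set $X$. Two of the conditions are formal. Since $\gray$ preserves colimits in each variable, the isomorphism $(\pt \amalg \pt) \gray X \cong X \amalg X$ identifies the pair $(\iota^-, \iota^+)$ with $(0^+, 0^-) \gray \idd{X}$; as $(0^+, 0^-) \colon \pt \amalg \pt \incl \markarr$ and $\idd{X}$ are monomorphisms, Lemma \ref{lem:pp_preserves_mono_mdgmset} shows this is a monomorphism, hence a cofibration. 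For the codiagonal condition, $\sigma \after (\iota^-, \iota^+) = (\varepsilon \after (0^+, 0^-)) \gray \idd{X}$, and $\varepsilon \after (0^+, 0^-) \colon \pt \amalg \pt \to \pt$ is the codiagonal of the monoidal unit; tensoring with $X$ and using that $\gray$ preserves coproducts yields the codiagonal of $X$.

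The only substantial condition is that $\sigma = \varepsilon \gray \idd{X} \colon \markarr \gray X \to \pt \gray X \cong X$ is a weak equivalence, and I would reduce this to the single claim that $\varepsilon \colon \markarr \to \pt$ is a weak equivalence. Every marked diagrammatic set is cofibrant, since the map out of the initial object is a monomorphism, hence a cofibration; and by Theorem \ref{thm:marked_model_structure_monoidal} the functor $- \gray X$ is left Quillen. Therefore, once $\varepsilon$ is known to be a weak equivalence between cofibrant objects, Lemma \ref{lem:ken_brown_plus} shows that $\sigma = (- \gray X)(\varepsilon)$ is a weak equivalence.

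To prove that $\varepsilon$ is a weak equivalence, I would recognise the endpoint inclusions $\pt \incl \markarr$ as marked horns. For the arrow $\arr$ with vertices $0^-, 0^+$ and top cell $\top$, the atomic horn $\Lambda^{0^-}_{\arr} = \bd{}{}\arr \setminus \clset{0^-}$ is the single vertex $0^+$, and its inclusion is the inclusion of that endpoint. The context it classifies has shape $\bd{}{-}\arr$, a point, so it is the identity context and in particular an $A$\nbd context for the empty marking $A = \varnothing$ on $\pt$; since $\faces{}{+}\arr = \set{0^+}$ is not contained in $A$, the prescribed marking on the codomain is $A \cup \set{\top} = \set{\top}$, which is exactly $\markarr$. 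Thus each endpoint inclusion $\pt \incl \markarr$ is a marked horn (the other arising symmetrically from $\Lambda^{0^+}_{\arr}$), so belongs to $\Jhorn \subseteq \Jcoind$ and is an acyclic cofibration by Theorem \ref{thm:model_structure_mdgmset}. As $\varepsilon$ composed with either endpoint inclusion is $\idd{\pt}$, two-out-of-three forces $\varepsilon$ to be a weak equivalence.

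The main obstacle is the marked-horn verification itself, which takes place in the low-dimensional and partly degenerate setting of the arrow: one must confirm that the distinguished element $x = 0^-$ is a $0$\nbd cell, so that the classified context is genuinely the identity and hence an $A$\nbd context for any $A$, and that the marking clause of the marked-horn definition lands in the branch producing precisely $\set{\top}$ rather than a larger marking. Everything else is the formal assembly of two-out-of-three, the cofibrancy of all objects, the left Quillenness of $- \gray X$ and $X \gray -$ from Theorem \ref{thm:marked_model_structure_monoidal}, and Lemma \ref{lem:ken_brown_plus}.
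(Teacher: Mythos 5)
Your proof is correct and takes essentially the same route as the paper's: recognise an endpoint inclusion \( \pt \incl \markarr \) as a marked horn, deduce by two-out-of-three that \( \varepsilon \colon \markarr \to \pt \) is a weak equivalence, and conclude using cofibrancy of all objects, the left Quillen-ness of tensoring from Theorem \ref{thm:marked_model_structure_monoidal}, and Lemma \ref{lem:ken_brown_plus}. The only difference is that you spell out the cofibration, codiagonal, and marked-horn verifications (correctly) where the paper simply asserts them.
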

\begin{proof}
    Consider the case of the left marked cylinder.
    In each component, the morphism \(  (\iota^-, \iota^+) \) is a cofibration, and \( \sigma (\iota^-, \iota^+) \) is a codiagonal.
    The inclusion \( 0^- \colon \pt \incl \markarr \) is a marked horn, thus a weak equivalence. 
    By two-out-of-three, \( \varepsilon \colon \markarr \to \pt \) is also a weak equivalence. 
    Let \( (X, A) \) be a marked diagrammatic set. 
    By Theorem \ref{thm:marked_model_structure_monoidal}, the functor \( - \gray (X, A) \) is left Quillen, so by Lemma \ref{lem:ken_brown_plus}, \( \varepsilon \gray (X, A) \colon \markarr \gray (X, A) \to \pt \gray (X, A) \), hence \( \markarr \gray (X, A) \to (X, A) \), are weak equivalences. 
    This shows that \( \markarr \gray - \) is a functorial cylinder.
    Proceed dually for the right marked cylinder.
\end{proof}

\begin{dfn} [Left and right reversible cylinder]
    The \emph{left reversible cylinder} is the endofunctor \( \locarr \gray - \) on diagrammatic sets, equipped with the natural transformations 
    \begin{equation*}
        (\iota^-, \iota^+) \colon \bigid{} \coprod \bigid{} \incl (\locarr \gray -),\quad\quad \sigma \colon (\locarr \gray -) \to \bigid{}.
   \end{equation*}
   induced by tensoring with the morphisms
   \begin{equation*}
       \Loc (0^+, 0^-) \colon \pt \coprod \pt \incl \locarr,\quad\quad \Loc \varepsilon \colon \locarr \to \pt.
   \end{equation*}
    We define dually the \emph{right reversible cylinder} with the endofunctor \( - \gray \locarr \) instead.
\end{dfn}

\begin{prop} \label{prop:reversible_cylinder_is_cylinder}
    The left and right reversible cylinders are functorial cylinders for the \( (\infty, n) \)\nbd model structure. 
\end{prop}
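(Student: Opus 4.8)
The plan is to verify the three cylinder conditions on each component $\locarr \gray X$, treating the left reversible cylinder; the right one follows from the symmetric argument with $X \gray -$ in place of $- \gray X$ (both are left Quillen by Theorem \ref{thm:gray_monoidal_dgmset}). The whole data is obtained from the already-established marked cylinder on $\pt$ by applying the colimit-preserving functor $\Loc$ and then tensoring with $X$. For the two easy conditions: $\Loc(0^+, 0^-) \colon \pt \amalg \pt \to \locarr$ is a monomorphism, since localisation leaves the endpoints $0^-, 0^+$ distinct, so $(\iota^-, \iota^+) = \Loc(0^+, 0^-) \gray X$ is a monomorphism by Lemma \ref{lem:pp_preserves_mono_dgmset}, hence a cofibration. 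Moreover $\sigma \after (\iota^-, \iota^+) = \Loc(\varepsilon \after (0^+, 0^-)) \gray X$, and since $\varepsilon \after (0^+, 0^-)$ is the codiagonal of $\pt$ and $\Loc$ preserves colimits, $\sigma \after (\iota^-, \iota^+)$ is the codiagonal of $X$.

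It remains to show that $\sigma = \Loc\varepsilon \gray X$ is a weak equivalence. By Theorem \ref{thm:gray_monoidal_dgmset} the functor $- \gray X$ is left Quillen, and as every diagrammatic set is cofibrant, Lemma \ref{lem:ken_brown_plus} shows it preserves weak equivalences between cofibrant objects. Thus it suffices to prove that $\Loc\varepsilon \colon \locarr \to \pt$ is a weak equivalence. By Theorem \ref{thm:model_structure_dgmSet} this holds if and only if $\minmark{(\Loc\varepsilon)} \colon \minmark{\locarr} \to \pt$ is a weak equivalence in $\mdgmSet$.

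Now the generating map $k \colon \minmark{\locarr} \incl \selflocm{\arr}$ lies in $\Jloc$, hence is an acyclic cofibration by Theorem \ref{thm:model_structure_mdgmset}; writing $\minmark{(\Loc\varepsilon)}$ as the terminal map $\selflocm{\arr} \to \pt$ precomposed with $k$, two-out-of-three reduces the problem to showing that $\selflocm{\arr} \to \pt$ is a weak equivalence. Since $\varepsilon \colon \markarr \to \pt$ is a weak equivalence (because $0^- \colon \pt \to \markarr$ is a marked horn, as in Proposition \ref{prop:markcylinder_is_cylinder}) and factors as this terminal map composed with the canonical inclusion $j \colon \markarr \to \selflocm{\arr}$, a further application of two-out-of-three reduces everything to the single claim that $j$ is a weak equivalence.

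This last claim is the heart of the proof. By Proposition \ref{prop:weak_equiv_wrt_cylinder} it is enough that $j^* \colon \pizero(\selflocm{\arr}, \natmark{W}) \to \pizero(\markarr, \natmark{W})$ is a bijection for every $(\infty, n)$\nbd category $W$. The universal property of the localisation (the adjunction $\Loc \dashv \natmark{(-)}$) gives, for every $(\infty, n)$\nbd category $V$, a bijection $\hom(\selflocm{\arr}, \natmark{V}) \cong \hom(\markarr, \natmark{V})$ — both sides being the equivalences of shape $\arr$ in $V$, the marking conditions on $\selflocm{\arr}$ being automatic — realised precisely by restriction along $j$. To upgrade this bijection on hom-sets into a bijection on $\pizero$, I would exploit the biclosed structure of Lemma \ref{lem:gray_product_mdgmSet_biclosed}: a left homotopy into $\natmark{W}$ detected by the marked cylinder $\markarr \gray -$ transposes to a map into an internal hom $[\markarr, \natmark{W}]$, which is again fibrant (the adjoint form of the pushout-product axiom from Theorem \ref{thm:marked_model_structure_monoidal}), hence of the form $\natmark{V}$; applying the same hom-bijection to $\natmark{V}$ and using injectivity then forces $j^*$ to reflect left homotopy. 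The main obstacle is exactly this step — making precise that internal homs into fibrant objects are fibrant and that the universal property of the localisation is compatible with the cylinder used to detect homotopies — after which surjectivity and injectivity on $\pizero$ follow formally and conclude the proof.
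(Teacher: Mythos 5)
Your verification of the two easy cylinder axioms, and your chain of two-out-of-three reductions --- from \( \Loc\varepsilon \gray X \) being a weak equivalence, via Theorem \ref{thm:gray_monoidal_dgmset} and Lemma \ref{lem:ken_brown_plus}, through the acyclic cofibration \( \minmark{\selfloc{\arr}} \incl \selflocm{\arr} \in \Jloc \), down to the single claim that \( j \colon \markarr \incl \selflocm{\arr} \) is a weak equivalence in the coinductive \( (\infty, n) \)\nbd model structure --- are all correct, and the outer shell coincides with the paper's proof, which simply mirrors Proposition \ref{prop:markcylinder_is_cylinder} with Theorem \ref{thm:gray_monoidal_dgmset} in place of Theorem \ref{thm:marked_model_structure_monoidal}. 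The difference is that the paper does not prove the remaining claim internally at all: it imports the fact that \( \Loc\varepsilon \) is a weak equivalence from \cite[Proposition 3.2]{chanavat2024model}.

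The gap is in your final step, and it is not a technicality. The functor \( \natmark{(-)} \) is \emph{not} right adjoint to \( \Loc \), and the asserted bijection \( \hom(\selflocm{\arr}, \natmark{V}) \cong \hom(\markarr, \natmark{V}) \) is false. By the sequential-colimit construction of the localisation, a morphism \( \selfloc{\arr} \to V \) is strictly more data than an equivalence of shape \( \arr \) in \( V \): it is an arrow \( u \) together with a choice of the entire infinite tower of cells \( \hinv{s}u \), \( \hinv{s}^L u \), \( \hinv{s}^R u \) (inverses and invertibility witnesses at every stage), and in an \( (\infty, n) \)\nbd category such choices are unique only up to equivalence, not on the nose. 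Hence restriction along \( j \) is surjective onto equivalences but not injective, so there is no bijection on hom-sets to ``upgrade'' to \( \pizero \); what you would actually need is injectivity of \( j^* \) up to left homotopy, i.e.\ that any two towers of inverse data over homotopic arrows are themselves homotopic. That statement is the genuine mathematical content here (the contractibility of the walking equivalence), it is exactly what your internal-hom sketch defers --- as you yourself acknowledge (``the main obstacle is exactly this step'') --- and it is what the cited external result \cite[Proposition 3.2]{chanavat2024model} supplies. As written, the proposal is therefore not a proof: completing it along your lines would amount to reproving that external proposition rather than invoking anything established in this paper.
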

\begin{proof}
    Proceed as in the proof of Proposition \ref{prop:markcylinder_is_cylinder} using Theorem \ref{thm:gray_monoidal_dgmset} instead, and the fact that \( \Loc \varepsilon \) is a weak equivalence by \cite[Proposition 3.2]{chanavat2024model}.
\end{proof}

\begin{lem} \label{lem:opp_preserve_homotopies}
    Let \( W \) be an \( (\infty, n) \)\nbd category, and \( f, g \colon (X, A) \to \natmark{W} \) be morphisms of marked diagrammatic sets such that \( f \approx g \).
    Then \( \opp{f} \approx \opp{g} \).
\end{lem}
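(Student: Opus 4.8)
The plan is to transport a left homotopy realised by the \emph{left} marked cylinder across the opposite functor, so that it becomes a left homotopy realised by the \emph{right} marked cylinder. This sidesteps any need to know a priori that \( \opp{(-)} \) preserves weak equivalences (which is only established later, and would make the argument circular).

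First I would dispatch the fibrancy and cofibrancy bookkeeping. Every marked diagrammatic set is cofibrant, so both \( (X, A) \) and \( \opp{(X, A)} \) are cofibrant. Since \( W \) is an \( (\infty, n) \)\nbd category, \( \natmark{W} \) is fibrant by Theorem \ref{thm:model_structure_mdgmset}; by Lemma \ref{lem:opp_of_infty_cat_is_infty_cat} the opposite \( \opp{W} \) is again an \( (\infty, n) \)\nbd category, so \( \natmark{(\opp{W})} \) is fibrant as well. Moreover, combining \( \natmark{W} = (W, \eqv W) \) with Lemma \ref{lem:equivalence_in_opposite} restricted to cells yields the identification \( \opp{(\natmark{W})} = (\opp{W}, \eqv \opp{W}) = \natmark{(\opp{W})} \), which is where the opposite of the homotopy will land.

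Next, since \( (X, A) \) is cofibrant and \( \natmark{W} \) is fibrant, Lemma \ref{lem:homotopic_implies_homotopic_any_cylinder} lets me realise \( f \approx g \) specifically by the left marked cylinder: there is a morphism \( \beta \colon \markarr \gray (X, A) \to \natmark{W} \) with \( \beta \after \iota^- = f \) and \( \beta \after \iota^+ = g \). Applying the involution \( \opp{(-)} \), using Proposition \ref{prop:gray_opp_opp_gray} in its marked form together with the evident self-duality \( \opp{\markarr} \cong \markarr \), I obtain a morphism
\begin{equation*}
    \opp{\beta} \colon \opp{(X, A)} \gray \markarr \to \natmark{(\opp{W})}
\end{equation*}
out of the right marked cylinder on \( \opp{(X, A)} \). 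Functoriality of \( \opp{(-)} \) turns the two defining equations into \( \opp{\beta} \after \opp{\iota^-} = \opp{f} \) and \( \opp{\beta} \after \opp{\iota^+} = \opp{g} \), and tracing the coprojections through the isomorphism \( \opp{(\markarr \gray (X,A))} \cong \opp{(X,A)} \gray \markarr \) identifies \( \opp{\iota^\pm} \) with the two coprojections of the right marked cylinder.

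By Proposition \ref{prop:markcylinder_is_cylinder} the right marked cylinder is a functorial cylinder for the coinductive \( (\infty, n) \)\nbd model structure, so \( \opp{\beta} \) exhibits a left homotopy between \( \opp{f} \) and \( \opp{g} \); as \( \opp{(X,A)} \) is cofibrant, \( \approx \) is an equivalence relation and in particular symmetric (Remark \ref{rmk:left_homotopy_equivalence}), so the argument concludes with \( \opp{f} \approx \opp{g} \). The only genuinely non-formal point is the endpoint bookkeeping: one must check that passing to the opposite interchanges the \( 0^+ \)\nbd and \( 0^- \)\nbd inclusions of \( \markarr \gray (X,A) \) with the \( 0^- \)\nbd and \( 0^+ \)\nbd inclusions of \( \opp{(X,A)} \gray \markarr \). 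This is exactly the self-duality \( \opp{\markarr} \cong \markarr \) swapping the two endpoints; because of the symmetry of \( \approx \) invoked above, the resulting possible swap of which coprojection carries \( \opp{f} \) versus \( \opp{g} \) is harmless.
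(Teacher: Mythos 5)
Your proposal is correct and follows essentially the same route as the paper's proof: realise the homotopy on the left marked cylinder via Lemma \ref{lem:homotopic_implies_homotopic_any_cylinder} (and Proposition \ref{prop:markcylinder_is_cylinder}), apply \( \opp{(-)} \) using Proposition \ref{prop:gray_opp_opp_gray} and \( \opp{(\markarr)} \cong \markarr \) to land on the right marked cylinder, and absorb the endpoint swap with the symmetry of \( \approx \) from Remark \ref{rmk:left_homotopy_equivalence}. The extra bookkeeping you include (fibrancy of \( \natmark{(\opp{W})} \) via Lemma \ref{lem:opp_of_infty_cat_is_infty_cat} and the identification \( \opp{(\natmark{W})} = \natmark{(\opp{W})} \)) is consistent with, and slightly more explicit than, what the paper leaves implicit.
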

\begin{proof}
    Since \( \natmark{W} \) is fibrant in the coinductive \( (\infty, n) \)\nbd model structure, Proposition \ref{prop:markcylinder_is_cylinder} and Lemma \ref{lem:homotopic_implies_homotopic_any_cylinder} imply that we can choose a left homotopy \( \beta \colon \markarr \gray (X, A) \to \natmark{W} \) from \( f \) to \( g \). 
    Using the evident isomorphism \( \opp{(\markarr)} \cong \markarr \), we have by Proposition \ref{prop:gray_opp_opp_gray} the following commutative diagram
    \begin{center}
        \begin{tikzcd}
            {\opp{(X, A)}} \\
            \\
            {\opp{(X, A)} \gray \markarr \cong \opp{(X, A)} \gray \opp{(\markarr)}} && {\opp{(\natmark Z)}.} \\
            \\
            {\opp{(X, A)}}
            \arrow["{\iota^+}"', from=1-1, to=3-1]
            \arrow["{\opp f}", from=1-1, to=3-3]
            \arrow["{\opp \beta}", from=3-1, to=3-3]
            \arrow["{\iota^-}", from=5-1, to=3-1]
            \arrow["{\opp g}"', from=5-1, to=3-3]
        \end{tikzcd}
    \end{center}
    By Proposition \ref{prop:markcylinder_is_cylinder}, the right marked cylinder is a functorial cylinder for the coinductive \( (\infty, n) \)\nbd model structure, thus \( \opp{g} \approx \opp{f} \).
    Since all marked diagrammatic sets are cofibrant, we know by Remark \ref{rmk:left_homotopy_equivalence} that \( \approx \) is an equivalence relation, so we conclude that \( \opp{f} \approx \opp{g} \).
\end{proof}

\begin{thm} \label{thm:op_quillen_auto_eq_marked}
    The endofunctor \( \opp{(-)} \colon \mdgmSet \to \mdgmSet \) is a Quillen self-equivalence for the coinductive \( (\infty, n) \)\nbd model structure.
\end{thm}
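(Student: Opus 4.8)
The plan is to leverage that \( \opp{(-)} \) is an involution. Writing \( T \eqdef \opp{(-)} \), the identity \( T \after T = \bigid{\mdgmSet} \) exhibits \( T \) as an auto-equivalence which is its own left and right adjoint, the adjunction \( T \dashv T \) having the bijection \( \hom(\opp{(X, A)}, (Y, B)) \cong \hom((X, A), \opp{(Y, B)}) \) sending \( f \) to \( \opp{f} \). To show \( T \) is a left Quillen functor it therefore suffices to check that it preserves cofibrations and weak equivalences, after which the Quillen equivalence property follows formally; I would carry this out in that order.

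Preservation of cofibrations is immediate: as an equivalence of categories \( T \) preserves and reflects monomorphisms, which are exactly the cofibrations by Theorem \ref{thm:model_structure_mdgmset}. The heart of the argument is that \( T \) preserves weak equivalences. Every marked diagrammatic set is cofibrant, so I would invoke Proposition \ref{prop:weak_equiv_wrt_cylinder}, by which a morphism is a weak equivalence precisely when it induces a bijection on \( \pizero(-, \natmark{W}) \) for every \( (\infty, n) \)-category \( W \), these being all the fibrant objects by Theorem \ref{thm:model_structure_mdgmset}. Given a weak equivalence \( f \colon (X, A) \to (Y, B) \), I must show that \( (\opp{f})^* \) is a bijection on \( \pizero(-, \natmark{W}) \). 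The crucial input is Lemma \ref{lem:opp_preserve_homotopies}: applying \( T \) sends a left homotopy valued in a fibrant object to another such homotopy, so it descends to a function on homotopy classes. Since \( \opp{(\natmark{W})} = \natmark{(\opp{W})} \) by the cell-level form of Lemma \ref{lem:equivalence_in_opposite} together with the definition of \( \natmark{(-)} \), and \( \opp{W} \) is again an \( (\infty, n) \)-category by Lemma \ref{lem:opp_of_infty_cat_is_infty_cat}, this descended function is a bijection \( \pizero(\opp{Y}, \natmark{W}) \cong \pizero(Y, \natmark{(\opp{W})}) \), with inverse again induced by \( T \). This bijection is natural in the first variable, so it fits into a commuting square identifying \( (\opp{f})^* \) with \( f^* \); as \( f \) is a weak equivalence and \( \natmark{(\opp{W})} \) is fibrant, \( f^* \) is a bijection, hence so is \( (\opp{f})^* \). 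As \( W \) was arbitrary, \( \opp{f} \) is a weak equivalence.

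Having preserved cofibrations and weak equivalences, \( T \) preserves acyclic cofibrations, and being a left adjoint it is left Quillen. For the Quillen equivalence, I would note that \( T \) reflects weak equivalences as well, since applying the involution \( T \) a second time returns the original morphism. Thus, under the adjunction \( f \mapsto \opp{f} \), a map \( \opp{X} \to Y \) with \( X \) cofibrant and \( Y \) fibrant is a weak equivalence if and only if its adjunct \( X \to \opp{Y} \) is; this is exactly the defining criterion of a Quillen equivalence, so \( T \) is a Quillen self-equivalence.

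I expect the only real obstacle to be the preservation of weak equivalences, and even this is largely discharged by Lemma \ref{lem:opp_preserve_homotopies} and the \( \pizero \)-characterization of Proposition \ref{prop:weak_equiv_wrt_cylinder}; the one point requiring care is checking the naturality square so that the transported bijection is genuinely \( (\opp{f})^* \). It is worth remarking that the alternative route of verifying that \( T(\Jcoind) \) consists of acyclic cofibrations via Lemma \ref{lem:left_adjoint_is_Quillen_enough_pseudo_generating} would be less convenient: one has \( \opp{\Jhorn} = \Jhorn \) by Lemma \ref{lem:opp_Jhorn} and \( \opp{\Jn{n}} = \Jn{n} \) immediately, but \( \opp{\Jloc} \) would force a comparison between the localisation functor \( \Loc \) and \( \opp{(-)} \), which the homotopical argument avoids entirely.
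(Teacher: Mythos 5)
Your proof is correct and follows essentially the same route as the paper's: both verify preservation of weak equivalences via the \( \pizero \)-criterion of Proposition \ref{prop:weak_equiv_wrt_cylinder}, using Lemma \ref{lem:opp_preserve_homotopies} together with the identity \( \opp{(\natmark{W})} = \natmark{(\opp{W})} \) and Lemma \ref{lem:opp_of_infty_cat_is_infty_cat}, and then conclude the Quillen self-equivalence formally from the involution structure. Your packaging of injectivity and surjectivity into a single natural bijection \( \pizero(\opp{Y}, \natmark{W}) \cong \pizero(Y, \natmark{(\opp{W})}) \) identifying \( (\opp{f})^* \) with \( f^* \) is a slightly cleaner presentation of the same argument, but not a different proof.
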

\begin{proof}    
    The functor \( \opp{(-)} \) is an isomorphism of categories, and is its own adjoint with unit and counit being identities. 
    Let \( f \colon (X, A) \to (Y, B) \) be a weak equivalence. 
    We show that \( \opp{f} \) is a weak equivalence using Proposition \ref{prop:weak_equiv_wrt_cylinder}.
    Let \( W \) be an \( (\infty, n) \)\nbd category.
    By Lemma \ref{lem:equivalence_in_opposite} \( \opp{(\natmark{W})} = \natmark{(\opp{W})}\), which is fibrant in the coinductive \( (\infty, n) \)\nbd model structure by Lemma \ref{lem:opp_of_infty_cat_is_infty_cat}, so it has the right lifting property against \( f \), thus \( \natmark{W} \) has the right lifting property against \( \opp{f} \).
    Let \( u, v \colon (Y, B) \to \natmark{W} \) such that \( u \after \opp{f} \approx v \after \opp{f} \).
    By Lemma \ref{lem:opp_preserve_homotopies}, \( \opp{u} \after f \approx \opp{v} \after f \).
    Since \( f \) is a weak equivalence, \( \opp{u} \approx \opp{v} \), so by Lemma \ref{lem:opp_preserve_homotopies} again, \( u \approx v \).
    This shows that \( \opp{f} \) is a weak equivalence. 
    Since \( \opp{(-)} \) also preserves monomorphisms, it is left Quillen.
    Finally, since the unit and counit are identities, and \( \opp{(-)} \) is also right Quillen, this implies that the derived unit and counit are weak equivalences.
    This concludes the proof.
\end{proof}

\begin{thm} \label{thm:op_quillen_auto_eq}
    The endofunctor \( \opp{(-)} \colon \dgmSet \to \dgmSet \) is a Quillen self-equivalence for the \( (\infty, n) \)\nbd model structure.
\end{thm}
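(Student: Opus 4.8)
The plan is to \emph{transfer} the marked result Theorem \ref{thm:op_quillen_auto_eq_marked} along the Quillen equivalence \( \minmark{(-)} \dashv \fun{U} \) of Theorem \ref{thm:model_structure_dgmSet}, rather than re-running the cylinder argument used in the marked case. Exactly as there, the functor \( \opp{(-)} \colon \dgmSet \to \dgmSet \) is an isomorphism of categories which is its own adjoint with identity unit and counit, and it preserves monomorphisms, hence cofibrations. It therefore suffices to show that \( \opp{(-)} \) preserves weak equivalences: it is then left Quillen, and being its own inverse it is also right Quillen, so with identity (in particular, weakly invertible) unit and counit it is automatically a Quillen self-equivalence.

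The key structural input is that \( \opp{(-)} \) commutes with \( \minmark{(-)} \). First I would observe that, being induced by the involution \( \opp{(-)} \colon \atom \to \atom \), the functor \( \opp{(-)} \) on diagrammatic sets sends surjective cartesian maps to surjective cartesian maps and preserves dimension, so that \( \dgn \opp{X} = \opp{(\dgn X)} \) for every diagrammatic set \( X \). Consequently
\[
    \minmark{(\opp X)} = (\opp X, \dgn \opp X) = (\opp X, \opp{(\dgn X)}) = \opp{(\minmark X)},
\]
naturally in \( X \); that is, \( \minmark{(-)} \after \opp{(-)} = \opp{(-)} \after \minmark{(-)} \) as functors \( \dgmSet \to \mdgmSet \).

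Then I would simply chase the definition of weak equivalence. Let \( f \) be a weak equivalence of diagrammatic sets, so that \( \minmark{f} \) is a weak equivalence of marked diagrammatic sets by Theorem \ref{thm:model_structure_dgmSet}. By Theorem \ref{thm:op_quillen_auto_eq_marked}, the marked functor \( \opp{(-)} \) is left Quillen, and since every marked diagrammatic set is cofibrant, Lemma \ref{lem:ken_brown_plus} shows that it preserves weak equivalences. Hence \( \opp{(\minmark{f})} = \minmark{(\opp{f})} \) is a weak equivalence, which by Theorem \ref{thm:model_structure_dgmSet} says precisely that \( \opp{f} \) is a weak equivalence. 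This gives that \( \opp{(-)} \) preserves weak equivalences, completing the argument along the lines sketched above.

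The main obstacle is modest: essentially the only point requiring care is the commutation of \( \opp{(-)} \) with \( \minmark{(-)} \), that is, the fact that passing to the opposite preserves degeneracy; once this is in place, the statement follows by a formal assembly of Theorems \ref{thm:model_structure_dgmSet} and \ref{thm:op_quillen_auto_eq_marked} together with Lemma \ref{lem:ken_brown_plus}.
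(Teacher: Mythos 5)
Your proposal is correct and follows essentially the same route as the paper: the paper's proof likewise combines the commutation \( \opp{(\minmark{(-)})} = \minmark{(\opp{(-)})} \) with Theorem \ref{thm:op_quillen_auto_eq_marked} and Lemma \ref{lem:ken_brown_plus}, transferring the marked statement along the Quillen equivalence \( \minmark{(-)} \dashv \fun{U} \). You merely spell out the details the paper leaves implicit (the justification that \( \dgn \opp{X} = \opp{(\dgn X)} \), and the final assembly into a Quillen self-equivalence), which is fine.
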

\begin{proof}
    Since \( \minmark{(-)} \) is a left Quillen equivalence and \( \opp{(\minmark{(-)})} \) is equal to \( \minmark{(\opp{(-)})} \), this is a simple combination of Lemma \ref{lem:ken_brown_plus} and Theorem \ref{thm:op_quillen_auto_eq_marked}.
\end{proof}

\bibliographystyle{alpha}
\small\bibliography{main.bib}

\end{document}